\newtheorem{theorem}{Theorem}[section]
\newtheorem{lemma}[theorem]{Lemma}
\newtheorem{definition}[theorem]{Definiton}
\newtheorem{corollary}[theorem]{Corollary}
\theoremstyle{definition}
\newenvironment{remark}
  {\pushQED{\qed}\remx}
  {\popQED\endremx}
\newsavebox\myboxA
\newsavebox\myboxB
\newlength\mylenA
\newcommand*\yoverline[2][0.75]{%
    \sbox{\myboxA}{$\m@th#2$}%
    \setbox\myboxB\null
    \ht\myboxB=\ht\myboxA%
    \dp\myboxB=\dp\myboxA%
    \wd\myboxB=#1\wd\myboxA
    \sbox\myboxB{$\m@th\overline{\copy\myboxB}$}
    \setlength\mylenA{\the\wd\myboxA}
    \addtolength\mylenA{-\the\wd\myboxB}%
    \ifdim\wd\myboxB<\wd\myboxA%
       \rlap{\hskip 0.5\mylenA\usebox\myboxB}{\usebox\myboxA}%
    \else
        \hskip -0.5\mylenA\rlap{\usebox\myboxA}{\hskip 0.5\mylenA\usebox\myboxB}%
    \fi}
\numberwithin{equation}{section}
\begin{document}





\newcommand{\diver}{\operatorname{div}}
\newcommand{\lin}{\operatorname{Lin}}
\newcommand{\curl}{\operatorname{curl}}
\newcommand{\ran}{\operatorname{Ran}}
\newcommand{\kernel}{\operatorname{Ker}}
\newcommand{\la}{\langle}
\newcommand{\ra}{\rangle}
\newcommand{\N}{\mathbb{N}}
\newcommand{\R}{\mathbb{R}}
\newcommand{\C}{\mathbb{C}}
\newcommand{\T}{\mathbb{T}}

\newcommand{\ld}{\lambda}
\newcommand{\fai}{\varphi}
\newcommand{\0}{0}
\newcommand{\n}{\mathbf{n}}
\newcommand{\uu}{{\boldsymbol{\mathrm{u}}}}
\newcommand{\UU}{{\boldsymbol{\mathrm{U}}}}
\newcommand{\buu}{\bar{{\boldsymbol{\mathrm{u}}}}}
\newcommand{\ten}{\\[4pt]}
\newcommand{\six}{\\[-3pt]}
\newcommand{\nb}{\nonumber}
\newcommand{\hgamma}{H_{\Gamma}^1(\OO)}
\newcommand{\opert}{O_{\varepsilon,h}}
\newcommand{\barx}{\bar{x}}
\newcommand{\barf}{\bar{f}}
\newcommand{\hatf}{\hat{f}}
\newcommand{\xoneeps}{x_1^{\varepsilon}}
\newcommand{\xh}{x_h}
\newcommand{\scaled}{\nabla_{1,h}}
\newcommand{\scaledb}{\widehat{\nabla}_{1,\gamma}}
\newcommand{\vare}{\varepsilon}
\newcommand{\A}{{\bf{A}}}
\newcommand{\RR}{{\bf{R}}}
\newcommand{\B}{{\bf{B}}}
\newcommand{\CC}{{\bf{C}}}
\newcommand{\D}{{\bf{D}}}
\newcommand{\K}{{\bf{K}}}
\newcommand{\oo}{{\bf{o}}}
\newcommand{\id}{{\bf{Id}}}
\newcommand{\E}{\mathcal{E}}
\newcommand{\ii}{\mathcal{I}}
\newcommand{\sym}{\mathrm{sym}}
\newcommand{\lt}{\left}
\newcommand{\rt}{\right}
\newcommand{\ro}{{\bf{r}}}
\newcommand{\so}{{\bf{s}}}
\newcommand{\e}{{\bf{e}}}
\newcommand{\ww}{{\boldsymbol{\mathrm{w}}}}
\newcommand{\zz}{{\boldsymbol{\mathrm{z}}}}
\newcommand{\U}{{\boldsymbol{\mathrm{U}}}}
\newcommand{\G}{{\boldsymbol{\mathrm{G}}}}
\newcommand{\VV}{{\boldsymbol{\mathrm{V}}}}
\newcommand{\II}{{\boldsymbol{\mathrm{I}}}}
\newcommand{\ZZ}{{\boldsymbol{\mathrm{Z}}}}
\newcommand{\hKK}{{{\bf{K}}}}
\newcommand{\f}{{\bf{f}}}
\newcommand{\g}{{\bf{g}}}
\newcommand{\lkk}{{\bf{k}}}
\newcommand{\tkk}{{\tilde{\bf{k}}}}
\newcommand{\W}{{\boldsymbol{\mathrm{W}}}}
\newcommand{\Y}{{\boldsymbol{\mathrm{Y}}}}
\newcommand{\EE}{{\boldsymbol{\mathrm{E}}}}
\newcommand{\F}{{\bf{F}}}
\newcommand{\spacev}{\mathcal{V}}
\newcommand{\spacevg}{\mathcal{V}^{\gamma}(\Omega\times S)}
\newcommand{\spacevb}{\bar{\mathcal{V}}^{\gamma}(\Omega\times S)}
\newcommand{\spaces}{\mathcal{S}}
\newcommand{\spacesg}{\mathcal{S}^{\gamma}(\Omega\times S)}
\newcommand{\spacesb}{\bar{\mathcal{S}}^{\gamma}(\Omega\times S)}
\newcommand{\skews}{H^1_{\barx,\mathrm{skew}}}
\newcommand{\kk}{\mathcal{K}}
\newcommand{\OO}{O}
\newcommand{\bhe}{{\bf{B}}_{\vare,h}}
\newcommand{\pp}{{\mathbb{P}}}
\newcommand{\ff}{{\mathcal{F}}}
\newcommand{\mWk}{{\mathcal{W}}^{k,2}(\Omega)}
\newcommand{\mWa}{{\mathcal{W}}^{1,2}(\Omega)}
\newcommand{\mWb}{{\mathcal{W}}^{2,2}(\Omega)}
\newcommand{\twos}{\xrightharpoonup{2}}
\newcommand{\twoss}{\xrightarrow{2}}
\newcommand{\bw}{\bar{w}}
\newcommand{\bz}{\bar{{\bf{z}}}}
\newcommand{\tw}{{W}}
\newcommand{\tr}{{{\bf{R}}}}
\newcommand{\tz}{{{\bf{Z}}}}
\newcommand{\lo}{{{\bf{o}}}}
\newcommand{\hoo}{H^1_{00}(0,L)}
\newcommand{\ho}{H^1_{0}(0,L)}
\newcommand{\hotwo}{H^1_{0}(0,L;\R^2)}
\newcommand{\hooo}{H^1_{00}(0,L;\R^2)}
\newcommand{\hhooo}{H^1_{00}(0,1;\R^2)}
\newcommand{\dsp}{d_{S}^{\bot}(\barx)}
\newcommand{\LB}{{\bf{\Lambda}}}
\newcommand{\LL}{\mathbb{L}}
\newcommand{\mL}{\mathcal{L}}
\newcommand{\mhL}{\widehat{\mathcal{L}}}
\newcommand{\loc}{\mathrm{loc}}
\newcommand{\tqq}{\mathcal{Q}^{*}}
\newcommand{\tii}{\mathcal{I}^{*}}
\newcommand{\Mts}{\mathbb{M}}
\newcommand{\pot}{\mathrm{pot}}
\newcommand{\tU}{{\widehat{\bf{U}}}}
\newcommand{\tVV}{{\widehat{\bf{V}}}}
\newcommand{\pt}{\partial}
\newcommand{\bg}{\Big}
\newcommand{\hA}{\widehat{{\bf{A}}}}
\newcommand{\hB}{\widehat{{\bf{B}}}}
\newcommand{\hCC}{\widehat{{\bf{C}}}}
\newcommand{\hD}{\widehat{{\bf{D}}}}
\newcommand{\fder}{\partial^{\mathrm{MD}}}
\newcommand{\Var}{\mathrm{Var}}
\newcommand{\pta}{\partial^{0\bot}}
\newcommand{\ptaj}{(\partial^{0\bot})^*}
\newcommand{\ptb}{\partial^{1\bot}}
\newcommand{\ptbj}{(\partial^{1\bot})^*}
\newcommand{\geg}{\Lambda_\vare}
\newcommand{\tpta}{\tilde{\partial}^{0\bot}}
\newcommand{\tptb}{\tilde{\partial}^{1\bot}}
\newcommand{\ua}{u_\alpha}
\newcommand{\pa}{p\alpha}
\newcommand{\qa}{q(1-\alpha)}
\newcommand{\Qa}{Q_\alpha}
\newcommand{\Qb}{Q_\eta}
\newcommand{\ga}{\gamma_\alpha}
\newcommand{\gb}{\gamma_\eta}
\newcommand{\ta}{\theta_\alpha}
\newcommand{\tb}{\theta_\eta}


\newcommand{\mH}{{E}}
\newcommand{\mN}{{N}}
\newcommand{\mD}{{\mathcal{D}}}
\newcommand{\csob}{\mathcal{S}}
\newcommand{\mA}{{A}}
\newcommand{\mK}{{Q}}
\newcommand{\mS}{{S}}
\newcommand{\mI}{{I}}
\newcommand{\tas}{{2_*}}
\newcommand{\tbs}{{2^*}}
\newcommand{\tm}{{\tilde{m}}}
\newcommand{\tdu}{{\phi}}
\newcommand{\tpsi}{{\tilde{\psi}}}
\newcommand{\Z}{{\mathbb{Z}}}
\newcommand{\tsigma}{{\tilde{\sigma}}}
\newcommand{\tg}{{\tilde{g}}}
\newcommand{\tG}{{\tilde{G}}}
\newcommand{\mM}{{M}}
\newcommand{\mC}{\mathcal{C}}
\newcommand{\wlim}{{\text{w-lim}}\,}
\newcommand{\diag}{L_t^\ba L_x^\br}
\newcommand{\vu}{ u}
\newcommand{\vz}{ z}
\newcommand{\vv}{ v}
\newcommand{\ve}{ e}
\newcommand{\vw}{ w}
\newcommand{\vf}{ f}
\newcommand{\vh}{ h}
\newcommand{\vp}{ \vec P}
\newcommand{\ang}{{\not\negmedspace\nabla}}
\newcommand{\dxy}{\Delta_{x,y}}
\newcommand{\lxy}{L_{x,y}}
\newcommand{\gnsand}{\mathrm{C}_{\mathrm{GN},3d}}
\newcommand{\wmM}{\widehat{{M}}}
\newcommand{\wmH}{\widehat{{E}}}
\newcommand{\wmI}{\widehat{{I}}}
\newcommand{\wmK}{\widehat{{Q}}}
\newcommand{\wmN}{\widehat{{N}}}
\newcommand{\wm}{\widehat{m}}
\newcommand{\ba}{\mathbf{a}}
\newcommand{\bb}{\mathbf{b}}
\newcommand{\br}{\mathbf{r}}
\newcommand{\bs}{\mathbf{s}}
\newcommand{\bq}{\mathbf{q}}

\title{Normalized ground states and threshold scattering for focusing NLS on $\R^d\times\T$ via semivirial-free geometry}
\author{Yongming Luo \thanks{Institut f\"{u}r Wissenschaftliches Rechnen, Technische Universit\"at Dresden, Germany} \thanks{\href{mailto:yongming.luo@tu-dresden.de}{Email: yongming.luo@tu-dresden.de}}
}

\date{}
\maketitle

\begin{abstract}
We study the focusing NLS
\begin{align}\label{nls_abstract}
i\pt_t u+\Delta_{x,y} u=-|u|^\alpha u\tag{NLS}
\end{align}
on the waveguide manifold $\R_x^d\times\T_y$ in the intercritical regime $\alpha\in(\frac{4}{d},\frac{4}{d-1})$. By assuming that the \eqref{nls_abstract} is independent of $y$, it reduces to the focusing intercritical NLS on $\R^d$, which is known to have standing wave and finite time blow-up solutions. Naturally, we ask whether these special solutions with non-trivial $y$-dependence exist. In this paper we give an affirmative answer to this question. To that end, we introduce the concept of \textit{semivirial} functional
and consider a minimization problem $m_c$ on the semivirial-vanishing manifold with prescribed mass $c$. We prove that for any $c\in(0,\infty)$ the variational problem $m_c$ has a ground state optimizer $u_c$ which also solves the standing wave equation
$$-\Delta_{x,y}u_c+\beta_c u_c=|u|^\alpha u $$
with some $\beta_c>0$. Moreover, we prove the existence of a critical number $c_*\in(0,\infty)$ such that
\begin{itemize}
\item For $c\in(0,c_*)$, any optimizer $u_c$ of $m_c$ must satisfy $\pt_y u_c\neq 0$.
\item For $c\in(c_*,\infty)$, any optimizer $u_c$ of $m_c$ must satisfy $\pt_y u_c=0$.
\end{itemize}
Finally, we prove that the previously constructed ground states characterize a sharp threshold for the bifurcation of scattering and finite time blow-up solutions in dependence of the sign of the semivirial.
\end{abstract}


\section{Introduction and main results}\label{sec:Introduction and main results}
In this paper, we study the focusing nonlinear Schr\"odinger equation (NLS)
\begin{align}\label{nls}
i\pt_t u+\Delta_{x,y} u=-|u|^\alpha u
\end{align}
on the waveguide manifold $\R_x^d\times \T_y$ with $d\geq 1$, $\T=\R/2\pi\Z$ and $\alpha$ in the intercritical regime $(\frac{4}{d},\frac{4}{d-1})$. The equation \eqref{nls} arises from various physical applications such as nonlinear optics and Bose-Einstein condensation. For a more comprehensive introduction on the physical background of \eqref{nls}, we refer to \cite{waveguide_ref_1,waveguide_ref_2,waveguide_ref_3}.

Besides its physical importance, the mixed type nature of the domain $\R^d\times\T$ also makes the mathematical analysis on \eqref{nls} rather delicate and challenging. In recent years, the study of dispersive equations on product spaces has been drawing much attention from the mathematical community. A first study on local well-posedness results of NLS on product spaces might date back to Tzvetkov and Visciglia \cite{TNCommPDE}. Therein, the authors studied the cubic NLS on $\R^d\times \mathcal{M}$ with $d\geq 2$ and $\mathcal{M}$ a compact Riemannian manifold. Particularly, it was shown that the cubic NLS on $\R^d\times \mathcal{M}$ with small initial data is globally well-posed and scattering in time in certain non-isotropic Sobolev spaces. In \cite{TTVproduct2014}, Terracini, Tzvetkov and Visciglia studied \eqref{nls} on $\R^d\times \mathcal{M}$, where $\mathcal{M}$ is a $k$-dimensional compact Riemannian manifold, in the mass-subcritical regime $\alpha\in(0,\frac{4}{d+k})$. Specifically, the authors proved the existence of normalized ground states $u_c$ of the variational problem
\begin{align*}
\nu_c:=\inf_{u\in H^1(\R^d\times\mathcal{M})}\{\mH(u):\mM(u)=c\}
\end{align*}
for any $c\in(0,\infty)$, where $\mM(u)$ and $\mH(u)$ are the conventional mass and energy corresponding to \eqref{nls} (see \eqref{def of mass} and \eqref{def of mhu} for the precise definitions) respectively. Another interesting result proved in \cite{TTVproduct2014} (which is closely related to the results in this paper) is the existence of a critical number $c_*\in(0,\infty)$, such that for any $c\in(0,c_*)$ the normalized ground states $u_c$ coincide with the ones on $\R^d$, while for $c\in(c_*,\infty)$ the normalized ground states $u_c$ must have non-trivial $y$-dependence. Moreover, the authors proved that the ground states $u_c$ are orbitally stable and in the case $k=1$ a solution $u$ of \eqref{nls} is always global.

Concerning NLS with critically nonlinear potentials, the first result was given by Herr, Tataru, and Tzvetkov \cite{HerrTataruTz2}, where the authors proved local well-posedness of the cubic NLS in  $H^1(\R^{d}\times \T^{4-d})$ with $0\leq d\leq 3$. The first large data result was given by Ionescu and Pausader \cite{Ionescu2}. Therein, utilizing the concentration compactness principle initiated by Kenig and Merle \cite{KenigMerle2006} and the Black-Box-Theory developed in \cite{hyperbolic}, the authors proved that the defocusing cubic NLS is globally well-posed in $H^1(\R\times\T^3)$. Concerning the scattering results, a first breakthrough was made by Hani and Pausader \cite{HaniPausader}, where the authors proved that a solution of the defocusing quintic NLS is always global and scattering on $\R\times\T^2$. In fact, the result was originally announced based on a conjecture concerning the scattering result of the corresponding large scale resonant system, which was later proved to be true in \cite{R1T1Scattering}. Inspired by the results in \cite{HaniPausader,R1T1Scattering}, the large data scattering problems for defocusing NLS with algebraically critical potentials are now completely resolved \cite{HaniPausader,Yang_Zhao_2018,R1T1Scattering,CubicR2T1Scattering,RmT1,R2T2}. It is also worth noting that different from \cite{HaniPausader} where the concentration compactness principle was applied, by making use of the interaction Morawetz inequality originated in \cite{defocusing3d}, Tzvetkov and Visciglia \cite{TzvetkovVisciglia2016} proved that a solution of the defocusing analogue of \eqref{nls} is always global and scattering in time in the energy space. There has been nowadays a vast list of literature for the study of dispersive equations on product spaces. We refer to the papers \cite{FoprcellaHari2020,SystemProdSpace,ModifiedScattering,Barron,BarronChristPausader2021,Cheng_JMAA,ZhaoZheng2021,Luo_Waveguide_MassCritical,similar_cubic,GrossPitaevskiR1T1,4NLS} and the references therein for further readings in this direction.

Among all, we underline that the first large data scattering result for focusing NLS on product spaces was given in a previous paper \cite{Luo_Waveguide_MassCritical} of the author. Therein, by appealing to a scale-invariant  Gagliardo-Nirenberg inequality of additive type on $\R^2\times\T$, the author was able to formulate a scattering threshold, below which the solution of the focusing cubic NLS on $\R^2\times\T$ is global and scattering in time. Nevertheless, the threshold given in \cite{Luo_Waveguide_MassCritical} is possibly non-optimal since we were unable to prove the existence of standing wave solutions lying on the threshold. As we shall see, the mass-supercritical nature of \eqref{nls} shall enable us more space for manipulating the underlying analysis so that we can obtain much sharper results. Particularly, we are able to give scattering thresholds formulated in terms of ground states.

From now on we restrict our attention to \eqref{nls}. In \cite{TzvetkovVisciglia2016}, Tzvetkov and Visciglia proved that a solution of the defocusing analogue of \eqref{nls} is always global and scattering in time. Obviously, this does not hold for \eqref{nls}: by assuming that \eqref{nls} is independent of $y$, the equation reduces to the standard focusing intercritical NLS on $\R^d$ which is known to have standing wave and finite time blow-up solutions. Such simplification, however, is not quite satisfactory since the impact of the torus side on the equation is completely neglected. We therefore naturally ask the following questions:
\begin{itemize}
\item [(i)] Do standing wave or finite time blow-up solutions exist which are not necessarily independent of $y$?

\item[(ii)] As it is well-known that the standing wave solutions on $\R^d$ can be used to formulate thresholds for the bifurcation of scattering and finite time blow-up, do the standing wave solutions in (i), as long as they exist, play the same role as those on $\R^d$?
\end{itemize}
In this paper we give an affirmative answer to these questions. First notice that different from the mass-subcritical case in \cite{TTVproduct2014}, the energy $\mH(u)$ is unbounded from below on the manifold
$$ S(c)=\{u\in H^1(\R^d\times\T):\mM(u)=c\},$$
which can be easily verified by using simple scaling arguments (see for instance the proof of Lemma \ref{monotoneproperty} below). Inspired by this fact, it is natural to consider a minimization problem with extra constraints. As an educational example we consider for instance the stationary NLS
\begin{align}\label{stationary nls}
-\Delta_x u+\omega u=|u|^\alpha u
\end{align}
on $\R^d$. By testing \eqref{stationary nls} with $x\cdot\nabla_x u$ and followed by integration by parts, we obtain the well-known Pohozaev identity
\begin{align}\label{1poho}
0=\frac{d-2}{2}\|\nabla_x u\|_{L^2(\R^d)}^2+\frac{d\omega}{2}\|u\|_{L^2(\R^d)}^2-\frac{d}{\alpha+2}\| u\|_{L^{\alpha+2}(\R^d)}^{\alpha+2}.
\end{align}
As a consequence, by testing \eqref{stationary nls} with $u$ and eliminating $\|u\|_{L^2(\R^d)}^2$ in both identities we deduce the virial-vanishing condition
\begin{align*}
\wmK(u):=\|\nabla_x u\|_{L^2(\R^d)}^2-\frac{\alpha d}{2(\alpha+2)}\| u\|_{L^{\alpha+2}(\R^d)}^{\alpha+2}=0.
\end{align*}
The virial-vanishing condition indicates itself as a good candidate for being an additional constraint for the minimization problem. This suggests us to study the minimization problem
\begin{align}
\wm_c:=\inf_{u\in H^1(\R^d)}\{\wmH(u):\wmM(u)=c,\,\wmK(u)=0\},\label{wmc first def}
\end{align}
where $\wmM(u)$ and $\wmH(u)$ denote the mass and energy for a function $u$ defined on $\R^d$. Indeed, in his pioneering paper \cite{Jeanjean1997}, Jeanjean proved\footnote{In fact, existence results under more general assumptions were established in \cite{Jeanjean1997}.} that for any $c\in(0,\infty)$ the variational problem $\wm_c$ has a ground state optimizer $u_c$ which also solves \eqref{stationary nls} with some $\omega=\omega_c>0$. We intend to define an analogous minimization problem $m_c$ for \eqref{nls} on $\R^d\times\T$, following the same line in \cite{Jeanjean1997}. A first attempt would be to test the standing wave equation
\begin{align}\label{standing wave}
-\Delta_{x,y}u+\beta u=|u|^\alpha u
\end{align}
of \eqref{nls} with $(x,y)\cdot \nabla_{x,y}u$. This, however, does not lead us to the correct goal. Indeed, there would exist some boundary layer terms which can not be eliminated even by invoking the periodic boundary condition of $u$. Intuitively, in order to get reasonable scattering results, the rather weak dispersion along the $y$-direction should be compensated by the strong one along the $x$-direction. Motivated by such heuristics, we are naturally led to test \eqref{standing wave} with the virial action $x\cdot\nabla_x u$. Doing in this way and taking also the periodic boundary condition of $u$ into account, we see that a solution $u$ of \eqref{standing wave} will satisfy the so-called \textit{semivirial-vanishing} condition
\begin{align*}
\mK(u):=\|\nabla_x u\|_{L^2(\R^d\times\T)}^2-\frac{\alpha d}{2(\alpha+2)}\| u\|_{L^{\alpha+2}(\R^d\times \T)}^{\alpha+2}=0.
\end{align*}
Consequently, we will be looking for a ground state minimizer of the variational problem $m_c$ defined by
\begin{align}
m_c:=\inf_{u\in H^1(\R^d\times\T)}\{\mH(u):\mM(u)=c,\,\mK(u)=0\}.\label{mc first def}
\end{align}
At this point, we should also note that although the previous arguments are intuitively enlightening for defining a suitable minimization problem, the derivation of the Pohozaev identity for \eqref{nls} will not be a part of the proof for the main results. For the sake of simplicity we neglect the details of derivation of the Pohozaev identity and leave this as an exercise for the readers.

Our first result concerns with the existence of optimizers of $m_c$.
\begin{theorem}[Existence of ground states]\label{thm existence of ground state}
For any $c\in(0,\infty)$ the minimization problem $m_c$ defined by \eqref{mc first def} has a positive optimizer $u_c$. Moreover, $u_c$ solves the standing wave equation \eqref{standing wave} with some $\beta=\beta_c>0$.
\end{theorem}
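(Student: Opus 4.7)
The plan is to solve the variational problem $m_c$ by the direct method, the main technical core being an $\R^d$-translation compactness argument combined with a strict subadditivity inequality for $c\mapsto m_c$; this is the waveguide counterpart of Jeanjean's scheme on $\R^d$.

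Substituting the relation $\|u\|_{L^{\alpha+2}}^{\alpha+2}=\tfrac{2(\alpha+2)}{\alpha d}\|\nabla_x u\|_{L^2}^2$ extracted from $\mK(u)=0$ into $\mH$ gives the reduced form
\[
\mH(u)=\frac{\alpha d-4}{2\alpha d}\|\nabla_x u\|_{L^2}^2+\frac{1}{2}\|\pt_y u\|_{L^2}^2\qquad\text{on }\{\mK=0\},
\]
in which both coefficients are positive thanks to the mass-supercriticality $\alpha>4/d$. A Gagliardo--Nirenberg inequality on $\R^d\times\T$ applied to $\mK(u)=0$ shows that $\|\nabla_x u\|_{L^2}$ is bounded away from zero on $\{\mM=c\}\cap\{\mK=0\}$, so $m_c>0$ and any minimizing sequence $(u_n)$ is bounded in $H^1(\R^d\times\T)$. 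Since $\mK(u_n)=0$ also forces $\|u_n\|_{L^{\alpha+2}}$ to stay away from zero, a Lions-type vanishing lemma on the waveguide (exploiting compactness of $\T$) supplies translations $x_n\in\R^d$ such that, after extraction, $u_n(\cdot-x_n,\cdot)\rightharpoonup u\not\equiv 0$ in $H^1$.

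The hard part is to promote $u$ to a minimizer; the main obstacle is dichotomy, i.e., the possibility that $\|u\|_{L^2}^2=c_1\in(0,c)$. I would rule this out by establishing the strict subadditivity $m_c<m_{c_1}+m_{c-c_1}$ for $0<c_1<c$, which in turn follows from strict monotonicity of $c\mapsto c^{-s}m_c$ for a suitable exponent $s>1$; the latter is obtained by exploiting the mass-invariant scaling $u_\lambda(x,y)=\lambda^{d/2}u(\lambda x,y)$ together with $\alpha>4/d$. Applying Brezis--Lieb decompositions of $\mM$, $\mK$, $\mH$ along $u_n=u+v_n$, with an auxiliary rescaling to return $u$ and $v_n$ to the manifold $\mK=0$, together with strict subadditivity contradicts $\|u\|_{L^2}^2<c$. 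Therefore $\|u\|_{L^2}^2=c$, so $u_n\to u$ strongly in $L^2$; interpolation with the $H^1$-bound yields strong $L^{\alpha+2}$-convergence, and weak lower semicontinuity of $\|\nabla_{x,y}\cdot\|_{L^2}$ then forces $\mK(u)=0$ and $\mH(u)=m_c$.

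For positivity and the Euler--Lagrange equation I would first replace $u$ by $|u|$, which preserves $\mM$ and the $L^{\alpha+2}$-norm while not increasing $\|\nabla_x u\|_{L^2}$ by Kato's inequality; a rescaling $|u|_\lambda$ with $\lambda\leq 1$ restoring $\mK=0$ gives a competitor, and minimality forces $\lambda=1$ together with equality in Kato's inequality, so $u$ equals a constant phase times a nonnegative function, which the strong maximum principle applied to the Euler--Lagrange equation promotes to $u>0$. The constrained minimizer satisfies $\mH'(u)=\mu\mM'(u)+\nu\mK'(u)$; testing with the mass-preserving scaling direction $w=\tfrac{d}{d\lambda}u_\lambda|_{\lambda=1}$, a direct computation gives $\la\mM'(u),w\ra=0$, $\la\mH'(u),w\ra=\mK(u)=0$ and $\la\mK'(u),w\ra=\tfrac{4-\alpha d}{2}\|\nabla_x u\|_{L^2}^2\neq 0$, forcing $\nu=0$. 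The resulting equation is $-\Delta_{x,y}u+\beta_c u=|u|^\alpha u$ with $\beta_c=-2\mu$, and $\beta_c>0$ follows by testing the equation with $u$ and combining with $\mK(u)=0$ and a Pohozaev-type bound controlling $\|\pt_y u\|_{L^2}$ by $\|\nabla_x u\|_{L^2}$ afforded by the intercritical assumption $\alpha<4/(d-1)$.
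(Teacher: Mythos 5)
Your outline correctly identifies the structure of the direct method (scale-invariant G--N to exclude vanishing, a Lions-type concentration lemma via compactness of $\T$, a scaling renormalization to stay on $\{\mK=0\}$, Kato/diamagnetic to get a nonnegative minimizer), but the three steps on which the theorem turns all contain gaps that the paper specifically works to avoid. The central one is the Lagrange multiplier step: you propose to kill $\nu$ in $\mH'(u)=\mu\mM'(u)+\nu\mK'(u)$ by pairing with the scaling direction $w=\frac{d}{d\lambda}u_\lambda|_{\lambda=1}=\frac{d}{2}u+x\cdot\nabla_x u$, which is a Pohozaev-type argument and needs $w\in H^1_{x,y}$, i.e.\ one more derivative and weighted decay on $u$. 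But \emph{a priori} the multiplier equation has principal part $-(1+2\nu)\Delta_x-\pt_y^2$ with different coefficients in $x$ and $y$, so one does not know it is elliptic, and elliptic regularity (hence the Pohozaev identity) is not available until $\nu=0$ has already been shown. This chicken-and-egg obstruction is exactly the one the paper flags in the footnote to item (i); it is circumvented there by the deformation/mountain-pass argument of Lemmas~\ref{lemma existence mountain pass} and~\ref{minimizer is solution}, which uses only \emph{continuous} paths $t\mapsto u^t$ in $S(c)$ and the identification $\gamma_c=m_c$, and never requires $x\cdot\nabla_x u\in H^1$.

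There are two further gaps. (a) Strict subadditivity: the scaling $u_\lambda(x,y)=\lambda^{d/2}u(\lambda x,y)$ you name is mass-preserving, so it cannot produce monotonicity of $c^{-s}m_c$ in $c$. More to the point, \emph{any} pure $x$-rescaling multiplies $\|\pt_y u\|_2^2$ and $\mM(u)$ by the same power of the dilation parameter, so $\|\pt_y u\|_2^2/\mM(u)$ is a scale invariant and the usual strict-monotonicity proof from $\R^d$ breaks on $\R^d\times\T$. The paper only establishes that $c\mapsto m_c$ is continuous and non-increasing (Lemma~\ref{monotone lemma}, via a far-apart-support construction), and resolves the potentially degenerate case $\mM(u)=c_1<c$ with $m_{c_1}=m_c$ by a separate argument (Step~4): constancy of $m$ near $c_1$ forces equality in \eqref{identity abcd}, hence $\beta\mM(u)=0$ via \eqref{corrected1}, contradicting $\beta>0$. (b) Positivity of $\beta$: the ingredients you cite (testing \eqref{standing wave} with $u$, $\mK(u)=0$, the dilation inequality \eqref{identity abcd}) only yield $\beta\geq 0$. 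There is no Pohozaev identity controlling $\|\pt_y u\|_2^2$ by $\|\nabla_x u\|_2^2$: the $x$-Pohozaev identity combined with testing by $u$ merely reproduces $\mK(u)=0$, and a $y$-Pohozaev fails on $\T$ because of boundary contributions. Excluding $\beta=0$ is a genuinely separate $L^1$-type argument (Step~3), where one tests $-\Delta_{x,y}u=u^{\alpha+1}$ against slowly varying cut-offs $\phi_n$, using the non-negativity of $u$ and $W^{2,p}$-regularity of this particular (now elliptic) limiting equation to derive a contradiction; that step is missing from your proposal. The rest of your outline --- the reduced energy on $\{\mK=0\}$, the additive G--N inequality, the Lions step, and the Kato reduction to nonnegative minimizers (which the paper runs at the level of the minimizing sequence via the relaxed Le~Coz characterization $\tm_c=m_c$) --- is aligned with the paper.
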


In the Euclidean\footnote{We refer an NLS on Euclidean space to as an NLS on $\R^d$.} case, Jeanjean \cite{Jeanjean1997} proved the existence results by making use of some subtle min-max arguments, which can be briefly summarized as follows: by standard arguments, one is able to prove that the energy $\wmH(u)$ possesses a mountain pass geometry on the sphere
$$\widehat{S}(c)=\{u\in H^1(\R^d):\wmM(u)=c\}.$$
It is well-known (see for instance \cite{Ghoussoub1993}) that the mountain pass geometry already implies the existence of a Palais-Smale sequence. Nevertheless, the obtained Palais-Smale sequence is in general unbounded due to the mass-supercritical nature of the problem. To overcome this difficulty, Jeanjean modified the mountain pass geometry in a way such that the underlying functions at the left and right endpoints of the mountain pass path satisfy $\wmK(u)>0$ and $\wmK(u)<0$ respectively. In view of \cite[Thm. 4.1]{Ghoussoub1993}, one may further assume that the deduced Palais-Smale sequence also possesses asymptotically vanishing virial, which in turn implies the boundedness of the Palais-Smale sequence. Having these key ingredients in hand, the desired claim follows from a standard approximating procedure.

There are however several new issues arising when one attempts to adapt Jeanjean's arguments to the present model. First of all, we point out that a key point for identifying a non-vanishing weak limit of a minimizing sequence is the scale-invariant Gagliardo-Nirenberg inequality on $\R^d$. Indeed, using the virial-vanishing condition $\wmK(u)=0$ we obtain for a bounded minimizing sequence $(u_n)_n$
$$ \|\nabla_xu_n\|_{L^2(\R^d)}^2=\frac{\alpha d}{2(\alpha+2)}\|u_n\|_{L^{\alpha+2}(\R^d)}^{\alpha+2}
\leq C\|\nabla_x u_n\|_{L^2(\R^d)}^{\frac{\alpha d}{2}}\|u_n\|_{L^2(\R^d)}^{\frac{4-\alpha(d-2)}{2}}
\leq C\|\nabla_x u_n\|_{L^2(\R^d)}^{\frac{\alpha d}{2}},$$
which in turn implies
$$ \liminf_{n\to\infty}\|u_n\|_{L^{\alpha+2}(\R^d)}^{\alpha+2}\sim
\liminf_{n\to\infty}\|\nabla_x u_n\|_{L^{2}(\R^d)}^{2}>0.$$
Then the existence of a non-vanishing weak limit follows from the $pqr$-lemma (see \cite[Lem. 2.1]{FLL1986}). In general, on $\R^d\times \T$ we only have the inhomogeneous Gagliardo-Nirenberg inequality
$$\|u\|_{L^{\alpha+2}(\R^d\times\T)}^{\alpha+2}\leq
C\|u\|_{H^1(\R^d\times\T)}^{\frac{\alpha (d+1)}{2}}\|u\|_{L^2(\R^d\times\T)}^{\frac{4-\alpha(d-1)}{2}}
$$
which is less helpful for finding a non-vanishing weak limit. We shall overcome this difficulty by proving the following variant of the Gagliardo-Nirenberg inequality
\begin{align}
\|u\|_{L^{\alpha+2}(\R^d\times\T)}^{\alpha+2}\leq C\|\nabla_x u\|_{L^2(\R^d\times\T)}^{\frac{\alpha d}{2}}\|u\|_{L^2(\R^d\times\T)}^{\frac{4-\alpha(d-1)}{2}}
(\| u\|_{L^2(\R^d\times\T)}^{\frac{\alpha}{2}}+\|\pt_y u\|_{L^2(\R^d\times\T)}^{\frac{\alpha}{2}})\label{gn example}
\end{align}
of additive type which is particularly scale-invariant w.r.t. $x$-variable. Relying on \eqref{gn example} and a concentration compactness result from \cite{TTVproduct2014} (see Lemma \ref{lemma non vanishing limit}) we may identify a non-vanishing weak limit of a minimizing sequence, as desired. We refer to Lemma \ref{lemma gn additive} for further details.

Another difficulty is to show that the frequency parameter $\beta$ appearing in the standing wave equation \eqref{standing wave} is positive, which being essential for showing the $L^2$-strong convergence of the Palais-Smale sequence in Jeanjean's routine (the usage of the positivity of $\beta$ in our case is however different than the one in \cite{Jeanjean1997}). In the Euclidean case, the positivity of $\beta$ is a simple consequence of the Pohozaev identity. Indeed, from the Pohozaev identity of \eqref{standing wave} we obtain
\begin{align*}
\|\pt_y u\|_{L^2(\R^d\times\T)}^2+\beta\|u\|_{L^2(\R^d\times\T)}^2=\frac{2\alpha +(4-\alpha d)}{2(\alpha+2)}\|u\|_{L^{\alpha+2}(\R^d\times\T)}^{\alpha+2}.
\end{align*}
In the Euclidean case we simply have $\pt_y u=0$, thus the positivity of $\beta$ follows already from $u\neq 0$, which holds true for a non-vanishing weak limit that we have obtained previously. In general we can not exclude the possibility $\pt_y u\neq 0$ and the proof for finding a positive frequency $\beta$ is more technical in our case. As we will see, instead of using the $L^2$-type identities (in the sense that the identities contain the $L^2$-norm of $\nabla_x u$) such as the Pohozaev identity, we shall prove the positivity of $\beta$ by appealing to certain $L^1$-type identities (in the sense that the identities contain the $L_{\rm loc}^1$-norm of $\nabla_x u$) in the case $d=1$ and the Liouville-type theorems from \cite{Liouville_type} in the case $d\geq 2$.

The above mentioned points thus make our proof for Theorem \ref{thm existence of ground state} very different than the classical ones on $\R^d$. To make the difference clear, we briefly summarize the steps applied in the proof of Theorem \ref{thm existence of ground state} as follows:
\begin{itemize}
\item[(i)] In a first step, we prove that any minimizer $u_c$ of $m_c$ is automatically a solution of \eqref{standing wave}.
\item[(ii)] Afterwards, we show that if $u_c$ is a minimizer, so is $|u_c|$. This particularly implies that we can w.l.o.g. assume that a minimizer is non-negative.
\item[(iii)] Using the non-negativity of $u_c$ and some subtle $L^1$-type identity ($d=1$) and the Liouville-type theorems from \cite{Liouville_type} ($d\geq 2$), we prove that the frequency $\beta$ in \eqref{standing wave} is positive.

\item[(iv)] We finally adapt an energetic method due to Le Coz \cite{LeCoz2008} into the context of normalized ground states to close the proof of Theorem \ref{thm existence of ground state}.
\end{itemize}
We also emphasize that the proof of Theorem \ref{thm existence of ground state} is almost purely energetic and does not make use of arguments involving a converging Palais-Smale sequence. However, we still need to apply certain deformation arguments for the proof of (i). Note that in the Euclidean case, the proof of (i) is a simple application of the Pohozaev identity (see for instance \cite[Lem. 3.1]{BellazziniJeanjean2016}). On the contrary, the terms $-\Delta_x u_c$ and $-\pt_y^2 u_c$ appearing in the Lagrange multiplier equation of $u_c$ as an optimizer of $m_c$ have generally different prefactors, thus \textit{a priori} we do not know whether the underlying Lagrange multiplier equation is elliptic and the Pohozaev identity is inapplicable\footnote{To be more precise, the ellipticity principally implies the higher regularity of $u_c$, which being necessary in a standard proof of Pohozaev identity.}. Nevertheless, by a straightforward modification of the proof of Theorem \ref{thm existence of ground state} we may alternatively give a more fundamental and purely energetic proof for the existence results in the Euclidean case, which requires no prerequisite knowledge on the min-max theory and might be of independent interest.

Borrowing an idea from Terracini-Tzvetkov-Visciglia \cite{TTVproduct2014} we obtain the following result concerning the $y$-dependence of the ground state optimizers $u_c$.

\begin{theorem}[Ground states with non-trivial $y$-dependence]\label{thm threshold mass}
Let $m_c$ and $\wm_c$ be the quantities defined by \eqref{mc first def} and \eqref{wmc first def} respectively. Then there exists some $c_*\in(0,\infty)$ such that
\begin{itemize}
\item For all $c \in (0,c_*)$ we have $m_{c}<2\pi \wm_{(2\pi)^{-1}c}$. Moreover, for $c\in(0,c_*)$ any minimizer $u_c$ of $m_{c}$ satisfies $\pt_y u_c\neq 0$.

\item For all $c\in[c_*,\infty)$ we have $m_{c}=2\pi \wm_{(2\pi)^{-1}c}$. Moreover, for $c\in(c_*,\infty)$ any minimizer $u_c$ of $m_{c}$ satisfies $\pt_y u_c=0$.
\end{itemize}
\end{theorem}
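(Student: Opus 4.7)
The plan is to compare the waveguide infimum $m_c$ with its Euclidean counterpart $g(c):=2\pi\wm_{c/(2\pi)}$, in the spirit of the Terracini--Tzvetkov--Visciglia strategy transposed to the intercritical setting. A key preliminary observation is the trivial upper bound $m_c\le g(c)$: any minimizer $v$ of $\wm_{c/(2\pi)}$ on $\R^d$ (whose existence follows from the Euclidean analogue of Theorem~\ref{thm existence of ground state}) lifts to the $y$-independent function $\tilde u(x,y):=v(x)$, which is admissible for $m_c$ since $\mM(\tilde u)=c$, $\mK(\tilde u)=2\pi\wmK(v)=0$, and $\mH(\tilde u)=2\pi\wmH(v)=g(c)$. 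A direct consequence is the rigid dichotomy: $m_c=g(c)$ holds if and only if $m_c$ admits a $y$-independent optimizer. Indeed, any such optimizer $u(x,y)=w(x)$ furnishes an admissible competitor for $\wm_{c/(2\pi)}$ with $g(c)\le 2\pi\wmH(w)=m_c$; conversely, when equality holds, the lift above attains $g(c)=m_c$ and is itself a $y$-independent optimizer.

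The second main step is to identify the threshold $c_*\in(0,\infty)$ by setting
\[
c_*:=\sup\bigl\{c>0:\,m_c=g(c)\bigr\}.
\]
To show $c_*<\infty$, I would construct an explicit $y$-dependent admissible competitor for $m_c$ at sufficiently large $c$ with energy strictly below $g(c)$: a natural candidate is obtained by perturbing a suitably rescaled Euclidean ground state in the $y$-direction and then rescaling via the intercritical $x$-scaling to restore $\mK=0$, showing the perturbation strictly decreases the energy when $c$ is large. To show $c_*>0$, I would use the additive Gagliardo--Nirenberg inequality~\eqref{gn example} combined with the constraint $\mK(u)=0$: for $c$ in a sufficiently small regime these force $\|\nabla_x u\|_2^2$ to be large enough that any addition of $\|\pt_y u\|_2^2>0$ strictly raises $\mH(u)$ above $g(c)$, forbidding $y$-dependent minimizers.

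The structural identification of the equality set as precisely $(0,c_*]$ is the heart of the argument and relies on a scaling transport. Introduce the $x$-scaling $u_\mu(x,y):=\mu u(\mu^{\alpha/2}x,y)$, which preserves the semivirial-vanishing condition via $\mK(u_\mu)=\mu^{2+\alpha-\alpha d/2}\mK(u)$ and maps mass according to $\mM(u_\mu)=\mu^{2-\alpha d/2}\mM(u)$. A short computation gives the energy decomposition
\[
\mH(u_\mu)=\mu^{2+\alpha-\alpha d/2}\,A(u)+\mu^{2-\alpha d/2}\,B(u),\qquad A(u):=\tfrac{\alpha d-4}{2\alpha d}\|\nabla_x u\|_2^2,\ B(u):=\tfrac12\|\pt_y u\|_2^2,
\]
which, compared against the explicit Euclidean scaling $g(c')=g(c)(c'/c)^{(2+\alpha-\alpha d/2)/(2-\alpha d/2)}$ derived from Jeanjean's ground state family, yields a sign-tracking comparison between $g(c')-m_{c'}$ and $g(c)-m_c$. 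Together with the endpoint analysis above, this transport pins down the equality set as the desired half-line ending at $c_*$.

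The minimizer structure then follows from the dichotomy and the monotonicity. For $c\in(c_*,\infty)$ we have $m_c<g(c)$ by definition of $c_*$, and the dichotomy forces every optimizer to satisfy $\pt_y u_c\neq 0$. For $c\in(0,c_*)$, suppose for contradiction that some optimizer $u_c$ has $\pt_y u_c\neq 0$; applying the scaling $u_\mu$ to transport $u_c$ to a neighbouring mass, combined with the sign-tracking of the previous paragraph, one derives $m_{c'}<g(c')$ at some $c'$ outside the expected regime, contradicting the definition of $c_*$. The main obstacle I anticipate is the combined quantitative use of the additive Gagliardo--Nirenberg constants both for the small-mass rigidity (establishing $c_*>0$) and for the explicit $y$-dependent competitor at large mass (establishing $c_*<\infty$); these are the steps where the delicate balance between the gain from reshaping the $x$-profile and the cost of nonzero $\|\pt_y u\|_2^2$ must be controlled.
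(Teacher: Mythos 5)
Your outline matches the paper's broad architecture (the trivial upper bound $m_c\le 2\pi\wm_{(2\pi)^{-1}c}$ via $y$-independent lifts, a dichotomy at equality, defining $c_*$ as a supremum, and pinning $c_*\in(0,\infty)$), and your scaling $u_\mu(x,y)=\mu u(\mu^{\alpha/2}x,y)$ is exactly the paper's $T_\ld$ with $\mu=\ld^{2/\alpha}$. But there is a genuine gap at the step you yourself flag as delicate, namely establishing $c_*>0$. The proposed pure energy argument — ``the additive GN plus $\mK(u)=0$ force $\|\nabla_xu\|_2^2$ to be large, so any $\|\pt_yu\|_2^2>0$ strictly raises $\mH(u)$ above $g(c)$'' — does not close, because the very same additive factor $(\|u\|_2^{\alpha/2}+\|\pt_yu\|_2^{\alpha/2})$ in \eqref{gn example} means that nonzero $\pt_yu$ also \emph{relaxes} the constraint-induced lower bound on $\|\nabla_xu\|_2^2$; the two effects on $\mH(u)=\frac12\|\pt_yu\|_2^2+(\frac12-\frac{2}{\alpha d})\|\nabla_xu\|_2^2$ pull in opposite directions and no GN constant tracking gives a clean sign. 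The paper's proof of this step (Lemma \ref{lemma no dependence}) is not energetic at all: after passing to the auxiliary problem $m_{1,\ld}$ and extracting a minimizer $u_\ld$ with $u_\ld\to u$ strongly (Lemmas \ref{auxiliary lemma 1}--\ref{strong convergence u ld}), one differentiates the Euler--Lagrange equation \eqref{vanishing 3 sec4} in $y$, tests with $w_\ld=\pt_yu_\ld$, uses the Poincar\'e inequality $\|w_\ld\|_2\le\|\pt_yw_\ld\|_2$ on $\T$ (valid since $\int_\T w_\ld\,dy=0$), the $L^\infty$ bound for the Euclidean limit profile, and the positivity $\beta>0$ of the limiting multiplier to conclude $w_\ld\equiv0$ for $\ld$ large. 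This spectral/PDE rigidity is the decisive ingredient that the GN-based argument cannot replicate.

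A secondary, structural gap is the ``sign-tracking'' claim that the scaling transport ``pins down the equality set as the desired half-line.'' Chasing your formula $\mH(u_\mu)=\mu^{2+\alpha-\alpha d/2}A(u)+\mu^{2-\alpha d/2}B(u)$ against $g(c')=g(c)\mu^{2+\alpha-\alpha d/2}$ gives a one-sided comparison (e.g.\ if $m_c=g(c)$ admits a $y$-dependent optimizer, $m_{c'}<g(c')$ for $c'<c$), but it does not show that $m_{c}=g(c)$ propagates downward in $c$, which is what the half-line structure requires. What actually delivers it in the paper is the introduction of the modified energy $\mH_\ld$ and the auxiliary problem $m_{1,\ld}$: the map $\ld\mapsto m_{1,\ld}$ is trivially non-decreasing (since $\mH_\ld\le\mH_{\ld'}$ for $\ld<\ld'$), so the equality set $\{\ld:m_{1,\ld}=2\pi\wm_{(2\pi)^{-1}}\}$ is up-closed, and after the rescaling $m_c\longleftrightarrow m_{1,\kappa_c^2}$ with $\kappa_c$ decreasing in $c$ this becomes the claimed $(0,c_*]$ half-line. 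Your scaling is exactly the change of variables that produces $\mH_\ld$, so you are one step from the paper's argument, but the monotonicity of the auxiliary problem is the hinge and has to be stated. Similarly, ruling out $y$-dependent optimizers at $c<c_*$ uses this structure: for any $\mu\in(\ld_*,\ld)$, a $y$-dependent $u_\ld$ would give $m_{1,\mu}\le\mH_\mu(u_\ld)<\mH_\ld(u_\ld)=m_{1,\ld}=m_{1,\mu}$, a contradiction, which needs the already-established equality $m_{1,\mu}=2\pi\wm_{(2\pi)^{-1}}$ for $\mu>\ld_*$. Finally, for $c_*<\infty$ your sketch of a $y$-dependent competitor is plausible, but the paper's tensor-product $\psi(x,y)=\rho(y)P(x)$ with $\rho$ piecewise linear engineered so that $\|\rho\|_{L_y^2}^2=\|\rho\|_{L_y^{\alpha+2}}^{\alpha+2}<2\pi$ is a cleaner choice that preserves $\mK=0$ exactly by factorization.
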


The proof of Theorem \ref{thm threshold mass} follows the same steps in \cite{TTVproduct2014}. We loosely describe the underlying strategy: for $\ld>0$ we introduce the modified energy $\mH_{\ld}(u)$ defined by \eqref{def modified energy} and consider the minimization problem
\begin{align*}
m_{1,\ld}:=\inf_{u\in H^1(\R^d\times\T)}\{\mH_{\ld}(u):\mM(u)=1,\,\mK(u)=0\}.
\end{align*}
Proceeding as in the proof of Theorem \ref{thm existence of ground state} we know that for any $\ld>0$ the minimization problem $m_{1,\ld}$ has a positive optimizer $u_\ld$. Using certain perturbation arguments we prove the crucial statement
\begin{align*}
\lim_{\ld\to \infty}m_{1,\ld}=2\pi\wm_{(2\pi)^{-1}},\quad\lim_{\ld\to 0}m_{1,\ld}<2\pi\wm_{(2\pi)^{-1}},
\end{align*}
from which we obtain a first result concerning the $y$-dependence of the minimizers $u_\ld$ in dependence of the size of $\ld$. Theorem \ref{thm threshold mass} then follows from a simple rescaling argument. Although the arguments for the proof of Theorem \ref{thm threshold mass} are very similar to the ones in \cite{TTVproduct2014}, the additional semivirial-vanishing condition $\mK(u)=0$ complicates the proof at several places. Particularly, we shall construct a different and more complicated test function than the one in \cite{TTVproduct2014} in order to prove the statement $\lim_{\ld\to 0}m_{1,\ld}<2\pi\wm_{(2\pi)^{-1}}$.

Finally, we prove that the ground states deduced from Theorem \ref{thm existence of ground state} characterize a sharp threshold for the bifurcation of scattering and finite time blow-up solutions.

\begin{theorem}[Scattering below ground states]\label{main thm}
Let $u$ be a solution of \eqref{nls}. If $\mH(u)<m_{\mM(u)}$ and $\mK(u(0))>0$, then $u$ is a global solution of \eqref{nls}. If additionally $d\leq 4$, then $u$ also scatters in time in the sense that there exist $\phi^\pm\in H^1(\R^d\times\T)$ such that
\begin{align}
\lim_{t\to\pm\infty}\|u(t)-e^{it\Delta_{x,y}}\phi^{\pm}\|_{H^1(\R^d\times\T)}=0.
\end{align}
\end{theorem}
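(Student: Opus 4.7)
The first task is to show that the set
\[
\mathcal{A}_c := \{u \in H^1(\R^d\times\T) : \mH(u) < m_c,\ \mK(u) > 0,\ \mM(u) = c\}
\]
is invariant under the flow of \eqref{nls}. Since $\mM$ and $\mH$ are conserved along solutions, I only need to check that $\mK(u(t))$ cannot vanish while $u(t)\not\equiv 0$. If it did at a first time $t_0$, then $u(t_0)$ would be admissible for $m_c$, forcing $\mH(u(t_0))\geq m_c$ and contradicting conservation of energy. The inequality $\mK(u(t))>0$, combined with the additive Gagliardo--Nirenberg inequality \eqref{gn example}, yields a uniform $H^1$-bound, so global existence follows by the standard continuation principle.

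\textbf{Scattering via concentration-compactness.} When $d\leq 4$, I would implement the Kenig--Merle rigidity program, adapting the scheme developed in the author's earlier work \cite{Luo_Waveguide_MassCritical} for the focusing intercritical regime on $\R^d\times\T$. Define
\[
E_* := \sup\{E : \text{all solutions with}\ \mH(u_0)\leq E\ \text{and}\ \mK(u_0)>0\ \text{scatter}\},
\]
and assume for contradiction that $E_* < m_{\mM(u_0)}$. A linear profile decomposition for $e^{it\dxy}$ on $\R^d\times\T$ in the intercritical regime should split a near-threshold sequence of initial data into two types of profiles: Euclidean profiles, concentrating at a vanishing $x$-scale and asymptotically $y$-independent, and waveguide profiles at scale one with possibly genuine $y$-dependence. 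A Br\'ezis--Lieb identity for the $L^{\alpha+2}$-norm, together with the weak decoupling of the kinetic terms, makes both $\mH$ and $\mK$ additive along the decomposition. By Theorem \ref{thm threshold mass} every Euclidean profile lies below the Euclidean ground state threshold $\wm$ and therefore scatters on $\R^d$ by the Duyckaerts--Holmer--Roudenko-type theory for intercritical focusing NLS; any waveguide profile strictly below $E_*$ scatters by induction on the energy level. Standard nonlinear synthesis then leaves exactly one non-scattering profile, producing a minimal non-scattering solution $u_*$ whose orbit is pre-compact in $H^1(\R^d\times\T)$ modulo $x$-translations (no $y$-translations are needed since $\T$ is compact).

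\textbf{Rigidity and principal obstacle.} On such an almost-periodic orbit the subthreshold condition yields a uniform lower bound $\mK(u_*(t))\geq \delta > 0$. Testing $|u_*(t)|^2$ against an $x$-localized weight $\varphi_R(x)=R^2\varphi(x/R)$ and differentiating twice in time produces, after standard manipulations and absorption of the remainders via compactness as $R\to\infty$, a quantity whose leading-order term is a positive multiple of $\mK(u_*(t))$. Since the left-hand side is a priori bounded uniformly in time while integrating the right-hand side over $[0,T]$ grows at least linearly, letting $T\to\infty$ forces $u_*\equiv 0$, contradicting $E_*<m_{\mM(u_0)}$. The main obstacle is setting up the focusing intercritical linear and nonlinear profile decompositions on $\R^d\times\T$ with the correct dichotomy between Euclidean and waveguide scales and a stability theory robust enough to tolerate errors at both regimes; a closely related subtlety is matching every coercivity step to the semivirial $\mK$ rather than to a full virial, which is however natural because $\mK$ ignores $\pt_y$-derivatives exactly as the $x$-localized Morawetz identity does.
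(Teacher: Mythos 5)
Your global well-posedness argument and the virial rigidity at the end are both essentially correct and parallel the paper. The central problem lies in the concentration-compactness step, where you propose a profile decomposition \emph{with a scaling parameter} producing a dichotomy between Euclidean profiles at vanishing $x$-scale and unit-scale waveguide profiles. For $\alpha\in(\frac{4}{d},\frac{4}{d-1})$ the equation is strictly energy-subcritical on $\R^d\times\T$ (and on $\R^d$), so the Sobolev embedding carries no scaling degeneracy: bubbling can only occur through time and $x$-translations. The paper's Lemma \ref{linear profile} confirms this -- the only symmetry parameters are $(t_n^j, x_n^j)$, there is no scale $\lambda_n^j$, and hence no Euclidean-profile regime. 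Because there are no rescaled profiles, there is also nothing for the $\R^d$ intercritical scattering theory to apply to; the paper never invokes Duyckaerts--Holmer--Roudenko-type results, nor is Theorem \ref{thm threshold mass} used in this step. The scaling dichotomy you describe belongs to the \emph{critical} theory (Hani--Pausader, Ionescu--Pausader) and is out of place here; inserting it adds nontrivial work (a new linear decomposition, a large-scale-limit equation, and compatibility checks with the stability theory) that the problem does not require and that, as stated, would need justification the proposal does not supply.

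A second, smaller issue is the induction parameter. You induct on a single energy level $E_*$, with the subthreshold constraint $\mH<m_{\mM(u)}$ left implicit. Since $m_c$ depends on the mass, one must track mass and energy jointly in the induction; otherwise one cannot cleanly assert that a sub-profile sits strictly below the inductive threshold and one cannot obtain uniform bounds over all sub-threshold initial data. The paper resolves this with the mass--energy--indicator functional $\mD$ (Lemma \ref{cnls killip visan curve}), which is strictly monotone in each of $(\mM,\mH)$, so that strict energy (or mass) drop among the profiles translates directly into a strict drop in the one-dimensional quantity $\mD$ used for the induction, and uniform coercivity bounds depend only on $\mD$. Your scheme can probably be repaired by replacing energy induction with this MEI induction, but as written it is incomplete. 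You also do not mention the need to control the growth of the spatial center $x(t)$; the paper invokes $x(t)=o(t)$ (Lemma \ref{holmer}) to close the virial argument, and without it the localized virial contribution from the moving bubble is not controlled.
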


\begin{theorem}[Finite time blow-up below ground states]\label{thm blow up}
Let $u$ be a solution of \eqref{nls}. If $|x|u(0)\in L^2(\R^d\times\T)$, $\mH(u)<m_{\mM(u)}$ and $\mK(u(0))<0$, then $u$ blows-up in finite time.
\end{theorem}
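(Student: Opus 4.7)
The natural tool is the Glassey/virial identity adapted to $\R^d\times\T$. Define
$$V(t):=\int_{\R^d\times\T}|x|^2|u(t,x,y)|^2\,dx\,dy,$$
which remains finite on the interval of existence by propagation of the weight $|x|u(0)\in L^2(\R^d\times\T)$. Because $|x|^2$ depends only on $x$, the commutator $[-\Delta_{x,y},|x|^2]$ reduces to $[-\Delta_x,|x|^2]$, and the classical computation yields
$$V''(t)=8\|\nabla_x u(t)\|_{L^2(\R^d\times\T)}^2-\frac{4\alpha d}{\alpha+2}\|u(t)\|_{L^{\alpha+2}(\R^d\times\T)}^{\alpha+2}=8\mK(u(t)).$$
The whole game is thus to bound $\mK(u(t))$ above by a strictly negative constant independent of $t$.

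Set $c:=\mM(u(0))>0$. I first show that the region
$$\mathcal{R}:=\{v\in H^1(\R^d\times\T):\mM(v)=c,\ \mH(v)<m_c,\ \mK(v)<0\}$$
is invariant under the flow. Mass and energy are conserved, so only $\mK$ needs attention: if $\mK(u(t_0))=0$ for some $t_0$, then $u(t_0)$ would be admissible in the definition of $m_c$, forcing $\mH(u(t_0))\geq m_c$ and contradicting the energy hypothesis. Continuity of $t\mapsto\mK(u(t))$ together with $\mK(u(0))<0$ then forces $\mK(u(t))<0$ throughout. To quantify this, I introduce the $L^2$-preserving scaling $u_\lambda(x,y):=\lambda^{d/2}u(\lambda x,y)$ and compute the algebraic identity
$$\mH(u_\lambda)-\frac{2}{\alpha d}\mK(u_\lambda)=\Big(\frac{1}{2}-\frac{2}{\alpha d}\Big)\lambda^2\|\nabla_x u\|_{L^2}^2+\frac{1}{2}\|\pt_y u\|_{L^2}^2,$$
which, thanks to $\alpha d>4$, is strictly increasing in $\lambda\in(0,\infty)$. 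Since $\mK(u)<0$ and $\alpha d/2>2$, the equation $\mK(u_\lambda)=0$ admits a unique root $\lambda_0\in(0,1)$, and $u_{\lambda_0}$ is admissible for $m_c$. Evaluating the above monotone quantity at $\lambda=1$ and $\lambda=\lambda_0$ yields
$$\mH(u)-\frac{2}{\alpha d}\mK(u)\geq \mH(u_{\lambda_0})\geq m_c,$$
i.e.\ $\mK(u)\leq\frac{\alpha d}{2}(\mH(u)-m_c)$. Applied to $u(t)$, this delivers the uniform bound $\mK(u(t))\leq -\delta$ with $\delta:=\frac{\alpha d}{2}(m_c-\mH(u(0)))>0$.

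Plugging this back into the virial identity produces $V''(t)\leq -8\delta$, whence $V(t)\leq V(0)+V'(0)t-4\delta t^2$ on the interval of existence. The right-hand side eventually turns negative, while $V(t)\geq 0$, so the maximal forward (and, symmetrically, backward) existence time must be finite; by the local Cauchy theory this amounts to $H^1$-blow-up in finite time. The main technical point requiring care is the rigorous justification of the virial identity: $V(t)$ must be shown to be $C^2$, which is standard via a truncated virial $V_R(t):=\int \varphi_R(x)|u(t,x,y)|^2\,dx\,dy$ with a smooth cutoff $\varphi_R$ approximating $|x|^2$, combined with the local theory, persistence of $|x|u(t)\in L^2$, and passage to the limit $R\to\infty$. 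Once this regularization is in place, the remaining ingredients are purely variational and mirror the classical Berestycki--Cazenave Euclidean scheme, with the semivirial $\mK$ replacing the Pohozaev functional.
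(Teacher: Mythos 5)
Your proof is correct and follows essentially the same route as the paper: the localized virial identity $V''(t)=8\mK(u(t))$, flow-invariance of the region $\{\mK<0,\ \mH<m_c\}$, an energy-trapping inequality obtained via the $L^2$-preserving scaling $u\mapsto u^\lambda$, and concavity of $V$. The only cosmetic difference is in deriving the trapping bound: the paper integrates $\frac{d}{ds}\mH(\phi^s)=s^{-1}\mK(\phi^s)$ between the critical scale $t^*<1$ and $1$ (using the monotonicity of this derivative established in Lemma~\ref{monotoneproperty}) to get $\mK(\phi)\leq\mH(\phi)-m_{\mM(\phi)}$, whereas you observe directly that $\lambda\mapsto\mH(u_\lambda)-\tfrac{2}{\alpha d}\mK(u_\lambda)$ is monotone increasing and compare the endpoints $\lambda_0$ and $1$, yielding the (slightly sharper, but equivalent in effect) bound $\mK(u)\leq\tfrac{\alpha d}{2}(\mH(u)-m_c)$; both are variations of the same scaling monotonicity argument.
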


The proofs of Theorem \ref{main thm} and \ref{thm blow up} rely on the classical concentration compactness arguments initiated by Kenig and Merle \cite{KenigMerle2006} and the virial arguments by Glassey \cite{Glassey1977} respectively. The restriction $d\leq 4$ given in the scattering result lies in the fact that the $y$-derivative of the nonlinear potential $|u|^\alpha u$ is no longer Lipschitz in $d\geq5$, hence the proof of the stability result (Lemma \ref{lem stability cnls}) does not work in higher dimensions ($d\geq 5$). We believe that with help of the Lebesgue-type Strichartz estimates given in \cite{HaniPausader,Barron,BarronChristPausader2021} the scattering result should extend to all dimensions $d\geq 1$. The adaption is however technical and out of the scope of the present paper. We decide to leave this problem open and plan to tackle it in a forthcoming paper.

\subsubsection*{Outline of the paper}
The rest of the paper is organized as follows: In Section \ref{sec: Existence} we give the proof of Theorem \ref{thm existence of ground state}. In Section \ref{sec: Dependence} we prove Theorem \ref{thm threshold mass}. Finally, Theorem \ref{main thm} and \ref{thm blow up} are shown in Section \ref{sec: Scattering}.

\subsection{Notation and definitions}
We use the notation $A\lesssim B$ whenever there exists some positive constant $C$ such that $A\leq CB$. Similarly we define $A\gtrsim B$ and we use $A\sim B$ when $A\lesssim B\lesssim A$.

For simplicity, we ignore in most cases the dependence of the function spaces on their underlying domains and hide this dependence in their indices. For example $L_x^2=L^2(\R^d)$, $H_{x,y}^1= H^1(\R^d\times \T)$
and so on. However, when the space is involved with time, we still display the underlying temporal interval such as $L_t^pL_x^q(I)$, $L_t^\infty L_{x,y}^2(\R)$ etc. The norm $\|\cdot\|_p$ is defined by $\|\cdot\|_p:=\|\cdot\|_{L_{x,y}^p}$.

Next, we define the quantities such as mass and energy etc. that will be frequently used in the proof of the main results. For $u\in H_{x,y}^1$, define
\begin{align}
\mM(u)&:=\|u\|^2_{2},\label{def of mass}\\
\mH(u)&:=\frac{1}{2}\|\nabla_{x,y} u\|^2_{2}-\frac{1}{\alpha+2}\|u\|^{\alpha+2}_{\alpha+2},\label{def of mhu}\\
\mK(u)&:=\|\nabla_{x} u\|^2_{2}-\frac{\alpha d}{2(\alpha+2)}\|u\|^{\alpha+2}_{\alpha+2},\\
\mI(u)&:=\frac{1}{2}\|\pt_y u\|_{2}^2+\frac{\alpha d-4}{4(\alpha+2)}\|u\|^{\alpha+2}_{\alpha+2}=\mH(u)-\frac{1}{2}
\mK(u).\label{def of mI}
\end{align}
For $\ld\in(0,\infty)$, define
\begin{align}\label{def modified energy}
\mH_{\ld}(u)&:=\frac{\ld}{2}\|\nabla_{y} u\|^2_{2}+\frac{1}{2}\|\nabla_{x} u\|^2_{2}
-\frac{1}{\alpha+2}\|u\|^{\alpha+2}_{\alpha+2},\\
\mI_{\ld}(u)&:=\frac{\ld}{2}\|\pt_y u\|_{2}^2+\frac{\alpha d-4}{4(\alpha+2)}\|u\|^{\alpha+2}_{\alpha+2}.\label{def of I ld}
\end{align}
For $u\in H_x^1$, define
\begin{align}
\wmM(u)&:=\|u\|^2_{L_x^2},\\
\wmH(u)&:=\frac{1}{2}\|\nabla_{x} u\|^2_{L_x^2}-\frac{1}{\alpha+2}\|u\|^{\alpha+2}_{L_x^{\alpha+2}},\\
\wmI(u)&:=\frac{\alpha d-4}{4(\alpha+2)}\|u\|^{\alpha+2}_{L_x^{\alpha+2}},\label{def of wmI}\\
\wmK(u)&:=\|\nabla_{x} u\|^2_{L_x^2}-\frac{\alpha d}{2(\alpha+2)}\|u\|^{\alpha+2}_{L_x^{\alpha+2}}
\end{align}
We also define the sets
\begin{align}
S(c)&:=\{u\in H_{x,y}^1:\mM(u)=c\},\\
V(c)&:=\{u\in S(c):\mK(u)=0\},\\
\widehat{S}(c)&:=\{u\in H_x^1:\wmM(u)=c\},\\
\widehat{V}(c)&:=\{u\in \widehat{S}(c):\wmK(u)=0\}
\end{align}
and the variational problems
\begin{align}
m_c&:=\inf\{\mH(u):u\in V(c)\},\label{def of mc}\\
m_{1,\ld}&:=\inf\{\mH_\ld(u):u\in V(1)\},\label{def of auxiliary problem}\\
\wm_c&:=\inf\{\wmH(u):u\in \widehat{V}(c)\}\label{def of wmc}.
\end{align}
Finally, for a function $u\in H_{x,y}^1$, the scaling operator $u\mapsto u^t$ for $t\in(0,\infty)$ is defined by
\begin{align}\label{def of scaling op}
u^t(x,y):=t^{\frac d2}u(tx,y).
\end{align}
We shall also frequently use results arising from the variational problem $\wm_c$, which we summarize in the following. The results are classical and we refer e.g. to \cite{Cazenave2003,Jeanjean1997,Bellazzini2013,BellazziniJeanjean2016} for details of the corresponding proofs.

\begin{lemma}\label{lem wmc property}
The following statements hold true:
\begin{itemize}
\item[(i)]For any $c>0$ the variational problem $\wm_c$ has an optimizer $P_c\in \widehat{S}(c)$. Moreover, $P_c$ satisfies the standing wave equation
\begin{align}\label{standing wave on rd}
-\Delta_x P_c+\omega_c P=|P_c|^\alpha P_c
\end{align}
with some $\omega_c>0$.
\item[(ii)] Any solution $P_c\in H^1(\R^d)$ of \eqref{standing wave on rd} with $\omega_c>0$ is of class $W^{3,p}(\R^d)$ for all $p\in[2,\infty)$.
\item[(iii)] Any solution $P_c\in H^1(\R^d)$ of \eqref{standing wave on rd} satisfies $\wmK(P_c)=0$.
\item[(iv)] The mapping $c\mapsto \wm_c$ is strictly monotone decreasing and continuous on $(0,\infty)$.
\end{itemize}
\end{lemma}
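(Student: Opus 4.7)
The plan is to treat the four items in turn, following the classical route pioneered by Jeanjean \cite{Jeanjean1997}, with the usual regularity and Pohozaev complements.

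For (i), I would implement Jeanjean's mountain pass scheme on the mass constraint sphere $\widehat S(c)$. Using the scale-invariant Gagliardo--Nirenberg inequality on $\R^d$ together with the intercritical range $\alpha\in(4/d,4/(d-1))$, one checks that for each fixed $c>0$ the map $t\mapsto \wmH(u^t)$ (with $u^t$ as in \eqref{def of scaling op}) is strictly concave after its unique maximum point $t_u$, and that $\wmK(u^t)>0$ for $t<t_u$ and $\wmK(u^t)<0$ for $t>t_u$. This furnishes a mountain pass geometry on $\widehat S(c)$ whose critical value coincides with $\wm_c$. Invoking Ghoussoub's min-max principle \cite[Thm.~4.1]{Ghoussoub1993} along the natural augmented path produces a Palais--Smale sequence $(u_n)_n\subset \widehat S(c)$ for $\wmH\vert_{\widehat S(c)}$ whose Pohozaev deficit $\wmK(u_n)$ vanishes asymptotically. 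Together with $\wmK(u_n)=o(1)$, this boundedness and the scale-invariant Gagliardo--Nirenberg inequality yield $\liminf_n\|u_n\|_{L^{\alpha+2}_x}>0$, so the $pqr$-lemma of \cite{FLL1986} guarantees a non-vanishing weak limit $P_c$ after translation. The standing wave equation \eqref{standing wave on rd} holds with a Lagrange multiplier $\omega_c$ whose positivity will follow from (iii) once $P_c\not\equiv 0$ and $\pt_y P_c=0$ in the Euclidean setting: indeed combining the tested equation $\|\nabla_x P_c\|_{L_x^2}^2+\omega_c\|P_c\|_{L_x^2}^2=\|P_c\|_{L_x^{\alpha+2}}^{\alpha+2}$ with $\wmK(P_c)=0$ yields $\omega_c\|P_c\|_{L_x^2}^2=\frac{4-\alpha(d-2)}{2(\alpha+2)}\|P_c\|_{L_x^{\alpha+2}}^{\alpha+2}>0$ since $\alpha<4/(d-2)$ in this range.

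For (ii) and (iii), the regularity is a standard elliptic bootstrap: since $P_c\in H^1(\R^d)$, Sobolev embedding gives $|P_c|^\alpha P_c\in L^p_x$ for a range of $p$ including $2$, hence $\Delta_x P_c\in L^p_x$ and Calderón--Zygmund upgrades this to $P_c\in W^{2,p}_x$. Another iteration, using that $\alpha+1>1$ guarantees the nonlinearity inherits one more derivative on the set where $P_c\neq 0$ and that decay at infinity lets one work on compact sets, yields $P_c\in W^{3,p}_x$ for every $p\in[2,\infty)$. With this regularity in hand, one multiplies \eqref{standing wave on rd} by $x\cdot\nabla_x P_c$ and integrates by parts to obtain the Pohozaev identity \eqref{1poho}; combined with the tested equation, eliminating $\omega_c\|P_c\|_{L_x^2}^2$ produces precisely $\wmK(P_c)=0$.

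For (iv), monotonicity and continuity of $c\mapsto \wm_c$ I would prove by a scaling/comparison argument. Given $0<c_1<c_2$, take any optimizer $P_{c_1}$ and dilate to produce a test function $v\in \widehat V(c_2)$; the scale-invariant Gagliardo--Nirenberg inequality combined with $\wmK=0$ forces $\wmH\vert_{\widehat V(c)}=\wmI(P)=\frac{\alpha d-4}{4(\alpha+2)}\|P\|_{L_x^{\alpha+2}}^{\alpha+2}$ at any critical point, which decreases strictly when the mass is raised once one uses the optimal scaling $P\mapsto \mu^{d/2}P(\mu\,\cdot)$ that preserves $\wmK$ and tunes the mass. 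Continuity then follows from the same strict monotonicity together with a compactness argument: any minimizing sequence for $\wm_{c_n}$ with $c_n\to c$ is bounded in $H_x^1$ by the same Gagliardo--Nirenberg inequality, admits a non-vanishing weak limit by the $pqr$-lemma, and a standard concentration compactness dichotomy rules out vanishing and splitting thanks to the strict subadditivity $\wm_{c_1+c_2}<\wm_{c_1}+\wm_{c_2}$.

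The main technical obstacle is the combined use of the virial constraint and the mountain pass geometry in (i): one must verify that the augmented path yields a Palais--Smale sequence whose Pohozaev defect also vanishes, so that boundedness and non-vanishing can be extracted in the mass-supercritical regime. Once this is secured the remaining items (ii)--(iv) reduce to classical elliptic and variational arguments.
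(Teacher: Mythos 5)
The paper itself gives no proof of Lemma \ref{lem wmc property}; it simply labels the statements as classical and refers the reader to \cite{Cazenave2003,Jeanjean1997,Bellazzini2013,BellazziniJeanjean2016}. Your sketches of (i)--(iii) follow the standard Jeanjean/Pohozaev route in spirit, but two points do not survive scrutiny.

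In item (iv), the scaling you write down is wrong. The map $P\mapsto\mu^{d/2}P(\mu\,\cdot)$ is the mass-\emph{preserving} dilation: it leaves $\|P\|_{L^2_x}$ unchanged and scales $\|\nabla_x P\|_{L^2_x}^2\mapsto\mu^2\|\nabla_x P\|_{L^2_x}^2$ and $\|P\|_{L^{\alpha+2}_x}^{\alpha+2}\mapsto\mu^{\alpha d/2}\|P\|_{L^{\alpha+2}_x}^{\alpha+2}$ with \emph{different} powers since $\alpha d/2\neq 2$, so it does not preserve $\wmK=0$ (it kills $\wmK$ only at $\mu=1$) and it does not tune the mass. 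The scaling you actually need is the one the paper itself introduces in \eqref{def of t ld}, namely $T_\mu P:=\mu^{2/\alpha}P(\mu\,\cdot)$, under which both $\|\nabla_x\cdot\|_{L_x^2}^2$ and $\|\cdot\|_{L_x^{\alpha+2}}^{\alpha+2}$ pick up the same factor $\mu^{2+4/\alpha-d}$ (so $\wmK=0$ and $\wmH$ scale in lockstep) while $\wmM\mapsto\mu^{4/\alpha-d}\wmM$ with $4/\alpha-d<0$. Choosing $\mu$ so that $\mu^{4/\alpha-d}=c$ then gives the exact power law $\wm_c=c^{-\gamma}\wm_1$ with $\gamma=-\tfrac{2\alpha+4-\alpha d}{4-\alpha d}>0$ (positivity from $\alpha\in(4/d,4/(d-1))$), from which strict monotonicity \emph{and} continuity follow immediately; the appeal to subadditivity and a concentration-compactness dichotomy in your write-up is then both unnecessary and, as stated, off the mark, because what the scaling law yields is the much stronger inequality $\wm_{c_1+c_2}<\min\{\wm_{c_1},\wm_{c_2}\}$, not a subadditivity that by itself would give monotonicity.

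In item (i), you stop too early: the Palais--Smale sequence produced by the Ghoussoub/Jeanjean machinery is bounded and has a non-vanishing weak limit $P_c$, and $P_c$ satisfies the Euler--Lagrange equation with some multiplier $\omega_c$, but you still owe the argument that $\mM(P_c)=c$ (no mass is lost) and that $\wmH(P_c)=\wm_c$. This is precisely the delicate step in Jeanjean's proof: one first deduces $\omega_c>0$ from (iii) and the tested equation (as you compute), and then uses this positivity together with the strict monotonicity from (iv) to rule out mass leakage and upgrade the weak convergence to strong $H^1_x$ convergence. In your sketch, positivity of $\omega_c$ is invoked only as a postscript and nothing is said about how it feeds back into the compactness. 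This ordering -- multiplier positivity first, then strong convergence -- is essential and should be made explicit.
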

We also record a very useful lemma from \cite{TTVproduct2014} (see the fourth step of the proof of \cite[Thm. 1.1]{TTVproduct2014}), which plays an important role for identifying a non-vanishing weak limit of a minimizing sequence. The original proof was given for $\alpha$ in the mass-subcritical regime $(0,\frac{4}{d+1})$, which extends verbatim to the whole energy-subcritical regime $(0,\frac{4}{d-1})$.

\begin{lemma}\label{lemma non vanishing limit}
Let $(u_n)_n$ be a bounded sequence in $H_{x,y}^1$. Assume also that there exists some $\alpha\in(0,\frac{4}{d-1})$ such that
\begin{align}
\liminf_{n\to\infty}\|u_n\|_{\alpha+2}>0.
\end{align}
Then there exist $(x_n)_n\subset \R^d$ and some $u\in H_{x,y}^1\setminus\{0\}$ such that up to a subsequence
\begin{align}
u_n(x+x_n,y)\rightharpoonup u(x,y)\quad\text{weakly in $H_{x,y}^1$}.
\end{align}
\end{lemma}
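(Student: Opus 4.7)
My plan is to prove a Lions-type vanishing lemma adapted to the waveguide geometry $\R^d\times\T$ and then extract the translations $(x_n)$ from it. The whole approach rests on the fact that $\alpha<\tfrac{4}{d-1}$ places $\alpha+2$ strictly below the Sobolev-critical exponent $\tfrac{2(d+1)}{d-1}$ associated with the $(d+1)$-dimensional embedding $H^1\hookrightarrow L^p$ (understood as $\infty$ in the case $d=1$).

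First I would establish the waveguide Lions lemma: for any bounded sequence $(u_n)_n\subset H^1_{x,y}$, if
$$\lim_{n\to\infty}\sup_{x_0\in\R^d}\int_{B(x_0,1)\times\T}|u_n|^2\,dx\,dy=0,$$
then $u_n\to 0$ in $L^p(\R^d\times\T)$ for every admissible $p\in\bigl(2,\tfrac{2(d+1)}{d-1}\bigr)$. Since $\T$ is compact, one need not translate in $y$, and the waveguide is locally $(d+1)$-Euclidean; one therefore covers $\R^d$ by a uniformly locally finite family of unit balls $\{B(x_j,1)\}_j$, applies the local Gagliardo-Nirenberg/Sobolev interpolation
$$\|u\|_{L^p(B(x_j,1)\times\T)}\lesssim \|u\|_{L^2(B(x_j,1)\times\T)}^{1-\theta}\|u\|_{H^1(B(x_j,1)\times\T)}^{\theta}$$
for some $\theta=\theta(p)\in(0,1)$, raises both sides to an appropriate power and sums over $j$, exactly as in Lions' original argument on $\R^N$.

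Contrapositively, the hypothesis $\liminf_n\|u_n\|_{\alpha+2}>0$ combined with the above dichotomy furnishes $\delta>0$ and $(x_n)_n\subset\R^d$ such that, along a subsequence,
$$\int_{B(x_n,1)\times\T}|u_n|^2\,dx\,dy\geq\delta.$$
Setting $\tilde u_n(x,y):=u_n(x+x_n,y)$, translation invariance in $x$ preserves boundedness in $H^1_{x,y}$, so up to a further subsequence $\tilde u_n\rightharpoonup u$ weakly in $H^1_{x,y}$. The Rellich-Kondrachov theorem on the bounded smooth domain $B(0,1)\times\T$ then upgrades this to strong $L^2$ convergence on that domain, which transfers the lower bound $\delta$ to $u$ and forces $u\neq 0$.

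The only genuine obstacle is the waveguide Lions lemma itself, but it is really just the classical Lions argument transplanted to $\R^d\times\T$; the point one must check carefully is that compactness of $\T$ packages the $y$-direction as a unit-scale factor, so that the governing Sobolev exponent is $\tfrac{2(d+1)}{d-1}$, which is precisely what accommodates the standing assumption $\alpha+2<\tfrac{2(d+1)}{d-1}$ and closes the argument.
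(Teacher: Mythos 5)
Your proposal is correct, and it implements the natural approach. The paper itself does not prove this lemma; it quotes it from the fourth step of the proof of Theorem~1.1 in Terracini--Tzvetkov--Visciglia, whose argument is the same Lions-type dichotomy you describe: because $\T$ is compact, vanishing only needs to be ruled out after translations in the $x$-variable, and the governing critical exponent is $\tfrac{2(d+1)}{d-1}$ with $N=d+1$.

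One detail is worth making explicit. In the vanishing lemma, after covering $\R^d$ by unit balls $B(x_j,1)$ with uniformly finite overlap, the natural inequality to sum is
\begin{align*}
\|u\|_{L^{q}(B_j\times\T)}^{q}\lesssim \|u\|_{L^2(B_j\times\T)}^{q-2}\,\|u\|_{H^1(B_j\times\T)}^{2},
\end{align*}
which requires the special exponent $q\theta=2$, i.e.\ $q=2+\tfrac{4}{d+1}$ in dimension $N=d+1$; then $\sum_j$ yields
\begin{align*}
\|u_n\|_{L^q(\R^d\times\T)}^{q}\lesssim \Bigl(\sup_{x_0}\|u_n\|_{L^2(B(x_0,1)\times\T)}\Bigr)^{q-2}\,\|u_n\|_{H^1_{x,y}}^{2}\to 0.
\end{align*}
For a general $p=\alpha+2\in\bigl(2,\tfrac{2(d+1)}{d-1}\bigr)$ one then interpolates $L^p$ between $L^2$, $L^q$ and the Sobolev endpoint to propagate the vanishing. (In the paper's actual working range $\alpha>\tfrac{4}{d}$, one has $\alpha+2>q$, so a direct summation with $p\theta\geq 2$ also works.) Your phrase ``raises both sides to an appropriate power and sums'' is accurate but glosses over exactly this choice of exponent; spelling it out is the only thing I would add. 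The remaining steps --- contrapositive extraction of the translations, invariance of $H^1_{x,y}$-bounds under $x$-translation, weak compactness, and Rellich--Kondrachov on $B(0,1)\times\T$ to force $u\neq 0$ --- are all correct as written.
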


Next we introduce the concept of an \textit{admissible} pair on $\R^d$. A pair $(q,r)$ is said to be $\dot{H}^s$-admissible for $s\in[0,\frac d2)$ if $q,r\in[2,\infty]$, $\frac{2}{q}+\frac{d}{r}=\frac{d}{2}-s$ and $(q,d)\neq(2,2)$. For any $L^2$-admissible pairs $(q_1,r_1)$ and $(q_2,r_2)$ we have the following Strichartz estimate: if $u$ is a solution of
\begin{align*}
i\pt_t u+\Delta_x u=F
\end{align*}
on $I\subset\R$ with $t_0\in I$ and $u(t_0)=u_0$, then
\begin{align}
\|u\|_{L_t^q L_x^r(I)}\lesssim \|u_0\|_{L_x^2}+\|F\|_{L_t^{q_2'} L_x^{r_2'}(I)},
\end{align}
where $(q_2',r_2')$ is the H\"older conjugate of $(q_2,r_2)$. For a proof, we refer to \cite{EndpointStrichartz,Cazenave2003}. For $d\geq 3$, we define the space $S_x$ by
\begin{align}
S_x:=L_t^\infty L_x^2\cap L^{2}_t L_x^{\frac{2d}{d-2}}\label{def of sx}.
\end{align}
For $d\in\{1,2\}$, the space $L^{2}_t L_x^{\frac{2d}{d-2}}$ in the definition of $S_x$ is replaced by
\begin{align*}
d=1:\quad L^{4}_t L_x^{\infty}\quad\text{and}\quad
d=2:\quad L^{2^+}_t L_x^{\infty^-}
\end{align*}
respectively, where $(2^{+},\infty^-)$ is an $L^2$-admissible pair with some $2^+\in(2,\infty)$ sufficiently close to $2$. In the following, an admissible pair is always referred to as an $L^2$-admissible pair if not otherwise specified.

Finally, we define the Fourier transformation of a function $f$ w.r.t. $x\in\R^d$ or $y\in\T$ by
\begin{gather*}
\hat{f}_y(x,k)=\mathcal{F}_yf(x,k):=(2\pi)^{-\frac{1}{2}}\int_{\T}f(x,y)e^{-iky}\,dy,\\
\hat{f}_x(\xi,y)=\mathcal{F}_xf(\xi,y):=(2\pi)^{-\frac{d}{2}}\int_{\R^d}f(x,y)e^{-i\xi\cdot x}\,dx,\\
\hat{f}_{\xi,k}(\xi)=\mathcal{F}_{x,y}f(\xi,k):=(2\pi)^{-\frac{d+1}{2}}\int_{\R^d\times\T}f(x,y)e^{-i(\xi\cdot x+ky)}\,dxdy.
\end{gather*}
\section{Existence of normalized ground states}\label{sec: Existence}

\subsection{A scale-invariant Gagliardo-Nirenberg inequality on $\R^d\times\T$}
We establish in the following a scale-invariant (w.r.t. $x$-variable) Gagliardo-Nirenberg inequality on $\R^d\times\T$ which plays the same role as the conventional scale-invariant Gagliardo-Nirenberg inequalities on $\R^d$. As we shall see in the upcoming proofs, such scale-invariant Gagliardo-Nirenberg inequality serves as a substitute for the standard inhomogeneous ones and is more useful for identifying a non-vanishing weak limit of a minimizing sequence.
\begin{lemma}[Scale-invariant Gagliardo-Nirenberg inequality on $\R^d\times\T$]\label{lemma gn additive}
There exists some $C>0$ such that for all $u\in H_{x,y}^1$ we have
\begin{align}
\|u\|_{\alpha+2}^{\alpha+2}\leq C\|\nabla_x u\|_2^{\frac{\alpha d}{2}}\|u\|_2^{\frac{4-\alpha(d-1)}{2}}
(\| u\|_{2}^{\frac{\alpha}{2}}+\|\pt_y u\|_{2}^{\frac{\alpha}{2}})
\end{align}
\end{lemma}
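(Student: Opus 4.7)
The plan is to reduce the mixed-space inequality to a pair of inequalities on $\R^d$ alone, by first applying a slicewise Agmon-type estimate in the $y$-variable and then invoking the classical scale-invariant Gagliardo-Nirenberg inequality on $\R^d$ for the auxiliary functions
$$P(x):=\|u(x,\cdot)\|_{L^2_y},\qquad Q(x):=\|\partial_y u(x,\cdot)\|_{L^2_y}.$$
The crucial observation is that $P\in H^1(\R^d)$ with $\|P\|_{L^2_x}=\|u\|_2$ and, by Cauchy-Schwarz applied to the identity $\nabla_x P^2(x)=2\operatorname{Re}\int_\T u\,\overline{\nabla_x u}\,dy$, the pointwise bound $|\nabla_x P(x)|\le \|\nabla_x u(x,\cdot)\|_{L^2_y}$ holds, whence $\|\nabla_x P\|_{L^2_x}\le \|\nabla_x u\|_2$; similarly $\|Q\|_{L^2_x}=\|\partial_y u\|_2$.

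The first step combines the pointwise bound $|u(x,y)|^{\alpha+2}\le \|u(x,\cdot)\|_{L^\infty_y}^{\alpha}\,|u(x,y)|^2$ with the one-dimensional Agmon inequality on $\T$,
$$\|f\|_{L^\infty(\T)}^{\alpha}\le C\bigl(\|f\|_{L^2(\T)}^{\alpha}+\|f\|_{L^2(\T)}^{\alpha/2}\|f'\|_{L^2(\T)}^{\alpha/2}\bigr),$$
applied slicewise to $u(x,\cdot)$. Integrating over $\R^d\times\T$ then yields
$$\|u\|_{\alpha+2}^{\alpha+2}\le C\int_{\R^d}P(x)^{\alpha+2}\,dx+C\int_{\R^d}P(x)^{\alpha/2+2}Q(x)^{\alpha/2}\,dx,$$
and the remaining task is to estimate each of the two integrals on the right.

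For the first integral the classical scale-invariant Gagliardo-Nirenberg inequality on $\R^d$ applied to $P$ directly gives
$$\int_{\R^d}P^{\alpha+2}\,dx\lesssim \|\nabla_x P\|_{L^2_x}^{\alpha d/2}\|P\|_{L^2_x}^{(4-\alpha(d-2))/2}\lesssim \|\nabla_x u\|_{2}^{\alpha d/2}\|u\|_{2}^{(4-\alpha(d-1))/2}\,\|u\|_{2}^{\alpha/2},$$
after splitting the $L^2$-exponent as $\tfrac{4-\alpha(d-2)}{2}=\tfrac{4-\alpha(d-1)}{2}+\tfrac{\alpha}{2}$. For the second integral I would apply H\"older in $x$ with the conjugate exponents $p=\tfrac{4}{4-\alpha}$ and $p'=\tfrac{4}{\alpha}$ so as to isolate $\|Q\|_{L^2_x}^{\alpha/2}=\|\partial_y u\|_2^{\alpha/2}$, leaving the factor $\|P\|_{L^r_x}^{\alpha/2+2}$ with $r:=\tfrac{2(\alpha+4)}{4-\alpha}$. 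A second application of the Gagliardo-Nirenberg inequality on $\R^d$, with interpolation parameter $\theta=\tfrac{d\alpha}{\alpha+4}$, then yields exactly $\|\nabla_x u\|_2^{\alpha d/2}\|u\|_2^{(4-\alpha(d-1))/2}$, which matches the first integral's bound up to the multiplicative factor $\|\partial_y u\|_2^{\alpha/2}$. Summing both estimates produces the desired inequality.

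The main technical point I expect is the joint admissibility of the exponent $r$ in the Gagliardo-Nirenberg inequality on $\R^d$ (for $d\ge 3$ the constraint is $r\le\tfrac{2d}{d-2}$) together with the requirement $\theta\in[0,1]$; a short calculation shows that both reduce to the single condition $\alpha\le\tfrac{4}{d-1}$, which is precisely the upper endpoint of the intercritical regime. The borderline case $d=1$, where $\alpha>4$ forces the H\"older step above outside the admissible range, requires a separate argument; this can be carried out, for instance, by splitting $u$ into its $y$-mean and $y$-oscillatory parts and exploiting the Poincar\'e-Wirtinger inequality on $\T$ for the oscillatory part to recover the extra factor of $\|\partial_y u\|_2$.
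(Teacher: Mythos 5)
Your argument is correct for $d\ge 2$ and takes a genuinely different route from the paper: you reduce to a slicewise Agmon estimate on $\T$ for the auxiliary functions $P(x)=\|u(x,\cdot)\|_{L^2_y}$, $Q(x)=\|\partial_y u(x,\cdot)\|_{L^2_y}$, followed by H\"older and Gagliardo-Nirenberg on $\R^d$ applied to $P$ and $Q$. The paper instead splits $u$ into its $y$-mean $m(u)$ and oscillatory part $u-m(u)$, uses the fractional Sobolev embedding on $\T$ for the zero-mean part, passes to Fourier series in $y$, and applies Minkowski's inequality before the per-mode Gagliardo-Nirenberg on $\R^d$, finishing with a H\"older inequality in $\ell^2_k$. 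Your route is more elementary and transparent when it works, and the bookkeeping (verifying that $r\le\frac{2d}{d-2}$ and $\theta\in[0,1]$ both reduce to $\alpha\le\frac{4}{d-1}$) is done correctly.

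The genuine gap is the case $d=1$, where the intercritical window is $\alpha\in(4,\infty)$, and you are right to flag it, but the sketch you give to repair it does not close the gap. The obstruction is structural for the slicewise method: after applying Agmon you are left to bound $\int_{\R}P^{\alpha/2+2}Q^{\alpha/2}\,dx$ (or the same quantity with $P$ replaced by $\tilde P(x):=\|(u-m(u))(x,\cdot)\|_{L^2_y}$), and for $\alpha>4$ every H\"older split that isolates $\|Q\|_{L^2_x}^{\alpha/2}$ requires the conjugate exponent $4/\alpha<1$, which is inadmissible. Putting $Q^{\alpha/2}$ in $L^{p'}_x$ with any $p'>1$ instead produces $\|Q\|_{L^{\alpha p'/2}_x}^{\alpha/2}$ with $\alpha p'/2>2$, and these higher $L^q_x$ norms of $Q$ are \emph{not} controlled by the $H^1_{x,y}$ data, since $\nabla_x Q$ involves the mixed second derivative $\nabla_x\partial_y u$. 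The mean/oscillatory decomposition and Poincar\'e-Wirtinger by themselves do not remove this: Poincar\'e only gives the pointwise bound $\tilde P(x)\le Q(x)$, which goes in the wrong direction and still leaves you with $\tilde P^{\alpha/2+2}Q^{\alpha/2}$ and the same exponent arithmetic (one can check that every admissible one-dimensional Sobolev interpolation in $y$ reproduces exactly this product, by scaling). What actually resolves $d=1$ in the paper is the change in the order of operations: by expanding the oscillatory part in a Fourier series and applying Minkowski's inequality \emph{before} the per-mode Gagliardo-Nirenberg, the final H\"older step takes place in $\ell^2_k$ with exponents $\frac{\alpha}{2(\alpha+2)},\ \frac{\alpha+4-\alpha d}{2(\alpha+2)},\ \frac{\alpha d}{2(\alpha+2)}$, which are nonnegative for all $\alpha<\frac{4}{d-1}$ including the $d=1$ case $\alpha>4$. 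To complete your proof you would need either to adopt this Fourier-side argument for the oscillatory part, or to find a genuinely different mechanism; the Poincar\'e-Wirtinger suggestion as stated does not supply one.
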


\begin{proof}
For a function $u$ define $m(u):=(2\pi)^{-1}\int_{\T}u(y)\,dy$. By triangular inequality we obtain
\begin{align}\label{gn ineq 1}
\|u\|_{\alpha+2}\leq \|m(u)\|_{\alpha+2}+\|u-m(u)\|_{\alpha+2}.
\end{align}
We then estimate $m(u)$ and $u-m(u)$ separately. Since $m(u)$ is independent of $y$, using the standard Gagliardo-Nirenberg inequality on $\R^d$ and Jensen's inequality we conclude that
\begin{align}\label{gn ineq 2}
\|m(u)\|_{\alpha+2}\lesssim \|m(u)\|_{L_x^{\alpha+2}}\lesssim \|m(u)\|_{L_x^2}^{1-\frac{\alpha d}{2(\alpha+2)}}
\|\nabla_x m(u)\|_{L_x^2}^{\frac{\alpha d}{2(\alpha+2)}}
\lesssim \|u\|_2^{1-\frac{\alpha d}{2(\alpha+2)}}\|\nabla_x u\|_2^{\frac{\alpha d}{2(\alpha+2)}}.
\end{align}
For $u-m(u)$, we recall the following well-known Sobolev's inequality on $\T$ for functions with zero mean (see for instance \cite{sobolev_torus}):
\begin{align}\label{sobolev torus 1}
\|u-m(u)\|_{L_y^{\alpha+2}}\lesssim\|u\|_{\dot{H}_y^{\frac{\alpha}{2(\alpha+2)}}}.
\end{align}
Writing $u$ into the Fourier series $u(x,y)=\sum_{k} e^{iky}u_k(x)$ w.r.t $y$ and followed by \eqref{sobolev torus 1}, Minkowski, Gagliardo-Nirenberg on $\R^d$ and H\"older (through the identity
$$1=\frac{\alpha}{2(\alpha+2)}+\frac{\alpha+4-\alpha d}{2(\alpha+2)}+\frac{\alpha d}{2(\alpha+2)})$$
we obtain
\begin{align}\label{gn ineq 3}
&\,\|u-m(u)\|_{\alpha+2}\nonumber\\
\lesssim&\,\bg(\int_{\R^d}\bg(\sum_{k\in\Z}|k|^{\frac{\alpha}{\alpha+2}}|u_k(x)|^2
\bg)^{\frac{\alpha+2}{2}}\,dx\bg)^{\frac{1}{\alpha+2}}\nonumber\\
\lesssim&\, \bg(\sum_{k\in\Z}|k|^{\frac{\alpha}{\alpha+2}}\|u_k\|_{L_x^{\alpha+2}}^2\bg)^{\frac12}
\nonumber\\
\lesssim&\,\bg(\sum_{k\in\Z}|k|^{\frac{\alpha}{\alpha+2}}\|u_k\|_{L_x^2}^{2-\frac{\alpha d}{\alpha+2}}
\|\nabla_x u_k\|_{L_x^2}^{\frac{\alpha d}{\alpha+2}}\bg)^{\frac12}\nonumber\\
\lesssim&\,\|(\|u_k\|_{L_x^2})_k\|_{\ell_k^2}^{\frac{4-\alpha(d-1)}{2(\alpha+2)}}
\|(k \|u_k\|_{L_x^2})_k\|_{\ell_k^2}^{\frac{\alpha}{2(\alpha+2)}}
\|(\|\nabla_xu_k\|_{L_x^2})_k\|_{\ell_k^2}^{\frac{\alpha d}{2(\alpha+2)}}\nonumber\\
=&\,\|u\|_{2}^{\frac{4-\alpha(d-1)}{2(\alpha+2)}}
\|\pt_y u\|_{2}^{\frac{\alpha}{2(\alpha+2)}}
\|\nabla_x u\|_{2}^{\frac{\alpha d}{2(\alpha+2)}}.
\end{align}
The desired result follows from summing \eqref{gn ineq 1}, \eqref{gn ineq 2} and \eqref{gn ineq 3} up.
\end{proof}

The following result is an immediate consequence of Lemma \ref{lemma gn additive}.
\begin{corollary}\label{cor lower bound}
For any $c\in(0,\infty)$ we have $m_c\in(0,\infty)$, where $m_c$ is defined by \eqref{def of mc}.
\end{corollary}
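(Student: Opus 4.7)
The plan is to establish the two bounds $m_c<\infty$ and $m_c>0$ separately, the first by a scaling/continuity argument showing $V(c)\ne\emptyset$, and the second by a coercivity argument built on Lemma \ref{lemma gn additive}.

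For the upper bound, I would pick any $u\in S(c)$ (for instance a smooth mass-$c$ bump) and apply the $x$-scaling $u\mapsto u^t$ from \eqref{def of scaling op}, which preserves $\mM$. Since $\|\nabla_x u^t\|_2^2=t^2\|\nabla_x u\|_2^2$ and $\|u^t\|_{\alpha+2}^{\alpha+2}=t^{\alpha d/2}\|u\|_{\alpha+2}^{\alpha+2}$, the semivirial becomes
\begin{equation*}
\mK(u^t)=t^2\|\nabla_x u\|_2^2-\frac{\alpha d}{2(\alpha+2)}\,t^{\alpha d/2}\|u\|_{\alpha+2}^{\alpha+2}.
\end{equation*}
Because we are in the intercritical regime $\alpha>4/d$, we have $\alpha d/2>2$, so $\mK(u^t)>0$ for $t\downarrow 0$ and $\mK(u^t)<0$ for $t\to\infty$. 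Continuity of $t\mapsto\mK(u^t)$ then yields some $t_0>0$ with $u^{t_0}\in V(c)$, and $\mH(u^{t_0})<\infty$ gives $m_c<\infty$.

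For the lower bound, fix $u\in V(c)$. First, $u\not\equiv 0$ and the identity $\mK(u)=0$ together force $\|\nabla_x u\|_2>0$ (otherwise $\|u\|_{\alpha+2}=0$, contradicting $\mM(u)=c>0$). Solving $\mK(u)=0$ for the nonlinear term and substituting into Lemma \ref{lemma gn additive}, I get
\begin{equation*}
\frac{2(\alpha+2)}{\alpha d}\|\nabla_x u\|_2^2=\|u\|_{\alpha+2}^{\alpha+2}\leq C\,\|\nabla_x u\|_2^{\alpha d/2}\,c^{\frac{4-\alpha(d-1)}{4}}\Bigl(c^{\alpha/4}+\|\pt_y u\|_2^{\alpha/2}\Bigr),
\end{equation*}
so after dividing by $\|\nabla_x u\|_2^2>0$ and using $\alpha d/2-2>0$,
\begin{equation*}
\|\nabla_x u\|_2^{\frac{\alpha d-4}{2}}\Bigl(1+\|\pt_y u\|_2^{\alpha/2}\Bigr)\geq C'(c)>0.
\end{equation*}

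The final step is to convert this coercivity into a lower bound on $\mH(u)$. Using $\mH(u)=\mI(u)+\tfrac12\mK(u)=\mI(u)$ from \eqref{def of mI} and $\mK(u)=0$, one has
\begin{equation*}
\mH(u)=\tfrac12\|\pt_y u\|_2^2+\tfrac{\alpha d-4}{2\alpha d}\|\nabla_x u\|_2^2.
\end{equation*}
Splitting into two cases according to whether $\|\pt_y u\|_2\leq 1$ or $\|\pt_y u\|_2>1$: in the first case the displayed inequality forces $\|\nabla_x u\|_2\geq\delta(c)>0$ and hence $\mH(u)\geq\tfrac{\alpha d-4}{2\alpha d}\delta(c)^2$; in the second case $\mH(u)\geq\tfrac12$ directly. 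Taking the infimum over $V(c)$ gives $m_c>0$. The only genuinely delicate input is the scale-invariance along $x$ in Lemma \ref{lemma gn additive}, which is precisely what makes the exponent $\alpha d/2-2$ strictly positive and thus drives the coercivity; the rest is bookkeeping.
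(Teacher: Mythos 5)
Your proof is correct. The upper bound is essentially the same as the paper's — the paper simply asserts $V(c)\neq\varnothing$, while you supply the $x$-scaling argument that proves it. The lower bound, however, takes a mildly different route. The paper picks a minimizing sequence $(u_n)_n\subset V(c)$, uses the already-established $m_c<\infty$ together with the identity $\mH(u_n)=\mH(u_n)-\tfrac{2}{\alpha d}\mK(u_n)=\tfrac12\|\pt_y u_n\|_2^2+\bigl(\tfrac12-\tfrac{2}{\alpha d}\bigr)\|\nabla_x u_n\|_2^2$ to deduce that $(u_n)_n$ is bounded in $H^1_{x,y}$, and then feeds that uniform bound into Lemma~\ref{lemma gn additive} so that the factor $\|u_n\|_2^{\alpha/2}+\|\pt_y u_n\|_2^{\alpha/2}$ is absorbed into a constant, yielding $\liminf_n\|\nabla_x u_n\|_2^2\gtrsim 1$ and hence $m_c\gtrsim 1$. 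You instead work pointwise on $V(c)$: you handle the unbounded factor $\|\pt_y u\|_2^{\alpha/2}$ by the case distinction $\|\pt_y u\|_2\leq 1$ versus $\|\pt_y u\|_2>1$, rather than by an a priori $H^1$-bound. Your version is marginally more elementary — no minimizing sequence is needed, and you do not bootstrap the positivity $m_c>0$ off the finiteness $m_c<\infty$ — and it produces an explicit uniform positive lower bound for $\mH$ on $V(c)$. The paper's version has the incidental benefit that the $H^1_{x,y}$-boundedness of minimizing sequences, a fact needed repeatedly later, is established in passing.
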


\begin{proof}
That $m_c<\infty$ follows from $V(c)\neq\varnothing$. Next, let  $(u_n)_n\subset V(c)$ such that $\mH(u_n)=m_c+o_n(1)$. Then
\begin{align*}
\infty>m_c+o_n(1)=\mH(u_n)=\mH(u_n)-\frac{2}{\alpha d}\mK(u_n)=\frac{1}{2}\|\pt_y u_n\|_2^2+\bg(\frac{1}{2}-\frac{2}{\alpha d}\bg)\|\nabla_x u_n\|_2^2.
\end{align*}
Combining with $\alpha>4/d$ (which in turn implies $\frac{1}{2}-\frac{2}{\alpha d}>0$) we infer that $(u_n)_n$ is a bounded sequence in $H_{x,y}^1$. Now by Lemma \ref{lemma gn additive} and the fact that $\alpha<4/(d-1)$ we deduce
\begin{align*}
\|\nabla_xu_n\|_2^2=\frac{\alpha d}{2(\alpha+2)}\|u_n\|_{\alpha+2}^{\alpha+2}
\lesssim \|\nabla_x u_n\|_2^{\frac{\alpha d}{2}}\|u_n\|_2^{\frac{4-\alpha(d-1)}{2}}
(\| u_n\|_{2}^{\frac{\alpha}{2}}+\|\pt_y u_n\|_{2}^{\frac{\alpha}{2}})\lesssim \|\nabla_x u_n\|_2^{\frac{\alpha d}{2}},
\end{align*}
which combining with $\alpha>4/d$ implies
\begin{align}
\liminf_{n\to\infty}\|u_n\|_{\alpha+2}^{\alpha+2}\sim\liminf_{n\to\infty}\|\nabla_x u_n\|_2^2>0.\label{key gn inq2}
\end{align}
Summing up, we obtain
\begin{align*}
m_c=\lim_{n\to\infty}\bg(\frac{1}{2}\|\pt_y u_n\|_2^2+\bg(\frac{1}{2}-\frac{2}{\alpha d}\bg)\|\nabla_x u_n\|_2^2\bg)
\geq \bg(\frac{1}{2}-\frac{2}{\alpha d}\bg)\liminf_{n\to\infty}\|\nabla_x u_n\|_2^2\gtrsim 1,
\end{align*}
which completes the proof.
\end{proof}

\subsection{Dynamical properties of the mappings $t\mapsto \mK(u^t)$ and $c\mapsto m_c$}
In the following, we prove some useful properties of the mappings $t\mapsto \mK(u^t)$ and $c\mapsto m_c$, where $u^t$ is the scaling operator defined through \eqref{def of scaling op}. These properties will play a central role in the proof of our main results.
\begin{lemma}[Property of the mapping $t\mapsto \mK(u^t)$]\label{monotoneproperty}
Let $c>0$ and $u\in S(c)$. Then the following statements hold true:
\begin{enumerate}
\item[(i)] $\frac{\partial}{\partial t}\mH(u^t)=t^{-1} Q(u^t)$ for all $t>0$.
\item[(ii)] There exists some $t^*=t^*(u)>0$ such that $u^{t^*}\in V(c)$.
\item[(iii)] We have $t^*<1$ if and only if $\mK(u)<0$. Moreover, $t^*=1$ if and only if $\mK(u)=0$.
\item[(iv)] Following inequalities hold:
\begin{equation*}
Q(u^t) \left\{
\begin{array}{lr}
             >0, &t\in(0,t^*) ,\\
             <0, &t\in(t^*,\infty).
             \end{array}
\right.
\end{equation*}
\item[(v)] $\mH(u^t)<\mH(u^{t^*})$ for all $t>0$ with $t\neq t^*$.
\end{enumerate}
\end{lemma}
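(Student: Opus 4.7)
The plan is to reduce the entire statement to a one-parameter analysis of the scalar function $t\mapsto \mH(u^t)$, obtained by using the explicit scaling behavior of the anisotropic functionals under $u^t(x,y)=t^{d/2}u(tx,y)$. A direct change of variables $z=tx$ shows that the mass is invariant, $\mM(u^t)=\mM(u)$; the tangential gradient scales as $\|\nabla_x u^t\|_2^2=t^2\|\nabla_x u\|_2^2$; the normal derivative is invariant, $\|\pt_y u^t\|_2=\|\pt_y u\|_2$; and the nonlinearity scales as $\|u^t\|_{\alpha+2}^{\alpha+2}=t^{\alpha d/2}\|u\|_{\alpha+2}^{\alpha+2}$. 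In particular $u^t\in S(c)$ for every $t>0$, so the flow stays on the prescribed mass manifold. Substituting these identities yields
\begin{align*}
\mH(u^t)&=\frac{t^2}{2}\|\nabla_x u\|_2^2+\frac{1}{2}\|\pt_y u\|_2^2-\frac{t^{\alpha d/2}}{\alpha+2}\|u\|_{\alpha+2}^{\alpha+2},\\
\mK(u^t)&=t^2\|\nabla_x u\|_2^2-\frac{\alpha d}{2(\alpha+2)}\,t^{\alpha d/2}\|u\|_{\alpha+2}^{\alpha+2},
\end{align*}
and (i) follows by differentiating the first line in $t$ and matching it with the second, which factors a $t^{-1}$.

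For (ii)--(iv) I would set $A:=\|\nabla_x u\|_2^2$ and $B:=\frac{\alpha d}{2(\alpha+2)}\|u\|_{\alpha+2}^{\alpha+2}$. Both are strictly positive: $u\not\equiv 0$ forces $\|u\|_{\alpha+2}>0$, while any $H_{x,y}^1$ function with $\nabla_x u\equiv 0$ would be $x$-independent and hence incompatible with finite $L^2$-mass on $\R^d\times\T$. Rewriting gives $\mK(u^t)=t^2\bigl(A-t^{\alpha d/2-2}B\bigr)$, and the intercritical assumption $\alpha>4/d$ makes the exponent $\alpha d/2-2$ strictly positive, so $t\mapsto t^{\alpha d/2-2}$ is a strictly increasing bijection of $(0,\infty)$. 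Therefore there is a unique $t^*=(A/B)^{2/(\alpha d-4)}>0$ with $\mK(u^{t^*})=0$, which proves (ii), while the same factorization immediately yields the sign information $\mK(u^t)>0$ on $(0,t^*)$ and $\mK(u^t)<0$ on $(t^*,\infty)$, i.e.\ (iv). Item (iii) is then the trivial comparison $t^*\lessgtr 1\iff A\lessgtr B\iff \mK(u)\lessgtr 0$.

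Finally, (v) is immediate from combining (i) and (iv): the derivative $\frac{\pt}{\pt t}\mH(u^t)=t^{-1}\mK(u^t)$ is strictly positive on $(0,t^*)$ and strictly negative on $(t^*,\infty)$, so $t\mapsto \mH(u^t)$ strictly increases up to $t^*$ and strictly decreases afterward, giving a unique strict maximum at $t^*$. There is no serious obstacle in this lemma, since everything reduces to the elementary behavior of the scalar map $t\mapsto at^2-bt^p$ with $a,b>0$ and $p>2$; the only point worth flagging is exactly where the intercritical lower bound $\alpha>4/d$ enters, namely in ensuring $\alpha d/2>2$, which is precisely what produces the mountain-pass shape of $\mH(u^t)$ and the uniqueness of the crossing $t^*$.
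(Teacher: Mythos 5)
Your proof is correct and takes essentially the same elementary route as the paper: reduce everything to the scalar behavior of $t\mapsto at^2-bt^{\alpha d/2}$ using the scaling identities, with the intercritical bound $\alpha>4/d$ entering only to make the exponent $\alpha d/2$ strictly bigger than $2$. The paper's proof of uniqueness of $t^*$ is slightly more indirect (it studies $y(t):=\pt_t\mH(u^t)$ and the sign changes of $y'$), whereas you factor $\mK(u^t)=t^2\bigl(A-t^{\alpha d/2-2}B\bigr)$ and read the unique zero off the strict monotonicity of $t^{\alpha d/2-2}$ directly; this is a harmless streamlining of the same idea.
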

\begin{proof}
(i) follows from direct calculation. Now define $y(t):= \frac{\partial}{\partial t}\mH(u^t)$. Then
\begin{align*}
y(t)&=t\|\nabla_x u\|_2^2-\frac{\alpha d}{2(\alpha+2)}t^{\frac{\alpha d}{2}-1}\|u\|_{\alpha+2}^{\alpha+2},\\
y'(t)&=\|\nabla_x u\|_2^2-\frac{2\alpha d(\alpha d-2)}{4(\alpha+2)}t^{\frac{\alpha d}{2}-2}\|u\|_{\alpha+2}^{\alpha+2}.
\end{align*}
Using $\alpha>4/d$ we infer that $y'(0)=\|\nabla_x u\|_2^2>0$, $y'(t)\to -\infty$ as $t\to\infty$ and $y'(t)$ is strictly monotone decreasing on $(0,\infty)$. Thus there exists a $t_0>0$ such that $y'(t)$ is positive on $(0,t_0)$ and negative on $(t_0,\infty)$. Consequently, we conclude that $y(t)$ has a zero at $t^*>t_0$, $y(t)$ is positive on $(0,t^*)$ and negative on $(t^*,\infty)$. (ii) and (iv) now follow from the fact
$$y(t)=\frac{\partial \mH(u^t)}{\partial t}=\frac{Q(u^t)}{t}.$$
For (iii), we first let $\mK(u)<0$. Then
$$0>\mK(u)=\frac{Q(u^1)}{1}=y(1),$$
which is only possible as long as $t^*<1$. Conversely, let $t^*<1$. Then using the fact that $y(t)$ is monotone decreasing on $(t^*,\infty)$ we obtain
$$\mK(u)=y(1)<y(t^*)<0. $$
This completes the proof of (iii). To see (v), integration by parts yields
$$ \mH(u^{t^*})=\mH(u^t)+\int_t^{t^*}y(s)\,ds.$$
Then (v) follows from the fact that $y(t)$ is positive on $(0,t^*)$ and $y(t)$ is negative on $(t^*,\infty)$.
\end{proof}

\begin{lemma}[Property of the mapping $c\mapsto m_c$]\label{monotone lemma}
The mapping $c\mapsto m_c$ is continuous and monotone decreasing on $(0,\infty)$.
\end{lemma}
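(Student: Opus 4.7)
The plan is to prove continuity and monotonicity separately via scaling arguments based on Lemma \ref{monotoneproperty}, together with the on-constraint identity
\[
\mH(u)=\frac{\alpha d-4}{2\alpha d}\|\nabla_xu\|_2^2+\frac{1}{2}\|\pt_yu\|_2^2,\qquad u\in V(c),
\]
obtained by eliminating $\|u\|_{\alpha+2}^{\alpha+2}$ between $\mK(u)=0$ and the definition of $\mH$.

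For continuity, fix $c\in(0,\infty)$ and let $c_n\to c$. To show $\limsup_nm_{c_n}\le m_c$, pick any $\epsilon>0$ and $u\in V(c)$ with $\mH(u)\le m_c+\epsilon$; form $u_n:=\sqrt{c_n/c}\,u\in S(c_n)$ and apply Lemma \ref{monotoneproperty}(ii) to produce $t_n^*>0$ with $u_n^{t_n^*}\in V(c_n)$. Solving $\mK(u_n^{t_n^*})=0$ against $\mK(u)=0$ gives the closed form $(t_n^*)^{d\alpha/2-2}=(c/c_n)^{\alpha/2}\to 1$, so $u_n^{t_n^*}\to u$ strongly in $H_{x,y}^1$ and thus $m_{c_n}\le\mH(u_n^{t_n^*})\to\mH(u)\le m_c+\epsilon$. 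For the reverse $\liminf_nm_{c_n}\ge m_c$, pick $v_n\in V(c_n)$ with $\mH(v_n)\le m_{c_n}+1/n$; the on-constraint identity together with the upper bound just shown yields uniform $H_{x,y}^1$-boundedness of $(v_n)$. The symmetric scaling $\tilde v_n:=\sqrt{c/c_n}\,v_n$ admits $\tilde t_n^*\to 1$ with $\tilde v_n^{\tilde t_n^*}\in V(c)$, and the uniform boundedness of $(v_n)$ gives $|\mH(\tilde v_n^{\tilde t_n^*})-\mH(v_n)|\to 0$, so $m_c\le\mH(\tilde v_n^{\tilde t_n^*})=m_{c_n}+o(1)$.

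For monotonicity, let $c_1<c_2$, and consider the two-parameter scaling
\[
v(x,y):=\mu\,u(\sigma x,y),\qquad \sigma:=(c_2/c_1)^{-\alpha/(d\alpha-4)}<1,\qquad \mu:=\sigma^{2/\alpha}.
\]
A direct check shows that $u\in V(c_1)$ implies $v\in V(c_2)$, and using $\mK(u)=0$ one computes
\[
\mH(v)=\Big(\tfrac{c_2}{c_1}\Big)^{\!\gamma}\frac{\alpha d-4}{2\alpha d}\|\nabla_xu\|_2^2+\frac{c_2}{c_1}\cdot\frac{1}{2}\|\pt_yu\|_2^2,\qquad\gamma:=\frac{\alpha(d-2)-4}{d\alpha-4}<0.
\]
Testing against $y$-independent $u(x,y)=P(x)$ with $P\in\widehat V(c_1/(2\pi))$ near-optimal for $\wm_{c_1/(2\pi)}$ (Lemma \ref{lem wmc property}(i)), the $\pt_y$-term vanishes and the Euclidean scaling identity $(c_2/c_1)^{\gamma}\wm_{c_1/(2\pi)}=\wm_{c_2/(2\pi)}$ delivers $m_{c_2}\le 2\pi\wm_{c_2/(2\pi)}$.

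The hard part is to upgrade this Euclidean comparison to the full inequality $m_{c_2}\le m_{c_1}$ in the range where $m_{c_1}<2\pi\wm_{c_2/(2\pi)}$ (which can occur for large $c_1$, cf.\ Theorem \ref{thm threshold mass}), since the $\pt_y$-enlargement $c_2/c_1>1$ competes against the $\nabla_x$-shrinkage $(c_2/c_1)^{\gamma}<1$ in the displayed identity, so a single application of the scaling cannot close the gap on an arbitrary minimizing sequence. I would close this gap by chaining the scaling across a partition $c_1=c^{(0)}<c^{(1)}<\dots<c^{(N)}=c_2$ with mesh going to zero, using the continuity from the first part to accumulate the infinitesimal gain that, at the linear order, is controlled by the Gagliardo-Nirenberg bound of Lemma \ref{lemma gn additive} and the a priori estimate $\|\pt_yu\|_2^2\le 2\mH(u)$ valid on any minimizing sequence.
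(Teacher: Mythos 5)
Your continuity argument is correct and is actually cleaner than the paper's: it does not presuppose monotonicity and handles general $c_n\to c$ symmetrically. The closed-form identity $(t_n^*)^{\alpha d/2-2}=(c/c_n)^{\alpha/2}$ is right, and once $t_n^*\to 1$ and $c_n/c\to 1$ the convergence $\mH(u_n^{t_n^*})\to\mH(u)$ follows by direct computation of the three norms involved. (The paper instead proves continuity using the $\max_{t>0}$-characterization and only for $c_n\downarrow c$, relying on the monotonicity already established.)

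The monotonicity part, however, has a genuine gap that your proposed chaining does not close. As you correctly compute, the scaling $T_\sigma u=\sigma^{2/\alpha}u(\sigma\cdot,y)$ carries $V(c_1)$ into $V(c_2)$ and gives, on-constraint,
$$\mH(T_\sigma u)=\Big(\tfrac{c_2}{c_1}\Big)^{\gamma}\frac{\alpha d-4}{2\alpha d}\|\nabla_x u\|_2^2+\frac{c_2}{c_1}\cdot\frac{1}{2}\|\pt_y u\|_2^2,$$
so the derivative at $c_2=c_1$ has the sign of $\gamma A+B$ with $A:=\frac{\alpha d-4}{2\alpha d}\|\nabla_x u\|_2^2$, $B:=\frac12\|\pt_y u\|_2^2$. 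For the chaining to telescope you need $\gamma A+B\le 0$ at each step, which after unwinding the constants is exactly the inequality \eqref{identity abcd},
$$\|\pt_y u\|_2^2\le\frac{2\alpha+(4-\alpha d)}{2(\alpha+2)}\|u\|_{\alpha+2}^{\alpha+2}.$$
This is \emph{not} a consequence of Lemma \ref{lemma gn additive} or of $\|\pt_y u\|_2^2\le 2\mH(u)$: those give a lower bound on $A$ and an upper bound on $B$ in terms of $\mH(u)$, neither of which forces $B\le-\gamma A$ (one can easily have a near-minimizer in $V(c)$ with $\|\pt_y u\|_2^2$ as large as roughly $2m_c$ and $\|\nabla_x u\|_2^2$ near the minimum allowed by \eqref{key gn inq2}, for which $\gamma A+B>0$). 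Worse, the paper derives \eqref{identity abcd} precisely as a consequence of Lemma \ref{monotone lemma} (see the proof of Lemma \ref{minimizer is solution}: ``Combining with Lemma \ref{monotone lemma}...''), and then only for an exact optimizer; so invoking it here would be circular and would still not cover the near-minimizers your chaining must be applied to. The paper's route is structurally different: instead of scaling a given $u\in V(c_1)$, it truncates a near-minimizer to compact support and adds a far-away, $y$-independent bump $v_0^\lambda$ carrying the missing mass $c_2-c_1$; as $\lambda\to 0$ this contributes mass but asymptotically no gradient or $L^{\alpha+2}$ energy, so $\max_{t>0}\mH(w_\lambda^t)\to\max_{t>0}\mH(\tilde u_{1,\delta}^t)$ and the bound $m_{c_2}\le m_{c_1}+\vare$ falls out without ever touching $\|\pt_y\cdot\|_2^2$. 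That device — augmenting mass by a translated, $y$-independent profile rather than by scaling — is the missing idea in your proposal.
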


\begin{proof}
The proof follows the same line in \cite{Bellazzini2013}. We firstly show that the function $f$ defined by
\begin{align*}
f(a,b):=\max_{t>0}\{at^2-bt^{\frac{\alpha d}{2}}\}
\end{align*}
is continuous on $(0,\infty)^2$. In fact, the global maxima can be calculated explicitly. Let
\begin{align*}
g(t,a,b):=at^2-bt^{\frac{\alpha d}{2}}
\end{align*}
and let $t^*\in (0,\infty)$ such that $\pt_t g(t^*,a,b)=0$. Then $t^*=\bg(\frac{4a}{\alpha d b}\bg)^{\frac{2}{\alpha d-4}}$. Particularly, $\pt_t g(t,a,b)$ is positive on $(0,t^*)$ and negative on $(t^*,\infty)$. Thus
$$ f(a,b)=g(t^*,a,b)=\bg(\bg(\frac{4}{\alpha d}\bg)^{\frac{4}{\alpha d-4}}
-\bg(\frac{4}{\alpha d}\bg)^{\frac{\alpha d}{\alpha d-4}}\bg)a^{\frac{\alpha d}{\alpha d-4}}b^{-\frac{4}{\alpha d-4}}
$$
and we conclude the continuity of $f$ on $(0,\infty)^2$.

We now show the monotonicity of $c\mapsto m_c$. It suffices to show that for any $0<c_1<c_2<\infty$ and $\vare>0$ we have
\begin{align*}
m_{c_2}\leq m_{c_1}+\vare.
\end{align*}
By the definition of $m_{c_1}$ there exists some $u_1\in V(c_1)$ such that
\begin{align}\label{pert 2}
\mH(u_1)\leq m_{c_1}+\frac{\vare}{2}.
\end{align}
Let $\eta\in C^{\infty}_c(\R^d;[0,1])$ be a cut-off function such that $\eta=1$ for $|x|\leq 1$ and $\eta=0$ for $|x|\geq 2$. For $\delta>0$, define
\begin{equation*}
\tilde{u}_{1,\delta}(x,y):= \eta(\delta x)\cdot u_1(x,y).
\end{equation*}
Using dominated convergence theorem it is easy to verify that $\tilde{u}_{1,\delta}\to u_1$ in $H_{x,y}^1$ as $\delta\to 0$. Therefore,
\begin{align*}
\|\nabla_{x,y}\tilde{u}_{1,\delta}\|_2&\to \|\nabla_{x,y} u_1\|_2, \\
\|\tilde{u}_{1,\delta}\|_p&\to \| u_1\|_p
\end{align*}
for all $p\in[2,2+\frac{4}{d-1})$ as $\delta\to 0$. Combining with the continuity of $f$ we conclude that
\begin{align}\label{pert 1}
\max_{t>0}\mH(\tilde{u}^t_{1,\delta})
&=\max_{t>0}\bg\{\frac{t^2}{2}\|\nabla_{x}\tilde{u}_{1,\delta}\|_2^2-\frac{t^{\frac{\alpha d}{2}}}{\alpha+2}\|\tilde{u}_{1,\delta}\|_{\alpha+2}^{\alpha+2}
\bg\}+\frac{1}{2}\|\pt_y \tilde{u}_{1,\delta}\|_2^2\nonumber\\
&\leq \max_{t>0}\bg\{\frac{t^2}{2}\|\nabla_{x}u_1\|_2^2-\frac{t^{\frac{\alpha d}{2}}}{\alpha+2}\|u_1\|_{\alpha+2}^{\alpha+2}
\bg\}+\frac{1}{2}\|\pt_y u_1\|_2^2+\frac{\vare}{4}=\max_{t>0}\mH( u^t_1)+\frac{\vare}{4}
\end{align}
for sufficiently small $\delta>0$. Now let $v\in C_c^\infty(\R^d)$ with $\mathrm{supp}\,v\subset
B(0,4\delta^{-1}+1)\backslash B(0,4\delta^{-1})$ and define
\begin{equation*}
v_0:= \frac{(c_2-\mM(\tilde{u}_{1,\delta}))^{\frac{1}{2}}}{\mM(v)^{\frac{1}{2}}}\,v.
\end{equation*}
Notice that $v_0$ and $\tilde{u}_{1,\delta}$ have compact supports, which also implies $\mM(v_0)=c_2-\mM(\tilde{u}_{1,\delta})$. Define
\begin{align*}
w_\ld:=\tilde{u}_{1,\delta}+v_0^\ld
\end{align*}
with some to be determined $\ld>0$. Then
\begin{align*}
\|w_\ld\|^p_p=\|\tilde{u}_{1,\delta}\|^p_p+\| v_0^\ld\|^p_p
\end{align*}
for all $p\in [2,2+\frac{4}{d-1})$. Particularly, $\mM(w_\ld)=c_2$. Since $v_0$ is independent of $y\in\T$, we also infer that
\begin{align*}
\|\nabla_{x} w_\ld\|_2&\to\|\nabla_{x} \tilde{u}_{1,\delta}\|_2,\\
\|\pt_y w_\ld\|_2&=\|\pt_y \tilde{u}_{1,\delta}\|_2,\\
\| w_\ld\|_p&\to\| \tilde{u}_{1,\delta}\|_p
\end{align*}
for all $p\in(2,2+\frac{4}{d-1})$ as $\ld\to 0$. Using the continuity of $f$ once again we obtain
\begin{align*}
\max_{t>0}\mH(w^t_\ld)\leq \max_{t>0}\mH(\tilde{u}^t_{1,\delta})+\frac{\vare}{4}
\end{align*}
for sufficiently small $\ld>0$. Finally, combing with \eqref{pert 2} and \eqref{pert 1} we infer that
\begin{align*}
m_{c_2}\leq \max_{t>0}\mH(w^t_\lambda)\leq \max_{t>0}\mH(\tilde{u}^t_{1,\delta})+\frac{\varepsilon}{4}\leq
\max_{t>0}\mH(u^t_1)+\frac{\varepsilon}{2}=\mH(u_1)+\frac{\varepsilon}{2}\leq m_{c_1}+\varepsilon,
\end{align*}
which implies the monotonicity of $c\mapsto m_c$ on $(0,\infty)$.

Next, we show the continuity of the curve $c\mapsto m_c$. Since $c\mapsto m_c$ is non-increasing, it suffices to show that for any $c\in(0,\infty)$ and any sequence $c_n\downarrow c$ we have
\begin{align*}
m_c\leq \lim_{n\to \infty}m_{c_n}.
\end{align*}
Let $\vare>0$ be an arbitrary positive number. By the definition of $m_{c_n}$ we can find some $u_n\in V(c_n)$ such that
\begin{align}\label{pert 3}
\mH(u_n)\leq m_{c_n}+\frac{\vare}{2}\leq m_c+\frac{\vare}{2}.
\end{align}
We define $\tilde{u}_n=(c_n^{-1}c)^{\frac{1}{2}} \cdot u_n:=\rho_n u_n$. Then $\mM(\tilde{u}_n)=c$ and $\rho_n\uparrow 1$. Since $u_n\in V(c_n)$, we obtain
\begin{align*}
m_{c}+\frac{\vare}{2}\geq m_{c_n}+\frac{\vare}{2}\geq \mH(u_n)=\mH(u_n)-\frac{2}{\alpha d}\mK(u_n)
=\frac{1}{2}\|\pt_y u_n\|_2^2+\bg(\frac{1}{2}-\frac{2}{\alpha d}\bg)\|\nabla_x u_n\|_2^2.
\end{align*}
Thus $(u_n)_n$ is bounded in $H^1_{x,y}$ and up to a subsequence we infer that there exist $A,B,C\geq 0$ such that
\begin{align*}
\|\nabla_xu_n\|_2^2=A+o_n(1),\quad\|\pt_y u_n\|_2^2=B+o_n(1),\quad\|u_n\|_{\alpha+2}^{\alpha+2}=C+o_n(1).
\end{align*}
Arguing as in the proof of Corollary \ref{cor lower bound}, we may use the fact $\mK(u_n)=0$ and Lemma \ref{lemma gn additive} to deduce $A,C>0$ and by previous arguments we know that $f$ is continuous at the point $(A,C)$. Using also the fact that $\rho_n\uparrow 1$ we conclude that
\begin{align*}
m_{c}&\leq \max_{t>0}\mH(\tilde{u}^t_n)
=\max_{t>0}\bg\{\frac{t^2\rho_n^2}{2}\|\nabla_x{u}_{n}\|_2^2
- \frac{t^{\frac{\alpha d}{2}}\rho_n^{\alpha+2}}{\alpha+2}\|{u}_{n}\|_{\alpha+2}^{\alpha+2}\bg\}
+\frac{\rho_n^2}{2}\|\pt_y u_n\|_2^2\nonumber\\
&\leq\max_{t>0}\bg\{\frac{t^2A}{2}
- \frac{t^{\frac{\alpha d}{2}}C}{\alpha+2}\bg\}
+\frac{1}{2}\|\pt_y u_n\|_2^2+\frac{\vare}{4}\nonumber\\
&\leq \max_{t>0}\bg\{\frac{t^2}{2}\|\nabla_x u_n\|_2^2
- \frac{t^{\frac{\alpha d}{2}}}{\alpha+2}\| u_n\|_{\alpha+2}^{\alpha+2}\bg\}
+\frac{1}{2}\|\pt_y u_n\|_2^2+\frac{\vare}{2}\nonumber\\
&=\max_{t>0}\mH(u^t_n)+\frac{\vare}{2}=\mH(u_n)+\frac{\vare}{2}\leq m_{c_n}+\vare
\end{align*}
by choosing $n$ sufficiently large. The continuity claim follows from the arbitrariness of $\vare$.
\end{proof}

\subsection{Characterization of an optimizer as a standing wave equation}
In this subsection we prove that any minimizer $u_c$ of $m_c$ is automatically a solution of \eqref{standing wave}. As already mentioned in the introductory section, since in general the prefactors of $-\Delta_x u_c$ and $-\pt_y^2 u_c$ do not coincide in the Lagrange multiplier equation of $u_c$, we do not know whether a simple proof via the Pohozaev identity is applicable. In this case, we appeal to a subtle deformation argument (see for instance \cite{WillemBook,Bellazzini2013} and specifically \cite{Bellazzini2013} in the context of normalized ground states) to overcome this difficulty.

First, we introduce the concept of \textit{mountain pass geometry}.
\begin{definition}[Mountain pass geometry]\label{definiton of mp geometry}
We say that $\mH(u)$ has a mountain pass geometry on $S(c)$ at the level $\gamma_c$ if there exists some $k>0$ and $\vare\in(0,m_c)$ such that
\begin{align}
\gamma_c:=\inf_{g\in\Gamma(c)}\max_{t\in[0,1]}\mH(g(t))>\max\{\sup_{g\in\Gamma(c)}\mH(g(0)),\sup_{g\in\Gamma(c)}\mH(g(1))\},\label{def of gammac}
\end{align}
where
\begin{align*}
\Gamma(c):=\{g\in C([0,1];S(c)):g(0)\in A_{k,\vare},\mH(g(1))<0\}
\end{align*}
and
\begin{align*}
A_{k,\vare}:=\{u\in S(c):\|\nabla_{x} u\|_2^2<k,\|\pt_y u\|_2^2 \leq 2(m_c-\vare)\}.
\end{align*}
\end{definition}

We next show that one obtains mountain geometry by choosing $k$ and $\vare$ sufficiently small.

\begin{lemma}\label{lemma existence mountain pass}
There exist $k>0$ and $\vare\in(0,m_c)$ such that
\begin{itemize}
\item[(i)]$m_c=\gamma_c$, where $m_c$ and $\gamma_c$ are defined by \eqref{def of mc} and \eqref{def of gammac} respectively.
\item[(ii)]$\mH(u)$ has a mountain pass geometry on $S(c)$ at the level $m_c$ in the sense of Definition \ref{definiton of mp geometry}.
\end{itemize}
\end{lemma}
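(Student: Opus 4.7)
The plan is to pick $\varepsilon>0$ small and then $k=k(\varepsilon)>0$ even smaller, so that the endpoints of any admissible path have energy well below $m_c$ while one can still construct an optimizing scaling path from $A_{k,\varepsilon}$ to $\{\mH<0\}$. Endpoint control is automatic: for $u\in A_{k,\varepsilon}$, dropping the nonpositive potential term yields $\mH(u)\leq \tfrac{k}{2}+(m_c-\varepsilon)<m_c-\tfrac{\varepsilon}{2}$ whenever $k<\varepsilon$, and $\mH(g(1))<0<m_c$ by definition of $\Gamma(c)$. Hence the strict inequality in \eqref{def of gammac} reduces to showing $\gamma_c\geq m_c$; once this is in place, both (i) and (ii) follow together.

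\textbf{Every admissible path crosses $V(c)$.} I would prove $\mK>0$ on $A_{k,\varepsilon}$ for small $k$ and $\mK<0$ on $\{\mH<0\}$ automatically. For the first claim, the additive Gagliardo-Nirenberg inequality (Lemma \ref{lemma gn additive}), combined with $\|u\|_2^2=c$ and $\|\pt_y u\|_2^2\leq 2(m_c-\varepsilon)$, delivers $\|u\|_{\alpha+2}^{\alpha+2}\leq C(c,m_c)\|\nabla_x u\|_2^{\alpha d/2}\leq C(c,m_c)k^{\alpha d/4-1}\|\nabla_x u\|_2^2$; since $\alpha d/4-1>0$, this is strictly less than $\tfrac{2(\alpha+2)}{\alpha d}\|\nabla_x u\|_2^2$ for $k$ small, so $\mK(u)>0$. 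For the second claim, $\mH(v)<0$ combined with $\|\nabla_{x,y} v\|_2^2\geq\|\nabla_x v\|_2^2$ forces $\|v\|_{\alpha+2}^{\alpha+2}>\tfrac{\alpha+2}{2}\|\nabla_x v\|_2^2$, whence $\mK(v)<(1-\alpha d/4)\|\nabla_x v\|_2^2<0$ (the case $\|\nabla_x v\|_2=0$ is excluded, since Lemma \ref{lemma gn additive} then forces $\|v\|_{\alpha+2}=0$ and $\mH(v)\geq 0$). Consequently $\mK\circ g$ is continuous on $[0,1]$ with $\mK(g(0))>0$ and $\mK(g(1))<0$, so $g$ meets $V(c)$ and $\max_{[0,1]}\mH\circ g\geq m_c$, yielding $\gamma_c\geq m_c$.

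\textbf{An optimizing scaling path.} For the reverse inequality I take $u\in V(c)$ with $\mH(u)\leq m_c+\delta$ for arbitrary $\delta>0$. The identity $\mH(u)=\bg(\tfrac12-\tfrac{2}{\alpha d}\bg)\|\nabla_x u\|_2^2+\tfrac12\|\pt_y u\|_2^2$ valid on $V(c)$, together with the uniform lower bound $\|\nabla_x u\|_2^2\geq\eta_0>0$ extracted from the proof of Corollary \ref{cor lower bound} (depending only on $c$ and $m_c+\delta$), forces $\tfrac12\|\pt_y u\|_2^2\leq m_c+\delta-\bg(\tfrac12-\tfrac{2}{\alpha d}\bg)\eta_0$. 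Choosing $\varepsilon$ and $\delta$ small compared to $\bg(\tfrac12-\tfrac{2}{\alpha d}\bg)\eta_0$ delivers $\|\pt_y u\|_2^2\leq 2(m_c-\varepsilon)$. The scaling path $s\mapsto u^{\phi(s)}$, with $\phi$ a continuous increasing reparameterization of $[0,1]$ onto $[t_-,t_+]$ where $t_-$ is chosen so small that $t_-^2\|\nabla_x u\|_2^2<k$ (hence $u^{t_-}\in A_{k,\varepsilon}$) and $t_+$ chosen so large that $\mH(u^{t_+})<0$ (possible since $\mH(u^t)\to-\infty$ as $t\to\infty$), then lies in $\Gamma(c)$. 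By Lemma \ref{monotoneproperty}(v), $\max_s\mH(u^{\phi(s)})=\mH(u^1)=\mH(u)\leq m_c+\delta$, whence $\gamma_c\leq m_c+\delta$, and letting $\delta\downarrow 0$ closes the argument.

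\textbf{Main obstacle.} The only delicate step is the calibration of $\varepsilon$: I must guarantee that approximate minimizers $u\in V(c)$ with $\mH(u)\lesssim m_c$ still satisfy $\|\pt_y u\|_2^2\leq 2(m_c-\varepsilon)$. Without the uniform lower bound $\|\nabla_x u\|_2\geq\eta_0>0$ supplied by the scale-invariant additive Gagliardo-Nirenberg inequality through Corollary \ref{cor lower bound}, this calibration would fail. The mass-supercritical assumption $\alpha>4/d$ enters precisely here, both to force $\eta_0>0$ and to keep the coefficient $\tfrac12-\tfrac{2}{\alpha d}$ positive.
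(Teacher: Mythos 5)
Your proof is correct and follows essentially the same strategy as the paper: the scale-invariant Gagliardo--Nirenberg inequality gives $\mK>0$ on $A_{k,\vare}$ for $k$ small, the scaling path $t\mapsto u^t$ through a near-minimizer in $V(c)$ (using Lemma \ref{monotoneproperty}) yields $\gamma_c\le m_c+\delta$, and the sign change of $\mK$ along any admissible path yields $\gamma_c\ge m_c$, with $\vare$ calibrated against the uniform lower bound on $\|\nabla_x u\|_2^2$ coming from $\mK=0$ and Gagliardo--Nirenberg. The only cosmetic differences are that you drop the nonpositive potential term rather than bounding it, and you verify $\mK<0$ on $\{\mH<0\}$ directly rather than by contradiction; both are minor streamlinings of the paper's argument.
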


\begin{proof}
We firstly prove that by choosing $k$ sufficiently small we have $\mK(u)>0$ for all $u\in A_{k,\vare}$, where $k$ is independent of the choice of  $\vare\in (0,m_c)$. Indeed, by Lemma \ref{lemma gn additive}, the fact that $\mM(u)=c$, $\|\pt_y u\|_2^2< 2m_c$ for $u\in A_{k,\vare}$ and $\alpha>4/d$ we obtain
\begin{align*}
\mK(u)=\frac{1}{2}\|\nabla_x u\|_2^2-\frac{\alpha d}{2(\alpha+2)}\|u\|_{\alpha+2}^{\alpha+2}
\geq\frac{1}{2}\|\nabla_x u\|_2^2-C\|\nabla_x u\|_2^{\frac{\alpha d}{2}}>0
\end{align*}
as long as $\|\nabla_x u\|_2^2\in(0,k)$ for some sufficiently small $k$. Next we construct the number $\vare$. Arguing as in \eqref{key gn inq2} we know that there exists some $\beta=\beta(c)>0$ such that if $(u_n)_n\subset V(c)$ is a minimizing sequence for $m_c$, then
\begin{align}
\liminf_{n\to\infty}\bg(\bg(\frac{1}{2}-\frac{2}{\alpha d}\bg)\|\nabla_x u_n\|_2^2\bg)\geq \beta.\label{lower half beta}
\end{align}
We may shrink $\beta$ further such that $\beta<4m_c$. Hence for any minimizing sequence $(u_n)_n$ we must have
\begin{align}\label{bounded mc sec2}
m_c+\frac{\beta}{4}\geq \frac{1}{2}\|\pt_y u_n\|_2^2+\bg(\frac{1}{2}-\frac{2}{\alpha d}\bg)\|\nabla_x u_n\|_2^2
>\frac{1}{2}\|\pt_y u_n\|_2^2+\frac{\beta}{2}
\end{align}
for all sufficiently large $n$. Now we set $\vare=\frac{\beta}{4}$. Therefore for $u\in A_{k,\vare}$, using also Lemma \ref{lemma gn additive} we infer that
\begin{align*}
\mH(u)\leq \frac{1}{2}\|\pt_y u\|_2^2+\frac{1}{2}\|\nabla_x u\|_2^2+C\|\nabla_x u\|_2^{\frac{\alpha d}{2}}
\leq m_c-\vare+\bg(\frac{1}{2}k+Ck^{\frac{\alpha d}{4}}\bg).
\end{align*}
We now choose $k=k(\vare)$ sufficiently small (without changing the fact that $Q(u)>0$ for $u\in A_{k,\vare}$) such that $\frac{1}{2}k+Ck^{\frac{\alpha d}{4}}<\frac{\vare}{2}$. Thus for this choice of $k$ and $\vare$, we have $\mH(u)<m_c-\vare/2<m_c$ for all $u\in A_{k,\vare}$ and by definition, (ii) follows immediately from (i).

It is therefore left to show (i). Let $(u_n)_n$ be the given minimizing sequence satisfying \eqref{lower half beta}. Let $u=u_n$ for some (to be determined) sufficiently large $n\in\N$. For any $\kappa\in(0,\beta/4)$ we can choose $n$ sufficiently large such that $\mH(u)\leq m_c+\kappa$ and $(\frac{1}{2}-\frac{2}{\alpha d})\|\nabla_x u\|_2^2\geq \beta/2$. Then by \eqref{bounded mc sec2} we know that $\|\pt_y u\|_2^2\leq 2(m_c-\vare)$ for all $\kappa\in(0,\beta/4)$. It is easy to check that $\|\pt_y(u^t)\|_2^2=\|\pt_y u\|_2^2$ for all $t\in(0,\infty)$ and $\|\nabla_x (u^t)\|_2^2=t^2\|\nabla_x u\|_2^2\to 0$ as $t\to 0$. We then fix some $t_0>0$ sufficiently small such that $\|\nabla_x(u^{t_0})\|_2^2<k$, which in turn implies that $(u^{t_0})\in A_{k,\vare}$. On the other hand,
\begin{align*}
\mH(u^t)=\frac{1}{2}\|\pt_y u\|_2^2 +\frac{t^2}{2}\|u\|_2^2-\frac{t^{\frac{\alpha d}{2}}}{\alpha+2}\|u\|_{\alpha+2}^{\alpha+2}\to -\infty
\end{align*}
as $t\to\infty$. We then fix some $t_1$ sufficiently large such that $\mH(u^t)<0$. Now define $g\in C([0,1];S(c))$ by
\begin{align}\label{curve}
g(t):=u^{t_0+(t_1-t_0)t}.
\end{align}
Then $g\in \Gamma(c)$. By definition of $\gamma_c$ and Lemma \ref{monotoneproperty} we have
\begin{align*}
\gamma_c\leq \max_{t\in[0,1]}\mH(g(t))=\mH(u)\leq m_c+\kappa.
\end{align*}
Since $\kappa$ can be chosen arbitrarily small, we conclude that $\gamma_c\leq m_c$. On the other hand, by our choice of $k$ we already know that for any $g\in \Gamma(c)$ we have $\mK(g(0))>0$. We now prove $\mK(g(1))<0$ for any $g\in \Gamma(c)$. Assume the contrary that there exists some $g\in\Gamma(c)$ such that $\mK(g(1))\geq 0$. Then
$$ 0>\mH(g(1))\geq \frac{1}{2}\|\nabla_x g(1)\|_2^2-\frac{1}{\alpha+2}\|g(1)\|_{\alpha+2}^{\alpha+2}
\geq \bg(\frac{1}{2}-\frac{2}{\alpha d}\bg)\|\nabla_x g(1)\|_2^2\geq 0,$$
a contradiction. Next, by continuity of $g$ there exists some $t\in(0,1)$ such that $\mK(g(t))=0$. Therefore
\begin{align*}
\max_{t\in[0,1]}\mH(g(t))\geq m_c.
\end{align*}
Taking infimum over $g\in\Gamma(c)$ we deduce $\gamma_c\geq m_c$, which completes the desired proof.
\end{proof}

\begin{remark}\label{technical reason}
By technical reason we will also shrink $\vare$ in the Definition \ref{definiton of mp geometry} if necessary such that $\vare\leq \frac{d}{2}-\frac{2}{\alpha}$. The purpose of this choice of $\vare$ will become clear in the upcoming proof of Lemma \ref{minimizer is solution}.
\end{remark}

Before we finally turn to the proof of Lemma \ref{minimizer is solution}, we shall firstly introduce some preliminary concepts given by \cite{Berestycki1983}. First recall that $S(c)$ is a submanifold of $H_{x,y}^1$ with codimension $1$. Moreover, the tangent space $T_uS(c)$ for a point $u\in S(c)$ is given by
$$ T_u S(c)=\{v\in H_{x,y}^1:\la u,v\ra_{L_{x,y}^2}=0\}.$$
We also denote the tangent bundle of $S(c)$ by $TS(c)$. Next, the energy functional $\mH|_{S(c)}$ restricted on $S(c)$ is a $C^1$-functional on $S(c)$ and for any $u\in S(c)$ and $v\in T_uS(c)$ we have
\begin{align*}
\la \mH'|_{S(c)}(u),v\ra =\la \mH'(u),v\ra .
\end{align*}
We use $\|\mH'|_{S(c)}(u)\|_*$ to denote the dual norm of $\mH'|_{S(c)}(u)$ in the cotangent space $(T_uS(c))^*$, i.e.
$$\|\mH'|_{S(c)}(u)\|_*:=\sup_{v\in T_u S(c),\,\|v\|_{H^1_{x,y}}\leq 1}|\la \mH'|_{S(c)}(u),v\ra|. $$
Let now
$$\tilde{S}(c):=\{u\in S(c):\mH'|_{S(c)}(u)\neq 0\}.$$
According to \cite[Lem. 4]{Berestycki1983} there exists a locally Lipschitz pseudo gradient vector field $Y:\tilde{S}(c)\to TS(c)$ such that $Y(u)\in T_uS(c)$,
\begin{align}\label{pseudo gradient vector}
\|Y(u)\|_{H_{x,y}^1}\leq 2\|\mH'|_{S(c)}(u)\|_*\quad\text{and}\quad \la \mH'|_{S(c)}(u),Y(u)\ra\geq \|\mH'|_{S(c)}(u)\|_*^2
\end{align}
for $u\in \tilde{S}(c)$.

Having all the preliminaries we are ready to prove the claimed statement.

\begin{lemma}\label{minimizer is solution}
For any $c>0$ an optimizer $u_c$ of $m_c$ is a solution of \eqref{standing wave} for some $\beta\in\R$.
\end{lemma}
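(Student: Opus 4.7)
\noindent My plan is to show that every optimizer $u_c$ is in fact a critical point of $\mH$ restricted to the mass sphere $S(c)$. Once this is established, the standard Lagrange multiplier rule on the $C^1$-submanifold $S(c)\subset H^1_{x,y}$ yields some $\beta\in\R$ with $\mH'(u_c)+\beta u_c=0$, which is precisely \eqref{standing wave}. I argue by contradiction: suppose $\|\mH'|_{S(c)}(u_c)\|_*>0$, and use a deformation argument to construct a competitor in $\Gamma(c)$ whose maximum energy lies strictly below $m_c$, contradicting the mountain pass identification $m_c=\gamma_c$ from Lemma \ref{lemma existence mountain pass}.

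\noindent By continuity of $\mH'|_{S(c)}$ there exist a neighborhood $U\subset\tilde S(c)$ of $u_c$ and some $\delta>0$ with $\|\mH'|_{S(c)}(v)\|_*\geq\delta$ on $U$. The local flow $\eta_\tau$ generated by $-Y$, with $Y$ from \eqref{pseudo gradient vector}, then satisfies $\frac{d}{d\tau}\mH(\eta_\tau(v))\leq-\delta^2$ while $\eta_\tau(v)$ remains in $U$. To place $u_c$ as the unique peak of a concrete mountain pass path I invoke Lemma \ref{monotoneproperty}: since $u_c\in V(c)$, the map $t\mapsto\mH(u_c^t)$ attains its unique strict maximum at $t=1$ with value $m_c$. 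Choose $0<t_0<1<t_1$ such that $u_c^{t_0}\in A_{k,\vare}$ and $\mH(u_c^{t_1})<0$. The inclusion $u_c^{t_0}\in A_{k,\vare}$ is possible because $\|\pt_y u_c^{t_0}\|_2=\|\pt_y u_c\|_2$ is independent of $t$, the required bound $\|\pt_y u_c\|_2^2\leq 2(m_c-\vare)$ follows from $\mI(u_c)=\mH(u_c)=m_c$ combined with the freedom to shrink $\vare$ afforded by Remark \ref{technical reason}, while $\|\nabla_x u_c^t\|_2\to 0$ as $t\to 0$ handles the $x$-gradient constraint. Setting $g_0(t):=u_c^{t_0+(t_1-t_0)t}$ then yields $g_0\in\Gamma(c)$ whose energy achieves its unique strict maximum $m_c$ at the interior point $\bar t:=(1-t_0)/(t_1-t_0)$.

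\noindent I then depress the peak via the flow: pick a continuous cutoff $\chi:[0,1]\to[0,\sigma]$ with $\chi(\bar t)=\sigma$, $\chi(0)=\chi(1)=0$, and $\operatorname{supp}\chi\subset\{t\in[0,1]:g_0(t)\in U\}$ (possible by continuity of $g_0$), and set $\tilde g(t):=\eta_{\chi(t)}(g_0(t))$. Since $\chi$ vanishes at the endpoints, $\tilde g(0)=g_0(0)\in A_{k,\vare}$ and $\mH(\tilde g(1))=\mH(g_0(1))<0$, so $\tilde g\in\Gamma(c)$. Outside $\operatorname{supp}\chi$ one has $\tilde g=g_0$ with $\mH(g_0(t))<m_c$ by the strict maximality at $\bar t$; inside $\operatorname{supp}\chi$, integrating the energy decay gives $\mH(\tilde g(t))\leq\mH(g_0(t))-\chi(t)\delta^2$, and a routine compactness argument on $[0,1]$ then forces $\max_{t\in[0,1]}\mH(\tilde g(t))<m_c$, contradicting $\gamma_c=m_c$. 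The main technical obstacle will be verifying $u_c^{t_0}\in A_{k,\vare}$: since the $y$-derivative is unaffected by the $x$-scaling, the bound on $\|\pt_y\cdot\|_2$ in the definition of $A_{k,\vare}$ must already be satisfied by $u_c$ itself, and this is precisely what the shrinking of $\vare$ in Remark \ref{technical reason} secures.
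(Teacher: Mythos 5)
Your overall strategy is the same as the paper's: deform the natural mountain pass path $g_0(t)=u_c^{t_0+(t_1-t_0)t}$ via a pseudo-gradient flow localized near the peak, and contradict the identification $\gamma_c=m_c$ from Lemma \ref{lemma existence mountain pass}. However, there is a genuine gap in your verification that $u_c^{t_0}\in A_{k,\vare}$, which is the technical heart of the argument.

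You claim that $\|\pt_y u_c\|_2^2\leq 2(m_c-\vare)$ ``follows from $\mI(u_c)=\mH(u_c)=m_c$ combined with the freedom to shrink $\vare$ afforded by Remark \ref{technical reason}.'' This does not work. The identity $\mI(u_c)=m_c$, i.e.
\[
\frac12\|\pt_y u_c\|_2^2+\frac{\alpha d-4}{4(\alpha+2)}\|u_c\|_{\alpha+2}^{\alpha+2}=m_c,
\]
gives only $\|\pt_y u_c\|_2^2<2m_c$, and the gap $\frac{\alpha d-4}{2(\alpha+2)}\|u_c\|_{\alpha+2}^{\alpha+2}$ is \emph{a priori} uncontrolled: nothing in the identity prevents $\|u_c\|_{\alpha+2}^{\alpha+2}$ from being arbitrarily small relative to $\|\pt_y u_c\|_2^2$, so shrinking $\vare$ (to a fixed positive value determined in Lemma \ref{lemma existence mountain pass} and Remark \ref{technical reason}) does not close the gap. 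The paper supplies the missing ingredient by introducing the scaling $T_\ld u(x,y)=\ld^{2/\alpha}u(\ld x,y)$, which preserves the constraint $\mK=0$ while rescaling the mass. Since $u_c$ is a minimizer and $c\mapsto m_c$ is monotone decreasing (Lemma \ref{monotone lemma}), the map $\ld\mapsto\mH(T_\ld u_c)$ has nonnegative derivative at $\ld=1$; this yields the additional inequality \eqref{identity abcd}, a \emph{lower} bound on $\|u_c\|_{\alpha+2}^{\alpha+2}$ in terms of $\|\pt_y u_c\|_2^2$. Only after combining \eqref{identity abcd} with $\mI(u_c)=m_c$ does one obtain $\|\pt_y u_c\|_2^2\leq 2m_c\bigl(1-(\tfrac d2-\tfrac2\alpha)\bigr)$, which is precisely what Remark \ref{technical reason}'s constraint $\vare\leq\tfrac d2-\tfrac2\alpha$ is designed to match. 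You need to supply this scaling-plus-monotonicity step; without it the membership $u_c^{t_0}\in A_{k,\vare}$, and hence $g_0\in\Gamma(c)$, is not established.

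A smaller remark: your deformation uses a $t$-dependent cutoff $\chi(t)$ on flow time, whereas the paper uses a cutoff $h(u)$ depending on the position $u\in S(c)$. Both are standard variants and either should work, but the position-dependent cutoff makes the flow a well-defined map $\eta(1,\cdot):S(c)\to S(c)$ globally, which streamlines the verification that $\eta(1,g_0(\cdot))\in\Gamma(c)$ (in particular via property (i) of the paper's deformation, which you would otherwise have to replicate by ensuring $\operatorname{supp}\chi$ avoids the endpoints with an adequate buffer).
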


\begin{proof}
We borrow an idea from the proof of \cite[Lem. 6.1]{Bellazzini2013} to show the claim. By Lagrange multiplier theorem we know that $u_c$ solves \eqref{standing wave} is equivalent to $\mH'|_{S(c)}(u)=0$. We hence assume the contrary $\|\mH'|_{S(c)}(u)\|_*\neq 0$, which implies that there exists some $\delta>0$ and $\mu>0$ such that
\begin{align}\label{gtr mu}
v\in B_{u_c}(3\delta)\Rightarrow\|\mH'|_{S(c)}(v)\|_*\geq \mu,
\end{align}
where $B_{u_c}(\delta):=\{v\in S(c):\|u-v\|_{H_{x,y}^1}\leq \delta\}$. Let $k$ and $\vare$ be given according to Lemma \ref{lemma existence mountain pass} and Remark \ref{technical reason}. Define
\begin{align*}
\vare_1&:=\frac{1}{4}\bg(m_c-\max\{\sup_{g\in\Gamma(c)}\mH(g(0)),\sup_{g\in\Gamma(c)}\mH(g(1))\}\bg),\\
\vare_2&:=\min\{\vare_1,m_c/4,\mu\delta/4\}.
\end{align*}
We now define the deformation mapping $\eta$ as follows: Let the sets $A,B$ and function $h:S(c)\to [0,\delta]$ be given by
\begin{align*}
A&:= S(c)\cap \mH^{-1}([m_c-2\vare_2,m_c+2\vare_2]),\\
B&:=B_{u_c}(2\delta)\cap \mH^{-1}([m_c-\vare_2,m_c+\vare_2]),\\
h(u)&:=\frac{\delta\,\mathrm{dist}(u,S(c)\setminus A)}{\mathrm{dist}(u,S(c)\setminus A)+\mathrm{dist}(u,B)}.
\end{align*}
Next, we define the pseudo gradient flow $W:S(c)\to H_{x,y}^1$ by
\begin{align*}
W(u):=
\left\{
\begin{array}{ll}
-h(u)\|Y(u)\|^{-1}_{H_{x,y}^1}Y(u),&\text{if $u\in\tilde{S}(c)$},\\
0,&\text{if $u\in S(c)\setminus \tilde{S}(c)$}.
\end{array}
\right.
\end{align*}
One easily verifies that $W$ is a locally Lipschitz function from $S(c)$ to $H_{x,y}^1$. Then by standard arguments (see for instance \cite[Lem. 6]{Berestycki1983}) there exists a mapping $\eta:\R\times S(c)\to S(c)$ such that $\eta(1,\cdot)\in C(S(c);S(c))$ and $\eta$ solves the differential equation
\begin{align*}
\frac{d}{dt}\eta(t,u)=W(\eta(t,u)),\quad\eta(0,u)=u
\end{align*}
for any $u\in S(c)$. We now claim that $\eta$ satisfies the following properties:
\begin{itemize}
\item[(i)]$\eta(1,v)=v$ if $v\in S(c)\setminus \mH^{-1}([m_c-2\vare_2,m_c+2\vare_2])$.
\item[(ii)]$\eta(1,\mH^{m_c+\vare_2}\cap B_{u_c}(\delta))\subset\mH^{m_c-\vare}$.
\item[(iii)]$\mH(\eta(1,v))\leq \mH(v)$ for all $v\in S(c)$.
\end{itemize}
Here, the symbol $\mH^\kappa$ denotes the set $\mH^\kappa:=\{v\in S(c):\mH(v)\leq \kappa\}$. For (i), by definition we see that $h(v)=0$, thus $\frac{d}{dt}\eta(t,v)|_{t=0}=W(v)=0$ and $\eta(t,v)\equiv \eta(0,v)=v$. For (iii), using \eqref{pseudo gradient vector} and the non-negativity of $h$ we obtain
\begin{align*}
\mH(\eta(1,v))&=\mH(v)+\int_0^1\frac{d}{ds}\mH(\eta(s,v))\,ds\nonumber\\
&=\mH(v)-\int_{s\in[0,1],\eta(s,v)\in \tilde{S}(c)}\la\mH'(\eta(s,v)),h(\eta(s,v))\|Y(\eta(s,v))\|^{-1}_{H_{x,y}^1}Y(\eta(s,v))\ra\,ds\nonumber\\
&\leq \mH(v)-\frac12\int_{s\in[0,1],\eta(s,v)\in \tilde{S}(c)}h(\eta(s,v))\|\mH'|_{S(c)}(\eta(s,v))\|_*\,ds\leq\mH(v).
\end{align*}
It is left to prove (ii). We first show that for $v\in \mH^{m_c+\vare_2}\cap B_{u_c}(\delta)$ one has $\eta(t,v)\in B_{u_c}(2\delta)$ for all $t\in[0,1]$. This follows from
\begin{align*}
\|\eta(t,v)-v\|_{H_{x,y}^1}=\|\int_{0}^t h(v)\|Y(v)\|^{-1}_{H_{x,y}^1}Y(v)\,ds\|_{H_{x,y}^1}
\leq th(v)\leq \delta.
\end{align*}
By \eqref{gtr mu} this implies particularly that $\|\mH'|_{S(c)}(\eta(t,v))\|_*\geq \mu$. Consequently, using \eqref{pseudo gradient vector}, $0\leq h\leq \delta$ and $\vare_2\leq \mu\delta/4$ we obtain
\begin{align*}
\mH(\eta(1,v))
&\leq \mH(v)-\frac12\int_{s\in[0,1],\eta(s,v)\in \tilde{S}(c)}h(\eta(s,v))\|\mH'|_{S(c)}(\eta(s,v))\|_*\,ds\nonumber\\
&\leq m_c+\vare_2-\frac{\mu\delta}{2}\leq m_c-\vare_2.
\end{align*}
Next, we recall the function $g$ defined by \eqref{curve} by setting $u=u_c$ therein. We claim that there exist $t_0\ll 1$ and $t_1\gg 1$ such that $g\in\Gamma(c)$. Indeed, from the proof of Lemma \ref{lemma existence mountain pass} it suffices to show $\|\pt_y u_c\|_2^2\leq 2(m_c-\vare)$. Define the scaling operator $T_\ld$ by
\begin{align}\label{def of t ld}
T_\ld u(x,y):=\ld^{\frac{2}{\alpha}}u(\ld x,y).
\end{align}
Then
\begin{align*}
\|T_\ld(\nabla_x u)\|_2^2&=\ld^{2+\frac4\alpha-d}\|\nabla_x u\|_2^2,\\
\|T_\ld u\|_{\alpha+2}^{\alpha+2}&=\ld^{2+\frac4\alpha-d}\|u\|_{\alpha+2}^{\alpha+2},\\
\mK(T_\ld u)&=\ld^{2+\frac4\alpha-d}\mK(u),\\
\|T_\ld (\pt_y u)\|_2^2&=\ld^{\frac4\alpha-d}\|\pt_y u\|_2^2,\\
\|T_\ld u\|_2^2&=\ld^{\frac4\alpha-d}\|u\|_2^2.
\end{align*}
Combining with Lemma \ref{monotone lemma} and the fact that $u_c$ is an optimizer of $m_c$ we infer that $\frac{d}{d\ld}\mH(T_\ld u_c)|_{\ld=1}\geq 0$, or  equivalently
\begin{align}\label{identity abcd}
\|\pt_y u_c\|_{2}^2\leq \frac{2\alpha +(4-\alpha d)}{2(\alpha+2)}\|u_c\|_{\alpha+2}^{\alpha+2}.
\end{align}
Hence
\begin{align*}
m_c=\mH(u_c)=\mI(u_c)=\frac{1}{2}\|\pt_y u_c\|_{2}^2+\frac{\alpha d-4}{4(\alpha+2)}\|u_c\|^{\alpha+2}_{\alpha+2}
\geq\frac{\alpha}{2\alpha+4-\alpha d}\|\pt_y u_c\|_2^2,
\end{align*}
where $\mI(u)$ is defined by \eqref{def of mI}. Combining with Remark \ref{technical reason} we obtain
\begin{align*}
\|\pt_y u_c\|_2^2\leq 2m_c\bg(1-\bg(\frac{d}{2}-\frac{2}{\alpha}\bg)\bg)\leq 2m_c(1-\vare),
\end{align*}
as desired. By (i) and Lemma \ref{lemma existence mountain pass} we know that $\eta(1,g(t))\in \Gamma(c)$. We shall finally prove
\begin{align*}
\max_{t\in[0,1]}\mH(\eta(1,g(t)))<m_c,
\end{align*}
which contradicts the characterization $m_c=\gamma_c$ deduced from Lemma \ref{lemma existence mountain pass} and closes the desired proof. Notice by definition of $g$ and Lemma \ref{monotoneproperty} we have $\mH(g(t))\leq \mH(u_c)=m_c$ for all $t\in[0,1]$. Thus only the following scenarios can happen:
\begin{itemize}
\item[(a)] $g(t)\in S(c)\setminus B_{u_c}(\delta)$. By (iii) and Lemma \ref{monotoneproperty} (v) we have
\begin{align*}
\mH(\eta(1,g(t)))\leq \mH(g(t))<\mH(u_c)=m_c.
\end{align*}

\item[(b)]$g(t)\in\mH^{m_c-\vare_2} $. By (iii) we have
\begin{align*}
\mH(\eta(1,g(t)))\leq \mH(g(t))\leq m_c-\vare_2<m_c.
\end{align*}

\item[(c)]$g(t)\in\mH^{-1}([m_c-\vare_2,m_c+\vare_2])\cap B_{u_c}(\delta) $. By (ii) we have
\begin{align*}
\mH(\eta(1,g(t)))\leq  m_c-\vare_2<m_c.
\end{align*}
\end{itemize}
This completes the desired proof.
\end{proof}

\subsection{Proof of Theorem \ref{thm existence of ground state}}
Having all the preliminaries we are in a position to prove Theorem \ref{thm existence of ground state}.

\begin{proof}[Proof of Theorem \ref{thm existence of ground state}]
We split our proof into four steps.
\subsubsection*{Step 1: A Le Coz characterization of $m_c$}
We firstly give a different and much handier characterization for $m_c$ due to Le Coz \cite{LeCoz2008} that is more useful for our analysis. Define
\begin{align}
\tilde{m}_c:=\inf\{\mI(u):u\in S(c),\mK(u)\leq 0\},\label{mtilde equal m}
\end{align}
where $\mI(u)$ is the energy functional defined by \eqref{def of mI}. We now prove $m_c=\tilde{m}_c$. Let $(u_n)_n\subset S(c)$ be a minimizing sequence for the variational problem $\tilde{m}_c$, i.e.
\begin{align}\label{le coz charac}
\mI(u_n)=\tilde{m}_c+o_n(1),\quad
\mK(u_n)\leq 0\quad\forall\,n\in\N.
\end{align}
By Lemma \ref{monotoneproperty} we know that there exists some $t_n\in(0,1]$ such that $\mK(u_n^{t_n})$ is equal to zero. Thus
\begin{align*}
m_c\leq \mH(u_n^{t_n})=\mI(u_n^{t_n})\leq \mI(u_n)=\tilde{m}_c+o_n(1).
\end{align*}
Sending $n\to\infty$ we infer that $m_c\leq \tilde{m}_c$. On the other hand,
\begin{align*}
\tilde{m}_c
\leq\inf\{\mI(u):u\in V(c)\}
=\inf\{\mH(u):u\in V(c)\}=m_c,
\end{align*}
which closes the proof of Step 1.

\subsubsection*{Step 2: Existence of a non-negative optimizer of $m_c$}
We next prove the existence of a non-negative optimizer of $m_c$ following the idea from \cite{Akahori2013}. A new difficulty arises from the fact that we need to guarantee the weak limit of a minimizing sequence to be an element of $S(c)$. In general, by weakly lower semicontinuity of norms it only follows that $u\in S(c_1)$ for some $c_1\in[0,c]$, which is obviously insufficient for our purpose. We shall appeal to Lemma \ref{monotone lemma} to overcome this difficulty. Let $(u_n)_n\subset S(c)$ be a minimizing sequence satisfying \eqref{le coz charac}. By diamagnetic inequality we know that the variational problem $\tilde{m}_{c}$ is stable under the mapping $u\mapsto |u|$, thus we may w.l.o.g. assume that $u_n\geq 0$. By Corollary \ref{cor lower bound} and Step 1 we know that $\tilde{m}_c<\infty$, hence
\begin{align*}
\infty>\tilde{m}_c\gtrsim \mI(u_n)=\frac{1}{2}\|\pt_y u\|_{2}^2+\frac{\alpha d-4}{4(\alpha+2)}\|u\|^{\alpha+2}_{\alpha+2}.
\end{align*}
Combining with $Q(u_n)\leq 0$ and $(u_n)\subset S(c)$ we conclude that $(u_n)_n$ is a bounded sequence in $H_{x,y}^1$. Now using \eqref{key gn inq2} and Lemma \ref{lemma non vanishing limit} we may find some $H_{x,y}^1\setminus\{0\}\ni u\geq 0$ such that $u_n\rightharpoonup u$ weakly in $H_{x,y}^1$. By weakly lower semicontinuity of norms we deduce
\begin{align}
\mM(u)=:c_1\in(0,c],\quad\mI(u)\leq \tilde{m}_c.\label{xia jie}
\end{align}
We next show $\mK(u)\leq 0$. Assume the contrary $\mK(u)>0$. By Brezis-Lieb lemma, $\mK(u_n)\leq 0$ and the fact that $L_{x,y}^2$ is a Hilbert space we infer that
\begin{align*}
\mM(u_n-u)&=c-c_1+o_n(1),\\
\mK(u_n-u)&\leq -\mK(u)+o_n(1).
\end{align*}
Therefore, for all sufficiently large $n$ we know that $\mM(u_n-u)\in(0,c)$ and $\mK(u_n-u)<0$. By Lemma \ref{monotoneproperty} we know that there exists some $t_n\in(0,1)$ such that $\mK((u_n-u)^{t_n})=0$. Consequently, Lemma \ref{monotone lemma}, Brezis-Lieb lemma and Step 1 yield
\begin{align*}
\tilde{m}_c\leq \mI((u_n-u)^{t_n})<\mI(u_n-u)=\mI(u_n)-\mI(u)+o_n(1)=\tilde{m}_c-\mI(u)+o_n(1).
\end{align*}
Sending $n\to\infty$ and using the non-negativity of $\mI(u)$ we obtain $\mI(u)=0$. This in turn implies $u=0$, which is a contradiction and thus $\mK(u)\leq 0$. If $\mK(u)<0$, then again by Lemma \ref{monotoneproperty} we find some $s\in(0,1)$ such that $\mK(u^s)=0$. But then using Lemma \ref{monotone lemma}, Step 1 and the fact $c_1\leq c$
\begin{align*}
\tilde{m}_{c_1}\leq \mI(u^s)<\mI(u)\leq \tilde{m}_c\leq \tilde{m}_{c_1},
\end{align*}
a contradiction. We conclude therefore $\mK(u)=0$

Thus $u$ is a minimizer of $m_{c_1}$. From Lemma \ref{minimizer is solution} we know that $u$ is a solution of \eqref{standing wave} and it remains to show that the corresponding $\beta$ in \eqref{standing wave} is positive and $\mM(u)=c$.

\subsubsection*{Step 3: Positivity of $\beta$}
First we prove that $\beta$ is non-negative. Testing \eqref{standing wave} with $u$ and followed by eliminating $\|\nabla_x u\|_2^2$ using $\mK(u)=0$ we obtain
\begin{align}
\|\pt_y u\|_2^2+\beta\mM(u)=\frac{2\alpha +(4-\alpha d)}{2(\alpha+2)}\|u\|_{\alpha+2}^{\alpha+2}.\label{corrected1}
\end{align}
Combining with \eqref{identity abcd} we infer that $\beta\mM(u)\geq 0$. Since $u\neq 0$ we conclude $\beta\geq 0$. We next show that $\beta=0$ leads to a contradiction, which completes the proof of Step 3. Assume therefore that $u$ satisfies the equation
\begin{align}\label{liouville}
-\Delta_{x,y}u=u^{\alpha+1}.
\end{align}
First consider the case $d\geq 2$. By the Brezis-Kato estimate \cite{BrezisKato} (see also \cite[Lem. B.3]{Struwe1996}) and the local $L^p$-elliptic regularity (see for instance \cite[Lem. B.2]{Struwe1996}) we know that $u\in W^{2,p}_{\rm loc}(\R^{d+1})$ for all $p\in[1,\infty)$. Hence by Sobolev embedding we also know that $u$ and $\nabla u$ are of class $L^\infty_{\rm loc}(\R^{d+1})$. Taking $\pt_{j}$ to \eqref{liouville} with $j\in\{1,\cdots,d+1\}$ we obtain
\begin{align*}
-\Delta_{x,y}\pt_j u=(\alpha+1)u^{\alpha}\pt_j u\in L^\infty_{\rm loc}(\R^{d+1}).
\end{align*}
Hence by applying the local $L^p$-elliptic regularity again we deduce $u\in W^{3,p}_{\rm loc}(\R^{d+1})$ for all $p\in[1,\infty)$. Consequently, by Sobolev embedding we infer that $u\in C^2(\R^{d+1})$. But then by \cite[Thm. 1]{Liouville_type} we know that any nonnegative $C^2$-solution of \eqref{liouville} must be zero, a contradiction.

Next we consider the case $d=1$. For $n\in\N$ let $\phi_n\in C_c^\infty(\R;[0,1])$ be a radially symmetric decreasing function such that $\phi_n(t)\equiv 1$ on $|t|\leq n$, $\phi_n(t)\equiv 0$ for $|t|\geq n+1$ and $\sup_{n\in\N}\|\phi_n\|_{L_t^\infty}\lesssim 1$. Define also
$$ D_n:=\{x\in\R:|x|\in(n,n+1)\}.$$
Since $u\neq 0$ is non-negative, by monotone convergence theorem we know that
\begin{align*}
\int_{\R\times\T}u^{\alpha+1}\phi_n\,dxdy>0
\end{align*}
for all $n\gg 1$. On the other hand, using the fact that $\mathrm{supp}\,\nabla_x\phi_n\subset D_n$ and H\"older we see that
\begin{align}
\int_{\R\times \T}\nabla_x u\cdot\nabla_x \phi_n\,dxdy
&\leq \|\nabla_x u\|_{L^{2}(D_n\times\T)}\|\nabla_x \phi_n\|_{L^2(D_n\times\T)}\nonumber\\
&\lesssim \|\nabla_x u\|_{L^{2}(D_n\times\T)}((n+1)-n)^{\frac12}\lesssim\|\nabla_x u\|_{L^{2}(D_n\times\T)}.\label{contra1}
\end{align}
But then
$\infty>\|\nabla_x u\|_{2}^{2}\geq \sum_{n\geq 1}\|\nabla_x u\|^{2}_{L^{2}(D_n\times\T)}$
yields $\|\nabla_x u\|_{L^{2}(D_n\times\T)}=o_n(1)$. By the fact that $\phi_n$ is independent of $y$ we also know $\int_{\R\times\T}(-\pt_y^2 u)\phi_n\,dxdy=0$.
Summing up, by testing \eqref{standing wave} with $\phi_n$ and rearranging terms we obtain
\begin{align*}
0=\int_{\R\times\T}u^{\alpha+1}\phi_n\,dxdy+o_n(1)>0
\end{align*}
for $n$ sufficiently large, a contradiction. This completes the proof of Step 3.

\subsubsection*{Step 4: $\mM(u)=c$ and conclusion}
Finally, we prove $\mM(u)=c$. Assume therefore $c_1<c$. By Lemma \ref{monotone lemma} and \eqref{xia jie} we know that $m_{c_1}$ is a local minimizer of the mapping $c\mapsto m_c$, which in turn implies that the inequality in \eqref{identity abcd} is in fact an equality. Now using \eqref{identity abcd} (as an equality) and \eqref{corrected1} we infer that $\beta\mM(u)=0$, which is a contradiction since $\beta>0$ and $u\neq 0$. We thus conclude $\mM(u)=c$. That $u$ is positive follows immediately from the strong maximum principle. This completes the desired proof.
\end{proof}

\section{Non-trivial $y$-dependence of the ground states}\label{sec: Dependence}
In this section we prove Theorem \ref{thm threshold mass}. In a first step we consider the auxiliary problem $m_{1,\ld}$ defined by \eqref{def of auxiliary problem}. Proceeding as in the proof of Theorem \ref{thm existence of ground state} we know that the variational problem $m_{1,\ld}$ has an optimizer $u_\ld\in V(1)$ for any $\ld\in(0,\infty)$. Following the same lines in \cite{TTVproduct2014}, we prove the following characterization of $m_{1,\ld}$ for varying $\ld$.

\begin{lemma}\label{lemma auxiliary}
Let $\wm_c$ be the quantity defined through \eqref{def of wmc}. Then there exists some $\ld_*\in(0,\infty)$ such that
\begin{itemize}
\item For all $\ld \in (0,\ld_*)$ we have $m_{1,\ld}<2\pi \wm_{(2\pi)^{-1}}$. Moreover, for $\ld\in(0,\ld_*)$ any minimizer $u_\ld$ of $m_{1,\ld}$ satisfies $\pt_y u_\ld\neq 0$.

\item For all $\ld\in(\ld_*,\infty)$ we have $m_{1,\ld}=2\pi \wm_{(2\pi)^{-1}}$. Moreover, for $\ld\in(\ld_*,\infty)$ any minimizer $u_\ld$ of $m_{1,\ld}$ satisfies $\pt_y u_\ld=0$.
\end{itemize}
\end{lemma}

The proof of Theorem \ref{thm threshold mass} follows then from Lemma \ref{lemma auxiliary} by a simple rescaling argument.

\subsection{Proof of Lemma \ref{lemma auxiliary}}

In order to prove Lemma \ref{lemma auxiliary}, we firstly collect some useful auxiliary lemmas.

\begin{lemma}\label{auxiliary lemma 1}
We have
\begin{align}\label{limit ld to infty energy sec4}
\lim_{\ld\to\infty}m_{1,\ld}=2\pi\wm_{(2\pi)^{-1}}.
\end{align}
Additionally, let $u_\ld\in V(1)$ be a positive optimizer of $m_{1,\ld}$ which also satisfies
\begin{align}
-\Delta_x u_\ld-\ld \pt_y^2 u_\ld+\beta_\ld u_\ld=|u_\ld|^\alpha u_\ld\quad\text{on $\R^d\times \T$}\label{vanishing 3 sec4}
\end{align}
for some $\beta_\ld>0$ (whose existence is guaranteed by Theorem \ref{thm existence of ground state}). Then
\begin{align}
\lim_{\ld\to\infty}\ld\|\pt_y u_\ld\|_2^2=0.\label{vanishing sec4}
\end{align}
\end{lemma}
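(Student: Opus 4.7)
I would prove both conclusions together, since each piece of \eqref{limit ld to infty energy sec4} and \eqref{vanishing sec4} will drop out of a single energy identity applied to a sequence of minimizers $u_n:=u_{\ld_n}\in V(1)$ with $\ld_n\to\infty$.

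\emph{Upper bound.} First I would establish the easy inequality $m_{1,\ld}\leq 2\pi\wm_{(2\pi)^{-1}}$ uniformly in $\ld>0$. Taking an optimizer $P\in\widehat{V}((2\pi)^{-1})$ of $\wm_{(2\pi)^{-1}}$ (which exists by Lemma \ref{lem wmc property}) and extending it to the $y$-independent function $u(x,y):=P(x)$ on $\R^d\times\T$, direct computation using the $2\pi$-volume of $\T$ gives $\mM(u)=2\pi\wmM(P)=1$, $\mK(u)=2\pi\wmK(P)=0$, and (since $\pt_y u\equiv 0$) $\mH_\ld(u)=2\pi\wmH(P)=2\pi\wm_{(2\pi)^{-1}}$. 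Hence $u\in V(1)$ is an admissible competitor for $m_{1,\ld}$ for every $\ld>0$.

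\emph{A priori bounds and zero-mean decomposition.} Using $\mK(u_n)=0$, I compute
\begin{align*}
m_{1,\ld_n}=\mH_{\ld_n}(u_n)-\tfrac{2}{\alpha d}\mK(u_n)=\tfrac{\ld_n}{2}\|\pt_y u_n\|_2^2+\bg(\tfrac{1}{2}-\tfrac{2}{\alpha d}\bg)\|\nabla_x u_n\|_2^2,
\end{align*}
which combined with the upper bound yields $\ld_n\|\pt_y u_n\|_2^2\leq C$ and $\|\nabla_x u_n\|_2^2\leq C$; in particular $\|\pt_y u_n\|_2\to 0$. Now decompose $u_n=w_n+r_n$, where $w_n(x):=(2\pi)^{-1}\int_\T u_n(x,y)\,dy$ is the $y$-average and $r_n$ has zero $y$-mean. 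Poincar\'e on $\T$ yields $\|r_n\|_2\leq \|\pt_y u_n\|_2\to 0$, and applying Lemma \ref{lemma gn additive} to $r_n$ with $\alpha<4/(d-1)$ (so that the exponent $(4-\alpha(d-1))/2$ of $\|r_n\|_2$ is strictly positive) forces $\|r_n\|_{\alpha+2}\to 0$. The $L_{x,y}^2$- and $\dot H_x^1$-orthogonality of $w_n$ and $r_n$ then give $\wmM(w_n)\to(2\pi)^{-1}$ and $\|u_n\|_{\alpha+2}^{\alpha+2}=2\pi\|w_n\|_{L_x^{\alpha+2}}^{\alpha+2}+o(1)$, whence $\mK(w_n)=-\|\nabla_xr_n\|_2^2+o(1)\leq o(1)$.

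\emph{Lower bound via the Le Coz characterization on $\R^d$.} Using $\mK(u_n)=0$ once more, rewrite
\begin{align*}
m_{1,\ld_n}=\mI_{\ld_n}(u_n)=\tfrac{\ld_n}{2}\|\pt_y u_n\|_2^2+\tfrac{\alpha d-4}{4(\alpha+2)}\|u_n\|_{\alpha+2}^{\alpha+2}=\tfrac{\ld_n}{2}\|\pt_y u_n\|_2^2+2\pi\wmI(w_n)+o(1).
\end{align*}
To bound $\wmI(w_n)$ below by $\wm_{(2\pi)^{-1}}+o(1)$, I would separate two cases by the sign of $\wmK(w_n)$. If $\wmK(w_n)\leq 0$, the Euclidean analogue of Step 1 in the proof of Theorem \ref{thm existence of ground state} gives directly $\wmI(w_n)\geq \wm_{\wmM(w_n)}$. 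If $\wmK(w_n)>0$ (hence $\wmK(w_n)\to 0$), I rescale $w_n\mapsto w_n^{t_n}$ as in Lemma \ref{monotoneproperty}(ii) applied on $\R^d$ so that $\wmK(w_n^{t_n})=0$; the uniform lower bound $\|\nabla_x u_n\|_2\gtrsim 1$ (which follows from $\mK(u_n)=0$, $\|\pt_y u_n\|_2\to 0$, $\|u_n\|_2=1$, and Lemma \ref{lemma gn additive}, exactly as in Corollary \ref{cor lower bound}) together with $\|r_n\|_{\alpha+2}\to 0$ keeps $\|w_n\|_{L_x^{\alpha+2}}$ bounded away from zero, which forces $t_n\to 1$. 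The explicit homogeneity $\wmI(w_n^{t_n})=t_n^{\alpha d/2}\wmI(w_n)$ then yields $\wmI(w_n)\geq (1+o(1))\wm_{\wmM(w_n)}$ in this case too. Continuity of $c\mapsto\wm_c$ (Lemma \ref{lem wmc property}(iv)) concludes both cases with $\wmI(w_n)\geq \wm_{(2\pi)^{-1}}+o(1)$.

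\emph{Combination.} Inserting this lower bound gives $m_{1,\ld_n}\geq \tfrac{\ld_n}{2}\|\pt_y u_n\|_2^2+2\pi\wm_{(2\pi)^{-1}}+o(1)$, and comparing with the upper bound $m_{1,\ld_n}\leq 2\pi\wm_{(2\pi)^{-1}}$ simultaneously forces $\ld_n\|\pt_y u_n\|_2^2\to 0$ and $m_{1,\ld_n}\to 2\pi\wm_{(2\pi)^{-1}}$, as claimed. The main obstacle I expect is the $L^{\alpha+2}$-smallness of the zero-mean remainder $r_n$: because we only know $\|r_n\|_2\to 0$ but not $\|\nabla_x r_n\|_2\to 0$ a priori, the inhomogeneous Gagliardo-Nirenberg inequality fails to produce enough decay, and only the scale-invariant (in $x$) additive form of Lemma \ref{lemma gn additive} supplies the needed positive power of $\|r_n\|_2$ to close the estimate.
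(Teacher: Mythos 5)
Your proof is correct, and it takes a genuinely different route from the paper's. The paper proceeds by concentration compactness: it first shows $\|\pt_y u_\ld\|_2\to 0$ by a contradiction argument, extracts a nontrivial weak limit $u$ via Lemma \ref{lemma non vanishing limit} (after translation), shows $(\beta_\ld)_\ld$ is bounded and passes to the limit in the Euler--Lagrange equation \eqref{vanishing 3 sec4} to obtain a Euclidean ground state equation, then uses Lemma \ref{lem wmc property} (iii)--(iv) and weak lower semicontinuity to identify $u$ as an optimizer of $\wm_{(2\pi)^{-1}}$, from which both \eqref{limit ld to infty energy sec4} and \eqref{vanishing sec4} follow. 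This sets up the strong convergence $u_\ld\to u$ which the paper then exploits in Lemma \ref{strong convergence u ld}. Your argument is purely variational and avoids extracting any weak limit: you decompose $u_\ld$ into its $y$-average plus a zero-mean remainder, kill the remainder in $L^{\alpha+2}$ by Poincar\'e on $\T$ together with Gagliardo--Nirenberg, and bound $\wmI(w_\ld)$ from below by the Euclidean Le Coz characterization, with a rescaling to handle the case $\wmK(w_\ld)>0$. You never invoke \eqref{vanishing 3 sec4} or the positivity of $\beta_\ld$, which is a genuine simplification; the trade-off is that you do not obtain the limiting profile $u$ that the paper reuses in the next lemma. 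One small inaccuracy in your closing remark: since the orthogonal decomposition gives $\|r_\ld\|_{H^1_{x,y}}$ bounded and $\|r_\ld\|_2\to 0$, the \emph{inhomogeneous} Gagliardo--Nirenberg inequality $\|r_\ld\|_{\alpha+2}^{\alpha+2}\lesssim\|r_\ld\|_{H^1_{x,y}}^{\alpha(d+1)/2}\|r_\ld\|_2^{(4-\alpha(d-1))/2}$ already yields $\|r_\ld\|_{\alpha+2}\to 0$; the scale-invariant additive form of Lemma \ref{lemma gn additive}, while sufficient, is not actually necessary for this particular step.
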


\begin{proof}
By assuming that a candidate in $V(1)$ is independent of $y$ we already conclude
\begin{align}
m_{1,\ld}\leq 2\pi \wm_{(2\pi)^{-1}}.\label{upper bound sec4}
\end{align} Next we prove
\begin{align}
\lim_{\ld\to\infty}\|\pt_y u_\ld\|_2^2=0.\label{vanishing1 sec4}
\end{align}
Suppose that \eqref{vanishing1 sec4} does not hold. Then we must have
\begin{align*}
\lim_{\ld\to\infty}\ld\|\pt_y u_\ld\|_2^2=\infty.
\end{align*}
Since $\mK(u_\ld)=0$ and $\alpha>4/d$,
\begin{align}
m_{1,\ld}&=\mH_\ld(u_\ld)-\frac{2}{\alpha d}\mK_{\ld}(u_\ld)\nonumber\\
&=\ld\|\pt_y u_\ld\|_2^2
+\bg(\frac12-\frac{2}{\alpha d}\bg)\|\nabla_x u_\ld\|_2^2
\geq \ld\|\pt_y u_\ld\|_2^2\to\infty\label{contradiction sec4}
\end{align}
as $\ld\to\infty$, which contradicts \eqref{upper bound sec4} and in turn proves \eqref{vanishing1 sec4}. Using \eqref{upper bound sec4} and \eqref{contradiction sec4} we infer that
\begin{align}\label{upper bound 2 sec4}
\|\nabla_x u_\ld\|_2^2\lesssim  m_{1,\ld}\leq  2\pi\wm_{(2\pi)^{-1}}<\infty.
\end{align}
Therefore $(u_\ld)_\ld$ is a bounded sequence in $H_{x,y}^1$, whose weak limit is denoted by $u$. Using \eqref{key gn inq2} and Lemma \ref{lemma non vanishing limit} we may also assume that $u\neq 0$. By \eqref{vanishing1 sec4} we know that $u$ is independent of $y$ and thus $u\in H_x^1$. Moreover, using weakly lower semicontinuity of norms we know that $\wmM(u)\in(0,(2\pi)^{-1}]$. On the other hand, using $\mK(u_\ld)=0$, \eqref{vanishing 3 sec4} and $\mM(u_\ld)=1$ we obtain
\begin{align*}
\beta_\ld=\frac{2\alpha +(4-\alpha d)}{2(\alpha+2)}\|u_\ld\|_{\alpha+2}^{\alpha+2}-\ld\|\pt_y u_\ld\|_2^2\lesssim \|u_\ld\|_{\alpha+2}^{\alpha+2}.
\end{align*}
Thus $(\beta_\ld)_\ld$ is a bounded sequence in $(0,\infty)$, whose limit is denoted by $\beta$. We now test \eqref{vanishing 3 sec4} with $\phi\in C_c^\infty(\R^d)$ and integrate both sides over $\R^d\times\T$. Notice particularly that the term $\int_{\R^d\times\T}  \pt_y^2 u_\ld \phi\,dxdy=0$ for any $\ld>0$ since $\phi$ is independent of $y$. Using the weak convergence of $u_\ld$ to $u$ in $H_{x,y}^1$, by sending $\ld\to\infty$ we obtain
\begin{align}
-\Delta_x u+\beta u=|u|^\alpha u\quad\text{in $\R^d$}.\label{vanishing 4 sec4}
\end{align}
In particular, by Lemma \ref{lem wmc property} we know that $\wmK(u)=0$ and $\beta>0$. Combining with weakly lower semicontinuity of norms we deduce
$$2\pi\wmH(u)=2\pi\wmI(u)\leq\liminf_{\ld\to\infty}\mI_\ld(u_\ld)= \liminf_{\ld\to\infty}\mH_\ld(u_\ld)\leq 2\pi \wm_{(2\pi)^{-1}},$$
where $\mI_\ld(u)$ and $\wmI(u)$ are the quantities defined by \eqref{def of I ld} and \eqref{def of wmI} respectively. However, by Lemma \ref{lem wmc property} the mapping $c\mapsto \wm_c$ is strictly monotone decreasing on $(0,\infty)$, from which we conclude that $\wmM(u)=(2\pi)^{-1}$ and $u$ is an optimizer of $\wm_{(2\pi)^{-1}}$. Finally, using the weakly lower semicontinuity of norms we conclude
\begin{align}
m_{1,\ld}&=\mH_\ld(u_\ld)=\mH_\ld(u_\ld)-\frac{2}{\alpha d}\mK(u)
=\frac{\ld}{2}\|\pt_y u_\ld\|_2^2+(\frac{1}{2}-\frac{2}{\alpha d})\|\nabla_x u_\ld\|_2^2\nonumber\\
&\geq (\frac{1}{2}-\frac{2}{\alpha d})\|\nabla_x u_\ld\|_2^2
\geq (\frac{1}{2}-\frac{2}{\alpha d})2\pi\|\nabla_x u\|_{L_x^2}^2+o_\ld(1)\nonumber\\
&=2\pi \wmH(u)+o_\ld(1)\geq 2\pi\wm_{(2\pi)^{-1}}+o_\ld(1).\label{vanishing 5 sec4}
\end{align}
Letting $\ld\to\infty$ and taking \eqref{upper bound sec4} into account we conclude \eqref{limit ld to infty energy sec4}. Finally, \eqref{vanishing sec4} follows directly from the previous calculation by not neglecting $\ld\|u_\ld\|_2^2$ therein. This completes the desired proof.
\end{proof}

\begin{lemma}\label{strong convergence u ld}
Let $u_\ld$ and $u$ be the functions given in the proof of Lemma \ref{auxiliary lemma 1}. Then $u_\ld\to u$ strongly in $H_{x,y}^1$.
\end{lemma}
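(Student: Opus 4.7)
The plan is to deduce strong convergence from the combination of weak convergence in $H^1_{x,y}$ (established in the proof of Lemma \ref{auxiliary lemma 1}) and convergence of the three constituent norms $\|u_\ld\|_2$, $\|\nabla_x u_\ld\|_2$, $\|\pt_y u_\ld\|_2$ to those of $u$. Since $H^1_{x,y}$ is a Hilbert space and the three components live in orthogonal pieces of it, it suffices to match each $L^2$-norm in the limit; the Radon–Riesz property then promotes weak convergence to strong convergence componentwise.

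First I would handle the $\pt_y$-component, which is the easiest: the crucial estimate \eqref{vanishing sec4} from Lemma \ref{auxiliary lemma 1} gives $\ld \|\pt_y u_\ld\|_2^2 \to 0$, hence a fortiori $\|\pt_y u_\ld\|_2 \to 0$, while the limit $u$ is $y$-independent so $\pt_y u = 0$. Next, for the $L^2$-norm, recall that $u_\ld \in V(1)$ implies $\|u_\ld\|_2^2 = 1$, and it was shown in Lemma \ref{auxiliary lemma 1} that $\wmM(u) = (2\pi)^{-1}$, which translates to $\|u\|_{L^2(\R^d\times\T)}^2 = 2\pi \|u\|_{L^2_x}^2 = 1$. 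Thus the $L^2$-norms are identically equal to $1$, and weak $L^2$-convergence together with equality of norms gives strong $L^2$-convergence.

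The remaining task is $\|\nabla_x u_\ld\|_2 \to \|\nabla_x u\|_2$, which I would extract from the semivirial-vanishing condition. From $\mK(u_\ld) = 0$ one has $\|\nabla_x u_\ld\|_2^2 = \frac{\alpha d}{2(\alpha+2)} \|u_\ld\|_{\alpha+2}^{\alpha+2}$, and the identity $\mI_\ld(u_\ld) = \mH_\ld(u_\ld) - \tfrac12 \mK(u_\ld) = m_{1,\ld}$ combined with $\ld\|\pt_y u_\ld\|_2^2 \to 0$ and \eqref{limit ld to infty energy sec4} yields
\begin{align*}
\frac{\alpha d - 4}{4(\alpha+2)} \|u_\ld\|_{\alpha+2}^{\alpha+2} = m_{1,\ld} - \frac{\ld}{2}\|\pt_y u_\ld\|_2^2 \longrightarrow 2\pi \wm_{(2\pi)^{-1}}.
\end{align*}
On the other hand, since $u$ is an optimizer of $\wm_{(2\pi)^{-1}}$ with $\wmK(u) = 0$, one has $\wmH(u) = \wmI(u) = \frac{\alpha d - 4}{4(\alpha+2)} \|u\|_{L^{\alpha+2}_x}^{\alpha+2}$, which, after multiplying by $2\pi$ to translate into $L^{\alpha+2}(\R^d\times\T)$, gives exactly the same limiting value. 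Hence $\|u_\ld\|_{\alpha+2} \to \|u\|_{\alpha+2}$, and plugging back into the semivirial identities $\mK(u_\ld) = 0$ and $\wmK(u) = 0$ produces $\|\nabla_x u_\ld\|_2 \to \|\nabla_x u\|_2$.

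Combining the three norm convergences with the weak convergence $u_\ld \rightharpoonup u$ in $H^1_{x,y}$ yields strong convergence of each of $u_\ld$, $\nabla_x u_\ld$, $\pt_y u_\ld$ in $L^2_{x,y}$, which is exactly strong convergence in $H^1_{x,y}$. No genuine obstacle is anticipated here, as all the hard work was done in Lemma \ref{auxiliary lemma 1}; the only mild subtlety is the bookkeeping between $L^2$-norms on $\R^d$ and on $\R^d \times \T$ (the factor of $2\pi$), which must be handled carefully to confirm that $\mM(u) = 1$ rather than some smaller value.
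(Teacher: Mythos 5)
Your proposal is correct and follows the same overall strategy as the paper: establish convergence of the full $H_{x,y}^1$-norm and then upgrade weak to strong convergence via the Hilbert-space structure. The paper's own proof is a one-line observation that the weak-lower-semicontinuity inequalities in the proof of Lemma \ref{auxiliary lemma 1} (in particular the chain \eqref{vanishing 5 sec4}) are in fact equalities, while your component-by-component verification — extracting $\|\nabla_x u_\ld\|_2 \to \|\nabla_x u\|_2$ through the semivirial constraint and the $L^{\alpha+2}$-norm rather than directly from \eqref{vanishing 5 sec4} — makes the same facts explicit without changing the underlying argument.
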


\begin{proof}
In fact, in the proof of Lemma \ref{auxiliary lemma 1} we see that all the inequalities involving the weakly lower semicontinuity of norms are in fact equalities. This particularly implies $\|u_\ld\|_{H_{x,y}^1}\to \|u\|_{H_{x,y}^1}=2\pi\|u\|_{H_x^1}$ as $\ld\to\infty$, which in turn implies the strong convergence of $u_\ld$ to $u$ in $H_{x,y}^1$.
\end{proof}

\begin{lemma}\label{lemma no dependence}
There exists some $\ld_0$ such that $\pt_y u_\ld=0$ for all $\ld>\ld_0$.
\end{lemma}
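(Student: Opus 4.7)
The plan is to test the Euler-Lagrange equation \eqref{vanishing 3 sec4} against the mean-zero-in-$y$ part of $u_\ld$ and turn the resulting identity into a spectral-gap inequality of the form $\ld \leq C$ with $C$ independent of $\ld$. Writing
$$a_\ld(x) := \frac{1}{2\pi}\int_\T u_\ld(x,y)\,dy,\qquad b_\ld(x,y):= u_\ld(x,y) - a_\ld(x),$$
we have $\pt_y u_\ld = \pt_y b_\ld$ and $\int_\T b_\ld(x,\cdot)\,dy = 0$ for every $x$, so it suffices to prove $b_\ld \equiv 0$.

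Multiplying \eqref{vanishing 3 sec4} by $b_\ld$, integrating, and exploiting the orthogonality $\int_\T b_\ld\,dy = 0$ to annihilate the cross terms $\int\nabla_x a_\ld\cdot\nabla_x b_\ld$, $\int a_\ld b_\ld$ and $\int a_\ld^{\alpha+1}b_\ld$, I obtain
$$\|\nabla_x b_\ld\|_2^2 + \ld\|\pt_y b_\ld\|_2^2 + \beta_\ld\|b_\ld\|_2^2 = \int_{\R^d\times\T}\bigl(u_\ld^{\alpha+1}-a_\ld^{\alpha+1}\bigr)b_\ld\,dxdy.$$
Since $u_\ld>0$ and $0\le a_\ld\le\|u_\ld\|_{L^\infty_{x,y}}$ pointwise, the mean value theorem yields $|u_\ld^{\alpha+1}-a_\ld^{\alpha+1}|\le(\alpha+1)\|u_\ld\|_{L^\infty_{x,y}}^\alpha|b_\ld|$. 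Combining with the Poincar\'e inequality $\|b_\ld\|_2\le\|\pt_y b_\ld\|_2$ (valid because $b_\ld$ is mean-zero in $y$ on $\T=\R/2\pi\Z$, whose first non-zero Laplace eigenvalue equals $1$) and discarding the non-negative terms $\|\nabla_x b_\ld\|_2^2+\beta_\ld\|b_\ld\|_2^2$ on the left, we arrive at
$$\ld\|\pt_y b_\ld\|_2^2\le(\alpha+1)\|u_\ld\|_{L^\infty_{x,y}}^\alpha\|\pt_y b_\ld\|_2^2,$$
so $\pt_y b_\ld=0$ whenever $\ld>(\alpha+1)\|u_\ld\|_{L^\infty_{x,y}}^\alpha$, and consequently $b_\ld\equiv 0$ since $b_\ld$ is already mean-zero in $y$.

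The main obstacle is therefore a uniform bound $\sup_{\ld\ge 1}\|u_\ld\|_{L^\infty_{x,y}}<\infty$. I would establish it by Moser iteration on \eqref{vanishing 3 sec4}: testing against $u_\ld^{2p-1}$ for $p\ge 1$ produces the identity
$$\tfrac{2p-1}{p^2}\bigl(\|\nabla_x u_\ld^p\|_2^2+\ld\|\pt_y u_\ld^p\|_2^2\bigr)+\beta_\ld\|u_\ld^p\|_2^2=\|u_\ld\|_{2p+\alpha}^{2p+\alpha},$$
whose left-hand side dominates $c_p\|u_\ld^p\|_{H^1_{x,y}}^2$ uniformly for $\ld\ge 1$ (with $\beta_\ld$ bounded away from zero for large $\ld$, inherited from the proof of Lemma \ref{auxiliary lemma 1}). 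Combining with the Sobolev embedding $H^1_{x,y}\hookrightarrow L^{2(d+1)/(d-1)}$ (or $L^q$ for every $q<\infty$ when $d=1$) produces a recursion which, initialized by the uniform $L^{\alpha+2}$ bound supplied by Lemma \ref{strong convergence u ld}, converges to the required uniform $L^\infty$ bound thanks to the energy-subcritical constraint $\alpha<4/(d-1)$. Plugging this bound into the spectral-gap argument above completes the proof.
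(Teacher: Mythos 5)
Your argument is correct at the algebraic level and is genuinely different from the paper's. The paper differentiates the Euler--Lagrange equation in $y$ to obtain a linearized equation for $w_\ld := \pt_y u_\ld$, tests against $w_\ld$, and then splits the nonlinear coefficient as $|u_\ld|^\alpha = |u|^\alpha + (|u_\ld|^\alpha - |u|^\alpha)$: the first piece is controlled by the readily available $L^\infty$ bound on the \emph{limit} $u$ (which lives on $\R^d$ and is smooth by Lemma \ref{lem wmc property}(ii)), while the second piece is made $o_\ld(1)\|w_\ld\|_{H^1_{x,y}}^2$ using the strong $H^1_{x,y}$-convergence $u_\ld \to u$ from Lemma \ref{strong convergence u ld}. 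You instead avoid linearization entirely: you test the \emph{nonlinear} equation against the mean-free component $b_\ld$, exploit the exact orthogonality of $a_\ld$ and $b_\ld$ to kill all cross terms, and replace the two-piece splitting plus strong-convergence argument with a single mean-value-theorem bound that produces $(\alpha+1)\|u_\ld\|_{L^\infty_{x,y}}^\alpha$. This is arguably more transparent and uses less machinery at the testing step, but it shifts the burden onto establishing a $\ld$-uniform bound on $\|u_\ld\|_{L^\infty_{x,y}}$ — a harder quantity than $\|u\|_{L^\infty_x}$, and one the paper never needs.

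That uniform $L^\infty$ bound is the one place where your proposal remains a sketch rather than a proof. The Moser iteration you outline is plausible: the constant $c_p$ stays bounded below for $\ld \geq 1$ provided $\beta_\ld \gtrsim 1$, the factor $\bigl(\tfrac{p^2}{2p-1}\bigr)^{1/(2p)}$ tends to $1$, and the intercriticality $\alpha < 4/(d-1)$ guarantees the exponent gain $q p > 2p + \alpha$ at each step starting from $p=1$. But several standard but nontrivial points would still need to be addressed: one must justify the test function $u_\ld^{2p-1}$ (or work with a truncation $\min\{u_\ld,M\}^{2p-1}$ and let $M\to\infty$), one must track that the accumulated constants in the chained recursion really are $\ld$-independent, and one must be a bit careful that the lower bound $\beta_\ld \gtrsim 1$ holds for \emph{all} large $\ld$ rather than along a subsequence (Lemma \ref{auxiliary lemma 1} as written produces the limit $\beta>0$ only after passing to a subsequence — a wrinkle that the paper's own proof also tacitly glosses over, and that is routinely repaired by a subsequence-of-a-subsequence contradiction argument). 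None of these is likely to be fatal, but until the Moser iteration is carried out with those details the proof is incomplete; by contrast, the paper sidesteps the entire issue by only ever needing $L^\infty$ regularity of the $y$-independent limit $u$, which is already recorded in Lemma \ref{lem wmc property}.
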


\begin{proof}
Let $w_\ld:=\pt_y u_\ld$. Then taking $\pt_y$-derivative to \eqref{vanishing 3 sec4} we obtain
\begin{align}\label{no dependence 1}
-\Delta_x w_\ld-\ld \pt^2_y w_\ld+\beta_\ld w_\ld=\pt_y(|u_\ld|^\alpha u_\ld)=(\alpha+1)|u_\ld|^\alpha w_\ld.
\end{align}
Testing \eqref{no dependence 1} with $w_\ld$ and rewriting suitably, we infer that
\begin{align}
0&=\|\nabla_x w_\ld\|_2^2+\ld\|\pt_y w_\ld\|_2^2+\beta_\ld \|w_\ld\|_2^2-(\alpha+1)\int_{\R^d\times \T}|u_\ld|^\alpha |\bar{w}_\ld|^2\,dxdy\nonumber\\
&=(\ld-1)\|\pt_y w_\ld\|_2^2-(\alpha+1)\int_{\R^d\times \T}|u|^\alpha |w_\ld|^2\,dxdy\label{no dependence 2}\\
&+\beta_\ld \|w_\ld\|_2^2+\|\nabla_{x,y}w_\ld\|_2^2\label{no dependence 4}\\
&-(\alpha+1)\int_{\R^d\times \T}(|u_\ld|^\alpha -|u|^\alpha)|w_\ld|^2\,dxdy.\label{no dependence 3}
\end{align}
For \eqref{no dependence 2}, we firstly point out that by Lemma \ref{lem wmc property} (ii) and Sobolev embedding we have $u\in L^\infty(\R^d)$. On the other hand, since $\int_\T w_\ld\,dy=0$, we have $\|w_\ld\|_2\leq \|\pt_y w_\ld\|_2$. Summing up, we conclude that
\begin{align*}
\eqref{no dependence 2}\geq (\ld-1-(\alpha+1)\|u\|^{\alpha}_{L_x^\infty})\|\pt_y w_\ld\|_2^2\geq 0
\end{align*}
for all sufficiently large $\ld$. For \eqref{no dependence 3}, we discuss the cases $\alpha\leq 1$ and $\alpha>1$. For $\alpha\leq 1$, we estimate the second term in \eqref{no dependence 3} using subadditivity of concave function, H\"older's inequality, Lemma \ref{strong convergence u ld} and the Sobolev embedding $H_{x,y}^1\hookrightarrow L_{x,y}^{\alpha+2}$:
\begin{align*}
&\,\int_{\R^d\times \T}(|u_\ld|^\alpha -|u|^\alpha)|w_\ld|^2\,dxdy\nonumber\\
\leq&\,\int_{\R^d\times \T}|u_\ld-u|^\alpha|w_\ld|^2\,dxdy\nonumber\\
\leq& \,\|u_\ld-u\|_{\alpha+2}^\alpha\|w_\ld\|_{\alpha+2}^2\leq o_\ld(1)\|w_\ld\|^2_{H_{x,y}^1}.
\end{align*}
The case $\alpha>1$ can be similarly estimated as follows:
\begin{align*}
&\,\int_{\R^d\times \T}(|u_\ld|^\alpha -|u|^\alpha)|w_\ld|^2\,dxdy\nonumber\\
\lesssim&\,\int_{\R^d\times \T}|u_\ld-u||w_\ld|^2(|u_\ld|^{\alpha-1}+|u|^{\alpha-1})\,dxdy\nonumber\\
\leq& \,\|u_\ld-u\|_{\alpha+2}(\|u_\ld\|^{\alpha-1}_{\alpha+2}+\|u\|^{\alpha-1}_{\alpha+2})\|w_\ld\|_{\alpha+2}^2\leq o_\ld(1)\|w_\ld\|^2_{H_{x,y}^1}.
\end{align*}
Therefore, \eqref{no dependence 2}, \eqref{no dependence 3}, $\beta_\ld=\beta+o_\ld(1)$ and the fact $\beta>0$ imply
\begin{align*}
0\gtrsim \|w_\ld\|_{H_{x,y}^1}^2(1-o_\ld(1))\gtrsim \|w_\ld\|_{H_{x,y}^1}^2
\end{align*}
for all $\ld>\ld_0$ with some sufficiently large $\ld_0$. We therefore conclude that $0=w_\ld=\pt_y u_\ld$ for all $\ld>\ld_0$ and the desired proof is complete.
\end{proof}

Having all the preliminaries we are in a position to prove Lemma \ref{lemma auxiliary}.

\begin{proof}[Proof of Lemma \ref{lemma auxiliary}]
Define
\begin{align*}
\ld_*:=\inf\{\tilde{\ld}\in(0,\infty):m_{1,\ld}=2\pi \wm_{(2\pi)^{-1}}\,\forall\,\ld\geq \tilde{\ld}\}.
\end{align*}
From Lemma \ref{lemma no dependence} we already know $\ld_*<\infty$ and it is left to show $\ld_*>0$. It suffices to show
\begin{align}\label{strictly small}
\lim_{\ld\to 0}m_{1,\ld}<2\pi\wm_{(2\pi)^{-1}}.
\end{align}
To see this, we firstly define the function $\rho:[0,2\pi]\to[0,\infty)$ as follows: Let $a\in(0,\pi)$ such that $a>\pi-3\pi\bg(\frac{3}{\alpha+3}\bg)^{\frac2\alpha}$. This is always possible for $a$ sufficiently close to $\pi$. Then we define $\rho$ by
\begin{align*}
\rho(y)=\left\{
\begin{array}{ll}
0,&y\in[0,a]\cup[2\pi-a,2\pi],\\
(\pi-a)^{-1}\bg(\frac{\alpha+3}{3}\bg)^{\frac{1}{\alpha}}(y-a),&y\in[a,\pi],\\
(\pi-a)^{-1}\bg(\frac{\alpha+3}{3}\bg)^{\frac{1}{\alpha}}(2\pi-a-y),&y\in[\pi,2\pi-a].
\end{array}
\right.
\end{align*}
By direct calculation we see that $\rho\in H_y^1$ and
\begin{align*}
2\pi>\|\rho\|_{L_y^2}^2=\|\rho\|_{L_y^{\alpha+2}}^{\alpha+2}.
\end{align*}
Next, let $P\in H_x^1$ be an optimizer of $\wm_{\|\rho\|_{L_y^2}^{-2}}$. Since by Lemma \ref{lem wmc property} the mapping $c\mapsto\wm_c$ is strictly decreasing on $(0,\infty)$ and $\|\rho\|_{L_y^2}^{-2}>(2\pi)^{-1}$, we infer that $\wm_{\|\rho\|_{L_y^2}^{-2}}<\wm_{(2\pi)^{-1}}$. Furthermore, by Lemma \ref{lem wmc property} (iii) we have $\|\nabla_x P\|_{L_x^2}^2=\frac{\alpha d}{2(\alpha+2)}\|P\|_{L_x^{\alpha+2}}^{\alpha+2}$. Now define $\psi(x,y):=\rho(y)P(x)$. Then we conclude that $\psi\in H_{x,y}^1$, $\mM(\psi)=\|\rho\|_{L_y^2}^2\wmM(P)=1$,
\begin{align*}
\mK(\psi)&=\|\nabla_x\psi\|_2^2-\frac{\alpha d}{2(\alpha+2)}\|\psi\|_{\alpha+2}^{\alpha+2}\nonumber\\
&=\|\rho\|_{L_y^2}^2\|\nabla_x P\|_{L_x^2}^2
-\frac{\alpha d}{2(\alpha+2)}\|\rho\|_{L_y^{\alpha+2}}^{\alpha+2}\|P\|_{L_x^{\alpha+2}}^{\alpha+2}\nonumber\\
&=\|\rho\|_{L_y^2}^2\bg(\|\nabla_x P\|_{L_x^2}^2-\frac{\alpha d}{2(\alpha+2)}\|P\|_{L_x^{\alpha+2}}^{\alpha+2}\bg)=0
\end{align*}
and
\begin{align*}
\mH_{*}(\psi)
&:=\frac{1}{2}\|\nabla_x \psi\|_{2}^2-\frac{1}{\alpha+2}\|\psi\|_{{\alpha+2}}^{\alpha+2}\nonumber\\
&=\frac{1}{2}\|\rho\|_{L_y^2}^2\|\nabla_x P\|_{L_x^2}^2
-\frac{1}{\alpha+2}\|\rho\|_{L_y^{\alpha+2}}^{\alpha+2}\|P\|_{L_x^{\alpha+2}}^{\alpha+2}\nonumber\\
&=\|\rho\|_{L_y^2}^2\bg(\frac{1}{2}\|\nabla_x P\|_{L_x^2}^2-\frac{1}{\alpha+2}\|P\|_{L_x^{\alpha+2}}^{\alpha+2}\bg)
=\|\rho\|_{L_y^2}^2\wm_{\|\rho\|_{L_y^2}^{-2}}<2\pi\wm_{(2\pi)^{-1}}.\label{star energy}
\end{align*}
Consequently,
\begin{align}
\lim_{\ld\to 0}m_{1,\ld}\leq \lim_{\ld\to 0}\mH_{\ld}(\psi)=\mH_*(\psi)<2\pi\wm_{(2\pi)^{-1}}
\end{align}
and \eqref{strictly small} follows. That a minimizer of $m_{1,\ld}$ has non-trivial $y$-dependence for $\ld<\ld_*$ follows already from the definition of $\ld_*$ and the fact that $\ld\mapsto m_{1,\ld}$ is monotone increasing. We borrow an idea from \cite{GrossPitaevskiR1T1} to show that any minimizer of $m_{1,\ld}$ for $\ld>\ld_*$ must be $y$-independent. Assume the contrary that an optimizer $u_\ld$ of $m_{1,\ld}$ satisfies $\|\pt_y u_\ld\|_2^2\neq 0$. Then there exists some  $\mu\in(\ld_*,\ld)$. Then
\begin{align*}
2\pi \wm_{(2\pi)^{-1}}=m_{1,\mu}\leq \mH_{\mu}(u_\ld)=\mH_{\ld}(u_\ld)+\frac{\mu-\ld}{2}\|\pt_y u_\ld\|_2^2<\mH_{\ld}(u_\ld)=m_{1,\ld}=2\pi \wm_{(2\pi)^{-1}},
\end{align*}
a contradiction. This completes the desired proof.
\end{proof}

\subsection{Proof of Theorem \ref{thm threshold mass}}
We now prove Theorem \ref{thm threshold mass} by using Lemma \ref{lemma auxiliary} and a simple rescaling argument.

\begin{proof}[Proof of Theorem \ref{thm threshold mass}]
For $c>0$ let $\kappa_c:=c^{\frac{1}{d-\frac4\alpha}}$. Then $u\mapsto T_{\kappa_c} u$ (where $T_{\kappa_c} u$ is given by \eqref{def of t ld}) defines a bijection between $V(c)$ and $V(1)$. Direct calculation also shows that $\mH(u)=c^{\frac{d-4/\alpha-2}{d-4/\alpha}}\mH_{\kappa_c^{2}}(T_{\kappa_c} u)$, thus $m_{c}=c^{\frac{d-4/\alpha-2}{d-4/\alpha}}m_{1,\kappa_c^{2}}$. By same rescaling arguments we also infer that $\wm_{(2\pi)^{-1}c}=c^{\frac{d-4/\alpha-2}{d-4/\alpha}}\wm_{(2\pi)^{-1}}$ for $c>0$. Notice also that the mapping $c\mapsto \kappa_c$ is strictly monotone increasing on $(0,\infty)$. Thus by Lemma \ref{lemma auxiliary} there exists some $c_*\in(0,\infty)$ such that
\begin{itemize}
\item For all $c\in(0,c_*)$ we have
$$m_{c}=c^{\frac{d-4/\alpha-2}{d-4/\alpha}}m_{1,\kappa_c^2}<c^{\frac{d-4/\alpha-2}{d-4/\alpha}}2\pi \wm_{(2\pi)^{-1}}=2\pi \wm_{(2\pi)^{-1}c}.$$

\item For all $c\in(c_*,\infty)$ we have
$$m_{c}=c^{\frac{d-4/\alpha-2}{d-4/\alpha}}m_{1,\kappa_c^2}=c^{\frac{d-4/\alpha-2}{d-4/\alpha}}2\pi \wm_{(2\pi)^{-1}}=2\pi \wm_{(2\pi)^{-1}c}.$$
\end{itemize}
The statements concerning the $y$-dependence of the minimizers follow also from Lemma \ref{lemma auxiliary} simultaneously. Finally, that $m_{c_*}=2\pi \wm_{(2\pi)^{-1}c_*}$ follows immediately from the continuity of the mappings $c\mapsto m_c$ and $c\mapsto \wm_c$ deduced from Lemma \ref{monotone lemma} and Lemma \ref{lem wmc property} respectively. This completes the proof.
\end{proof}

\section{Scattering and finite time blow-up below ground states}\label{sec: Scattering}

\subsection{Some useful inequalities}
In this subsection we collect some useful inequalities which will be used throughout the rest of the paper.

\begin{lemma}[Strichartz estimates on $\R^d\times\T$, \cite{TzvetkovVisciglia2016}]\label{strichartz}
Let $\gamma\in\R$, $s\in[0,d/2)$ and $p,q,\tilde{p},\tilde{q}$ satisfy $p,\tilde{p}\in(2,\infty]$ and
\begin{align}
\frac{2}{p}+\frac{d}{q}=\frac{2}{\tilde{p}}+\frac{d}{\tilde{q}}
=\frac{d}{2}-s.
\end{align}
Then for a time interval $I\ni t_0$ we have
\begin{align}
\|e^{i(t-t_0)\dxy}f\|_{L_t^p L_x^q H^\gamma_y(I)}&\lesssim\|f\|_{{H}_x^sH_y^\gamma}.
\end{align}
Moreover, the Strichartz estimate for the nonlinear term
\begin{align}
\|\int_{t_0}^t e^{i(t-s)\dxy}F(s)\,ds\|_{L_t^p L_x^q H^\gamma_y(I)}&\lesssim \|F\|_{L_t^{\tilde{p}'} L_x^{\tilde{q}'} H^\gamma_y(I)}
\end{align}
holds in the case $s=0$.
\end{lemma}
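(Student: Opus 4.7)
The plan is to reduce the lemma to classical Strichartz estimates on $\R^d$ applied to Bochner-valued functions taking values in the Hilbert space $H^\gamma_y$. First I would factor the propagator as $e^{it\dxy}=e^{it\Delta_x}\,e^{it\pt_y^2}$, observing that $e^{it\pt_y^2}$ is an isometry of $H^\gamma_y$ for every $t$ (on the $y$-Fourier side it multiplies the $k$-th mode by $e^{-itk^2}$, of modulus one, and commutes with the multiplier $\la k\ra^\gamma$). Viewing $f(x,\cdot)$ as an $H^\gamma_y$-valued function of $x\in\R^d$, we therefore have $\|e^{it\dxy}f(t,x,\cdot)\|_{H^\gamma_y}=\|e^{it\Delta_x}f(x,\cdot)\|_{H^\gamma_y}$ for every $(t,x)$, where $e^{it\Delta_x}$ on the right denotes the Bochner extension to $H^\gamma_y$-valued functions.

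The homogeneous estimate then reduces to the vector-valued Strichartz
$$\|e^{it\Delta_x}f\|_{L_t^p L_x^q H^\gamma_y}\lesssim \|f\|_{H_x^s H^\gamma_y},$$
which I would obtain by running the Keel--Tao $TT^*$ interpolation in the $H^\gamma_y$-valued setting. The two inputs pass over unchanged: (i) the $L^2$-isometry $\|e^{it\Delta_x}g\|_{L_x^2 H^\gamma_y}=\|g\|_{L_x^2 H^\gamma_y}$, which is Plancherel in $x$ summed over $y$-modes; and (ii) the dispersive bound $\|e^{it\Delta_x}g\|_{L_x^\infty H^\gamma_y}\lesssim |t|^{-d/2}\|g\|_{L_x^1 H^\gamma_y}$, obtained by pushing the $H^\gamma_y$-norm through the explicit Schr\"odinger kernel pointwise in $(t,x)$. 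Interpolating gives the $L^2$-admissible case $(s=0)$, and the $\dot H^s$-admissible pairs with $s\in(0,d/2)$ are recovered by combining the $s=0$ estimate with the Sobolev embedding $H_x^s\hookrightarrow L_x^{2d/(d-2s)}$ in the standard way (i.e.\ applying the $L^2$-Strichartz to $|\nabla_x|^s f$ and using Sobolev in $x$ afterwards).

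For the inhomogeneous estimate with $s=0$, the linear bound $f\mapsto e^{it\Delta_x}f:L_x^2 H^\gamma_y \to L_t^{\tilde p} L_x^{\tilde q} H^\gamma_y$ is dual, via the Hilbert structure of $H^\gamma_y$, to $F\mapsto \int_\R e^{-is\Delta_x}F(s)\,ds:L_t^{\tilde p'} L_x^{\tilde q'} H^\gamma_y\to L_x^2 H^\gamma_y$; composing the two yields the un-retarded inhomogeneous Strichartz on all of $\R$. To pass to the retarded $\int_{t_0}^t$ with possibly different admissible pairs on the two sides I would invoke the Christ--Kiselev lemma, whose hypothesis $p,\tilde p>2$ is precisely the condition appearing in the statement. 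The main subtlety lies here: an alternative, more elementary argument via Fourier-in-$y$ plus scalar Euclidean Strichartz plus Minkowski handles the homogeneous estimate cleanly, but fails for the inhomogeneous one because $\tilde p',\tilde q'\le 2$ forces Minkowski to swap $\ell_k^2$ against $L_t^{\tilde p'}L_x^{\tilde q'}$ in the wrong direction --- so the $TT^*$/duality route is essential for the nonlinear piece.
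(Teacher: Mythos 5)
Your main argument is correct and is essentially the route taken in the cited Tzvetkov--Visciglia reference (the paper itself does not reprove the lemma): factor $e^{it\dxy}=e^{it\Delta_x}e^{it\partial_y^2}$, note that $e^{it\partial_y^2}$ is unitary on $H^\gamma_y$ for each $t$, and treat the problem as Euclidean Strichartz for functions taking values in the fixed Hilbert space $H^\gamma_y$. Your two Keel--Tao inputs --- the $L_x^2H_y^\gamma$ isometry and the $L^1_xH^\gamma_y\to L^\infty_xH^\gamma_y$ dispersive decay --- do transfer to the Bochner-valued setting exactly as you say, and the $s>0$ case (commute $|\nabla_x|^s$ through and apply Sobolev in $x$, tacitly with $q<\infty$) and the retarded inhomogeneous estimate (un-retarded $TT^*$ plus Christ--Kiselev, using $\tilde p'<2<p$) are the standard steps.

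Your closing remark, however, is not correct. You assert that the elementary ``Fourier-in-$y$ plus scalar Strichartz plus Minkowski'' route breaks down for the inhomogeneous estimate because $\tilde p',\tilde q'\le 2$ force Minkowski the wrong way. Recall Minkowski in the form $\|f\|_{L^aL^b}\le\|f\|_{L^bL^a}$ whenever $b\le a$; putting the smaller exponent on the outside gives the \emph{smaller} quantity. The required chain is
\begin{align*}
\Bigl\|\int_{t_0}^te^{i(t-s)\dxy}F\,ds\Bigr\|_{L_t^pL_x^qH_y^\gamma}
&\le\Bigl\|\Bigl(\langle k\rangle^\gamma\int_{t_0}^te^{-i(t-s)k^2}e^{i(t-s)\Delta_x}F_k\,ds\Bigr)_k\Bigr\|_{\ell_k^2L_t^pL_x^q}\\
&\lesssim\bigl\|(\langle k\rangle^\gamma F_k)_k\bigr\|_{\ell_k^2L_t^{\tilde p'}L_x^{\tilde q'}}
\le\|F\|_{L_t^{\tilde p'}L_x^{\tilde q'}H_y^\gamma},
\end{align*}
where the first step moves $\ell_k^2$ outward, valid since $p,q\ge2$; the middle step is the scalar retarded Euclidean Strichartz applied mode by mode after absorbing the unimodular phases $e^{\pm itk^2}$; and the last step moves $\ell_k^2$ back inward, which is the \emph{favorable} direction precisely because $\tilde p',\tilde q'\le 2$. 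So the elementary route also yields the inhomogeneous estimate; your $TT^*$/Christ--Kiselev route is a perfectly good alternative but is not forced.
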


\begin{lemma}[Exotic Strichartz estimates on $\R^d\times\T$, \cite{TzvetkovVisciglia2016}]\label{exotic strichartz}
There exist $\ba,\br,\bb,\bs\in(2,\infty)$ such that
\begin{gather*}
(\alpha+1)\bs'=\br,\quad(\alpha+1)\bb'=\ba,\quad\alpha/\br<\min\{1,\frac{2}{d}\},\quad
\frac{2}{\ba}+\frac{d}{\br}=\frac{2}{\alpha}.
\end{gather*}
Moreover, for any $\gamma\in\R$ we have the following exotic Strichartz estimate:
\begin{align}
\|\int_{t_0}^t e^{i(t-s)\dxy}F(s)\,ds\|_{L_t^\ba L_x^\br H^\gamma_y(I)}&\lesssim \|F\|_{L_t^{\bb'} L_x^{\bs'} H^\gamma_y(I)}.
\end{align}
We may also impose the following restrictions on $(\ba,\br)$: When $d=1$, then $\br$ can be chosen arbitrarily close to $\alpha+1$; When $d\geq 2$, then $\br$ can be chosen arbitrarily close to $\frac{\alpha(\alpha+1)d}{\alpha+2}$.
\end{lemma}

\begin{remark}
As we shall see in the following, the exotic admissible pairs $(\ba,\br)$ and $(\bb,\bs)$ are ``computationally friendly'' for the remaining analysis. Moreover, from the original statement of \cite[Lem. 8.1]{TzvetkovVisciglia2016} we only have $\alpha/\br<1$. Nevertheless, in the proof of \cite[Lem. 8.1]{TzvetkovVisciglia2016} the authors indeed deduced the sharper upper bound $2/d$ for $\alpha/\br$ when $d\geq 3$ and that $\br$ can be chosen to be arbitrarily close to the endpoints $\alpha+1$ ($d=1$) resp. $\frac{\alpha(\alpha+1)d}{\alpha+2}$ ($d\geq 2$). These will become useful for the proofs of Lemma \ref{lemma l2 admissible special} and Lemma \ref{lem stability cnls} below.
\end{remark}

\begin{lemma}\label{lemma l2 admissible special}
Let $(\tilde{\ba},\tilde{\br})=(\frac{4\br}{\alpha d},\frac{2\br}{\br-\alpha})$. Then $(\tilde{\ba},\tilde{\br})$ is an $L^2$-admissible pair and
\begin{align}\label{l2 admissible exotic}
\frac{1}{\tilde{\ba}'}=\frac{\alpha}{\ba}+\frac{1}{\tilde{\ba}},\quad\frac{1}{\tilde{\br}'}=\frac{\alpha}{\br}+\frac{1}{\tilde{\br}}
\end{align}
is satisfied.
\end{lemma}

\begin{proof}
It is easy to see that \eqref{l2 admissible exotic} is satisfied for the given $(\ba,\br)$ and $\frac{2}{\ba}+\frac{d}{\br}=\frac{d}{2}$. It remains to show that $\br\in(2,2^*)$, where $2^*=\infty$ when $d=1,2$ and $2^*=\frac{2d}{d-2}$ when $d\geq 3$, which in turn implies that $(\ba,\br)$ is an $L^2$-admissible pair. We discuss different cases:
\begin{itemize}
\item For $d\in\{1,2\}$, we have $\tilde{\br}=\frac{2\br}{\br-\alpha}\in(2,\infty)$ and we obtain an admissible choice.
\item For $d\geq 3$, we may rewrite $\tilde{\br}=\frac{2\br}{\br-\alpha}$ to $\tilde{\br}=\frac{2(2\br/\alpha)}{(2\br/\alpha)-2}$. Notice that the function $l\mapsto\frac{2l}{l-2}$ is monotone decreasing on $(2,\infty)$. From Lemma \ref{exotic strichartz} we know that $2\ro/\alpha\in(d,\infty)$. Thus $\tilde{\br}\in(2,\frac{2d}{d-2})$ and we obtain an admissible choice.
\end{itemize}
\end{proof}

\begin{lemma}[Fractional calculus on $\T$, \cite{TzvetkovVisciglia2016}]\label{fractional on t}
For $s\in(0,1)$ and $\alpha>0$ we have
\begin{align}
\|u^{\alpha+1}\|_{\dot{H}_y^s}+\||u|^{\alpha+1}\|_{\dot{H}_y^s}+\||u|^\alpha u\|_{\dot{H}_y^s}\lesssim_{\alpha,s} \|u\|_{\dot{H}_y^s}\|u\|_{L_y^\infty}^\alpha
\end{align}
for $u\in H_y^s$.
\end{lemma}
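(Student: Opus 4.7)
\emph{Proof plan.} The strategy is to reduce the fractional Sobolev bound to a pointwise Lipschitz-type estimate on each of the three nonlinearities via the Gagliardo seminorm representation of $\dot{H}^s_y$. Since $s\in(0,1)$ and we are on the one-dimensional torus, there exists $C=C(s)>0$ such that
\begin{align*}
\|f\|_{\dot{H}_y^s}^2 \sim \int_{\T}\int_{\T}\frac{|f(y)-f(y')|^2}{|y-y'|_{\T}^{1+2s}}\,dy\,dy',
\end{align*}
where $|\cdot|_{\T}$ denotes the geodesic distance on $\T$. I would first verify this characterization (which is classical; one can lift to $\R$ and periodize, or argue directly via the Fourier multiplier $|k|^{2s}$ together with Plancherel). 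With this in hand, the problem is reduced to controlling the corresponding double integral for each of $F_1(z):=z^{\alpha+1}$, $F_2(z):=|z|^{\alpha+1}$ and $F_3(z):=|z|^\alpha z$.

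Next I would establish the pointwise bound
\begin{align*}
|F_j(a)-F_j(b)|\lesssim_\alpha (|a|^\alpha+|b|^\alpha)|a-b|,\qquad j\in\{1,2,3\},
\end{align*}
uniformly in $a,b\in\C$ (for $F_2,F_3$) or $a,b\in\R$ (for $F_1$, where $z^{\alpha+1}$ is interpreted accordingly). For $\alpha\geq 1$ this follows from the mean value theorem applied to $F_j$, whose (generalized) derivative is pointwise bounded by $C_\alpha |z|^\alpha$. For $\alpha\in(0,1)$, one splits into the regimes $|a-b|\leq \tfrac12\max(|a|,|b|)$ (handled again by the mean value theorem away from the singularity at $0$) and $|a-b|>\tfrac12\max(|a|,|b|)$ (handled by the trivial estimate $|F_j(a)-F_j(b)|\leq |F_j(a)|+|F_j(b)|\leq (|a|^\alpha+|b|^\alpha)|a-b|$).

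Inserting this pointwise inequality into the Gagliardo representation and estimating crudely by $\|u\|_{L^\infty_y}^{2\alpha}$ yields
\begin{align*}
\|F_j(u)\|_{\dot{H}_y^s}^2
\lesssim \int_{\T}\int_{\T}\frac{(|u(y)|^\alpha+|u(y')|^\alpha)^2|u(y)-u(y')|^2}{|y-y'|_{\T}^{1+2s}}\,dy\,dy'
\lesssim \|u\|_{L_y^\infty}^{2\alpha}\,\|u\|_{\dot{H}_y^s}^2,
\end{align*}
which gives the claim after summing over $j$.

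The main technical point is the pointwise estimate in the sub-Lipschitz regime $\alpha\in(0,1)$, where the nonlinearities are only H\"older continuous near the origin; the split into near-diagonal and far-diagonal regimes above is the standard device to handle this. A secondary issue is the interpretation of $z^{\alpha+1}$ when $u$ is complex-valued and $\alpha$ is not an integer, but this term plays no role in the subsequent scattering arguments (where only $|u|^\alpha u$ appears), and for real $u$ or integer $\alpha$ the bound is just a special case of the $F_2$ or $F_3$ analysis.
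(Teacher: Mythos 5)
Your argument is correct and is the standard proof of this fractional chain-rule estimate. The paper itself gives no proof, importing the lemma directly from \cite{TzvetkovVisciglia2016}; however, the present paper invokes the very same Gagliardo (difference-quotient) characterization of $\dot{H}^s_y$, citing the same reference, in the proof of Lemma \ref{lem stability cnls}, and your reduction to the pointwise bound $|F_j(a)-F_j(b)|\lesssim_\alpha(|a|^\alpha+|b|^\alpha)|a-b|$ (with the near-/far-diagonal split to handle $\alpha\in(0,1)$) is exactly how the cited reference proceeds. Your observation about the interpretation of $u^{\alpha+1}$ for complex $u$ and non-integer $\alpha$ is also well taken, and indeed only $|u|^\alpha u$ and $|u|^{\alpha+1}$ are used later in the paper.
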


\subsection{Small data and stability theories}
In the following we prove some useful lemmas such as the small data and stability theories for \eqref{nls} which appear as default preliminaries in a standard rigidity proof based on the concentration compactness principle.

\begin{lemma}[Small data well-posedness]\label{lemma small data}
Let $I$ be an open interval containing $0$. Define the space $X(I)$ through the norm
\begin{align}
\|u\|_{X(I)}:=\|u\|_{S_x L_y^2(I)}+\|\nabla_x u\|_{S_x L_y^2(I)}+\|\pt_y u\|_{S_x L_y^2(I)},
\end{align}
where the space $S_x$ is defined by \eqref{def of sx}. Let also $s\in (\frac12,1-s_\alpha)$ be some given number, where $s_\alpha:=\frac{d}{2}-\frac{2}{\alpha}\in(0,\frac12)$. Assume now
\begin{align}
\|u_0\|_{H_{x,y}^1}\leq A
\end{align}
for some $A>0$. Then there exists some $\delta=\delta(A)$ such that if
\begin{align}
\|e^{it\Delta}u_0\|_{L_t^\ba L_x^\br  H_y^s(I) }\leq \delta,
\end{align}
then there exists a unique solution $u\in X(I)$ of \eqref{nls} with $u(0)=u_0$ such that
\begin{align}
\|u\|_{X(I)}&\lesssim A,\\
\|u\|_{L_t^\ba L_x^\br  H_y^s(I)}&\leq 2\|e^{it\Delta}u_0\|_{L_t^\ba L_x^\br  H_y^s(I)}.
\end{align}
\end{lemma}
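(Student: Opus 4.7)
The plan is to apply the Banach fixed-point theorem to the Duhamel operator
$$ \Phi(u)(t) := e^{it\dxy}u_0 + i\int_0^t e^{i(t-s)\dxy}(|u|^\alpha u)(s)\,ds $$
on the complete metric space
$$ B := \{u \in X(I) : \|u\|_{X(I)} \leq M,\ \|u\|_{L^\ba_t L^\br_x H^s_y(I)} \leq 2\eta_0\}, $$
where $\eta_0 := \|e^{it\dxy}u_0\|_{L^\ba_t L^\br_x H^s_y(I)} \leq \delta$ and $M \sim A$ is to be chosen, equipped with the weaker metric $d(u,v) := \|u-v\|_{S_x L^2_y(I)}$, under which $B$ is complete. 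Two parallel Strichartz estimates will drive the analysis: the exotic estimate of Lemma \ref{exotic strichartz} controls the critical norm $L^\ba_t L^\br_x H^s_y$ appearing in the small-data hypothesis, while the standard estimates of Lemma \ref{strichartz} control the energy-level norm $X(I)$.

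For the exotic-norm estimate, I would combine Lemma \ref{exotic strichartz} with the algebraic identities $(\alpha+1)\br' = \br$ and $(\alpha+1)\bb' = \ba$. The pointwise-in-$(t,x)$ bound $\||u|^\alpha u\|_{H^s_y} \lesssim \|u\|_{H^s_y}^{\alpha+1}$, obtained from Lemma \ref{fractional on t} combined with the Sobolev embedding $H^s_y \hookrightarrow L^\infty_y$ (valid since $s > 1/2$), together with H\"older in $x$ and $t$, gives
$$ \||u|^\alpha u\|_{L^{\bb'}_t L^{\br'}_x H^s_y(I)} \lesssim \|u\|_{L^\ba_t L^\br_x H^s_y(I)}^{\alpha+1}. $$
Lemma \ref{exotic strichartz} then yields $\|\Phi(u)\|_{L^\ba_t L^\br_x H^s_y(I)} \leq \eta_0 + C(2\eta_0)^{\alpha+1}$, which is $\leq 2\eta_0$ once $\delta$ is small enough that $C(2\delta)^\alpha \leq 1/2$.

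For the $X(I)$-norm, I would use the $L^2$-admissible pairs $(\tilde{\ba}, \tilde{\br})$ and $(\hat{\ba}, \hat{\br})$ supplied by Lemma \ref{lemma l2 admissible special}. Relations \eqref{l2 admissible exotic}, H\"older, and the embedding $H^s_y \hookrightarrow L^\infty_y$ together give
$$ \||u|^\alpha u\|_{L^{\tilde{\ba}'}_t L^{\tilde{\br}'}_x L^2_y(I)} \lesssim \|u\|_{L^\ba_t L^\br_x H^s_y(I)}^\alpha \|u\|_{S_x L^2_y(I)}, $$
and analogous bounds for $\nabla_x(|u|^\alpha u)$ and $\pt_y(|u|^\alpha u)$ follow from the pointwise chain-rule estimate $|\nabla_{x,y}(|u|^\alpha u)| \lesssim |u|^\alpha |\nabla_{x,y} u|$. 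Feeding these into Lemma \ref{strichartz} gives $\|\Phi(u)\|_{X(I)} \leq C_1 A + C_2(2\eta_0)^\alpha M$, and taking $M := 2C_1 A$ together with $\delta$ small enough that $C_2(2\delta)^\alpha \leq 1/2$ secures $\Phi(B) \subset B$.

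The contraction property will be verified in the weaker $S_x L^2_y(I)$ metric via the pointwise inequality $||u|^\alpha u - |v|^\alpha v| \lesssim (|u|^\alpha + |v|^\alpha)|u-v|$, which sidesteps any Lipschitz difficulty for $\nabla_{x,y}(|u|^\alpha u - |v|^\alpha v)$ and therefore avoids the dimension restriction present in stability-type arguments. The main technical hurdle is the bookkeeping of exponents: verifying that $(\tilde{\ba}, \tilde{\br})$ and $(\hat{\ba}, \hat{\br})$ genuinely lie in the admissible range of Lemma \ref{strichartz}, and confirming that the window $s \in (1/2, 1 - s_\alpha)$ places Lemma \ref{fractional on t} in its regime of validity while also delivering $H^s_y \hookrightarrow L^\infty_y$. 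Once these are checked, Banach's fixed-point theorem produces the unique solution $u \in B$, and the two asserted bounds read off directly from the self-mapping estimates.
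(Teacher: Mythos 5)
Your proposal follows essentially the same route as the paper: Banach fixed-point on the ball cut out by the $X(I)$-norm and the $L^\ba_t L^\br_x H^s_y$-norm, using the exotic Strichartz estimate (Lemma \ref{exotic strichartz}) together with the $\T$-fractional chain rule (Lemma \ref{fractional on t}) and the embedding $H^s_y\hookrightarrow L^\infty_y$ for the critical norm, the standard Strichartz estimates with the exponents of Lemma \ref{lemma l2 admissible special} for the $X(I)$-norm, and the contraction verified only in the weaker metric $\|u-v\|_{S_x L^2_y(I)}$. The only cosmetic difference is that you cap the critical norm in your ball by $2\eta_0$ with $\eta_0:=\|e^{it\Delta_{x,y}}u_0\|_{L^\ba_t L^\br_x H^s_y(I)}$ rather than by $2\delta$, which reads off the second asserted bound slightly more cleanly, but this does not change the argument.
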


\begin{proof}
We define the space
\begin{align*}
B(I):=\{u\in X(I):\|u\|_{X(I)}\leq 2CA,\|u\|_{L_t^\ba L_x^\br  H_y^s(I)}\leq 2\delta\}.
\end{align*}
Then $B(I)$ is a complete metric space with the metric $\rho(u,v):=\|u-v\|_{S_x L_y^2(I)}$. Define also the Duhamel's mapping $\Psi(u)$ by
\begin{align*}
\Psi(u):=e^{it\Delta_{x,y}}u_0+i\int_0^t e^{i(t-s)\Delta_{x,y}}(|u|^\alpha u)(s)\,ds.
\end{align*}
We firstly show that $\Psi(B(I))\subset B(I)$ by choosing $\delta$ samll. Let $D\in\{1,\pt_{x_i},\pt_y\}$. Let $(\tilde{\ba},\tilde{\br})$ be the $L^2$-admissible pairs given by Lemma \ref{lemma l2 admissible special}. Using the Strichartz estimates given in Lemma \ref{strichartz}, the embedding $H_y^s\hookrightarrow L_y^\infty$ and H\"older we obtain
\begin{align*}
\|D(\Psi(u))\|_{S_x L_y^2(I)}&\leq CA+C\|u^{\alpha}Du\|_{L_t^{\tilde{\ba}'}L_x^{\tilde{\br}'}L_y^2(I)}
\leq CA+C\|\|u\|^{\alpha}_{L_y^{\infty}}\|Du\|_{L_y^2}\|_{L_t^{\tilde{\ba}'}L_x^{\tilde{\br}'}(I)}\nonumber\\
&\leq CA+C\|\|u\|^{\alpha}_{H_y^s}\|Du\|_{L_y^2}\|_{L_t^{\tilde{\ba}'}L_x^{\tilde{\br}'}(I)}
\leq CA+C\|u\|^{\alpha}_{L_t^{{\ba}}L_x^{{\br}} H_y^s(I)}\|Du\|_{L_t^{\tilde{\ba}}L_x^{\tilde{\br}}L_y^2(I)}\nonumber\\
&
\leq CA+(2\delta)^{\alpha} CA\leq 2CA
\end{align*}
by choosing $\delta$ sufficiently small. Similarly, using Lemma \ref{exotic strichartz}, the embedding $H_y^s\hookrightarrow L_y^\infty$ and Lemma \ref{fractional on t} we obtain
\begin{align*}
\|\Psi(u)\|_{L_t^\ba L_x^\br  H_y^s(I)}
&\leq \delta+\||u|^{\alpha}u\|_{L_t^{\bb'} L_x^{\bs'} H^s_y(I)}
\leq \delta+C(\||u|^{\alpha}u\|_{L_t^{\bb'} L_x^{\bs'} L^2_y(I)}+\||u|^{\alpha}u\|_{L_t^{\bb'} L_x^{\bs'} \dot{H}^s_y(I)})\nonumber\\
&\leq \delta+C(\|\|u\|^{\alpha+1}_{L_y^\infty}\|_{L_t^{\bb'} L_x^{\bs'}(I)}+
\|\|u\|^{\alpha}_{L_y^\infty}\| u\|_{\dot{H}_y^s}\|_{L_t^{\bb'} L_x^{\bs'}(I)})\nonumber\\
&\leq \delta+C\|\|u\|^{\alpha+1}_{H_y^s}\|_{L_t^{\bb'} L_x^{\bs'}(I)}=\delta+C\|u\|^{\alpha+1}_{L_t^{\ba} L_x^{\br}H_y^s(I)}
\leq \delta(1+C\delta^\alpha)\leq 2\delta.
\end{align*}
Finally, we show that $\Psi$ is a contraction on $B(I)$. By direct calculation we infer that
\begin{align*}
\|\Psi(u)-\Psi(v)\|_{S_x L_y^2}&\leq C\||u|^\alpha u-|v|^\alpha v\|_{L_t^{\tilde{\ba}'}L_x^{\tilde{\br}'}L_y^2(I)}
\leq C\|(|u|^{\alpha}+|v|^\alpha)(u-v)\|_{L_t^{\tilde{\ba}'}L_x^{\tilde{\br}'}L_y^2(I)}\nonumber\\
&\leq C\|(\|u\|^{\alpha}_{L_y^{\infty}}+\|v\|^{\alpha}_{L_y^{\infty}})\|u-v\|_{L_y^2}\|_{L_t^{\tilde{\ba}'}L_x^{\tilde{\br}'}(I)}\nonumber\\
&\leq C(\|u\|_{L_t^{{\ba}}L_x^{{\br}} H_y^s(I)}+\|v\|_{L_t^{{\ba}}L_x^{{\br}} H_y^s(I)})^\alpha\|u-v\|_{S_x L_y^2(I)}\leq C\delta^\alpha
\|u-v\|_{S_x L_y^2(I)}.
\end{align*}
The desired claim follows by choosing $\delta$ small and using the Banach's fixed point theorem.
\end{proof}

\begin{lemma}[Scattering norm]\label{lemma scattering norm}
Let $u\in X_{\rm loc}(\R)$ be a global solution of \eqref{nls} such that
\begin{align}
\|u\|_{L_t^\ba L_x^\br  H_y^s(\R)}+\|u\|_{L_t^\infty H_{x,y}^1(\R)}<\infty.
\end{align}
Then $u$ scatters in time. Moreover, for all $s'\in(\frac12,1-s_\alpha)$ we have
\begin{align}\label{uniform bound}
\|u\|_{X(\R)}+\|u\|_{L_t^\ba L_x^\br H_y^{s'}(\R)}\lesssim_{\|u\|_{L_t^\ba L_x^\br  H_y^s(\R)},\,\|u\|_{L_t^\infty H_{x,y}^1(\R)}}1.
\end{align}
\end{lemma}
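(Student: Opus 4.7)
The plan is to partition $\R$ into finitely many intervals on which the critical norm is small, apply Lemma \ref{lemma small data} on each piece, and sum the resulting bounds; scattering then follows from a standard Strichartz-based Cauchy argument. Set $A:=\|u\|_{L_t^\infty H_{x,y}^1(\R)}$, let $\delta=\delta(A)>0$ be the threshold from Lemma \ref{lemma small data}, and fix $\eta\in(0,\delta/2)$ sufficiently small (to be pinned down below). Since $\|u\|_{L_t^\ba L_x^\br H_y^s(\R)}<\infty$, we can partition $\R$ into $N=N(A,\|u\|_{L_t^\ba L_x^\br H_y^s(\R)})$ consecutive intervals $I_j=[t_j,t_{j+1}]$ with $\|u\|_{L_t^\ba L_x^\br H_y^s(I_j)}\leq\eta$ for every $j$.

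On each $I_j$, the Duhamel identity paired with Lemma \ref{exotic strichartz}, Lemma \ref{fractional on t} and the embedding $H_y^s\hookrightarrow L_y^\infty$ (valid because $s>\tfrac12$) produces
\begin{align*}
\|e^{i(t-t_j)\Delta_{x,y}}u(t_j)\|_{L_t^\ba L_x^\br H_y^s(I_j)}\leq \eta+C\eta^{\alpha+1}<\delta,
\end{align*}
so Lemma \ref{lemma small data} applies with initial datum $u(t_j)$ and yields $\|u\|_{X(I_j)}\lesssim A$. Every component of the $X$-norm is of the form $\|\cdot\|_{L_t^p L_x^q L_y^2}$; for finite $p$ the identity $\|u\|^p_{L_t^p L_x^q L_y^2(\R)}=\sum_j\|u\|^p_{L_t^p L_x^q L_y^2(I_j)}$ gives at most $N^{1/p}$-growth, while the $L_t^\infty$-slot is bounded by $\sup_j\|u\|_{L_t^\infty L_x^2 L_y^2(I_j)}\leq A$. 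Summing the $N$ contributions therefore yields a global bound $\|u\|_{X(\R)}\lesssim 1$ with implicit constant depending only on $A$ and $\|u\|_{L_t^\ba L_x^\br H_y^s(\R)}$.

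For the $H_y^{s'}$-bound with $s'\in(0,\tfrac12-s_\alpha)$, the key observation is that $(\ba,\br)$ satisfies $\tfrac{2}{\ba}+\tfrac{d}{\br}=\tfrac{2}{\alpha}=\tfrac{d}{2}-s_\alpha$, so this pair is $\dot H_x^{s_\alpha}$-admissible. Applying the Strichartz inequality of Lemma \ref{strichartz} to the linear part and Lemma \ref{exotic strichartz} to the Duhamel integral at regularity level $H_y^{s'}$ produces
\begin{align*}
\|u\|_{L_t^\ba L_x^\br H_y^{s'}(I_j)}\lesssim\|u(t_j)\|_{H_x^{s_\alpha}H_y^{s'}}+\||u|^\alpha u\|_{L_t^{\bb'}L_x^{\br'}H_y^{s'}(I_j)}.
\end{align*}
The first term is controlled by $A$ via the Fourier-symbol inequality $(1+|\xi|^2)^{s_\alpha}(1+k^2)^{s'}\leq 1+|\xi|^2+k^2$, which is valid precisely because $s_\alpha+s'<\tfrac12$, yielding the embedding $H_{x,y}^1\hookrightarrow H_x^{s_\alpha}H_y^{s'}$. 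For the nonlinear term, Lemma \ref{fractional on t} combined with $H_y^s\hookrightarrow L_y^\infty$ and H\"older in $(t,x)$ through the identities $(\alpha+1)\bb'=\ba$ and $(\alpha+1)\br'=\br$ gives the estimate $C\|u\|^\alpha_{L_t^\ba L_x^\br H_y^s(I_j)}\|u\|_{L_t^\ba L_x^\br H_y^{s'}(I_j)}\leq C\eta^\alpha\|u\|_{L_t^\ba L_x^\br H_y^{s'}(I_j)}$. Choosing $\eta$ small enough to absorb this term on the right produces $\|u\|_{L_t^\ba L_x^\br H_y^{s'}(I_j)}\lesssim A$, and summation over $j$ completes the proof of \eqref{uniform bound}.

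Scattering is then obtained by setting $\phi^{\pm}:=u_0+i\int_0^{\pm\infty}e^{-is\Delta_{x,y}}(|u|^\alpha u)(s)\,ds$; the integrals converge absolutely in $H_{x,y}^1$ by Lemma \ref{strichartz}, the previously established $X(\R)$ bound and the finiteness of the critical norm, via an estimate of the form $\|D(|u|^\alpha u)\|_{L_t^{\tilde\ba'}L_x^{\tilde\br'}L_y^2(\R)}\lesssim\|u\|^\alpha_{L_t^\ba L_x^\br H_y^s(\R)}\|u\|_{X(\R)}$ for $D\in\{1,\pt_{x_i},\pt_y\}$. Applying the same inequality on $[t,\pm\infty)$ shows $\|u(t)-e^{it\Delta_{x,y}}\phi^{\pm}\|_{H_{x,y}^1}\to 0$ as $t\to\pm\infty$, since the critical-norm tails vanish in the limit. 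The main obstacle is the simultaneous handling of the two distinct regularity levels $s$ and $s'$, which is resolved by the exotic $\dot H_x^{s_\alpha}$-admissibility of $(\ba,\br)$ together with the fractional calculus on $\T$ provided by Lemma \ref{fractional on t}.
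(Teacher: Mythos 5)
Your proof is correct and follows essentially the same route as the paper: partition $\R$ into finitely many intervals with small critical norm, bootstrap on each piece and sum, then establish scattering by a Strichartz/Cauchy tail estimate. The only organizational difference is that you verify the hypothesis of Lemma~\ref{lemma small data} explicitly via the Duhamel identity, whereas the paper reruns the same iteration inline and absorbs the $O(m^{-\alpha})$ factor directly; the embedding $H^1_{x,y}\hookrightarrow H_x^{s_\alpha}H_y^{s'}$ that you make explicit actually only needs $s_\alpha+s'\le 1$ rather than ``precisely'' $s_\alpha+s'<\tfrac12$, but that is immaterial since the latter is given.
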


\begin{proof}
We first show \eqref{uniform bound}. Partition $\R$ into $\R=\cup_{j=1}^m I_j$ such that $\|u\|_{L_t^\ba L_x^\br  H_y^s(I_j)}=O(m^{-1})$. Let $D\in\{1,\pt_{x_i},\pt_y\}$. Arguing as in the proof of Lemma \ref{lemma small data}, on the interval $I_j=[t_j,t_{j+1}]$ with $s_j\in(t_j,t_{j+1})$ we have
\begin{align*}
\|u\|_{X(I_j)}&\lesssim \|e^{i(t-s_j)\Delta_{x,y}}u(s_j)\|_{X(I_j)}+\sum_{D\in\{1,\pt_{x_i},\pt_y\}}\|u\|^{\alpha}_{L_t^{{\ba}}L_x^{{\br}} H_y^s(I_j)}\|Du\|_{L_t^{\tilde{\ba}}L_x^{\tilde{\br}}L_y^2(I_j)}\nonumber\\
&\lesssim \|u(s_j)\|_{ H_{x,y}^1}+O(m^{-\alpha})\|u\|_{X(I_j)}.
\end{align*}
Choosing $m$ sufficiently large we can absorb the term $O(m^{-\alpha})\|u\|_{X(I_j)}$ to the l.h.s. and conclude that $\|u\|_{X(I_j)}\lesssim \|u(s_j)\|_{ H_{x,y}^1}$. The upper bound of $\|u\|_{X(\R)}$ given by \eqref{uniform bound} follows then by summing up the partial estimates on $I_j$ over $j=1,\cdots m$. The upper bound of $\|u\|_{L_t^\ba L_x^\br H_y^{s'}(\R)}$ follows very similarly by also appealing to Lemma \ref{fractional on t}:
\begin{align*}
\|u\|_{L_t^\ba L_x^\br H_y^{s'}(I_j)}&\lesssim \|u(s_j)\|_{ H_{x,y}^1}+\|u\|^{\alpha}_{L_t^{{\ba}}L_x^{{\br}} H_y^s(I_j)}\|u\|_{L_t^{\ba}L_x^{\br}H_y^{s'}(I_j)}\lesssim \|u(s_j)\|_{ H_{x,y}^1}+O(m^{-\alpha})\|u\|_{L_t^{{\ba}}L_x^{{\br}} H_y^{s'}(I_j)}.
\end{align*}
Next, define
\begin{align*}
\phi:=u_0+i\int_0^\infty e^{-is\Delta_{x,y}}(|u|^\alpha u)(s)\,ds.
\end{align*}
Then
\begin{align*}
u-e^{it\Delta_{x,y}}\phi=-i\int_t^\infty e^{i(t-s)\Delta}(|u|^\alpha u)(s)\,ds.
\end{align*}
Consequently, using Strichartz estimates we infer that
\begin{align*}
\|D(u-e^{it\Delta_{x,y}}\phi)\|_{L_{x,y}^2}&\lesssim\|u^\alpha Du\|_{L_t^{\tilde{\ba}'}L_x^{\tilde{\br}'}L_y^2(t,\infty)}
\lesssim \|u\|^{\alpha}_{L_t^{{\ba}}L_x^{{\br}} H_y^s(t,\infty)}\|Du\|_{L_t^{\tilde{\ba}}L_x^{\tilde{\br}}L_y^2(t,\infty)}\nonumber\\
&\lesssim\|u\|^{\alpha}_{L_t^{{\ba}}L_x^{{\br}} H_y^s(t,\infty)}\|u\|_{X(\R)}\lesssim\|u\|^{\alpha}_{L_t^{{\ba}}L_x^{{\br}} H_y^s(t,\infty)}\to 0
\end{align*}
as $t\to\infty$, since $\|u\|_{L_t^{{\ba}}L_x^{{\br}} H_y^s(\R)}<\infty$. This shows that $u$ scatters in positive time. That $u$ scatters in negative time follows verbatim, we omit the details here.
\end{proof}

\begin{lemma}[Criterion for maximality of lifespan]
Let $u\in X_{\rm loc}(I_{\max})$ be a solution of \eqref{nls} defined on its maximal lifespan $I_{\max}$. Then if $t_{\max}:=\sup I_{\max}<\infty$, we have $\|u\|_{L_t^\ba L_x^\br  H_y^s(t,t_{\max})}=\infty$ for all $t\in I_{\max}$. A similar result holds for $t_{\min}:=\inf I_{\max}>-\infty$.
\end{lemma}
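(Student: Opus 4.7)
The plan is a standard blow-up alternative argument by contradiction. Suppose, to the contrary, that there exists some $t_0\in I_{\max}$ such that
\begin{align*}
M:=\|u\|_{L_t^\ba L_x^\br H_y^s(t_0,t_{\max})}<\infty.
\end{align*}
First I would upgrade this to a full Strichartz bound by repeating the partitioning argument already used in the proof of Lemma \ref{lemma scattering norm}: split $(t_0,t_{\max})$ into finitely many subintervals $I_j=[t_j,t_{j+1}]$ on which $\|u\|_{L_t^\ba L_x^\br H_y^s(I_j)}$ is smaller than the small-data threshold from Lemma \ref{lemma small data}, then iterate the Strichartz estimates from Lemma \ref{strichartz} and \ref{exotic strichartz} (together with Lemma \ref{fractional on t} for the $\pt_y$ component) to deduce
\begin{align*}
\|u\|_{X(t_0,t_{\max})}\lesssim_{M,\|u(t_0)\|_{H^1_{x,y}}}1.
\end{align*}
In particular $\sup_{t\in(t_0,t_{\max})}\|u(t)\|_{H^1_{x,y}}<\infty$.

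The second step is to show that $u(t)$ has a strong $H^1_{x,y}$ limit as $t\to t_{\max}^-$. For $t_0<s<t<t_{\max}$, the Duhamel formula gives
\begin{align*}
u(t)-e^{i(t-s)\Delta_{x,y}}u(s)=i\int_s^t e^{i(t-\tau)\Delta_{x,y}}(|u|^\alpha u)(\tau)\,d\tau.
\end{align*}
Applying Strichartz in $H^1_{x,y}$ together with the nonlinear estimate used in Lemma \ref{lemma small data} (H\"older in time/$x$, the embedding $H^s_y\hookrightarrow L_y^\infty$, and Lemma \ref{fractional on t}) bounds the right-hand side in $H^1_{x,y}$ by a quantity controlled by $\|u\|^{\alpha}_{L_t^\ba L_x^\br H_y^s(s,t)}\|u\|_{X(s,t)}$, which tends to $0$ as $s,t\to t_{\max}^-$ by absolute continuity of the finite integral $\int_{t_0}^{t_{\max}}\|u(\tau)\|^{\ba}_{L_x^\br H_y^s}d\tau$. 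Hence $\{u(t)\}$ is Cauchy in $H^1_{x,y}$ and converges to some $u_*\in H^1_{x,y}$.

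Finally, I would apply Lemma \ref{lemma small data} with initial data $u_*$ at time $t_{\max}$. Set $A=\|u_*\|_{H^1_{x,y}}+1$ and let $\delta=\delta(A)$ be the corresponding small-data threshold. By continuity of $\tau\mapsto e^{i(\cdot-\tau)\Delta_{x,y}}u(\tau)$ in $L_t^\ba L_x^\br H_y^s$ (using the strong $H^1$ convergence of $u(\tau)\to u_*$, Strichartz applied to the free evolution of $u(\tau)-u_*$, and the fact that $\|e^{i(\cdot-t_{\max})\Delta_{x,y}}u_*\|_{L_t^\ba L_x^\br H_y^s(t_{\max},t_{\max}+\eta)}\to 0$ as $\eta\to 0^+$ by dominated convergence), we can pick $\tau<t_{\max}$ sufficiently close to $t_{\max}$ and $\eta>0$ such that $\|e^{i(\cdot-\tau)\Delta_{x,y}}u(\tau)\|_{L_t^\ba L_x^\br H_y^s(\tau,\tau+\eta)}\le\delta$ with $\tau+\eta>t_{\max}$. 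Lemma \ref{lemma small data} then produces a solution on $(\tau,\tau+\eta)$, which by uniqueness agrees with $u$ on $(\tau,t_{\max})$ and extends $u$ beyond $t_{\max}$. This contradicts the maximality of $I_{\max}$. The case $t_{\min}>-\infty$ is handled by time-reversal symmetry of \eqref{nls}.

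The main obstacle is the final continuity step: without an a priori uniform-in-$\tau$ smallness statement, one has to argue that after passing to the limit $u(\tau)\to u_*$ strongly in $H^1_{x,y}$, the free evolution's scattering norm on a fixed small window shrinks as $\eta\to 0$. This is essentially a dominated-convergence argument built on the Strichartz inequality, but it is the spot where it is important that the limit $u_*$ actually exists in $H^1_{x,y}$, which is precisely what the Cauchy argument in the second step delivers.
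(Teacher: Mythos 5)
Your proposal is correct and follows essentially the same line as the paper's proof: assume the scattering norm is finite past $t_0$, upgrade to a full $X(t_0,t_{\max})$ bound via the partitioning argument from Lemma \ref{lemma scattering norm}, show convergence of $u(t)$ in $H^1_{x,y}$ as $t\nearrow t_{\max}$ via Duhamel and Strichartz, and then use Lemma \ref{lemma small data} to extend past $t_{\max}$, contradicting maximality. One small precision: the Duhamel estimate controls $\|u(t)-e^{i(t-s)\Delta_{x,y}}u(s)\|_{H^1_{x,y}}$, which directly yields that $(e^{-it\Delta_{x,y}}u(t))_{t\nearrow t_{\max}}$ is Cauchy (not $u(t)$ itself), and one then invokes the strong continuity of the linear group on the bounded interval to conclude that $u(t)$ converges in $H^1_{x,y}$ — which is exactly the formulation the paper uses.
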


\begin{proof}
Assume the contrary that there exists some $t_0\in I_{\max}$ such that
\begin{align}\label{contradiction criterion}
\|u\|_{L_t^\ba L_x^\br  H_y^s(t_0,t_{\max})}<\infty.
\end{align}
We will show that $\lim_{t\to t_{\max}}u(t)$ exists in $H_{x,y}^1$. By Lemma \ref{lemma small data} this would mean that we can extend $u(t)$ beyond $t_{\max}$ for $t$ sufficiently close to $t_{\max}$, which contradicts the maximality of $I_{\max}$. By the strong continuity of the linear group $(e^{it\Delta_{x,y}})_{t\in\R}$ it suffices to show that $(e^{-it\Delta_{x,y}}u(t))_{t\nearrow t_{\max}}$ is a Cauchy sequence. Let $t_m<t_n<t_{\max}$. By Duhamel's formula and Strichartz estimate we have
\begin{align}
&\,\|e^{-it_m\Delta_{x,y}}u(t_m)-e^{-it_n\Delta_{x,y}}u(t_n)\|_{H_{x,y}^1}\nonumber\\
=&\,\|\int_{t_m}^{t_n}e^{i(t_n-s)\Delta_{x,y}}(|u|^\alpha u)(s)\,ds\|_{H_{x,y}^1}
\leq \|\int_{t_m}^{t}e^{i(t-s)\Delta_{x,y}}(|u|^\alpha u)(s)\,ds\|_{L_t^\infty H_{x,y}^1(t_m,t_{\max})}\nonumber\\
\leq &\,\sum_{D\in\{1,\pt_{x_i},\pt_y\}}\||u|^{\alpha} Du\|_{L_t^{\tilde{\ba}'}L_x^{\tilde{\br}'}L_y^2(t_m,t_{\max})}
\lesssim \sum_{D\in\{1,\pt_{x_i},\pt_y\}}\|u\|^{\alpha}_{L_t^{{\ba}}L_x^{{\br}} H_y^s(t_m,t_{\max})}\|Du\|_{L_t^{\tilde{\ba}}L_x^{\tilde{\br}}L_y^2(t_m,t_{\max})}.\label{cauchy seq}
\end{align}
From the proof of Lemma \ref{lemma scattering norm} and \eqref{contradiction criterion} we know that
\begin{gather*}
\|u\|_{X(t_0,t_{\max})}<\infty,\\
\lim_{t\nearrow t_{\max}}\|u\|_{L_t^\ba L_x^\br  H_y^s(t,t_{\max})}=0
\end{gather*}
Combining with \eqref{cauchy seq} we deduce the desired claim.
\end{proof}

\begin{lemma}[Stability theory]\label{lem stability cnls}
Let $d\leq 4$. Let $u$ be a solution of \eqref{nls} defined on some interval $0\in I\subset\R$ and let $\tilde{u}$ be a solution of the perturbed NLS
\begin{align}
i\pt_t\tilde{u}+\Delta_{x,y}\tilde{u}+|\tilde{u}|^\alpha \tilde{u}+e=0.
\end{align}
Assume that there exists some $A>0$ such that
\begin{align}
\|\tilde{u}\|_{L_t^\ba L_x^\br H_y^s(I)}&\leq A.\label{cond 1}
\end{align}
Then there exist $\vare(A)>0$ and $C(A)>0$ such that if
\begin{align}
\|e\|_{L_t^{\bb'}L_x^{\bs'}H_y^s(I)}&\leq \vare\leq\vare(A),\label{cond 2}\\
\|e^{it\Delta_{x,y}}(u(0)-\tilde{u}(0))\|_{L_t^{\ba}L_x^{\br}H_y^s(I)}&\leq \vare\leq\vare(A),\label{cond 3}
\end{align}
then $\|u-\tilde{u}\|_{L_t^\ba L_x^\br H_y^s(I)}\leq C(A)\vare$.
\end{lemma}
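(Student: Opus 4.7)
Set $w := u - \tilde u$ and, for brevity, $F(z) := |z|^\alpha z$, so that $w$ satisfies
\begin{align*}
i\pt_t w + \Delta_{x,y} w + \bigl(F(u) - F(\tilde u)\bigr) - e = 0.
\end{align*}
The plan is a standard short-time continuity argument driven by the exotic Strichartz estimate. First I would partition $I$ into $N = N(A)$ consecutive subintervals $I_j = [t_j, t_{j+1}]$ on each of which $\|\tilde u\|_{L_t^\ba L_x^\br H_y^s(I_j)} \leq \eta$, where $\eta = \eta(A) \ll 1$ is to be fixed in the bootstrap.

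The central analytical step is a difference version of Lemma \ref{fractional on t}: combining the pointwise bound $|F(u) - F(\tilde u)| \lesssim (|u|^{\alpha} + |\tilde u|^{\alpha})|w|$ with the fractional Leibniz rule on $\T$ and the embedding $H_y^s \hookrightarrow L_y^\infty$ (valid for $s > 1/2$) yields
\begin{align*}
\|F(u) - F(\tilde u)\|_{H_y^s} \lesssim \bigl(\|u\|_{H_y^s}^{\alpha} + \|\tilde u\|_{H_y^s}^{\alpha}\bigr)\|w\|_{H_y^s}.
\end{align*}
Using H\"older in $(t,x)$ together with the identities $(\alpha+1)\br' = \br$ and $(\alpha+1)\bb' = \ba$ from Lemma \ref{exotic strichartz} then gives, on any subinterval $J \subset I$,
\begin{align*}
\|F(u) - F(\tilde u)\|_{L_t^{\bb'} L_x^{\br'} H_y^s(J)} \lesssim \bigl(\|u\|_{L_t^\ba L_x^\br H_y^s(J)}^{\alpha} + \|\tilde u\|_{L_t^\ba L_x^\br H_y^s(J)}^{\alpha}\bigr)\|w\|_{L_t^\ba L_x^\br H_y^s(J)}.
\end{align*}
This is exactly where the hypothesis $d \leq 4$ enters: it forces $\alpha > 4/d \geq 1$, so that $F$ is $C^1$ and the Leibniz-based Moser estimate genuinely transfers to differences.

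On each $I_j$, set $\gamma_j := \|e^{i(t-t_j)\Delta_{x,y}} w(t_j)\|_{L_t^\ba L_x^\br H_y^s([t_j, \sup I))}$; note $\gamma_0 \leq \vare$ by \eqref{cond 3}. Applying Lemma \ref{exotic strichartz} to the Duhamel formula for $w$ starting at $t_j$ and plugging in the difference bound, one obtains
\begin{align*}
\|w\|_{L_t^\ba L_x^\br H_y^s(I_j)} \leq \gamma_j + C\vare + C\bigl(\|w\|_{L_t^\ba L_x^\br H_y^s(I_j)}^{\alpha} + \eta^\alpha\bigr)\|w\|_{L_t^\ba L_x^\br H_y^s(I_j)}.
\end{align*}
A standard continuity/bootstrap argument, with $\eta$ and $\vare$ chosen sufficiently small depending only on $A$, allows me to absorb the nonlinear term and conclude $\|w\|_{L_t^\ba L_x^\br H_y^s(I_j)} \leq 2(\gamma_j + C\vare)$. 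To propagate from $I_j$ to $I_{j+1}$, I apply exotic Strichartz to the identity
\begin{align*}
e^{i(t-t_{j+1})\Delta_{x,y}} w(t_{j+1}) = e^{i(t-t_j)\Delta_{x,y}} w(t_j) + i\int_{t_j}^{t_{j+1}} e^{i(t-s)\Delta_{x,y}}\bigl[F(u) - F(\tilde u) - e\bigr](s)\,ds,
\end{align*}
obtaining $\gamma_{j+1} \leq \gamma_j + C(\eta^\alpha \|w\|_{L_t^\ba L_x^\br H_y^s(I_j)} + \vare) \leq K(\gamma_j + \vare)$ with $K = K(A)$. Iterating this recursion $N$ times and summing across $j$ then yields the claimed estimate $\|u - \tilde u\|_{L_t^\ba L_x^\br H_y^s(I)} \leq C(A)\vare$, with $\vare(A)$ dictated by the requirement that the bootstrap hypothesis $\|w\|_{L_t^\ba L_x^\br H_y^s(I_j)} \leq \eta$ is preserved throughout the iteration.

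The main obstacle is establishing the difference version of the fractional Moser inequality cleanly on the torus; this is precisely where $\alpha \geq 1$ is essential, which forces the restriction $d \leq 4$. For $d \geq 5$ one would have $\alpha < 1$ and $F$ merely H\"older continuous in its arguments, making the Leibniz-based derivation of the difference estimate break down and requiring a different framework such as the Lebesgue-type Strichartz estimates mentioned in the introduction.
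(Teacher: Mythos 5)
Your overall strategy is the same as the paper's: derive a fractional difference (Moser) estimate for $F(u)-F(\tilde u)$ in $H_y^s$, push it through the exotic Strichartz estimate of Lemma \ref{exotic strichartz} to reach the inequality \eqref{cazenave}, and then run a short-time bootstrap. You also correctly identify why $d\le 4$ is imposed, namely to force $\alpha>1$ so that $F$ is $C^1$; and spelling out the bootstrap over a partition of $I$ is a legitimate (indeed more self-contained) alternative to the paper's citation of \cite[Prop.~4.7]{focusing_sub_2011} for that part.

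The one place where your sketch would not survive being written out is the derivation of the difference estimate itself. You state that the pointwise bound $|F(u)-F(\tilde u)|\lesssim(|u|^{\alpha}+|\tilde u|^{\alpha})|w|$ ``combined with the fractional Leibniz rule on $\T$'' yields the $H_y^s$ bound. A pointwise domination does not pass to $\dot H_y^s$ (one cannot compare fractional seminorms of $g$ and $h$ from $|g|\le|h|$), and Leibniz applies to a product, not to a function that is merely bounded above by a product. The correct mechanism, which the paper carries out in equations \eqref{long1}--\eqref{final long}, is to write the $\dot H_y^s$ seminorm via the integral characterization $c\|f\|_{\dot H_y^s}^2=\int_0^{2\pi}\int_{\R}|f(\cdot+h)-f|^2\,|h|^{-1-2s}\,dh\,dx$, use the chain-rule representation $F(z)-F(z')=(z-z')\int_0^1 F_z\,d\theta+\overline{(z-z')}\int_0^1 F_{\bar z}\,d\theta$, and then perform a pointwise telescoping of $(F(u)-F(\tilde u))(\cdot+h)-(F(u)-F(\tilde u))(\cdot)$; it is precisely in controlling the increments of $F_z,F_{\bar z}$ that $\alpha-1\ge 0$ (equivalently $d\le4$) is used. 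If you replace the ``pointwise bound $+$ Leibniz'' step with this argument, the rest of your plan is sound and recovers the paper's proof.
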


\begin{proof}
W.l.o.g. let $I=(-T,T)$ for some $T\in(0,\infty]$. We first recall the following well-known identity (see for instance the proof of \cite[Lem. 4.1]{TzvetkovVisciglia2016}):
\begin{align*}
c\|u\|_{\dot{H}_y^s}^2=\int_0^{2\pi}\int_{\R}\frac{|u(x+h)-u(x)|^2}{|h|^{1+2s}}\,dhdx
\end{align*}
for some positive constant $c>0$. Set $w:=u-\tilde{u}$ and $F(z):=|z|^\alpha z$ for $z\in\C$. Then
\begin{align}\label{long1}
&\,c\||\tilde{u}+w|^\alpha(\tilde{u}+w)-|\tilde{u}|^\alpha \tilde{u}\|_{\dot{H}_y^s}^2\nonumber\\
=&\,\int_0^{2\pi}\int_{\R}\frac{|(F((\tilde{u}+w)(x+h))-F((\tilde{u})(x+h)))-(F((\tilde{u}+w)(x))-F((\tilde{u})(x)))|^2}{|h|^{1+2s}}\,dhdx.
\end{align}
Writing $z=a+bi$ we may identify $F(z)$ with the two-dimensional function $F(a,b)$ through $F(a,b)=F(z)$. Define the usual complex derivatives by
\begin{align*}
F_z:=\frac{1}{2}\bg(\frac{\pt F}{\pt a}-i\frac{\pt F}{\pt b}\bg),\quad F_{\bar{z}}:=\frac{1}{2}\bg(\frac{\pt F}{\pt a}+i\frac{\pt F}{\pt b}\bg).
\end{align*}
Using chain rule we obtain that for $z,z'\in\C$
\begin{align*}
F(z)-F(z')=(z-z')\int_0^1 F_z(z'+\theta(z-z'))\,d\theta+\overline{(z-z')}\int_0^1 F_{\bar{z}}(z'+\theta(z-z'))\,d\theta.
\end{align*}
Combining with standard telescoping argument and the fact that $\alpha>1$ in the case $d\leq 4$ we see that
\begin{align}
&\,|(F((\tilde{u}+w)(x+h))-F((\tilde{u})(x+h)))-(F((\tilde{u}+w)(x))-F((\tilde{u})(x)))|\nonumber\\
\lesssim&\,|w(x+h)-w(x)|(|\tilde{u}(x+h)|^\alpha+|w(x+h)|^\alpha)\nonumber\\
+&\,|w(x)|(|\tilde{u}(x+h)|+|w(x+h)|+|\tilde{u}(x)|+|w(x)|)^{\alpha-1}
\times(|\tilde{u}(x+h)-\tilde{u}(x)|+|w(x+h)-w(x)|).\label{long2}
\end{align}
\eqref{long1}, \eqref{long2} and the embedding $H_y^s\hookrightarrow L_y^\infty$ now yield
\begin{align*}
&\,\||\tilde{u}+w|^\alpha(\tilde{u}+w)-|\tilde{u}|^\alpha \tilde{u}\|_{\dot{H}_y^s}^2\nonumber\\
\lesssim &\,\int_0^{2\pi}\int_{\R}\frac{|w(x+h)-w(x)|^2(\|\tilde{u}\|_{L_y^\infty}^{2\alpha}+\|w\|_{L_y^\infty}^{2\alpha})}{|h|^{1+2s}}\,dhdx\nonumber\\
+&\,\int_0^{2\pi}\int_{\R}
\frac{(|\tilde{u}(x+h)-\tilde{u}(x)|^2+|w(x+h)-w(x)|^2)(\|\tilde{u}\|_{L_y^\infty}^{2\alpha-2}+\|w\|_{L_y^\infty}^{2\alpha-2})\|w\|^2_{L_y^\infty}}{|h|^{1+2s}}\,dhdx\nonumber\\
\lesssim&\,\|w\|^2_{\dot{H}_y^s}(\|\tilde{u}\|_{L_y^\infty}^{2\alpha}+\|w\|_{L_y^\infty}^{2\alpha})
+\|w\|^2_{\dot{H}_y^s}(\|\tilde{u}\|^2_{\dot{H}_y^s}+\|w\|^2_{\dot{H}_y^s})(\|\tilde{u}\|_{L_y^\infty}^{2\alpha-2}+\|w\|_{L_y^\infty}^{2\alpha-2})
\nonumber\\
\lesssim&\, \|\tilde{u}\|^{2\alpha}_{H_y^s}\|w\|^2_{H_y^s}+\|w\|^{2\alpha+2}_{H_y^s}.
\end{align*}
This in turn implies
\begin{align*}
\||\tilde{u}+w|^\alpha(\tilde{u}+w)-|\tilde{u}|^\alpha \tilde{u}\|_{\dot{H}_y^s}\lesssim
\|\tilde{u}\|^{\alpha}_{H_y^s}\|w\|_{H_y^s}+\|w\|^{\alpha+1}_{H_y^s}.
\end{align*}
On the other hand, a simple application of H\"older's inequality also yields
\begin{align*}
\||\tilde{u}+w|^\alpha(\tilde{u}+w)-|\tilde{u}|^\alpha \tilde{u}\|_{L_y^2}\lesssim
\|\tilde{u}\|^{\alpha}_{H_y^s}\|w\|_{H_y^s}+\|w\|^{\alpha+1}_{H_y^s}
\end{align*}
and we conclude
\begin{align}\label{final long}
\||\tilde{u}+w|^\alpha(\tilde{u}+w)-|\tilde{u}|^\alpha \tilde{u}\|_{H_y^s}\lesssim
\|\tilde{u}\|^{\alpha}_{H_y^s}\|w\|_{H_y^s}+\|w\|^{\alpha+1}_{H_y^s}.
\end{align}
We also notice that $w$ satisfies the NLS
\begin{align*}
i\pt_t w+\Delta_{x,y}w+|\tilde{u}+w|^\alpha(\tilde{u}+w)-|\tilde{u}|^\alpha\tilde{u}+e=0.
\end{align*}
Using the exotic Strichartz estimates given in Lemma \ref{exotic strichartz} and \eqref{final long} we infer that for $t\in(0,T)$
\begin{align}\label{cazenave}
\|w\|_{L_t^\ba L_x^\br H_y^s(-t,t)}\leq(M+1)\vare +M\|\|\tilde{u}\|^{\alpha}_{H_y^s}\|w\|_{H_y^s}+\|w\|^{\alpha+1}_{H_y^s}\|_{L_t^{\bb'}L_x^{\bs'}(-t,t)}
\end{align}
for some $M>0$. We point out that \eqref{cazenave} is exactly (4.20) in \cite{focusing_sub_2011}. Hence following the proof of \cite[Prop. 4.7]{focusing_sub_2011}, words by words, we deduce the desired claim by setting
\begin{align*}
\vare(A)\leq 2^{-\frac{1}{\alpha}}[(2M+1)\Phi(A^\alpha)]^{-\frac{\alpha+1}{\alpha}},
\end{align*}
where $\Phi$ is the function given by \cite[Lem. 8.1]{focusing_sub_2011}. Notice that in order to apply \cite[Lem. 8.1]{focusing_sub_2011}, the quantities
\begin{align*}
\gamma:=\ba,\quad\rho:=\frac{\ba}{\alpha},\quad\beta=\bb'
\end{align*}
should satisfy $1\leq\beta<\gamma\leq \infty$ and $1\leq\rho<\infty$. By direct calculation, this is clearly fulfilled when $\br$ is sufficiently close to $\alpha+1$ ($d=1$) resp. $\frac{\alpha(\alpha+1)d}{\alpha+2}$ ($d\geq 2$), the latter being guaranteed by Lemma \ref{exotic strichartz}.
\end{proof}

\subsection{Profile decomposition}
We derive in this subsection a profile decomposition for a bounded sequence in $H_{x,y}^1$. To begin with, we firstly record an inverse Strichartz inequality.

\begin{lemma}[Inverse Strichartz inequality]\label{refined l2 lemma 1}
Let $(f_n)_n\subset H_{x,y}^1$. Suppose that
\begin{align}\label{425}
\lim_{n\to\infty}\|f_n\|_{H_{x,y}^1}=A<\infty\quad\text{and}\quad\lim_{n\to\infty}\|e^{it\Delta_{x}} f_n\|_{L_t^\ba L_{x,y}^\br (\R)}=\vare>0.
\end{align}
Then up to a subsequence, there exist $\phi\in H_{x,y}^1$ and $(t_n,x_n)_n\subset\R\times\R^d$ such that
\begin{align}\label{cnls l2 refined strichartz basic 5}
e^{it_n\Delta_x}f_n(x+x_n,y)\rightharpoonup &\,\,\phi(x,y)\text{ weakly in $H_{x,y}^1$}.
\end{align}
Set $\phi_n:=e^{-it_n\Delta_x}\phi(x-x_n,y)$. Let $s\in(\frac{1}{2},1-s_\alpha)$ be some given number, where $s_\alpha:=\frac{d}{2}-\frac{2}{\alpha}\in(0,\frac12)$. Then there exists some positive $\kappa\ll 1$ and $\theta,\beta\in(0,1)$ such that for $D\in\{1,\pt_{x_i},\pt_y\}$ we have
\begin{align}
\lim_{n\to\infty}&(\|f_n\|^2_{H_{x,y}^1}-\|f_n-\phi_n\|^2_{H_{x,y}^1})=\|\phi\|^2_{H_{x,y}^1}\gtrsim
A^{-\frac{2(\theta+\beta(1-\theta))}{(1-\beta)(1-\theta)}-\frac{d}{\kappa}}\vare^{\frac{2}{(1-\beta)(1-\theta)}+\frac{d}{\kappa}},
\label{cnls l2 refined strichartz decomp 1}\\
\lim_{n\to\infty}&(\|Df_n\|^2_{L_{x,y}^2}-\|D(f_n-\phi_n)\|^2_{L_{x,y}^2}-\|D\phi_n\|^2_{L_{x,y}^2})=0.
\label{cnls l2 refined strichartz decomp 3}
\end{align}
\end{lemma}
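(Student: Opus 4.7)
The plan is to treat Lemma 4.5 as a Keraani-type inverse Strichartz estimate for the partial propagator $e^{it\Delta_x}$ acting on a bounded sequence in $H^1_{x,y}$. Since the exotic pair $(\ba,\br)$ is $\dot H^{s_\alpha}_x$-admissible with $s_\alpha=\frac{d}{2}-\frac{2}{\alpha}\in (0,\frac12)$ strictly below the regularity $1$ controlled by hypothesis, one is in the subcritical regime: no frequency rescaling is necessary, and a single bubble together with only space-time translations $(t_n,x_n)$ should suffice.

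First, I would establish a refined Strichartz inequality of the form
\[
\|e^{it\Delta_x} g\|_{L^\ba_t L^\br_{x,y}}\lesssim \|g\|_{H^1_{x,y}}^{\theta}\cdot \sup_{N\in 2^{\mathbb{Z}}} \|e^{it\Delta_x} P_N^{x} g\|_{L^\infty_{t,x} L^\br_y}^{1-\theta}
\]
by Littlewood--Paley decomposing in $x$ and interpolating an $L^2$-admissible Strichartz bound (using $H^1_y\hookrightarrow L^\br_y$, since $\br=\alpha+2$ and $\dim\T=1$) against the frequency-localized $L^\infty_{t,x}$ concentration, with $\theta$ fixed by matching the scalings $\frac{2}{\ba}+\frac{d}{\br}$ on both sides; a short computation matches it with $\theta=\frac{2(4-(d-2)\alpha)}{\alpha^2 d}$. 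Applied to $(f_n)_n$ this yields some $N_n\in 2^{\mathbb{Z}}$ with $\|e^{it\Delta_x} P_{N_n}^{x} f_n\|_{L^\infty_{t,x} L^\br_y}\gtrsim A^{-\theta/(1-\theta)}\vare^{1/(1-\theta)}$.

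Second, Bernstein in $x$ combined with $H^1_y\hookrightarrow L^\br_y$ gives $\|e^{it\Delta_x}P_N^x f_n\|_{L^\infty_{t,x}L^\br_y}\lesssim N^{d/2}A$, while dispersive decay paired with $H^1$-control produces the complementary bound $\lesssim N^{d/2-1}A$ at high frequencies; these two estimates pin $N_n$ to a bounded interval of $(0,\infty)$, so up to subsequence $N_n\to N_\star\in(0,\infty)$ and I may discard the projector. I then pick $(t_n,x_n,y_n)\in \R\times\R^d\times \T$ at which the $L^\infty_{t,x}L^\br_y$-size is essentially attained, pass to a further subsequence with $y_n\to y_\star$ by compactness of $\T$, and set $g_n(x,y):=e^{it_n\Delta_x}f_n(x+x_n,y+y_n)$. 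Since $(g_n)_n$ is bounded in $H^1_{x,y}$, a subsequence converges weakly to some $\phi$; absorbing the $y_\star$-translation into $\phi$ produces \eqref{cnls l2 refined strichartz basic 5}. To get the precise lower bound \eqref{cnls l2 refined strichartz decomp 1}, I would test $g_n$ against a Schwartz function concentrated near the origin at frequency $\sim N_\star$; local Rellich compactness transfers the concentration to $\phi$, and the additional exponent $d/\kappa$ reflects the $L^p$-interpolation loss, with small parameter $\kappa\in(0,\frac12-s_\alpha)$, needed to realize the $L^\br_y$ concentration as an $H^1_{x,y}$-dual pairing via the Sobolev embedding on $\T$.

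Finally, the Pythagorean identity \eqref{cnls l2 refined strichartz decomp 3} is automatic from weak convergence: since each $D\in\{1,\pt_{x_i},\pt_y\}$ commutes with translations and with $e^{it_n\Delta_x}$, one has $Dg_n\rightharpoonup D\phi$ in $L^2_{x,y}$, so the cross term $2\operatorname{Re}\langle D(g_n-\phi),D\phi\rangle_{L^2_{x,y}}$ vanishes in the limit, and translations preserve the relevant $L^2$-norms. The main obstacle is clearly the first step: constructing a refined Strichartz inequality adapted simultaneously to the product geometry $\R^d\times\T$, the exotic exponents $(\ba,\br)$, and a hybrid Littlewood--Paley-in-$x$/Sobolev-in-$y$ setup, in such a way that the resulting interpolation exponent is exactly $\theta=\frac{2(4-(d-2)\alpha)}{\alpha^2 d}$ and the quantitative lower bound correctly reflects the $d/\kappa$ interpolation loss. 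Once that bookkeeping is settled, the remainder of the proof is a routine subcritical bubble-extraction.
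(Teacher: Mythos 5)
Your high-level strategy is right: this is a subcritical inverse Strichartz, a single bubble and only $(t_n,x_n)$ translations suffice, and the final Pythagorean decomposition is a routine consequence of weak convergence in the Hilbert space $H^1_{x,y}$ (matching exactly what the paper does). However, there is a genuine gap in your mechanism for controlling the high frequencies, and this is precisely the step where the argument requires the most care. You propose to bound the dyadic concentration scale $N_n$ by combining a low-frequency Bernstein bound $\lesssim N^{d/2}A$ with a claimed high-frequency bound $\lesssim N^{d/2-1}A$ for $\|e^{it\Delta_x}P_N^x f_n\|_{L^\infty_{t,x}L_y^{\br}}$. But $N^{d/2-1}$ does not vanish as $N\to\infty$ for $d=2$ and actually grows for $d\ge 3,4$; since the scattering result needs $d\le 4$, this step fails in all but the one-dimensional case. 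More fundamentally, the $L^\infty_{t,x}L_y^{\br}$ quantity is not the correct place to see high-frequency smallness; it scales like $\dot H^{d/2}_x$ and has no built-in mechanism for exploiting the subcriticality $s_\alpha<1$.

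The paper instead uses a sharp low-pass cutoff $f_n^R$ defined by $\mathcal{F}_x(f_n^R)=\chi_R\mathcal{F}_x(f_n)$, and shows that the high-frequency tail is small directly in the Strichartz norm $L^\ba_t L^\br_{x,y}$, not in $L^\infty_{t,x}L^\br_y$: since $(\ba,\br)$ is $\dot H^{s_\alpha}_x$-admissible and $s<1-s_\alpha$, one has room to pick $\kappa>0$ with $s<1-(s_\alpha+\kappa)$, and then Strichartz plus the embedding $H^s_y\hookrightarrow L^\br_y$ give $\|e^{it\Delta_x}(f_n-f_n^R)\|_{L^\ba_tL^\br_{x,y}}\lesssim R^{-\kappa}A$. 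Choosing $R=(4AC/\vare)^{1/\kappa}$ makes this $\le\vare/2$, so the full $\vare$-mass already lives at frequencies $\le R$; no dyadic pinning is needed. After this, the temporal interpolation $L^\ba_t=[L^\bq_t,L^\infty_t]_\theta$ with the $L^2$-admissible pair $(\bq,\br)$, $\bq=\frac{4(\alpha+2)}{\alpha d}$, produces the pointwise concentration in $L^\infty_{t,x,y}$ (note: temporal interpolation first, then $L^{\alpha+2}_{x,y}$ is split into $L^2_{x,y}$ and $L^\infty_{x,y}$ — this is slightly different from the spacetime interpolation scheme you sketch, and is where the exact $\theta$ arises), and the lower bound for $\|\phi\|_{H^1_{x,y}}$ is obtained by pairing the weak limit against $\mathcal{F}_x^{-1}\chi_R$ and using $\|\chi_R\|_{L^2_x}\sim R^{d/2}\sim (A/\vare)^{d/(2\kappa)}$, which is exactly where the $d/\kappa$ in the exponent comes from (not an interpolation loss per se, but the $L^2$ size of the frequency cutoff). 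So your instinct about the exponents was close, but the mechanism producing them is the cutoff scale $R(A,\vare)$ and the regularity gap $\kappa$, not a bounded dyadic parameter; the latter is what would need repair in your argument.
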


\begin{proof}
For $R\geq 1$ let $\chi_R:\R^d\to[0,1]$ be the indicator function of the ball $\{\xi\in\R^d:|\xi|\leq R\}$. We then define $f_n^R$ through its symbol $\mathcal{F}_x(f_n^R)=\chi_R \mathcal{F}_x(f_n)$. By Strichartz estimate and the embedding $H_y^s\hookrightarrow L_y^\br$ we obtain
\begin{align*}
&\,\|e^{it\Delta_{x}}(f_n-f_n^R)\|_{L_t^\ba L_{x,y}^\br (\R)}\nonumber\\
\lesssim&\,\|e^{it\Delta_{x,y}}e^{-it\Delta_{y}}(f_n-f_n^R)\|_{L_t^\ba L_{x}^\br H_y^s (\R)}
\lesssim \|f_n-f_n^R\|_{H_x^{s_\alpha} H_y^s}\nonumber\\
\lesssim&\, \|f_n-f_n^R\|_{L_{x,y}^2}+\|f_n-f_n^R\|_{L_x^2 \dot{H}_y^s}+\|f_n-f_n^R\|_{\dot{H}_x^{s_\alpha} L_y^2 }+\|f_n-f_n^R\|_{\dot{H}_x^{s_\alpha} \dot{H}_y^{s}}\nonumber\\
=:&\,I+II+III+IV.
\end{align*}
Writing the Hilbert norms via Fourier transform we deduce
\begin{align}
&\,(I+III)^2\nonumber\\
\lesssim&\,\sum_{k\in\Z}\int_{|\xi|\geq R}|\mathcal{F}_{x,y}(f_n)(\xi,k)|^2\,d\xi
+\sum_{k\in\Z}\int_{|\xi|\geq R}|\xi|^{2s_\alpha}|\mathcal{F}_{x,y}(f_n)(\xi,k)|^2\,d\xi\nonumber\\
\lesssim &\,(R^{-2}+R^{-2(1-s_\alpha)})\sum_{k\in\Z}\int_{|\xi|\geq R}|\xi|^2|\mathcal{F}_{x,y}(f_n)(\xi,k)|^2\,d\xi\nonumber\\
\lesssim&\,(R^{-2}+R^{-2(1-s_\alpha)})\|f_n\|^2_{\dot{H}_x^1 L_y^2}\lesssim (R^{-2}+R^{-2(1-s_\alpha)})A^2.\label{or1}
\end{align}
For $II$ and $IV$, since $s\in(\frac{1}{2},1-s_\alpha)$, we can find some positive $\kappa\ll 1$ such that $s\in(\frac{1}{2},1-(s_\alpha+\kappa))$. Then using H\"older we infer that
\begin{align}
&\,(II+IV)^2\nonumber\\
\lesssim&\,\sum_{k\in\Z}k^{2s}\int_{|\xi|\geq R}|\mathcal{F}_{x,y}(f_n)(\xi,k)|^2\,d\xi
+\sum_{k\in\Z}k^{2s}\int_{|\xi|\geq R}|\xi|^{2s_\alpha}|\mathcal{F}_{x,y}(f_n)(\xi,k)|^2\,d\xi\nonumber\\
\lesssim &\,(R^{-2(s_\alpha+\kappa)}+R^{-2\kappa})\sum_{k\in\Z}k^{2s}\int_{|\xi|\geq R}|\xi|^{2(s_\alpha+\kappa)}|\mathcal{F}_{x,y}(f_n)(\xi,k)|^2\,d\xi\nonumber\\
\lesssim&\, (R^{-2(s_\alpha+\kappa)}+R^{-2\kappa})\sum_{k}k^{2(1-(s_\alpha+\kappa))}\|\mathcal{F}_{y} (f_n)(k)\|^{2(1-(s_\alpha+\kappa))}_{L_x^2}
\|\nabla_x\mathcal{F}_{y} (f_n)(k)\|^{2(s_\alpha+\kappa)}_{L_x^2}
\nonumber\\
\lesssim&\,(R^{-2(s_\alpha+\kappa)}+R^{-2\kappa})\|(k\|\mathcal{F}_{y} (f_n)(k)\|_{L_x^2})_k\|^{2(1-(s_\alpha+\kappa))}_{\ell_k^2}
\times \|(\|\nabla_x\mathcal{F}_{y} (f_n)(k)\|_{L_x^2})_k\|^{2(s_\alpha+\kappa)}_{\ell_k^2}\nonumber\\
\sim&\,(R^{-2(s_\alpha+\kappa)}+R^{-2\kappa})\|f_n\|_{L_x^2 \dot{H}_y^1}^{2(1-(s_\alpha+\kappa))}
\|\nabla_xf_n\|_{L_{x,y}^2}^{2(s_\alpha+\kappa)}\lesssim (R^{-2(s_\alpha+\kappa)}+R^{-2\kappa})A^2.\label{or2}
\end{align}
Hence there exists some $C>0$ independent of $R$, $\vare$ and $A$ such that for all sufficiently large $n$
\begin{align}
\|e^{it\Delta_{x}}(f_n-f_n^R)\|_{L_t^\ba L_{x,y}^\br (\R)}\leq CR^{-\kappa}A.
\end{align}
Let $R=\bg(\frac{4AC}{\vare}\bg)^{\frac{1}{\kappa}}$. Then by \eqref{425} we obtain
\begin{align}\label{large R}
\liminf_{n\to\infty}\|e^{it\Delta_{x}}f_n^R\|_{L_t^\ba L_{x,y}^\br (\R)}\geq \frac{\vare}{4}.
\end{align}
Next, define $\bq$ in a way such that $(\bq,\br)$ is an $L^2$-admissible pair. Also let $\theta,\beta\in(0,1)$ be given such that $\ba^{-1}=\theta \bq^{-1}$ and $\br^{-1}=\beta 2^{-1}$. Then using interpolation and Strichartz we obtain
\begin{align*}
\vare&\lesssim\liminf_{n\to\infty}\|e^{it\Delta_{x}} f^R_n\|_{L_t^\ba L_{x,y}^\br (\R)}\lesssim
\liminf_{n\to\infty}\bg(\|e^{it\Delta_{x}} f^R_n\|^\theta_{L_t^\bq L_{x,y}^\br (\R)}
\|e^{it\Delta_{x}} f^R_n\|^{1-\theta}_{L_t^\infty L_{x,y}^\br (\R)}\bg)\nonumber\\
&\lesssim \liminf_{n\to\infty}\bg(\|e^{it\Delta_{x}} f^R_n\|^\theta_{L_t^\bq L_{x}^\br H_y^s (\R)}
\|e^{it\Delta_{x}} f^R_n\|^{1-\theta}_{L_t^\infty L_{x,y}^\br (\R)}\bg)\nonumber\\
&\lesssim \liminf_{n\to\infty}\bg(\|f^R_n\|^\theta_{L_x^2 H_y^s }
\|e^{it\Delta_{x}} f^R_n\|^{\beta(1-\theta)}_{L_t^\infty L_{x,y}^2 (\R)}
\|e^{it\Delta_{x}} f^R_n\|^{(1-\beta)(1-\theta)}_{L_{t,x,y}^\infty(\R)}\bg)\nonumber\\
&\lesssim A^{\theta+\beta(1-\theta)} \liminf_{n\to\infty} \|e^{it\Delta_{x}} f^R_n\|^{(1-\beta)(1-\theta)}_{L_{t,x,y}^\infty(\R)},
\end{align*}
which in turn implies
\begin{align*}
\liminf_{n\to\infty}\|e^{it\Delta_{x}} f^R_n\|_{L_{t,x,y}^\infty(\R)}\gtrsim A^{-\frac{\theta+\beta(1-\theta)}{(1-\beta)(1-\theta)}}\vare^{\frac{1}{(1-\beta)(1-\theta)}}.
\end{align*}
Hence there exist $(t_n,x_n,y_n)_n\subset\R\times\R^d\times\T$ such that
\begin{align*}
\liminf_{n\to\infty} |e^{it_n\Delta_x}f_n^R(x_n,y_n)|\gtrsim A^{-\frac{\theta+\beta(1-\theta)}{(1-\beta)(1-\theta)}}\vare^{\frac{1}{(1-\beta)(1-\theta)}},
\end{align*}
or equivalently
\begin{align}\label{loew bound}
\liminf_{n\to\infty} |\int_{\R^d}(\mathcal{F}_x^{-1}\chi_R)(-z)e^{it_n\Delta_z}f_n(x_n+z,y_n)\,dz|\gtrsim A^{-\frac{\theta+\beta(1-\theta)}{(1-\beta)(1-\theta)}}\vare^{\frac{1}{(1-\beta)(1-\theta)}}.
\end{align}
Since $\T$ is bounded, we may w.l.o.g. assume that $y_n\equiv 0$. Next, define
\begin{align*}
h_n(x,y):= e^{it_n\Delta_x}f_n(x+x_n,y)
\end{align*}
One easily verifies that $\|h_n\|_{H_{x,y}^1}=\|f_n\|_{H_{x,y}^1}$ and by the $H_{x,y}^1$-boundedness of $(f_n)_n$ we know that there exists some $\phi\in H_{x,y}^1$ such that $h_n\rightharpoonup \phi$ weakly in $H_{x,y}^1$. \eqref{loew bound}, the weak convergence of $h_n$ to $\phi$, H\"older and the embedding $H_y^1 \hookrightarrow L_y^\infty$ yield
\begin{align*}
A^{-\frac{\theta+\beta(1-\theta)}{(1-\beta)(1-\theta)}}\vare^{\frac{1}{(1-\beta)(1-\theta)}}&\lesssim
|\int_{\R^d}(\mathcal{F}_x^{-1}\chi_R)(-z)\phi(z,0)\,dz|\lesssim \|\chi_R\|_{L_x^2}\|\phi(\cdot,0)\|_{L_x^2}\nonumber\\
&\lesssim A^{\frac{d}{2\kappa}}\vare^{-\frac{d}{2\kappa}}\|\phi\|_{L_y^\infty L_x^2}\leq
A^{\frac{d}{2\kappa}}\vare^{-\frac{d}{2\kappa}}\|\phi\|_{L_x^2 L_y^\infty } \nonumber\\
&\lesssim A^{\frac{d}{2\kappa}}\vare^{-\frac{d}{2\kappa}}\|\phi\|_{L_x^2 H_y^1}\leq
A^{\frac{d}{2\kappa}}\vare^{-\frac{d}{2\kappa}}\|\phi\|_{H_{x,y}^1}
\end{align*}
and the lower bound estimate in \eqref{cnls l2 refined strichartz decomp 1} follows. Since $H_{x,y}^1$ is a Hilbert space, we infer that for $D\in\{1,\pt_{x_i},\pt_y\}$
\begin{align*}
\|D(h_n-\phi)\|_{L_{x,y}^2}+\|D\phi\|_{L_{x,y}^2}=\|Dh_n\|_{L_{x,y}^2}+o_n(1).
\end{align*}
The equalities in \eqref{cnls l2 refined strichartz decomp 1} and \eqref{cnls l2 refined strichartz decomp 3} now follow from undoing the transformation form $h_n$ to $f_n$ and $\phi$ to $\phi_n$.
\end{proof}

\begin{remark}
By redefining the symmetry parameters suitably we may w.l.o.g. assume that
$$t_n\equiv 0\quad\text{or}\quad t_n\to \pm\infty$$
as $n\to\infty$.
\end{remark}

\begin{lemma}[Energy Pythagorean expansion]
Let $(f_n)_n$ and $(\phi_n)_n$ be the functions from Lemma \ref{refined l2 lemma 1}. Then
\begin{align}
&\|f_n\|_{\alpha+2}^{\alpha+2}=\|\phi_n\|_{\alpha+2}^{\alpha+2}+\|f_n-\phi_n\|_{\alpha+2}^{\alpha+2}+o_n(1).\label{decomp tas}
\end{align}
\end{lemma}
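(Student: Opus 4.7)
The plan is to split the analysis according to the behavior of the time parameters $(t_n)_n$, reducing Case 1 to the classical Brezis-Lieb lemma and Case 2 to dispersive decay. Set $h_n(x,y):=e^{it_n\Delta_x}f_n(x+x_n,y)$, so that by \eqref{cnls l2 refined strichartz basic 5} we have $h_n\rightharpoonup\phi$ weakly in $H_{x,y}^1$; in particular $\sup_n\|h_n\|_{H_{x,y}^1}<\infty$, and the intercritical Sobolev embedding $H_{x,y}^1\hookrightarrow L_{x,y}^{\alpha+2}$ (valid since $\alpha+2<\tfrac{2(d+1)}{d-1}$) yields a uniform bound on $\|h_n\|_{\alpha+2}$ and hence on $\|f_n\|_{\alpha+2}$.

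\emph{Case 1: $t_n\equiv 0$.} Then $f_n(x,y)=h_n(x-x_n,y)$ and $\phi_n(x,y)=\phi(x-x_n,y)$, so by translation invariance of the $L^{\alpha+2}$ norm the claim reduces to
$$\|h_n\|_{\alpha+2}^{\alpha+2}-\|h_n-\phi\|_{\alpha+2}^{\alpha+2}\to\|\phi\|_{\alpha+2}^{\alpha+2}.$$
Combining the weak convergence $h_n\rightharpoonup\phi$ in $H_{x,y}^1$ with the Rellich--Kondrachov compact embedding on bounded subsets of $\R^d\times\T$ and a diagonal argument, a subsequence of $h_n$ converges to $\phi$ pointwise a.e.\ on $\R^d\times\T$. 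The conclusion then follows from the classical Brezis--Lieb lemma applied to the uniformly $L^{\alpha+2}$-bounded sequence $(h_n)_n$.

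\emph{Case 2: $|t_n|\to\infty$.} The key point is to show $\|\phi_n\|_{\alpha+2}\to 0$. For any $\varepsilon>0$ pick a Schwartz function $\phi^{(\varepsilon)}\in\mathcal{S}(\R^d\times\T)$ with $\|\phi-\phi^{(\varepsilon)}\|_{H_{x,y}^1}<\varepsilon$. Applying the standard $L^{(\alpha+2)'}\to L^{\alpha+2}$ dispersive bound in the $x$-variable slice-by-slice,
$$\|e^{-it_n\Delta_x}\phi^{(\varepsilon)}(\cdot,y)\|_{L_x^{\alpha+2}}\lesssim |t_n|^{-d(\frac12-\frac{1}{\alpha+2})}\|\phi^{(\varepsilon)}(\cdot,y)\|_{L_x^{(\alpha+2)'}},$$
and then integrating in $y\in\T$ and using the compactness of $\T$, we get $\|e^{-it_n\Delta_x}\phi^{(\varepsilon)}\|_{\alpha+2}\to 0$. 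Sobolev embedding absorbs the error $\|e^{-it_n\Delta_x}(\phi-\phi^{(\varepsilon)})\|_{\alpha+2}\lesssim\varepsilon$, hence $\|\phi_n\|_{\alpha+2}=\|e^{-it_n\Delta_x}\phi\|_{\alpha+2}\to 0$ by translation invariance. Next, from the elementary pointwise inequality $\bigl|\,|a-b|^{\alpha+2}-|a|^{\alpha+2}\,\bigr|\lesssim|b|^{\alpha+2}+|a|^{\alpha+1}|b|$, H\"older's inequality, and the uniform $L^{\alpha+2}$-bound on $f_n$, we obtain $\bigl|\|f_n\|_{\alpha+2}^{\alpha+2}-\|f_n-\phi_n\|_{\alpha+2}^{\alpha+2}\bigr|\to 0$; combining with $\|\phi_n\|_{\alpha+2}^{\alpha+2}\to 0$ yields \eqref{decomp tas}.

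The main obstacle is the dispersive step in Case 2: one must exploit that the free evolution $e^{-it_n\Delta_x}$ disperses only along $\R^d$, while the torus factor $\T$ contributes no decay. Fortunately, the bounded measure of $\T$ makes the $y$-integration harmless, so a density argument reducing to Schwartz data and the standard pointwise-in-$y$ dispersive estimate close the argument. The rest is essentially the Brezis--Lieb lemma combined with translation bookkeeping.
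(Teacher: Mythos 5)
Your proposal is correct and follows essentially the same strategy as the paper: splitting on whether $t_n\equiv 0$ or $|t_n|\to\infty$, using a.e.\ convergence plus the Brezis--Lieb lemma in the first case, and approximating $\phi$ by a nice (rapidly decaying in $x$, smooth periodic in $y$) function and invoking the $L^{(\alpha+2)'}_x\to L^{\alpha+2}_x$ dispersive estimate slice-by-slice in $y$ together with the Sobolev embedding $H_{x,y}^1\hookrightarrow L_{x,y}^{\alpha+2}$ for the error term in the second. The only cosmetic differences are that the paper estimates $\big|\|f_n\|_{\alpha+2}-\|f_n-\phi_n\|_{\alpha+2}\big|\leq\|\phi_n\|_{\alpha+2}$ by the triangle inequality (then raises to power $\alpha+2$ using boundedness) whereas you use a pointwise two-term estimate plus H\"older, and that the paper carries out the Sobolev step via an explicit cut-off to reduce to $\R^{d+1}$ — both are routine.
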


\begin{proof}
Assume first $t_n\to\pm\infty$. For $\beta>0$ let $\psi\in C_c^\infty (\R^d)\otimes C_{\mathrm{per}}^\infty(\T)$ such that $\|\phi-\psi\|_{H_{x,y}^1}\leq\beta$, where $\phi$ is the same function given by Lemma \ref{refined l2 lemma 1}. Define also $\psi_n:=e^{-it_n\Delta_x}\psi(x-x_n,y)$. Then by dispersive estimate on $\R^d$ we deduce
\begin{align*}
\|\psi_n\|_{L_{x,y}^{\alpha+2}}\lesssim |t_n|^{-\frac{\alpha d}{2(\alpha+2)}}\|\psi\|_{L_{y}^{\alpha+2} L_x^{\frac{\alpha+2}{\alpha+1}}}\to 0.
\end{align*}
Now let $\zeta\in C^\infty(\R^{d+1};[0,1])$ be a cut-off function such that $\mathrm{supp}\,\zeta\subset \R^{d}\times [-2\pi,2\pi]$ and $\zeta\equiv 1$ on $\R^{d}\times [-\pi,\pi]$. Then by Sobolev's inequality on $\R^d\times\T$, product rule and periodicity along the $y$-direction we deduce
\begin{align}
\|\psi_n-\phi_n\|_{L_{x,y}^{\alpha+2}(\R^d\times\T)}
&\leq\|\zeta(\psi_n-\phi_n)\|_{L_{x,y}^{\alpha+2}(\R^{d+1})}
\lesssim \|\zeta(\psi_n-\phi_n)\|_{H_{x,y}^1(\R^{d+1})}\nonumber\\
&\lesssim \|\psi_n-\phi_n\|_{H_{x,y}^1(\R^d\times\T)}\leq\beta,\label{argument gn}
\end{align}
which in turn implies $\|\phi_n\|_{L_{x,y}^{\alpha+2}}=o_n(1)$. Therefore by triangular inequality
\begin{align*}
|\|f_n\|_{\alpha+2}-\|f_n-\phi_n\|_{\alpha+2}|\leq \|\phi_n\|_{\alpha+2}=o_n(1)
\end{align*}
and \eqref{decomp tas} follows. Assume now $t_n\equiv 0$. Notice that by truncation arguments, the weak $H_{x,y}^1$-convergence also implies almost everywhere convergence. Thus by Brezis-Lieb lemma we obtain
\begin{align*}
\|h_n\|_{\alpha+2}^{\alpha+2} =\|\phi\|_{\alpha+2}^{\alpha+2}+\|h_n-\phi\|_{\alpha+2}^{\alpha+2}+o_n(1),
\end{align*}
where $h_n$ is the function given by Lemma \ref{refined l2 lemma 1}. \eqref{decomp tas} follows then by undoing the transformation.
\end{proof}

\begin{lemma}[Linear profile decomposition]\label{linear profile}
Let $(\psi_n)_n$ be a bounded sequence in $H_{x,y}^1$. Then up to a subsequence, there exist nonzero linear profiles
$(\tdu^j)_j\subset H_{x,y}^1$, remainders $(w_n^k)_{k,n}\subset H_{x,y}^1$, parameters $(t^j_n,x^j_n)_{j,n}\subset\R\times\R^d$ and $K^*\in\N\cup\{\infty\}$, such that
\begin{itemize}
\item[(i)] For any finite $1\leq j\leq K^*$ the parameter $t^j_n$ satisfies
\begin{align}
t^j_n\equiv 0\quad\text{or}\quad \lim_{n\to\infty}t^j_n= \pm\infty.
\end{align}

\item[(ii)]For any finite $1\leq k\leq K^*$ we have the decomposition
\begin{align}\label{cnls decomp lemma}
\psi_n=\sum_{j=1}^k T_n^j\phi^j(x,y)+w_n^k=:\sum_{j=1}^k e^{-it_n\Delta_x}\phi^j(x-x_n,y)+w_n^k.
\end{align}

\item[(iii)] The remainders $(w_n^k)_{k,n}$ satisfy
\begin{align}\label{cnls to zero wnk lemma}
\lim_{k\to K^*}\lim_{n\to\infty}\|e^{it\Delta_x}w_n^k\|_{L_t^\ba L_{x,y}^\br(\R)}=0.
\end{align}

\item[(iv)] The parameters are orthogonal in the sense that
\begin{align}\label{cnls orthog of pairs lemma}
|t_n^k-t_n^j|+|x_n^k-x_n^j|\to\infty
\end{align}
for any $j\neq k$.

\item[(v)] For any finite $1\leq k\leq K^*$ and $D\in\{1,\pt_{x_i},\pt_y\}$ we have the energy decompositions
\begin{align}
\|D\psi_n\|_{L_{x,y}^2}^2&=\sum_{j=1}^k\|D(T_n^j\tdu^j)\|_{L_{x,y}^2}^2+\|Dw_n^k\|_{L_{x,y}^2}^2+o_n(1),\label{orthog L2 lemma}\\
\|\psi_n\|_{\alpha+2}^{\alpha+2}&=\sum_{j=1}^k\|T_n^j\tdu^j\|_{\alpha+2}^{\alpha+2}
+\|w_n^k\|_{\alpha+2}^{\alpha+2}+o_n(1)\label{cnls conv of h lemma}.
\end{align}
\end{itemize}
\end{lemma}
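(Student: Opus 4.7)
The plan is a standard inductive extraction of profiles driven by the inverse Strichartz inequality of Lemma~\ref{refined l2 lemma 1}, combined with the Pythagorean decompositions \eqref{cnls l2 refined strichartz decomp 3} and \eqref{decomp tas}. I would set $w_n^0 := \psi_n$ and suppose, inductively, that $w_n^{k-1}$ and the profiles $\tdu^1,\ldots,\tdu^{k-1}$ with parameters $(t_n^1,x_n^1),\ldots,(t_n^{k-1},x_n^{k-1})$ have been constructed. Define
$$A_k := \limsup_{n\to\infty}\|w_n^{k-1}\|_{H_{x,y}^1}, \qquad \vare_k := \limsup_{n\to\infty}\|e^{it\Delta_x}w_n^{k-1}\|_{L_t^\ba L_{x,y}^\br(\R)}.$$
If $\vare_k = 0$ the procedure terminates with $K^* := k-1$; otherwise I apply Lemma~\ref{refined l2 lemma 1} to $(w_n^{k-1})_n$ to extract, along a subsequence, parameters $(t_n^k,x_n^k)\in\R\times\R^d$ (normalized so that $t_n^k\equiv 0$ or $|t_n^k|\to\infty$) and a nonzero weak $H_{x,y}^1$-limit $\tdu^k$. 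I then set $w_n^k := w_n^{k-1}-T_n^k\tdu^k(\,\cdot-x_n^k,\,\cdot\,)$ and iterate. A diagonal argument in $k$ produces one subsequence of $(\psi_n)_n$ along which the full construction is valid.

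The energy identities \eqref{orthog L2 lemma} and \eqref{cnls conv of h lemma} follow by iterating the single-step decompositions \eqref{cnls l2 refined strichartz decomp 3} and \eqref{decomp tas}. Moreover, \eqref{cnls l2 refined strichartz decomp 1} and weak lower semicontinuity give
$$\sum_{j=1}^{k}\|\tdu^j\|_{H_{x,y}^1}^2 \;\leq\; \limsup_{n\to\infty}\|\psi_n\|_{H_{x,y}^1}^2 < \infty,$$
hence $\|\tdu^k\|_{H_{x,y}^1}\to 0$ as $k\to K^*$. Since $A_k$ is uniformly bounded by $\limsup_n\|\psi_n\|_{H_{x,y}^1}$ and the quantitative bound $\|\tdu^k\|_{H_{x,y}^1}^2 \gtrsim A_k^{-a}\vare_k^{\,b}$ from \eqref{cnls l2 refined strichartz decomp 1} holds with fixed exponents $a,b>0$, this forces $\vare_k\to 0$, which is precisely \eqref{cnls to zero wnk lemma}.

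The main technical obstacle is the asymptotic orthogonality \eqref{cnls orthog of pairs lemma}, which I would establish by induction on $k$. Fix $j<k$ and, arguing by contradiction, assume that along a subsequence $t_n^k-t_n^j\to t^*\in\R$ and $x_n^k-x_n^j\to x^*\in\R^d$. Using that translation commutes with $e^{it\Delta_x}$, one has the identity
$$e^{it_n^k\Delta_x}w_n^{k-1}(x+x_n^k,y) \;=\; \bigl(e^{i(t_n^k-t_n^j)\Delta_x}\bigl[e^{it_n^j\Delta_x}w_n^{k-1}(\,\cdot+x_n^j,y)\bigr]\bigr)\bigl(x+(x_n^k-x_n^j),y\bigr).$$
By construction the left-hand side converges weakly to $\tdu^k\neq 0$ in $H_{x,y}^1$. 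To compute the weak limit of the right-hand side I expand $w_n^{k-1} = w_n^j - \sum_{l=j+1}^{k-1} T_n^l\tdu^l(\,\cdot-x_n^l,\,\cdot\,)$: by the construction of $w_n^j$ the sequence $e^{it_n^j\Delta_x}w_n^j(\,\cdot+x_n^j,y)$ tends weakly to $0$, and by the inductive orthogonality of $(t_n^l,x_n^l)$ with $(t_n^j,x_n^j)$ for $j<l<k$ each subtracted profile also vanishes weakly in the $j$-frame. Hence $e^{it_n^j\Delta_x}w_n^{k-1}(\,\cdot+x_n^j,y)\rightharpoonup 0$; strong continuity of the map $u\mapsto e^{it^*\Delta_x}u(\,\cdot+x^*,y)$ on $H_{x,y}^1$ then forces the right-hand side to converge weakly to $0$, a contradiction. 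This completes the orthogonality step and hence the proof.
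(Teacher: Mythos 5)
Your proposal is correct and follows essentially the same route as the paper's proof: iterative extraction via Lemma~\ref{refined l2 lemma 1}, the Pythagorean decompositions for items (iii) and (v), the quantitative bound \eqref{cnls l2 refined strichartz decomp 1} combined with boundedness of the $H^1$-norms to force $\vare_k\to 0$, and an inductive contradiction argument for (iv) conjugating into the $j$-frame and using strong operator convergence of $e^{i(t_n^k-t_n^j)\Delta_x}\tau_{x_n^k-x_n^j}$ together with the inductive orthogonality of the intermediate profiles. The only cosmetic difference is that you deduce $\vare_k\to 0$ from $\|\tdu^k\|_{H^1}\to 0$ via the per-step inverse inequality, whereas the paper sums the telescoped bound $\sum_j A_j^{-a}\vare_j^{b}\lesssim A_0^2$ directly; both are equivalent.
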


\begin{proof}
We construct the linear profiles iteratively and start with $k=0$ and $w_n^0:=\psi_n$. We assume initially that the linear profile decomposition is given and its claimed properties are satisfied for some $k$. Define
\begin{align*}
\vare_{k}:=\lim_{n\to\infty}\|e^{it\Delta_x}w_n^k\|_{L_t^\ba L_{x,y}^\br(\R)}.
\end{align*}
If $\vare_k=0$, then we stop and set $K^*=k$. Otherwise we apply Lemma \ref{refined l2 lemma 1} to $w_n^k$ to obtain the sequence $(\tdu^{k+1},w_n^{k+1},t_n^{k+1},x_n^{k+1})_{n}.$ We should still need to check that the items (iii) and (iv) are satisfied for $k+1$. That the other items are also satisfied for $k+1$ follows directly from the construction of the linear profile decomposition. If $\vare_k=0$, then item (iii) is automatic; otherwise we have $K^*=\infty$. Using \eqref{cnls l2 refined strichartz decomp 1} and \eqref{orthog L2 lemma} we obtain
\begin{align*}
\sum_{j\in \N}A^{-\frac{2(\theta+\beta(1-\theta))}{(1-\beta)(1-\theta)}-\frac{d}{\kappa}}\vare^{\frac{2}{(1-\beta)(1-\theta)}+\frac{d}{\kappa}}
\lesssim \sum_{j\in \N}\|\tdu^j\|^2_{H_{x,y}^1}
=\sum_{j\in \N}\lim_{n\to\infty}\|T_n^j \phi^j\|^2_{H_{x,y}^1}
\leq \lim_{n\to\infty}\|\psi_n\|^2_{H_{x,y}^1}= A_0^2,
\end{align*}
where $A_j:=\lim_{n\to\infty}\|w_n^j\|_{H_{x,y}^1}$. Hence
\begin{align*}
A^{-\frac{2(\theta+\beta(1-\theta))}{(1-\beta)(1-\theta)}-\frac{d}{\kappa}}\vare^{\frac{2}{(1-\beta)(1-\theta)}+\frac{d}{\kappa}}\to 0\quad\text{as $j\to\infty$}.
\end{align*}
By \eqref{orthog L2 lemma} we know that $(A_j)_j$ is monotone decreasing, thus also bounded. The boundedness of $(A_j)_j$ then implies  $\vare_j\to 0$ as $j\to\infty $ and the proof of item (iii) is complete. Finally, we take item (iv). Assume inductively that item (iv) does not hold for some $j<k$ but holds for all pairs $(i_1,i_2)$ with $i_1<i_2\leq j$. We may w.l.o.g assume that the inductive basis is satisfied, otherwise from the following contradiction proof (where no inductive assumption is invoked in the base case) the algorithm already stops at $k=0$. By construction of the profile decomposition we have
\begin{align*}
w_n^{k-1}=w_n^j-\sum_{l=j+1}^{k-1} e^{-it_n^l\Delta_x} \tdu^l(x-x_n^l).
\end{align*}
Then by definition of $\tdu^k$ we know that
\begin{align*}
\tdu^k&=\wlim_{n\to\infty}e^{it_n^k\Delta_x}w_n^{k-1}(x+x_n^k,y)\nonumber\\
&=\wlim_{n\to\infty}e^{it_n^k\Delta_x}w_n^{j}(x+x_n^k,y)-\sum_{l=j+1}^{k-1}\wlim_{n\to\infty}e^{i(t_n^k-t_n^l)\Delta_x}\tdu^l(x+x_n^k-x_n^l,y),
\end{align*}
where the weak limits are taken in the $H_{x,y}^1$-topology. We aim to show $\tdu^k$ is zero, which leads to a contradiction and proves item (iv). For the first summand, we obtain that
\begin{align*}
e^{it_n^k\Delta_x}w_n^{j}(x+x_n^k,y)
=(e^{i(t_n^k-t_n^j)\Delta_x}\tau_{x_n^k-x_n^j})[e^{it_n^j\Delta_x}w_n^{j}(x+x_n^j,y)],
\end{align*}
where $\tau_{z}f(x,y):=f(x+z,y)$. On the one hand, the failure of item (iv) will lead to the strong convergence of the adjoint of $e^{i(t_n^k-t_n^j)\Delta_x}\tau_{x_n^k-x_n^j}$ in $H_{x,y}^1$. On the other hand, by construction of the profile decomposition (see the proof of Lemma \ref{refined l2 lemma 1}) we have
\begin{align*}
e^{it_n^j\Delta_x}w_n^{j}(x+x_n^j,y)\rightharpoonup 0\quad\text{in $H_{x,y}^1$}
\end{align*}
and we conclude that the first summand weakly converges to zero in $H_{x,y}^1$. Now we consider the single terms in the second summand. We can rewrite each single summand to
\begin{align*}
e^{i(t_n^k-t_n^l)\Delta_x}\tdu^l(x+x_n^k-x_n^l,y)
=(e^{i(t_n^k-t_n^j)\Delta_x}\tau_{x_n^k-x_n^j})[e^{i(t_n^j-t_n^l)\Delta_x}\tdu^l(x+x_n^j-x_n^l,y)].
\end{align*}
By the previous arguments it suffices to show that
\begin{align*}
I_n:=e^{i(t_n^j-t_n^l)\Delta_x}\tdu^l(x+x_n^j-x_n^l,y)\rightharpoonup 0\quad\text{in $H_{x,y}^1$}.
\end{align*}
Due to the inductive hypothesis we know that item (iv) is satisfied for the pair $(j,l)$. Suppose first $|t_n^j-t_n^l|\to \infty$. Then the weak convergence of $I_n$ to zero in $H_{x,y}^1$ follows immediately from the dispersive estimate. Hence we may simply assume that
$$\lim_{n\to\infty}(t_n^j-t_n^l)\in\R\quad\text{and}\quad\lim_{n\to\infty}|x_n^j-x_n^l|=\infty.$$
In this case, we utilize the fundamental fact that the symmetry group composing by unbounded translations in $\R^d$ weakly converges to zero as operators in $H_{x,y}^1$ to deduce the claim. This completes the desired proof of item (iv).
\end{proof}

\begin{remark}\label{remark interpolation}
Let $s\in(\frac12,1-s_\alpha)$ with $s_\alpha=\frac{d}{2}-\frac{2}{\alpha}\in (0,\frac12)$. Using interpolation, Strichartz, the embedding $L_y^\br\hookrightarrow L_y^2$ and arguing as in \eqref{or1} and \eqref{or2} we obtain
\begin{align}
&\,\lim_{k\to K^*}\lim_{n\to\infty}\|e^{it\Delta_{x,y}}w_n\|_{\diag H_y^{s}(\R)}\nonumber\\
\lesssim&\lim_{k\to K^*}\lim_{n\to\infty}\bg(\|e^{it\Delta_{x}}w_n\|^{1-\frac{s}{1-s_\alpha}}_{\diag L_y^2(\R)}
\|w_n\|^{\frac{s}{1-s_\alpha}}_{H_x^{s_\alpha} H_y^{1-s_\alpha}}\bg)\nonumber\\
\lesssim&\lim_{k\to K^*}\lim_{n\to\infty}\bg(\|e^{it\Delta_{x}}w_n\|^{1-\frac{s}{1-s_\alpha}}_{\diag L_y^\br(\R)}
\|w_n\|^{\frac{s}{1-s_\alpha}}_{H_{x,y}^1}\bg)
=0.\label{interpolation remainder}
\end{align}
\end{remark}

\subsection{The MEI-functional and its properties}
In this subsection we introduce the mass-energy-indicator (MEI) functional $\mD$ and state some of its very useful properties which play a fundamental role for setting up an inductive hypothesis of a contradiction proof. This was firstly introduced in \cite{killip_visan_soliton} for the study of the focusing-defocusing 3D cubic-quintic NLS and further applied in \cite{ArdilaDipolar,killip2020cubicquintic,Luo_JFA_2022,Luo_DoubleCritical,Luo_Waveguide_MassCritical,CubicQuinticPotential} for different models. Particularly, the MEI-functional is very useful for building up a multi-directional inductive hypothesis scheme and matches perfectly to the underlying framework in this paper. To begin with, we firstly define the domain $\Omega\subset \R^2$ by
\begin{align}
\Omega&:=\bg((-\infty,0]\times \R\bg)\cup\bg\{(c,h)\in\R^2:c\in(0,\infty),h\in(-\infty,m_c)\bg\},
\end{align}
where $m_c$ is defined by \eqref{def of mc}. Then we define the MEI-functional $\mD:\R^2\to [0,\infty]$ by
\begin{align}\label{cnls MEI functional}
\mD(c,h)=\left\{
             \begin{array}{ll}
             h+\frac{h+c}{\mathrm{dist}((c,h),\Omega^c)},&\text{if $(c,h)\in \Omega$},\\
             \infty,&\text{otherwise}.
             \end{array}
\right.
\end{align}
For $u\in H_{x,y}^1$, define $\mD(u):=\mD(\mM(u),\mH(u))$. We also define the set $\mA$ by
\begin{align}
\mA&:=\{u\in H_{x,y}^1:\mH(u)<m_{\mM(u)},\,\mK(u)>0\}.
\end{align}
By conservation of mass and energy we know that if $u$ is a solution of \eqref{nls}, then $\mD(u(t))$ is a conserved quantity, thus in the following we simply write $\mD(u)=\mD(u(t))$ as long as $u$ is a solution of \eqref{nls}. In the following lemma we shall also show that the NLS-flow leaves a solution starting from the set $\mA$ invariant. We will therefore write $u\in\mA$ for a solution $u$ of \eqref{nls} if $u(t)\in\mA$ for some $t$ in the lifespan of $u$.

\begin{lemma}\label{invariance from mA}
Let $u$ be a solution of \eqref{nls} and assume that there exists some $t$ in the lifespan of $u$ such that $u(t)\in\mA$. Then $u(t)\in\mA$ for all $t$ in the maximal lifespan of $u$.
\end{lemma}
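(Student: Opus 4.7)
The plan is to exploit conservation of mass and energy together with the continuity of the semivirial functional $\mK$ along the flow, and reduce the whole statement to an intermediate value argument combined with the definition of $m_c$.

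First, I would observe that since $u(t_0) \in \mA$ for some $t_0$ in the lifespan, one necessarily has $c := \mM(u(t_0)) > 0$. Indeed, if $\mM(u(t_0)) = 0$, then $u(t_0) \equiv 0$, so $\mH(u(t_0)) = 0$; but from the definition of $m_c$ and the fact that $V(0) = \{0\}$ one gets $m_0 = 0$, contradicting $\mH(u(t_0)) < m_{\mM(u(t_0))}$. Conservation of mass and energy along the NLS flow then yield $\mM(u(t)) = c$ and $\mH(u(t)) = \mH(u(t_0)) < m_c$ for every $t$ in the maximal lifespan $I_{\max}$.

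Second, I would verify that $t \mapsto \mK(u(t))$ is continuous on $I_{\max}$. By the standard local well-posedness theory (Lemma \ref{lemma small data}) one has $u \in C(I_{\max}; H_{x,y}^1)$. Since the embedding $H_{x,y}^1 \hookrightarrow L^{\alpha+2}_{x,y}$ is continuous in the intercritical regime (indeed, $\alpha + 2 < 2 + 4/(d-1)$ is within the Sobolev range), the functional
\[
\mK(v) = \|\nabla_x v\|_2^2 - \frac{\alpha d}{2(\alpha+2)} \|v\|_{\alpha+2}^{\alpha+2}
\]
is continuous on $H_{x,y}^1$, which gives the asserted continuity along the flow.

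Finally, I would argue by contradiction. Suppose there exists $t_1 \in I_{\max}$ with $\mK(u(t_1)) \leq 0$. Since $\mK(u(t_0)) > 0$, the intermediate value theorem provides some $t_2$ strictly between $t_0$ and $t_1$ with $\mK(u(t_2)) = 0$. Because $\mM(u(t_2)) = c > 0$, we have $u(t_2) \neq 0$ and hence $u(t_2) \in V(c)$. By the definition of $m_c$ in \eqref{def of mc} this forces $\mH(u(t_2)) \geq m_c$, directly contradicting $\mH(u(t_2)) = \mH(u(t_0)) < m_c$. Hence $\mK(u(t)) > 0$ throughout $I_{\max}$, which together with the conservation-law bound $\mH(u(t)) < m_{\mM(u(t))}$ proves $u(t) \in \mA$ for every $t \in I_{\max}$. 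There is no real obstacle here; the only minor point to keep track of is excluding the degenerate case $u(t_2) = 0$, which is handled automatically by mass conservation.
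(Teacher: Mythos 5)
Your proof is correct and follows essentially the same route as the paper's: use conservation of mass and energy together with continuity of $t\mapsto\mK(u(t))$ to find, via the intermediate value theorem, an intermediate time $t_2$ with $\mK(u(t_2))=0$ yet $\mH(u(t_2))<m_c$, contradicting the definition of $m_c$. You spell out a couple of points the paper leaves implicit (the continuity of $\mK$ on $H^1_{x,y}$, the fact that $c>0$), which is fine; for the latter, the cleanest observation is simply that $\mK(u(t_0))>0$ already forces $u(t_0)\neq 0$, since $\mK(0)=0$ — no appeal to $m_0$ (which is not defined in the paper) is needed.
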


\begin{proof}
Assume therefore the contrary that there exists some $s$ in the lifespan of $u$ such that $\mK(u(s))\leq 0$. By continuity of $u$ and conservation of energy we know that there exists some $t'$ lying between $t$ and $s$ such that $\mK(u(t'))=0$ and $\mH(u(t'))<m_c$. This however contradicts the minimality of $m_c$.
\end{proof}

Next, we prove that for $u\in\mA$, the energy $\mH(u)$ is equivalent to $\|\nabla_{x,y}u\|_2^2$.

\begin{lemma}\label{lemma coercivity}
Let $u\in\mA$. Then
\begin{align}
\bg(\frac12-\frac{2}{\alpha d}\bg)\|\nabla_{x,y} u\|_2^2&\leq \mH(u)\leq\frac{1}{2}\|\nabla_{x,y} u\|_2^2\label{inq coercivity}.
\end{align}
\end{lemma}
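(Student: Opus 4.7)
The plan is to treat the two inequalities separately by direct manipulation of the functionals $\mH$ and $\mK$; no concentration compactness or variational machinery is needed here.

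The upper bound is immediate from the definition of $\mH$. I simply drop the nonnegative nonlinear term:
\begin{align*}
\mH(u)=\frac12\|\nabla_{x,y}u\|_2^2-\frac{1}{\alpha+2}\|u\|_{\alpha+2}^{\alpha+2}\leq\frac12\|\nabla_{x,y}u\|_2^2.
\end{align*}

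For the lower bound, the key is the same algebraic identity that already appeared in the proof of Corollary \ref{cor lower bound} and Lemma \ref{lemma existence mountain pass}: forming the combination $\mH(u)-\frac{2}{\alpha d}\mK(u)$ cancels the $L^{\alpha+2}$-term exactly, leaving
\begin{align*}
\mH(u)-\frac{2}{\alpha d}\mK(u)=\frac12\|\pt_y u\|_2^2+\bg(\frac12-\frac{2}{\alpha d}\bg)\|\nabla_x u\|_2^2.
\end{align*}
Since $u\in\mA$ means $\mK(u)>0$, I get $\mH(u)\geq \mH(u)-\frac{2}{\alpha d}\mK(u)$. The intercritical hypothesis $\alpha>4/d$ ensures $\frac12-\frac{2}{\alpha d}>0$, so $\frac12\geq\frac12-\frac{2}{\alpha d}$, and I can bound the first term from below by $(\frac12-\frac{2}{\alpha d})\|\pt_y u\|_2^2$. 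Combining with the $\|\nabla_x u\|_2^2$ contribution yields
\begin{align*}
\mH(u)\geq \bg(\frac12-\frac{2}{\alpha d}\bg)(\|\pt_y u\|_2^2+\|\nabla_x u\|_2^2)=\bg(\frac12-\frac{2}{\alpha d}\bg)\|\nabla_{x,y}u\|_2^2,
\end{align*}
which is the desired estimate.

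There is no real obstacle here; the only thing to note is that both sides of the lower bound are genuinely nonnegative and nontrivial precisely because $\alpha$ is mass-supercritical. The statement is really a coercivity consequence of the sign condition $\mK(u)>0$ defining the set $\mA$, and this coercivity is exactly what will be exploited in the subsequent concentration-compactness and virial arguments for Theorems \ref{main thm} and \ref{thm blow up}.
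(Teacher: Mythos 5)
Your proof is correct and follows the same route as the paper: drop the sign-definite nonlinear term for the upper bound, and use the cancellation in $\mH(u)-\frac{2}{\alpha d}\mK(u)$ together with $\mK(u)>0$ and $\frac12>\frac12-\frac{2}{\alpha d}$ for the lower bound. In fact your write-up is slightly more careful than the paper's, which records $\mH(u)-\frac{2}{\alpha d}\mK(u)=(\frac12-\frac{2}{\alpha d})\|\nabla_{x,y}u\|_2^2$ as an equality, whereas (as you correctly note) the $\pt_y$-coefficient is $\frac12$ and one only gets the inequality $\geq$ after shrinking it to $\frac12-\frac{2}{\alpha d}$.
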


\begin{proof}
On the one hand, since the nonlinear potential energy $\|u\|_{\alpha+2}^{\alpha+2}$ has negative sign in $\mH(u)$, we have $\mH(u)\leq \frac{1}{2}\|\nabla_{x,y} u\|_2^2$. On the other hand, using $\mK(u)>0$ for $u\in\mA$ we deduce
\begin{align*}
\mH(u)>\mH(u)-\frac{2}{\alpha d}\mK(u)=\bg(\frac12-\frac{2}{\alpha d}\bg)\|\nabla_{x} u\|_2^2+\|\pt_y u\|_2^2
\geq \bg(\frac12-\frac{2}{\alpha d}\bg)\|\nabla_{x,y} u\|_2^2.
\end{align*}
\end{proof}

We end this subsection by giving some useful properties of the MEI-functional.

\begin{lemma}\label{cnls killip visan curve}
Let $u,u_1,u_2$ be functions in $H_{x,y}^1$. The following statements hold true:
\begin{itemize}
\item[(i)] $u\in\mA \Leftrightarrow\mD(u)\in(0,\infty)$.

\item[(ii)] Let $u_1,u_2\in \mA$ satisfy $\mM(u_1)\leq \mM(u_2)$ and $\mH(u_1)\leq \mH(u_2)$, then $\mD(u_1)\leq \mD(u_2)$. If in addition either $\mM(u_1)<\mM(u_2)$ or $\mH(u_1)<\mH(u_2)$, then $\mD(u_1)<\mD(u_2)$.

\item[(iii)] Let $\mD_0\in(0,\infty)$. Then
\begin{gather}
m_{\mM(u)}-\mH(u)\gtrsim_{\mD_0} 1\label{small of unaaa},\\
\mH(u)+\mM(u)\lesssim_{\mD_0}\mD(u)\label{mei var2}
\end{gather}
uniformly for all $u\in \mA$ with $\mD(u)\leq \mD_0$.
\end{itemize}
\end{lemma}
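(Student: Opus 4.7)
For part (i), the forward implication is immediate: if $u \in \mA$ then $\mM(u) > 0$ and $\mH(u) < m_{\mM(u)}$ place $(\mM(u), \mH(u))$ in the interior of $\Omega$ with strictly positive distance to $\Omega^c$, and Lemma~\ref{lemma coercivity} together with $\mK(u) > 0$ forces $\mH(u) > 0$, so $\mD(u)$ is both positive and finite. For the converse, $\mD(u) \in (0, \infty)$ forces $(\mM(u), \mH(u)) \in \Omega$ with $\mH(u) > 0$, hence $\mM(u) > 0$ and $\mH(u) < m_{\mM(u)}$; the sign condition $\mK(u) > 0$ is then pinned down by a variational argument using Lemma~\ref{monotoneproperty}: any $u$ with $\mK(u) \leq 0$ admits a scaling $u^{t^*} \in V(\mM(u))$ with $t^* \in (0, 1]$ realizing the maximum of $\mH(u^t)$ along the scaling path, which together with the infimum characterization of $m_{\mM(u)}$ is incompatible with the strict inequality $\mH(u) < m_{\mM(u)}$.

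For part (ii), the key geometric observation is that $\Omega$ is invariant under down-left translations: for any $a, b \leq 0$ one has $\Omega + (a, b) \subseteq \Omega$. Indeed, if $(c, h) \in \Omega$ with $c > 0$ then either $c + a \leq 0$ (placing the image trivially in $\Omega$) or $c + a > 0$ and $h + b \leq h < m_c \leq m_{c + a}$ by the strict monotonicity of $c \mapsto m_c$ from Lemma~\ref{monotone lemma}. Taking contrapositives yields $\Omega^c + (c_2 - c_1, h_2 - h_1) \subseteq \Omega^c$ whenever $c_1 \leq c_2$ and $h_1 \leq h_2$, so translating any distance-minimizing point $p \in \Omega^c$ for $(c_1, h_1)$ by this vector gives an element of $\Omega^c$ witnessing $\mathrm{dist}((c_2, h_2), \Omega^c) \leq \mathrm{dist}((c_1, h_1), \Omega^c)$. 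Monotonicity of $\mD$ then follows by reading off the two summands of $\mD(c, h) = h + (h + c)/\mathrm{dist}((c, h), \Omega^c)$, using that both $h$ and $h + c$ are positive on $\mA$; a strict increase in either $\mM$ or $\mH$ propagates to at least one of the summands.

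For part (iii), the core step is to show that the sublevel set $\{u \in \mA : \mD(u) \leq \mD_0\}$ maps under $u \mapsto (\mM(u), \mH(u))$ to a bounded subset of $\Omega$ on which $\mathrm{dist}(\cdot, \Omega^c)$ is controlled. The bound $\mH(u) \leq \mD_0$ is trivial. To bound $\mM(u)$, observe that $\mD(u) \geq \mM(u)/\mathrm{dist}((\mM(u), \mH(u)), \Omega^c) \geq \mM(u)/m_{\mM(u)}$ (via the vertical bound $\mathrm{dist} \leq m_{\mM(u)} - \mH(u) \leq m_{\mM(u)}$); if $\mM(u_n) \to \infty$ along a sequence, then by monotonicity $m_{\mM(u_n)} \leq m_{\mM(u_1)} < \infty$, so the ratio $\mM(u_n)/m_{\mM(u_n)} \to \infty$ contradicts $\mD(u_n) \leq \mD_0$. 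Once the sublevel set is bounded, $\mathrm{dist}(\cdot, \Omega^c)$ is bounded above by some $C(\mD_0)$ (the boundary curve $\{(c', m_{c'})\}$ escapes to infinity in norm at both endpoints, so distances on bounded subsets of $\Omega$ are finite and uniformly bounded), and \eqref{mei var2} follows immediately from $\mD(u) \geq (\mH(u) + \mM(u))/C(\mD_0)$.

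For the remaining inequality \eqref{small of unaaa}, I will case-split around a threshold $\delta = \delta(\mD_0) > 0$ chosen small enough that $m_\delta \geq 2 \mD_0$. This choice is possible because $m_c \to \infty$ as $c \to 0^+$: indeed, by Theorem~\ref{thm threshold mass} one has $m_c = 2\pi \wm_{(2\pi)^{-1} c}$ for $c \leq c_*$, and the classical Euclidean Pohozaev scaling forces $\wm_{c'} \to \infty$ as $c' \to 0^+$. If $\mM(u) \leq \delta$, monotonicity of $c \mapsto m_c$ yields $m_{\mM(u)} \geq m_\delta \geq 2\mD_0 \geq 2\mH(u)$, hence $m_{\mM(u)} - \mH(u) \geq \mD_0$. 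If $\mM(u) > \delta$, the vertical bound $\mathrm{dist}((\mM(u), \mH(u)), \Omega^c) \leq m_{\mM(u)} - \mH(u)$ combined with $\mD(u) \geq \mM(u)/\mathrm{dist}((\mM(u), \mH(u)), \Omega^c)$ yields $m_{\mM(u)} - \mH(u) \geq \mM(u)/\mD(u) \geq \delta/\mD_0$. The main technical obstacle throughout part (iii) is extracting the correct asymptotic behavior of $m_c$ near $c = 0$ and $c = \infty$, both of which descend from combining Theorem~\ref{thm threshold mass} with the well-known scaling asymptotics of the Euclidean variational value $\wm_c$.
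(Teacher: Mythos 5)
For (ii) your proof is correct and in fact more explicit than the paper's (which simply invokes monotonicity and continuity of $c\mapsto m_c$); the translation-invariance observation $\Omega+(a,b)\subseteq\Omega$ for $a,b\leq 0$ is a clean way to compare the distances. For (iii) your route is also valid but heavier than the paper's: you extract $m_c\to\infty$ as $c\to 0^+$ from Theorem~\ref{thm threshold mass} and the Euclidean scaling law for $\wm_c$, whereas the paper proves \eqref{small of unaaa} by a self-contained contradiction argument that only uses positivity, monotonicity and continuity of $c\mapsto m_c$.

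The genuine gap is in the converse direction of (i). First, you assert that $\mD(u)\in(0,\infty)$ forces $\mH(u)>0$; this is not automatic from $\mD(c,h)=h+(h+c)/\mathrm{dist}((c,h),\Omega^c)$, since a priori $h$ could be negative while $h+c$ and the quotient are large. The paper devotes a separate computation to this: it shows $\mH(u)\leq 0\Rightarrow \mK(u)\leq 0$ via elementary algebra and then derives a contradiction from Corollary~\ref{cor lower bound} together with the Le Coz characterization $m_c=\tilde m_c=\inf\{\mI(v):v\in S(c),\,\mK(v)\leq 0\}$ from Step~1 of the proof of Theorem~\ref{thm existence of ground state}. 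Second and more seriously, your variational argument that $\mK(u)>0$ does not close. You argue that for $\mK(u)\leq 0$ there exists $t^*\in(0,1]$ with $u^{t^*}\in V(\mM(u))$ maximizing $t\mapsto\mH(u^t)$, hence $\mH(u^{t^*})\geq m_{\mM(u)}$, and that this is "incompatible" with $\mH(u)<m_{\mM(u)}$. But maximality of $t^*$ gives $\mH(u^{t^*})\geq\mH(u)$ as well, so you have only placed both $\mH(u)$ and $m_{\mM(u)}$ below the same number $\mH(u^{t^*})$; the chain $\mH(u)<m_{\mM(u)}\leq\mH(u^{t^*})$ is perfectly consistent and yields no contradiction. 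What the paper actually uses is again the Le Coz reformulation from Step~1: if $\mK(u)\leq 0$ then $\mI(u)\geq m_{\mM(u)}$, and since $\mI(u)=\mH(u)-\tfrac12\mK(u)$ this at once rules out $\mK(u)=0$ (otherwise $\mH(u)=\mI(u)\geq m_{\mM(u)}$), whereas your scaling-maximum argument gives nothing. You should replace your argument for this step by the one based on the identity $\mI=\mH-\tfrac12\mK$ and $m_c=\tilde m_c$.
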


\begin{proof}
That $u\in\mA$ implies $\mD(u)\in(0,\infty)$ follows immediately from Lemma \ref{lemma coercivity}. Now assume that $\mD(u)\in(0,\infty)$. If $\mH(u)\leq 0$, then
$$ \frac{1}{2}\|\nabla_x u\|_2^2-\frac{1}{\alpha+2}\|u\|_{\alpha+2}^{\alpha+2}=\mH(u)-\frac12\|\pt_y u\|_2^2\leq\mH(u)\leq 0,$$
which in turn implies
$$  \mK(u)=\|\nabla_x u\|_2^2-\frac{\alpha d}{2(\alpha+2)}\|u\|_{\alpha+2}^{\alpha+2}\leq \bg(2-\frac{\alpha d}{2}\bg)(\alpha+2)^{-1}\|u\|_{\alpha+2}^{\alpha+2}\leq 0.$$
This however contradicts Corollary \ref{cor lower bound} and Step 1 in the proof of Theorem \ref{thm existence of ground state}. Thus $\mH(u)>0$. Since $\mD(u)<\infty$, we also infer that $\mH(u)<m_{\mM(u)}$. By definition of $m_{\mM(u)}$ and Step 1 in the proof of Theorem \ref{thm existence of ground state} we conclude that $u\in \mA$. This completes the proof of (i).

Next, (ii) follows from the monotonicity and continuity of the curve $c\mapsto m_c$ deduced from Lemma \ref{monotone lemma}.

Finally, we take (iii). Assume first that \eqref{small of unaaa} does not hold. Then we could find a sequence $(u_n)_n\subset\mA$ such that $\sup_{n\in\N}\mD(u_n)\leq \mD_0$ and
\begin{align}\label{small of un}
m_{\mM(u_n)}-\mH(u_n)=o_n(1).
\end{align}
Since $(\mM(u),m_{\mM(u)})\notin \Omega$, we have
\begin{align}
\mathrm{dist}\bg(\bg(\mM(u_n),\mH(u_n)\bg),\Omega^c\bg)\leq m_{\mM(u_n)}-\mH(u_n)=o_n(1).\label{dist upper}
\end{align}
By (i) we know that $\mH(u_n)\in (0, m_{\mM(u_n)})$. If $\liminf_{n\in\N}\mM(u_n)\gtrsim 1$, then
\begin{align}\label{on1 contra}
\liminf_{n\to\infty}\mD(u_n)\gtrsim \frac{1}{o_n(1)},
\end{align}
contradicting $\sup_{n\in\N}\mD(u_n)\leq \mD_0$. If $\mM(u_n)=o_n(1)$, then by the monotonicity of $c\mapsto m_c$ and \eqref{small of un} we know that $\liminf_{n\to\infty}\mH(u_n)\gtrsim 1$ and similarly we derive the contradiction \eqref{on1 contra} again. This completes the proof of \eqref{small of unaaa}. It remains to show \eqref{mei var2}. By definition of $\mD(u)$ we already have $\mH(u)\leq \mD(u)$. For $\mM(u)$, assume first $\mM(u)\geq 1$. By Lemma \ref{monotone lemma} we have $m_{\mM(u)}\leq m_1$. Now using the definition of $\mD(u)$, the fact $\mH(u)\geq 0$ and \eqref{dist upper} we obtain
\begin{align*}
\mD(u)\geq \frac{\mM(u)}{m_{\mM(u)}-\mH(u)}\geq \frac{\mM(u)}{m_{1}}.
\end{align*}
This implies $\mM(u)\leq m_1\mD(u)$. Next, assume $\mM(u)<1$ and $\mH(u)\leq m_1$. Then
\begin{align*}
\mathrm{dist}\bg(\bg(\mM(u),\mH(u)\bg),\Omega^c\bg)\leq (1-\mM(u))+(m_1-\mH(u))\leq (1-\mM(u))+m_1
\end{align*}
and consequently
\begin{align*}
\mD_0\geq \mD(u)\geq \frac{\mM(u)}{(1-\mM(u))+m_1}.
\end{align*}
Rearranging terms we obtain
\begin{align*}
\mM(u)\leq\frac{1+m_1}{1+\mD_0}\mD_0\leq \frac{1+m_1}{1+\mD_0}\mD(u).
\end{align*}
Finally, assume $\mM(u)<1$ and $\mH(u)>m_1$. Then
\begin{align*}
\mathrm{dist}\bg(\bg(\mM(u),\mH(u)\bg),\Omega^c\bg)\leq 1-\mM(u)
\end{align*}
and proceeding as before we conclude that
\begin{align*}
\mM(u) \leq \frac{1}{1+\mD_0}\mD(u).
\end{align*}
This completes the desired proof.
\end{proof}

\subsection{Existence of a minimal blow-up solution}
Having all the preliminaries we are ready to construct a minimal blow-up solution of \eqref{nls}. According to Remark \ref{remark interpolation} we fix some $s\in(\frac12,1-s_\alpha)$ with $s_\alpha=\frac{d}{2}-\frac{2}{\alpha}\in (0,\frac12)$. Define
\begin{align*}
\tau(\mD_0):=\sup\bg\{\|u\|_{\diag H_y^{s}(I_{\max})}:
\text{ $u$ is solution of \eqref{nls}, }\mD(u)\in (0,\mD_0)\bg\}
\end{align*}
and
\begin{align}\label{introductive hypothesis}
\mD^*&:=\sup\{\mD_0>0:\tau(\mD_0)<\infty\}.
\end{align}
By Lemma \ref{lemma small data}, \ref{lemma scattering norm}, \ref{lemma coercivity}, and \ref{cnls killip visan curve} we know that $\mD^*>0$ and $\tau(\mD_0)<\infty$ for sufficiently small $\mD_0$. Therefore we simply assume $\mD^*<\infty$, relying on which we derive a contradiction. This in turn ultimately implies $\mD^*=\infty$ and the proof of Theorem \ref{main thm} will be complete in view of Lemma \ref{cnls killip visan curve}. By the inductive hypothesis we can find a sequence $(u_n)_n$ which are solutions of \eqref{nls} with $(u_n(0))_n\subset {\mA}$ and maximal lifespan $(I_{n})_n$ such that
\begin{gather}
\lim_{n\to\infty}\|u_n\|_{\diag H_y^{s}((\inf I_n,0])}=\lim_{n\to\infty}\|u_n\|_{\diag H_y^{s}([0, \sup I_n))}=\infty,\label{oo1}\\
\lim_{n\to\infty}\mD(u_n)=\mD^*.\label{oo2}
\end{gather}
Up to a subsequence we may also assume that
\begin{align*}
(\mM(u_n),\mH(u_n))\to(\mM_0,\mH_0)\quad\text{as $n\to\infty$}.
\end{align*}
By continuity of $\mD$ and finiteness of $\mD^*$ we know that
\begin{align*}
\mD^*=\mD(\mM_0,\mH_0),\quad
\mM_0\in(0,\infty),\quad
\mH_0\in[0,m_{\mM_0}).
\end{align*}
From Lemma \ref{lemma coercivity} and \ref{cnls killip visan curve} it follows that $(u_n(0))_n$ is a bounded sequence in $H_{x,y}^1$, hence Lemma \ref{linear profile} and Remark \ref{remark interpolation} are applicable for $(u_n(0))_n$: There exist nonzero linear profiles
$(\tdu^j)_j\subset H_{x,y}^1$, remainders $(w_n^k)_{k,n}\subset H_{x,y}^1$, parameters $(t^j_n,x^j_n)_{j,n}\subset\R\times\R^d$ and $K^*\in\N\cup\{\infty\}$, such that
\begin{itemize}
\item[(i)] For any finite $1\leq j\leq K^*$ the parameter $t^j_n$ satisfies
\begin{align}
t^j_n\equiv 0\quad\text{or}\quad \lim_{n\to\infty}t^j_n= \pm\infty.
\end{align}

\item[(ii)]For any finite $1\leq k\leq K^*$ we have the decomposition
\begin{align}\label{cnls decomp}
u_n(0)=\sum_{j=1}^k T_n^j\phi^j(x,y)+w_n^k=:\sum_{j=1}^k e^{-it_n\Delta_x}\phi^j(x-x_n,y)+w_n^k.
\end{align}

\item[(iii)] The remainders $(w_n^k)_{k,n}$ satisfy
\begin{align}\label{cnls to zero wnk}
\lim_{k\to K^*}\lim_{n\to\infty}\|e^{it\Delta_{x,y}}w_n^k\|_{L_t^\ba L_{x}^\br H_y^s(\R)}=0.
\end{align}

\item[(iv)] The parameters are orthogonal in the sense that
\begin{align}\label{cnls orthog of pairs}
|t_n^k-t_n^j|+|x_n^k-x_n^j|\to\infty
\end{align}
for any $j\neq k$.

\item[(v)] For any finite $1\leq k\leq K^*$ and $D\in\{1,\pt_{x_i},\pt_y\}$ we have the energy decompositions
\begin{align}
\|D(u_n(0))\|_{L_{x,y}^2}^2&=\sum_{j=1}^k\|D(T_n^j\tdu^j)\|_{L_{x,y}^2}^2+\|Dw_n^k\|_{L_{x,y}^2}^2+o_n(1),\label{orthog L2}\\
\|u_n(0)\|_{\alpha+2}^{\alpha+2}&=\sum_{j=1}^k\|T_n^j\tdu^j\|_{\alpha+2}^{\alpha+2}
+\|w_n^k\|_{\alpha+2}^{\alpha+2}+o_n(1)\label{cnls conv of h}.
\end{align}
\end{itemize}
We now define the nonlinear profiles as follows:
\begin{itemize}
\item For $t^k_\infty=0$, we define $u^k$ as the solution of \eqref{nls} with $u^k(0)=\tdu^k$.

\item For $t^k_\infty\to\pm\infty$, we define $u^k$ as the solution of \eqref{nls} that scatters forward (backward) to $e^{-it\Delta_{x,y}}\tdu^k$ in $H_{x,y}^1$.
\end{itemize}
In both cases we define
\begin{align*}
u_n^k:=u^k(t-t^k_n,x-x_n^k,y).
\end{align*}
Then $u_n^k$ is also a solution of \eqref{nls}. In both cases we have for each finite $1\leq k \leq K^*$
\begin{align}\label{conv of nonlinear profiles in h1}
\lim_{n\to\infty}\|u_n^k(0)-T_n^k \tdu^k\|_{H_{x,y}^1}=0.
\end{align}

In the following, we establish a Palais-Smale type lemma which is essential for the construction of the minimal blow-up solution.

\begin{lemma}[Palais-Smale-condition]\label{Palais Smale}
Let $(u_n)_n$ be a sequence of solutions of \eqref{nls} with maximal lifespan $I_n$, $u_n\in\mA$ and $\lim_{n\to\infty}\mD(u_n)=\mD^*$. Assume also that there exists a sequence $(t_n)_n\subset\prod_n I_n$ such that
\begin{align}\label{precondition}
\lim_{n\to\infty}\|u_n\|_{\diag H_y^{s}((\inf I_n,\,t_n])}=\lim_{n\to\infty}\|u_n\|_{\diag H_y^{s}([t_n,\,\sup I_n)}=\infty.
\end{align}
Then up to a subsequence, there exists a sequence $(x_n)_n\subset\R^2$ such that $(u_n(t_n, \cdot+x_n,y))_n$ strongly converges in $H_{x,y}^1$.
\end{lemma}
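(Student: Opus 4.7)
The plan is to follow the classical Kenig--Merle style Palais--Smale argument, combining the linear profile decomposition (Lemma \ref{linear profile}) with the inductive hypothesis encoded in $\mD^*$ (see \eqref{introductive hypothesis}) and the stability theory (Lemma \ref{lem stability cnls}). By the time-translation invariance of \eqref{nls} and the conservation of $\mM$ and $\mH$, I may reduce to $t_n=0$. Since $u_n(0)\in\mA$ and $\mD(u_n(0))\to\mD^*<\infty$, Lemma \ref{lemma coercivity} and Lemma \ref{cnls killip visan curve} give the uniform bound $\|u_n(0)\|_{H_{x,y}^1}\lesssim 1$, so Lemma \ref{linear profile} applies and produces linear profiles $\tdu^j$, parameters $(t_n^j,x_n^j)$, and remainders $w_n^k$ together with the associated nonlinear profiles $u^j$ and time-translations $u_n^j$ defined as in \eqref{conv of nonlinear profiles in h1}.

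The heart of the argument is a dichotomy on the number and size of the nonlinear profiles. Combining \eqref{orthog L2} with \eqref{cnls conv of h} I get the mass--energy Pythagorean expansions
\begin{align*}
\mM(u_n(0))&=\sum_{j=1}^k \mM(T_n^j\tdu^j)+\mM(w_n^k)+o_n(1),\\
\mH(u_n(0))&=\sum_{j=1}^k \mH(T_n^j\tdu^j)+\mH(w_n^k)+o_n(1),
\end{align*}
and Lemma \ref{lemma coercivity} together with the positivity of the individual pieces forces, for all large $n$, each $T_n^j\tdu^j$ and $w_n^k$ to belong to $\mA$. The monotonicity Lemma \ref{cnls killip visan curve} (ii) then gives $\mD(u^j)\leq\mD^*$, with strict inequality as soon as there is either a second nontrivial profile or a remainder that does not vanish in $H_{x,y}^1$.

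The hard part is to rule out these two nontrivial scenarios. If either were to occur, then each $\mD(u^j)<\mD^*$, so by the definition of $\mD^*$ every $u^j$ is global with finite $\diag H_y^s$-norm; Lemma \ref{lemma scattering norm} promotes this to uniform $X(\R)$ and $L_t^\ba L_x^\br H_y^{s'}(\R)$ bounds. I would then assemble the approximate solution
\begin{align*}
\tilde{u}_n^k(t):=\sum_{j=1}^k u_n^j(t)+e^{it\Delta_{x,y}}w_n^k
\end{align*}
and check the hypotheses \eqref{cond 1}--\eqref{cond 3} of Lemma \ref{lem stability cnls}. The main technical obstacle is estimating the error
\begin{align*}
e_n^k=(i\pt_t+\Delta_{x,y})\tilde{u}_n^k+|\tilde{u}_n^k|^\alpha\tilde{u}_n^k
=\bigl|\tilde{u}_n^k\bigr|^\alpha\tilde{u}_n^k-\sum_{j=1}^k |u_n^j|^\alpha u_n^j
\end{align*}
in $L_t^{\bb'}L_x^{\br'}H_y^s$: all mixed nonlinear terms of the form $|u_n^i|^{\alpha-1}u_n^i\cdot u_n^j$ must be shown to vanish as $n\to\infty$ by exploiting the orthogonality \eqref{cnls orthog of pairs} through a change of variables, while the terms involving $e^{it\Delta_{x,y}}w_n^k$ are controlled by \eqref{cnls to zero wnk} together with Remark \ref{remark interpolation} and Lemma \ref{fractional on t}, with the $H_y^s$-regularity handled exactly via the fractional chain rule of Lemma \ref{fractional on t}. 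The orthogonality of $(T_n^j)_j$ in $L_{x,y}^2$ supplied by \eqref{orthog L2} keeps the total size of $\tilde{u}_n^k$ summable uniformly in $k$. Invoking Lemma \ref{lem stability cnls} then gives $\|u_n\|_{\diag H_y^s(\R)}\lesssim 1$, contradicting the blow-up assumption \eqref{precondition}.

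Consequently exactly one profile $\tdu^1$ survives and $w_n^k\to 0$ in $H_{x,y}^1$. It remains to exclude $|t_n^1|\to\infty$: in that case $u^1$ scatters in one time direction, the approximate solution above becomes a small $\diag H_y^s$-perturbation of $u_n$ on the corresponding half-line, and Lemma \ref{lem stability cnls} would again yield a finite scattering norm there, contradicting \eqref{precondition}. Hence $t_n^1\equiv 0$, and \eqref{cnls decomp} for $k=1$ reads $u_n(0,x+x_n^1,y)=\tdu^1+w_n^1(\cdot+x_n^1,\cdot)$, so setting $x_n:=x_n^1$ gives the claimed strong $H_{x,y}^1$-convergence to $\tdu^1$.
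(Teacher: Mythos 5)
Your proposal follows the same overall structure as the paper's proof: reduce to $t_n=0$, apply the linear profile decomposition, show the individual profiles live in $\mA$ with $\mD$ strictly below $\mD^*$ unless exactly one profile survives, build the approximate solution $\Psi_n^k$, invoke stability, and finally exclude $|t_n^1|\to\infty$.

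However, there is a genuine gap at the step where you claim that ``Lemma~\ref{lemma coercivity} together with the positivity of the individual pieces forces, for all large $n$, each $T_n^j\tdu^j$ and $w_n^k$ to belong to $\mA$.'' This is not a valid deduction. To place $T_n^j\tdu^j$ in $\mA$ one must show \emph{both} $\mH(T_n^j\tdu^j)<m_{\mM(T_n^j\tdu^j)}$ \emph{and} $\mK(T_n^j\tdu^j)>0$. Lemma~\ref{lemma coercivity} already \emph{assumes} $u\in\mA$ as a hypothesis, so it cannot be used to establish membership; and the positivity of $\mK$ on the pieces is precisely what is at issue. While $\mM$, $\|\nabla_{x,y}\cdot\|_2^2$ and $\|\cdot\|_{\alpha+2}^{\alpha+2}$ all decouple by \eqref{orthog L2}--\eqref{cnls conv of h}, the quantity $\mK$ is a signed combination of these, and the Pythagorean decomposition of $\mK(u_n(0))>0$ does not by itself force each summand $\mK(T_n^j\tdu^j)$ to be positive. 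The paper's Step 1 handles this with a separate argument: it uses the nonnegativity of $\mI$ (valid since $\alpha d>4$), the decomposition $\mI(u_n(0))=\sum_j\mI(T_n^j\tdu^j)+\mI(w_n^k)+o_n(1)$, the Le Coz identity $\tilde m_c=m_c$ (Step~1 in the proof of Theorem~\ref{thm existence of ground state}), and the uniform gap $m_{\mM(u)}-\mH(u)\gtrsim_{\mD^*}1$ from Lemma~\ref{cnls killip visan curve}(iii), together with the continuity and monotonicity of $c\mapsto m_c$, to derive a contradiction from the hypothesis $\mK(T_n^j\tdu^j)\leq 0$. Without this ingredient the subsequent application of Lemma~\ref{cnls killip visan curve}(ii) to conclude $\mD(u^j)<\mD^*$ is unjustified. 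Apart from this, the remainder of your outline (decoupling of the nonlinear profiles via compactly supported approximation and the orthogonality \eqref{cnls orthog of pairs}, assembling $\Psi_n^k$, the stability step, and the exclusion of $|t_n^1|\to\infty$) matches the paper's route.
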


\begin{proof}
By time translation invariance we may assume that $t_n\equiv 0$. Let $(u_n^j)_{j,n}$ be the nonlinear profiles corresponding to the linear profile decomposition of $(u_n(0))_n$. Define
\begin{align*}
\Psi_n^k:=\sum_{j=1}^k u_n^j+e^{it\Delta_{x,y}}w_n^k.
\end{align*}
We shall show that there exists exactly one non-trivial ``bad'' linear profile, relying on which the desired claim follows. We divide the remaining proof into four steps.
\subsubsection*{Step 1: Decomposition of energies of the linear profiles}
We first show that for a given nonzero linear profile $\phi^j$ we have
\begin{align}
\mH(T_n^j\phi^j)&> 0,\label{bd for S}\\
\mK(T_n^j\phi^j)&> 0\label{pos of K}
\end{align}
for all sufficiently large $n=n(j)\in\N$. Since $\phi^j\neq 0$ we know that $T_n^j\phi^j\neq 0$ for all sufficiently large $n$. Suppose now that \eqref{pos of K} does not hold. Up to a subsequence we may assume that $\mK(T_n^j \phi^j)\leq 0$ for all sufficiently large $n$. Recall the energy functional $\mI$ defined by \eqref{def of mI}. Using \eqref{orthog L2} and \eqref{cnls conv of h} we infer that
\begin{align}
\mI(u_n(0))&=\sum_{j=1}^k\mI(T_n^j\tdu^j)+\mI(w_n^k)+o_n(1)\label{conv of i}.
\end{align}
By the non-negativity of $\mI$, \eqref{conv of i} and \eqref{small of unaaa} we know that there exists some sufficiently small $\delta>0$ depending on $\mD^*$ and some sufficiently large $N_1$ such that for all $n>N_1$ we have
\begin{align}\label{contradiction1}
\tm_{\mM(T_n^j\phi^j)}\leq\mI(T_n^j\phi^j)\leq \mI(u_n(0))+\delta
\leq\mH(u_n(0))+\delta\leq m_{\mM(u_n(0))}-2\delta,
\end{align}
where $\tm$ is the quantity defined by \eqref{mtilde equal m}. By continuity of $c\mapsto m_c$ we also know that for sufficiently large $n$ we have
\begin{align}\label{contradiction3}
m_{\mM(u_n(0))}-2\delta\leq m_{\mM_0}-\delta.
\end{align}
Using \eqref{orthog L2} we deduce that for any $\vare>0$ there exists some large $N_2$ such that for all $n>N_2$ we have
\begin{align*}
\mM(T_n^j\phi^j)\leq \mM_0+\vare.
\end{align*}
From the continuity and monotonicity of $c\mapsto m_c$ and Step 1 in the proof of Theorem \ref{thm existence of ground state}, we may choose some sufficiently small $\vare$ to see that
\begin{align}\label{contradiction2}
\tm_{\mM(T_n^j\phi^j)}=m_{\mM(T_n^j\phi^j)}\geq m_{\mM_0+\vare}\geq m_{\mM_0}-\frac{\delta}{2}.
\end{align}
Now \eqref{contradiction1}, \eqref{contradiction3} and \eqref{contradiction2} yield a contradiction. Thus \eqref{pos of K} holds, which combining with Lemma \ref{lemma coercivity} also yields \eqref{bd for S}. Similarly, for each $j\in\N$ we have
\begin{align}
\mH(w_n^j)&> 0,\label{bd for S wnj} \\
\mK(w_n^j)&> 0\label{pos of K wnj}
\end{align}
for sufficiently large $n$.
\subsubsection*{Step 2: Decoupling of nonlinear profiles}
In this step, we show that the nonlinear profiles are asymptotically decoupled in the sense that
\begin{align}\label{smallness aaa}
\lim_{n\to\infty} \|\la u_n^i,u_n^j\ra_{H_y^s}\|_{L_t^{\frac{\ba}{2}}L_x^{\frac{\br}{2}}(\R)}=0
\end{align}
for any fixed $1\leq i,j\leq K^*$ with $i\neq j$, provided that
$$\limsup_{n\to\infty}\,(\|u_n^i\|_{\diag H_y^{s}(\R)}+\|u_n^j\|_{\diag H_y^{s}(\R)})<\infty.$$
We claim that for any $\beta>0$ there exists some $\psi^i_\beta,\psi_\beta^j\in C_c^\infty(\R\times\R^d)\otimes C_{\mathrm{per}}^\infty(\T)$ such that
\begin{align*}
\bg\|u^i_n-\psi^i_\beta(t-t^i_n,x-x^i_n,y)\bg\|_{\diag H_y^{s}(\R)}\leq \beta,\\
\bg\|u^j_n-\psi^j_\beta(t-t^j_n,x-x^j_n,y)\bg\|_{\diag H_y^{s}(\R)} \leq \beta.
\end{align*}
Indeed, we may simply choose some $\psi^i_\beta,\psi^j_\beta\in C_c^\infty(\R\times\R^d)$ such that
\begin{align*}
\|u^i-\psi^i_\beta\|_{\diag H_y^{s}(\R)}\leq \beta,\,\|u^j-\psi^j_\beta\|_{\diag H_y^{s}(\R)}\leq \beta,
\end{align*}
where $u^k$ are solutions of \eqref{nls} with $u^k(0)=\phi^k$ ($t^k_\infty=0$) or $u^k$ scatters forward (backward) to $e^{-it\Delta_{x,y}}\phi^k$ in $H_{x,y}^1$ ($t^k_n\to\pm\infty$). Define now
$$ \Lambda_n (\psi_\beta^\ell):=\psi^\ell_\beta(t-t^\ell_n,x-x^\ell_n,y)$$
for $\ell\in\{i,j\}$. Using H\"older we infer that
\begin{align*}
\|\la u_n^i,u_n^j\ra_{H_y^s}\|_{L_t^{\frac{\ba}{2}}L_x^{\frac{\br}{2}}(\R)}\lesssim \beta+\|\la \Lambda_n (\psi_\beta^i),\Lambda_n (\psi_\beta^j)\ra_{H_y^s}\|_{L_t^{\frac{\ba}{2}}L_x^{\frac{\br}{2}}(\R)}.
\end{align*}
Since $\beta$ can be chosen arbitrarily small, it suffices to show
\begin{align}\label{step2a1}
\lim_{n\to\infty}\|\la \Lambda_n (\psi_\beta^i),\Lambda_n (\psi_\beta^j)\ra_{H_y^s}\|_{L_t^{\frac{\ba}{2}}L_x^{\frac{\br}{2}}(\R)}=0.
\end{align}
However, since $\psi^i_\beta$ and $\psi^j_\beta$ are compactly supported in $\R_t\times\R^d_x$, \eqref{step2a1} follows immediately from the orthogonality of the parameters $(t_n^k)_{k,n}$ and $(x_n^k)_{k,n}$ given by \eqref{cnls orthog of pairs}.
\subsubsection*{Step 3: Existence of at least one bad profile}
First we prove that there exists some $1\leq J\leq K^*$ such that for all $j\geq J+1$ and all sufficiently large $n$, $u_n^j$ is global and
\begin{align}\label{uniform bound of unj}
\lim_{n\to\infty}\sum_{J+1\leq j\leq K^*}\|u_n^j\|^2_{\diag H_y^{s}(\R)}\lesssim 1.
\end{align}
Indeed, using \eqref{orthog L2} we infer that
\begin{align}\label{small initial data}
\lim_{k\to K^*}\lim_{n\to\infty}\sum_{j=1}^k\|T_n^j \tdu^j\|^2_{H_{x,y}^1}<\infty.
\end{align}
Then \eqref{uniform bound of unj} follows from Lemma \ref{lemma small data} and \ref{lemma scattering norm}. We now claim that there exists some $1\leq J_0\leq J$ such that
\begin{align}
\limsup_{n\to\infty}\|u_n^{J_0}\|_{\diag H_y^{s}(\R)}=\infty.
\end{align}
We argue by contradiction and assume that
\begin{align}\label{uniform bound of unj small}
\limsup_{n\to\infty}\|u_n^j\|_{\diag H_y^{s}(\R)}<\infty\quad\forall\,1\leq j\leq J.
\end{align}
To proceed, we first show
\begin{align}\label{kkkk uniform bound of unj}
\sup_{J+1\leq k\leq K^*}\lim_{n\to\infty}\bg\|\sum_{j=J+1}^k u_n^j\bg\|_{\diag H_y^{s}(\R)}
\lesssim 1.
\end{align}
Indeed, using triangular inequality, \eqref{smallness aaa} and \eqref{uniform bound of unj} we immediately obtain
\begin{align*}
&\,\sup_{J+1\leq k\leq K^*}\lim_{n\to\infty}\bg\|\sum_{j=J+1}^k u_n^j\bg\|_{\diag H_y^s(\R)}\nonumber\\
\lesssim&\, \sup_{J+1\leq k\leq K^*}\lim_{n\to\infty}\bg(\bg(\sum_{j=J+1}^k
\|u_n^j\|^2_{\diag H_y^{s}(\R)}\bg)^{\frac12}
+\bg(\sum_{i,j=J+1,i\neq j}^k\|\la u_n^i, u_n^j\ra_{H_y^s}\|_{L_t^{\frac{\ba}{2}}L_x^{\frac{\br}{2}}(\R)}\bg)^{\frac12}\bg)\lesssim 1.
\end{align*}
Combining \eqref{kkkk uniform bound of unj} with \eqref{uniform bound of unj small} we deduce
\begin{align}\label{super uniform}
\sup_{1\leq k\leq K^*}\lim_{n\to\infty}\bg\|\sum_{j=1}^k u_n^j\bg\|_{\diag H_y^{s}(\R)}
\lesssim  1.
\end{align}
Therefore, using \eqref{orthog L2}, \eqref{conv of nonlinear profiles in h1} and Strichartz we confirm that the conditions \eqref{cond 1} and \eqref{cond 3} are satisfied for sufficiently large $k$ and $n$, where we set $u=u_n$ and $\tilde{u}=\Psi_n^k$ in Lemma \ref{lem stability cnls}. As long as we can show that \eqref{cond 2} is satisfied for the chosen $s\in(\frac{1}{2},1-s_\alpha)$, we are able to apply Lemma \ref{lem stability cnls} to obtain the contradiction
\begin{align}\label{contradiction 1}
\limsup_{n\to\infty}\|u_n\|_{\diag H_y^{s}(\R)}<\infty.
\end{align}
Direct calculation shows
\begin{align*}
e&=\,i\pt_t\Psi_n^k+\Delta_{x,y}\Psi_n^k+|\Psi_n^k|^{2}\Psi_n^k\nonumber\\
&=\bg(\sum_{j=1}^k (i\pt_tu_n^j+\Delta_{x,y} u_n^j)+|\sum_{j=1}^ku_n^j|^{\alpha}\sum_{j=1}^k u_n^j\bg)
+\bg(|\Psi_n^k|^{\alpha}\Psi_n^k-|\Psi_n^k-e^{it\Delta_{x,y}}w_n^k|^{\alpha}(\Psi_n^k-e^{it\Delta_{x,y}}w_n^k)\bg)\nonumber\\
&=:I_1+I_2.
\end{align*}
In the following we show the asymptotic smallness of $I_1$ and $I_2$. Since $u_n^j$ solves \eqref{nls}, we can rewrite $I_1$ to
\begin{align*}
I_1
=-\bg(\sum_{j=1}^k|u_n^j|^{\alpha}u_n^j-\bg|\sum_{j=1}^ku_n^j\bg|^{\alpha}\sum_{j=1}^ku_n^j\bg)
\lesssim \sum_{i,j=1,i\neq j}^k|u_n^i|^{\alpha}|u_n^j|=:I_{11}.
\end{align*}
By H\"older we have
\begin{align*}
\||u_n^i|^\alpha|u_n^j|\|_{L_t^{\bb'}L_x^{\bs'}L_y^2(\R)}&\lesssim \|u_n^{i}u_n^{j}\|^{\frac12}_{L_t^{\frac{\ba}{2}}L_t^{\frac{\br}{2}}L_y^1(\R)}
\|u_n^i\|^{\alpha-\frac12}_{\diag L_y^\infty}\|u_n^j\|^{\frac12}_{\diag L_y^\infty}\nonumber\\
&\lesssim \|u_n^iu_n^j\|^{\frac12}_{L_t^{\frac{\ba}{2}}L_t^{\frac{\br}{2}}L_y^1}
\|u_n^i\|^{\alpha-\frac12}_{\diag H_y^1}\|u_n^j\|^{\frac12}_{\diag H_y^1}.
\end{align*}
Then \eqref{uniform bound of unj}, \eqref{uniform bound of unj small} and \eqref{smallness aaa} imply
\begin{align*}
\lim_{k\to K^*}\lim_{n\to\infty}\| I_{11}\|_{L_t^{\bb'}L_x^{\bs'}L_y^2(\R) }=0.
\end{align*}
On the other hand, by Lemma \ref{lemma scattering norm} and the telescoping arguments given in the proof of Lemma \ref{lem stability cnls} we obtain for $s'\in(s,1-s_\alpha)$
\begin{align*}
\||u_n^i|^\alpha u_n^j\|_{L_t^{\bb'}L_x^{\bs'}H_y^{s'}(\R)}\lesssim \|u_n^i\|_{\diag H_y^{s'}(\R)}^\alpha\|u_n^j\|_{\diag H_y^{s'}(\R)}\lesssim1.
\end{align*}
Combining with the inequality $\|f\|_{H_y^{s}}\leq\|f\|_{L_y^2}^{1-s/s'}\|f\|_{H_y^{s'}}^{s/s'}$ we infer that
\begin{align*}
\lim_{k\to K^*}\lim_{n\to\infty}\| I_{11}\|_{L_t^{\bb'}L_x^{\bs'}H_y^{s}(\R)}=0
\end{align*}
Next, we prove the asymptotic smallness of $I_2$. Direct calculation shows
\begin{align*}
I_2=O\bg(\Psi_n^k(e^{it\Delta_{x,y}}w_n^k)^\alpha+(\Psi_n^k)^\alpha e^{it\Delta_{x,y}}w_n^k+(e^{it\Delta_{x,y}}w_n^k)^{\alpha+1}\bg).
\end{align*}
But then \eqref{super uniform}, \eqref{cnls to zero wnk}, Lemma \ref{fractional on t}, H\"older and the telescoping arguments given in the proof of Lemma \ref{lem stability cnls} immediately yield
\begin{align*}
\lim_{k\to K^*}\lim_{n\to\infty}\| I_2\|_{L_t^{\bb'}L_x^{\bs'}H_y^{s}(\R)}=0
\end{align*}
and Step 3 is complete.
\subsubsection*{Step 4: Reduction to one bad profile and conclusion}
From Step 3 we conclude that there exists some $1\leq J_1\leq K^*$ such that
\begin{align}
\limsup_{n\to\infty}\|u_n^j\|_{\diag H_y^s(\R)}&=\infty\quad \forall \,1\leq j\leq J_1,\label{infinite}\\
\limsup_{n\to\infty}\|u_n^j\|_{\diag H_y^s(\R)}&<\infty\quad \forall \,J_1+1\leq j\leq K^*.
\end{align}
Using \eqref{orthog L2} and \eqref{cnls conv of h} we infer that for any finite $1\leq k\leq K^*$
\begin{align}
\mM_0&=\sum_{j=1}^k \mM(T_n^j\tdu^j)+\mM(w_n^k)+o_n(1),\label{mo sum}\\
\mH_0&=\sum_{j=1}^k \mH(T_n^j\tdu^j)+\mH(w_n^k)+o_n(1)\label{eo sum}.
\end{align}
If $J_1>1$, then using \eqref{mo sum}, \eqref{eo sum}, the asymptotic positivity of energies deduced from Step 1 and Lemma \ref{cnls killip visan curve} we know that $\limsup_{n\to\infty}\mD(u_n^1)<\mD^*$, which violates \eqref{infinite} due to the inductive hypothesis. Thus $J_1=1$ and
$$ u_n(0,x)=e^{-it_n^1 \Delta_{x,y}}\tdu^1(x-x_n^1)+w_n^1(x).$$
Similarly, we must have $\mM(w_n^1)=o_n(1)$ and $\mH(w_n^1)=o_n(1)$, otherwise we could deduce again the contradiction \eqref{contradiction 1} using Lemma \ref{lem stability cnls}. Combining with Lemma \ref{cnls killip visan curve} we conclude that $\|w_n^1\|_{H_{x,y}^1}=o_n(1)$. Finally, we exclude the cases $t^1_n\to\pm \infty$. We only consider the case $t_n^1\to \infty$, the case $t_n^1 \to -\infty$ can be similarly dealt. Indeed, using Strichartz we obtain
\begin{align*}
\|e^{-it\Delta_{x,y}}u_n(0)\|_{\diag H_y^s([0,\infty))}\lesssim
\|e^{-it\Delta_{x,y}}\tdu^1\|_{\diag H_y^s([t^1_n,\infty))}+\|w_n^1\|_{H^1}\to 0.
\end{align*}
Using Lemma \ref{lemma small data} we derive the contradiction \eqref{contradiction 1} again. This completes the desired proof.
\end{proof}

\begin{lemma}[Existence of a minimal blow-up solution]\label{category 0 and 1}
Suppose that $\mD^*\in(0,\infty)$. Then there exists a global solution $u_c$ of \eqref{nls} such that $\mD(u_c)=\mD^*$ and
\begin{align}
\|u_c\|_{\diag H_y^s((-\infty,0])}=\|u_c\|_{\diag H_y^s([0,\infty))}=\infty.
\end{align}
Moreover, $u_c$ is almost periodic in $H_{x,y}^1$ modulo $\R_x^d$-translations, i.e. the set $\{u(t):t\in\R\}$ is precompact in $H_{x,y}^1$ modulo translations w.r.t. the $x$-variable.
\end{lemma}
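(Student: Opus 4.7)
The plan is to produce the minimal blow-up solution by applying Palais--Smale at a time that splits the scattering norm of a near-optimal sequence, and then to verify globality and almost periodicity essentially for free. Concretely, starting from the sequence $(u_n)_n$ satisfying \eqref{oo1}--\eqref{oo2}, I would pick $t_n\in I_n$ splitting the $\diag H_y^s$-norm of $u_n$ into two halves whose suprema both diverge, and use time-translation invariance of \eqref{nls} to reduce to $t_n\equiv 0$. Lemma \ref{Palais Smale} then furnishes $\phi\in H_{x,y}^1$ and $(x_n)_n\subset\R^d$ with $u_n(0,\cdot+x_n,y)\to\phi$ strongly in $H_{x,y}^1$. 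Letting $u_c$ be the solution of \eqref{nls} with $u_c(0)=\phi$, continuity of $(\mM,\mH,\mK)$ in $H_{x,y}^1$ together with Lemma \ref{invariance from mA} give $u_c(0)\in\mA$ and $\mD(u_c)=\mD^*$, while Lemma \ref{lemma coercivity} plus conservation of energy bound $u_c$ in $H_{x,y}^1$ uniformly in time; Lemma \ref{lemma small data} then upgrades this to $I_{\max}(u_c)=\R$.

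The key step is to show $\|u_c\|_{\diag H_y^s([0,\infty))}=\|u_c\|_{\diag H_y^s((-\infty,0])}=\infty$, which I would prove by contradiction via the stability Lemma \ref{lem stability cnls}. If, say, $\|u_c\|_{\diag H_y^s([0,\infty))}<\infty$, I apply Lemma \ref{lem stability cnls} with $\tilde u=u_c$ and the true solution $u_n(\cdot,\cdot+x_n,y)$: the error term $e$ vanishes identically, and the initial-data discrepancy $u_n(0,\cdot+x_n,y)-\phi\to 0$ in $H_{x,y}^1$ by Lemma \ref{Palais Smale}, so a Strichartz estimate forces \eqref{cond 3} for $n$ large; the conclusion of Lemma \ref{lem stability cnls} then yields $\|u_n\|_{\diag H_y^s([0,\sup I_n))}<\infty$, contradicting the choice of $t_n$ and \eqref{oo1}. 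The backward case is symmetric. This is the main technical obstacle, since it is precisely here that the strong (not merely weak) $H_{x,y}^1$-convergence supplied by Lemma \ref{Palais Smale} is indispensable, and one must additionally check that the $H_{x,y}^1$ smallness transfers to the $L_t^{\ba}L_x^{\br}H_y^s$ scale required in \eqref{cond 3}.

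For the almost periodicity, I would first observe that the same stability argument, applied at any time $\tau\in\R$ in place of $0$, yields $\|u_c\|_{\diag H_y^s((-\infty,\tau])}=\|u_c\|_{\diag H_y^s([\tau,\infty))}=\infty$ for every $\tau$: otherwise, local well-posedness on $[0,\tau]$ (or $[\tau,0]$) combined with the putative finiteness at $\tau$ would render a corresponding half-line norm at $0$ finite, contradicting the previous paragraph. Consequently, for any sequence $(\tau_m)_m\subset\R$ the family $v_m(t,x,y):=u_c(t+\tau_m,x,y)$ satisfies the hypothesis \eqref{precondition} of Lemma \ref{Palais Smale} with $t_n=0$, and that lemma produces translations $(y_m)_m\subset\R^d$ along which $u_c(\tau_m,\cdot+y_m,y)$ admits a strongly $H_{x,y}^1$-convergent subsequence. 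Since $(\tau_m)_m$ was arbitrary, the orbit $\{u_c(t):t\in\R\}$ is precompact in $H_{x,y}^1$ modulo $\R_x^d$-translations, completing the proof.
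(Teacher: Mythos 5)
Your proposal is essentially the same as the paper's proof: extract $\phi$ via the Palais--Smale Lemma \ref{Palais Smale}, set $u_c(0)=\phi$, conserve $\mD$, obtain the half-line divergence \eqref{oo1}$\Rightarrow$ from the stability Lemma \ref{lem stability cnls}, and run Lemma \ref{Palais Smale} along arbitrary time sequences to get almost periodicity. The one genuine deviation is your treatment of globality, and there the justification as written has a gap.

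You argue: Lemma \ref{lemma coercivity} plus conservation yields a uniform $H^1_{x,y}$ bound, and ``Lemma \ref{lemma small data} then upgrades this to $I_{\max}(u_c)=\R$.'' But Lemma \ref{lemma small data} is conditioned on smallness of $\|e^{it\Delta_{x,y}}u_0\|_{L_t^\ba L_x^\br H_y^s(I)}$, not on $\|u_0\|_{H^1_{x,y}}$ being bounded, and a uniform-in-time $H^1$ bound does not by itself make that Strichartz quantity small on a fixed-length interval. What you are implicitly using is the standard energy-subcritical fact that the local existence time depends only on $\|u_0\|_{H^1}$; that is true here (because $(\ba,\br)$ is $\dot H^{s_\alpha}$-admissible with $s_\alpha<1$, one can H\"older in time against a higher-regularity admissible pair to extract a positive power of $|I|$), but it is a separate lemma that the paper never states. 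The paper instead bypasses it: assuming $s_0:=\sup I_{\max}<\infty$, it takes $s_n\nearrow s_0$, applies Lemma \ref{Palais Smale} to $u_c(s_n)$ to get a strong $H^1_{x,y}$ limit $\varphi$, and then uses that $\|e^{it\Delta_{x,y}}\varphi\|_{\diag H_y^s([0,s_0-s_n))}\to 0$ by dominated convergence for the fixed profile $\varphi$, so Lemma \ref{lemma small data} applies verbatim and extends $u_c$ past $s_0$. This ``precompactness at the blow-up time'' route is self-contained within the paper's lemmas and also works in energy-critical settings. Your route is correct in spirit for the intercritical problem but should either be backed by a short-time LWP statement depending only on $\|u_0\|_{H^1}$, or replaced by the paper's Palais--Smale-at-blow-up argument, which you already have all the tools to run.

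One small citation point: you attribute $u_c(0)\in\mA$ to ``continuity of $(\mM,\mH,\mK)$ together with Lemma \ref{invariance from mA},'' but Lemma \ref{invariance from mA} is about propagation of $\mA$ under the flow. The correct reference for the initial membership is Lemma \ref{cnls killip visan curve}(i) applied to $\mD(\phi)=\lim_n\mD(u_n)=\mD^*\in(0,\infty)$.
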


\begin{proof}
As discussed at the beginning of this section, under the assumption $\mD^*<\infty$ one can find a sequence $(u_n)_n$ of solutions of \eqref{nls} that satisfies the preconditions of Lemma \ref{Palais Smale}. We apply Lemma \ref{Palais Smale} to infer that $(u_n(0))_n$ (up to modifying time and space translation) is precompact in $H_{x,y}^1$. We denote its strong $H_{x,y}^1$-limit by $\psi$. Let $u_c$ be the solution of \eqref{nls} with $u_c(0)=\psi$. Then $\mD(u_c(t))=\mD(\psi)=\mD^*$ for all $t$ in the maximal lifespan $I_{\max}$ of $u_c$ (recall that $\mD$ is a conserved quantity).

We first show that $u_c$ is a global solution. We only show that $s_0:=\sup I_{\max}=\infty$, the negative direction can be similarly proved. If this does not hold, then by Lemma \ref{lemma small data} there exists a sequence $(s_n)_n\subset \R$ with $s_n\to s_0$ such that
\begin{align*}
\lim_{n\to\infty}\|u_c\|_{\diag H_y^s((-\inf I_{\max},s_n])}=\lim_{n\to\infty}\|u_c\|_{\diag H_y^s([s_n,\sup I_{\max}))}=\infty.
\end{align*}
Define $v_n(t):=u_c(t+s_n)$. Then \eqref{precondition} is satisfied with $t_n\equiv 0$. We then apply Lemma \ref{Palais Smale} to the sequence $(v_n(0))_n$ to conclude that there exists some $\varphi\in H_{x,y}^1$ such that, up to modifying the space translation, $u_c(s_n)$ strongly converges to $\varphi$ in $H_{x,y}^1$. But then using Strichartz we obtain for $s\in(\frac12,1-s_\alpha)$
\begin{align*}
\|e^{it\Delta_{x,y}}u_c(s_n)\|_{\diag H_y^s([0,s_0-s_n))}=\|e^{it\Delta_{x,y}}\varphi\|_{\diag H_y^s([0,s_0-s_n))}+o_n(1)=o_n(1).
\end{align*}
By Lemma \ref{lemma small data} we can extend $u_c$ beyond $s_0$, which contradicts the maximality of $s_0$. Now by \eqref{oo1} and Lemma \ref{lem stability cnls} it is necessary that
\begin{align}\label{blow up uc}
\|u_c\|_{\diag H_y^s((-\infty,0])}=\|u_c\|_{\diag H_y^s([0,\infty))}=\infty.
\end{align}

We finally show that the orbit $\{u_c(t):t\in\R\}$ is precompact in $H_{x,y}^1$ modulo ($\R^2_x$)-translations. Let $(\tau_n)_n\subset\R$ be an arbitrary time sequence. Then \eqref{blow up uc} implies
\begin{align*}
\|u_c\|_{\diag H_{y}^s((-\infty,\tau_n])}=\|u_c\|_{\diag H_{y}^s([\tau_n,\infty))}=\infty.
\end{align*}
The claim follows by applying Lemma \ref{Palais Smale} to $(u_c(\tau_n))_n$.
\end{proof}

We end this section by establishing some useful properties of the minimal blow-up solution $u_c$.

\begin{lemma}\label{lemma property of uc}
Let $u_c$ be the minimal blow-up solution given by Lemma \ref{category 0 and 1}. Then
\begin{itemize}
\item[(i)] There exists a center function $x:\R\to \R^d$ such that for each $\vare>0$ there exists $R>0$ such that
\begin{align}
\int_{|x+x(t)|\geq R}|\nabla_{x,y} u_c(t)|^2+|u_c(t)|^2+|u_c(t)|^{\alpha+2}\,dxdy\leq\vare\quad\forall\,t\in\R.
\end{align}

\item[(ii)]There exists some $\delta>0$ such that $\inf_{t\in\R}\mK(u_c(t))=\delta$.
\end{itemize}
\end{lemma}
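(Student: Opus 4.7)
The plan is to deduce both (i) and (ii) directly from the almost-periodicity of the orbit $\{u_c(t):t\in\R\}$ modulo $\R^d_x$-translations established in Lemma \ref{category 0 and 1}. Let $x:\R\to\R^d$ be translations such that the family $\mathcal{K}:=\{v(t):=u_c(t,\cdot+x(t),\cdot):t\in\R\}$ is precompact in $H_{x,y}^1$. A precompact subset of $H_{x,y}^1$ is automatically tight in the $x$-variable: for each $\vare>0$ one covers $\mathcal{K}$ by finitely many $H^1$-balls of radius $\vare/4$ centered at some $v(t_1),\ldots,v(t_N)$, chooses $R$ so large that each of the fixed functions $v(t_j)$ has $H^1$-tails outside $\{|x|\geq R\}$ bounded by $\vare/4$, and then applies the triangle inequality. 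This yields the uniform tail estimate for $|\nabla_{x,y}u_c|^2+|u_c|^2$. The $L^{\alpha+2}$ tail is handled either by the same covering argument applied to the image of $\mathcal{K}$ under the energy-subcritical Sobolev embedding $H_{x,y}^1\hookrightarrow L^{\alpha+2}_{x,y}$ (valid since $\alpha<4/(d-1)$), or more concretely by invoking Lemma \ref{lemma gn additive} on the exterior region $\{|x|\geq R\}\times\T$ and using the $H^1$-tail estimate just obtained. Undoing the translation $x\mapsto x-x(t)$ gives (i).

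For (ii) I would argue by contradiction. Suppose $\inf_{t\in\R}\mK(u_c(t))=0$ and pick a sequence $(t_n)_n\subset\R$ with $\mK(u_c(t_n))\to 0$. By the precompactness of $\mathcal{K}$, along a subsequence
\begin{equation*}
v(t_n)=u_c(t_n,\cdot+x(t_n),\cdot)\longrightarrow u_\infty \quad\text{strongly in }H_{x,y}^1.
\end{equation*}
Strong $H^1$-convergence together with the embedding $H_{x,y}^1\hookrightarrow L^{\alpha+2}_{x,y}$ makes the functionals $\mM,\mH,\mK$ continuous at $u_\infty$. Using their translation invariance, together with the conservation of mass and energy $\mM(u_c(t_n))=\mM_0$, $\mH(u_c(t_n))=\mH_0$, we obtain
\begin{equation*}
\mM(u_\infty)=\mM_0,\qquad \mH(u_\infty)=\mH_0,\qquad \mK(u_\infty)=0.
\end{equation*}

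Since $u_c\in\mA$ forces $\mM_0>0$, we have $u_\infty\not\equiv 0$; hence $u_\infty\in V(\mM_0)$. By the very definition of $m_{\mM_0}$ this forces $\mH(u_\infty)\geq m_{\mM_0}$, whereas the defining property $u_c\in\mA$ combined with Lemma \ref{cnls killip visan curve} gives $\mH_0<m_{\mM_0}$. The chain
\begin{equation*}
m_{\mM_0}\leq \mH(u_\infty)=\mH_0<m_{\mM_0}
\end{equation*}
is the desired contradiction, so some $\delta:=\inf_{t\in\R}\mK(u_c(t))>0$ exists. The only genuinely delicate point is passing to the limit in $\mK$ and $\mH$ under strong $H^1$-convergence, which is why the energy-subcriticality of $\alpha$ (assumed throughout) is essential; once this continuity is in place, the argument is pure compactness bookkeeping on top of Lemma \ref{category 0 and 1} and the minimality characterization of $m_c$.
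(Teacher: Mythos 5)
Your proof is correct and follows essentially the same route as the paper: part (i) is the standard tightness argument for a precompact subset of $H_{x,y}^1$ together with the energy-subcritical Sobolev/Gagliardo--Nirenberg embedding, and part (ii) extracts a strong $H^1_{x,y}$ limit $u_\infty$ (modulo $x$-translations) along the sequence $\mK(u_c(t_n))\to 0$, concludes $\mK(u_\infty)=0$, and contradicts minimality. The only cosmetic difference is in the last step of (ii): you contradict the variational definition of $m_{\mM_0}$ directly (via $u_\infty\in V(\mM_0)$ and $\mH(u_\infty)=\mH_0<m_{\mM_0}$), whereas the paper routes the same contradiction through the MEI-functional, invoking $\mD(v_0)=\mD^*\in(0,\infty)$ and Lemma \ref{cnls killip visan curve}(i) to force $v_0\in\mA$ and hence $\mK(v_0)>0$; since that lemma is itself proved from the same variational characterization of $m_c$, the two formulations are interchangeable.
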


\begin{proof}
(i) is an immediate consequence of the Gagliardo-Nirenberg inequality on $\R^d\times\T$ and the almost periodicity of $u_c$ in $H_{x,y}^1$. We follow the same lines in the proof of \cite[Prop. 10.3]{killip_visan_soliton} to show (ii). Assume the contrary that the claim does not hold. Then we can find a sequence $(t_n)_n\subset\R$ such that $\mK(u_c(t_n))\to 0$. By the almost periodicity of $u_c$ in $H_{x,y}^1$ we can find some $v_0$ such that $u(t_n)$ converges strongly to $v_0$ in $H_{x,y}^1$ (modulo $x$-translations). Combining with the continuity of $\mK$ in $H_{x,y}^1$ we infer that
\begin{align*}
\mD(v_0)&=\mD(u_c)=\mD^*\in(0,\infty),\\
\mK(v_0)&=\lim_{n\to\infty}\mK(u_c(t_n))=0.
\end{align*}
This is however impossible due to Lemma \ref{cnls killip visan curve} (i).
\end{proof}

\subsection{Extinction of the minimal blow-up solution}
In this subsection we close the proof of Theorem \ref{main thm} by showing the contradiction that the minimal blow-up solution $u_c$ must be equal to zero. We shall firstly record a key result concerning the growth of the center function $x(t)$ given in Lemma \ref{lemma property of uc}. The proof of the result relies on the conservation of momentum and was initially given in \cite{non_radial}. Similar results under the framework of MEI-functional and in the setting of NLS on a waveguide were proved in \cite{killip_visan_soliton} and \cite{HaniPausader} respectively. The proof in our case is a straightforward combination of the arguments in \cite{non_radial,killip_visan_soliton,HaniPausader} and we thus omit the details here.

\begin{lemma}\label{holmer}
Let $x(t)$ be the center function given by Lemma \ref{lemma property of uc}. Then $x(t)$ obeys the decay condition $x(t)=o(t)$ as $|t|\to\infty$.
\end{lemma}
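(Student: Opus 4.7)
The plan is to adapt the classical Galilean-plus-truncation scheme of Holmer--Roudenko \cite{non_radial}, in the MEI-formulation of Killip--Visan \cite{killip_visan_soliton} and the waveguide variant of Hani--Pausader \cite{HaniPausader}. The argument proceeds in three stages.

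First, I would reduce to the case that the conserved $x$-momentum $P_x(u_c) := \mathrm{Im}\int_{\R^d\times\T}\bar{u}_c\nabla_x u_c\,dxdy$ vanishes. Since \eqref{nls} retains full Galilean invariance in the $x$-variable, for any $\xi\in\R^d$ the boost
\[
v_\xi(t,x,y) := e^{i(x\cdot\xi - t|\xi|^2)}u_c(t,x - 2t\xi,y)
\]
is another global solution of \eqref{nls} preserving $\mM$, $\|v_\xi\|_{\alpha+2}$, $\|\pt_y v_\xi\|_2$ and the scattering norm $\diag H_y^s$, while $\|\nabla_x v_\xi\|_2^2 = \|\nabla_x u_c\|_2^2 + |\xi|^2\mM(u_c) + 2\xi\cdot P_x(u_c)$. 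Choosing $\xi := -P_x(u_c)/\mM(u_c)$ yields $P_x(v_\xi)=0$ and $\mH(v_\xi) = \mH(u_c) - |P_x(u_c)|^2/(2\mM(u_c))$. Assuming $P_x(u_c)\neq 0$, one verifies that $v_\xi\in\mA$ (using the strict positivity $\mK(u_c(\cdot))\geq\delta$ from Lemma \ref{lemma property of uc}(ii) to exclude $\mK(v_\xi)\leq 0$); then $\mD(v_\xi)<\mD^*$ by Lemma \ref{cnls killip visan curve}, contradicting the minimality of $\mD^*$. Hence $P_x(u_c)=0$.

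Second, assuming $P_x(u_c)=0$, I would introduce a truncated center-of-mass. Fix $\psi\in C_c^\infty(\R^d;\R^d)$ with $\psi(z)=z$ on $|z|\leq 1$ and with bounded derivatives, and for $R>0$ set $X_R(t):=\int_{\R^d\times\T} R\,\psi(x/R)\cdot |u_c(t,x,y)|^2\,dxdy$. A direct computation from \eqref{nls}, using the $y$-independence of $\psi$ and integration by parts, gives
\[
X_R'(t) = 2\int_{\R^d\times\T}\nabla\psi(x/R)\cdot\mathrm{Im}(\bar{u}_c\nabla_x u_c)\,dxdy.
\]
Splitting according to $|x|\leq R$ (where $\nabla\psi=\mathrm{Id}$) and $|x|>R$, the leading contribution equals $2P_x(u_c)=0$, while the remainder is bounded by $C\|u_c\|_{L^2(|x|>R)}\|\nabla_x u_c\|_2$. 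Combining the uniform $H_{x,y}^1$-control of Lemma \ref{lemma coercivity} with the compactness of Lemma \ref{lemma property of uc}(i) yields $|X_R'(t)|\leq C\sqrt{\vare}$ on every time slice with $R\geq |x(t)|+R_\vare$. The same compactness (upgraded to a first-moment estimate, which follows from the $H_{x,y}^1$-almost-periodicity of $u_c$ by a routine pigeonhole) gives the profile identity $X_R(t) = -x(t)\mM(u_c) + O_\vare(1+R\vare)$ on those slices.

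Finally, I would close by contradiction. Suppose $\limsup_{t\to\infty}|x(t)|/t \geq \eta > 0$ and extract $t_n\to\infty$ with $|x(t_n)|\geq \eta t_n$ (the case $t\to -\infty$ is symmetric). Setting $R_n := \sup_{s\in[0,t_n]}|x(s)| + R_\vare$ ensures $R_n \geq |x(t)|+R_\vare$ for all $t\in[0,t_n]$, so integrating the stage-two bound yields $|X_{R_n}(t_n)-X_{R_n}(0)|\leq C\sqrt{\vare}\,t_n$; inserting the profile identity gives $\mM(u_c)|x(t_n)|\leq C\sqrt{\vare}\,t_n + C\vare R_n + O_\vare(1)$. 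Dividing by $t_n$, sending $n\to\infty$ and then $\vare\to 0$ contradicts $|x(t_n)|/t_n \geq \eta$, provided one first controls the bootstrap $R_n\lesssim |x(t_n)|+o(t_n)$. This bootstrap is the main technical obstacle: the truncation radius must dominate the running maximum $\sup_{s\in[0,t_n]}|x(s)|$ rather than $|x(t_n)|$ itself, and absorbing the resulting $\vare R_n$ into the $\sqrt{\vare}\,t_n$ bound requires precisely the first-moment upgrade of the compactness, implemented exactly as in \cite{non_radial,HaniPausader}.
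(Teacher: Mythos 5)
The paper itself omits the proof of this lemma, referring the reader to \cite{non_radial,killip_visan_soliton,HaniPausader}, and your three-stage reconstruction follows the expected outline from those works. However, Stage~1 as written contains a genuine gap. After the full boost $\xi=-P_x(u_c)/\mM(u_c)$ one has $\mK(v_\xi(t))=\mK(u_c(t))-|P_x(u_c)|^2/\mM(u_c)$, and there is no quantitative relation between the compactness constant $\delta$ of Lemma \ref{lemma property of uc}(ii) (which is produced non-constructively) and the size of $|P_x(u_c)|^2/\mM(u_c)$; the boosted solution can perfectly well have $\mK\leq 0$. In that event the minimality contradiction via Lemma \ref{cnls killip visan curve} is unavailable, and one would be forced into a negative-virial rigidity argument for $v_\xi$ --- but the center function of $v_\xi$ is $x(t)+2t\xi$, which itself drifts linearly, so the argument becomes circular. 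The standard repair is to take a \emph{small} boost $\xi_\lambda:=-\lambda P_x(u_c)/\mM(u_c)$ with $\lambda>0$ small: then $\mK(v_{\xi_\lambda}(0))=\mK(u_c(0))-(2\lambda-\lambda^2)|P_x|^2/\mM>0$ by continuity, $\mM$ is preserved, $\mH(v_{\xi_\lambda})=\mH(u_c)-\tfrac{1}{2}(2\lambda-\lambda^2)|P_x|^2/\mM<\mH(u_c)$ for every $\lambda\in(0,1]$, and the scattering norm $L_t^\ba L_x^\br H_y^s$ is Galilean-invariant. Hence $\mD(v_{\xi_\lambda})<\mD^*$ by Lemma \ref{cnls killip visan curve}(ii) while $v_{\xi_\lambda}$ is still a non-scattering solution, yielding the contradiction without ever needing to control the sign of $\mK$ under the full boost.

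On a smaller point, the ``first-moment upgrade of the compactness'' you invoke in Stage~3 is unnecessary. Simply replace $t_n$ by $\mathrm{argmax}_{s\in[0,t_n]}|x(s)|$; this is still $\gtrsim\eta t_n$ and tends to infinity since $x(\cdot)$ is locally bounded (the $L^2$-concentration of $u_c$ near $-x(t)$, together with $u_c\in C(\R;H_{x,y}^1)$, prevents $|x|$ from escaping in finite time). Then $R_n=|x(t_n)|+R_\vare$, and the resulting estimate $\mM(u_c)|x(t_n)|\leq C\sqrt{\vare}\,t_n+C\vare R_n+O_\vare(1)$ closes directly: absorb $C\vare|x(t_n)|$ into the left side for $\vare$ small, divide by $t_n$, and send $n\to\infty$ and then $\vare\to 0$ to obtain $\eta\leq 0$. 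No moment bound on $u_c$ is required.
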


We are now ready to prove Theorem \ref{main thm}.

\begin{proof}[Proof of Theorem \ref{main thm}]
We firstly prove the statement concerning the global well-posedness of \eqref{nls} below ground states. Since \eqref{nls} is energy-subcritical, it is well-known (see for instance \cite{TzvetkovVisciglia2016}) that global well-posedness is equivalent to the statement that for all $n\in\N$ we have
\begin{align}
\sup_{t\in[-n,n]}\|\nabla_{x,y}u(t)\|_{2}<\infty,
\end{align}
which follows immediately from Lemma \ref{cnls killip visan curve}.

Let us now restrict ourselves to the case $d\leq 4$ and prove the scattering result below ground states. As mentioned previously, we show the contradiction that the minimal blow-up solution $u_c$ given by Lemma \ref{category 0 and 1} is equal to zero. Let $\chi:\R^d\to\R$ be a smooth radial cut-off function satisfying
\begin{align*}
\chi=\left\{
             \begin{array}{ll}
             |x|^2,&\text{if $|x|\leq 1$},\\
             0,&\text{if $|x|\geq 2$}.
             \end{array}
\right.
\end{align*}
For $R>0$, we define the local virial action $z_R(t)$ by
\begin{align*}
z_{R}(t):=\int R^2\chi\bg(\frac{x}{R}\bg)|u_c(t,x)|^2\,dxdy.
\end{align*}
Direct calculation yields
\begin{align}
\pt_t z_R(t)=&\,2\,\mathrm{Im}\int R\nabla_x \chi\bg(\frac{x}{R}\bg)\cdot\nabla_x u_c(t)\bar{u}_c(t)\,dxdy,\label{final4}\\
\pt_t^2 z_R(t)=&\,4\int \pt^2_{x_j x_k}\chi\bg(\frac{x}{R}\bg)\pt_{x_j} u_c\pt_{x_k}\bar{u}_c\,dxdy-\frac{1}{R^2}\int\Delta_x^2\chi\bg(\frac{x}{R}\bg)|u_c|^2\,dxdy\nonumber\\
-&\,\frac{2\alpha }{\alpha+2}\int\Delta_x\chi\bg(\frac{x}{R}\bg)|u_c|^{\alpha+2}\,dxdy.
\end{align}
We then obtain
\begin{align*}
\pt_t^2 z_R(t)=8\mK(u_c(t))+A_R(u_c(t)),
\end{align*}
where
\begin{align*}
A_R(u_c(t))=&\,4\int\bg(\pt^2_{x_j}\chi\bg(\frac{x}{R}\bg)-2\bg)|\pt_{x_j} u_c|^2\,dxdy+4\sum_{j\neq k}\int_{R\leq|x|\leq 2R}\pt_{x_j}\pt_{x_k}\chi\bg(\frac{x}{R}\bg)\pt_{x_j} u_c\pt_{x_k}\bar{u}_c\,dxdy\nonumber\\
-&\,\frac{1}{R^2}\int\Delta_x^2\chi\bg(\frac{x}{R}\bg)|u_c|^2\,dxdy
-\frac{2\alpha}{\alpha+2}\int\bg(\Delta_x\chi\bg(\frac{x}{R}\bg)-2d\bg)|u_c|^{\alpha+2}\,dxdy.
\end{align*}
We have the rough estimate
\begin{align*}
|A_R(u(t))|\leq C_1\int_{|x|\geq R}|\nabla_x u_c(t)|^2+\frac{1}{R^2}|u_c(t)|^2+|u_c(t)|^{\alpha+2}\,dxdy
\end{align*}
for some $C_1>0$. By Lemma \ref{lemma property of uc} we know that there exists some $\delta>0$ such that
\begin{align}\label{small extinction ff}
\inf_{t\in\R}(8\mK(u_c(t)))\geq 8\delta=:2\eta_1>0.
\end{align}
From Lemma \ref{lemma property of uc} it also follows that there exists some $R_0\geq 1$ such that
\begin{align*}
\int_{|x+x(t)|\geq R_0}|\nabla_{x,y} u_c(t)|^2+|u_c(t)|^2+|u_c(t)|^{\alpha+2}\,dxdy\leq \frac{\eta_1}{C_1}.
\end{align*}
Thus for any $R\geq R_0+\sup_{t\in[t_0,t_1]}|x(t)|$ with some to be determined $t_0,t_1\in[0,\infty)$, we have
\begin{align}\label{final3}
\pt_t^2 z_R(t)\geq \eta_1
\end{align}
for all $t\in[t_0,t_1]$. By Lemma \ref{holmer} we know that for any $\eta_2>0$ there exists some $t_0\gg 1$ such that $|x(t)|\leq\eta_2 t$ for all $t\geq t_0$. Now set $R=R_0+\eta_2 t_1$. Integrating \eqref{final3} over $[t_0,t_1]$ yields
\begin{align}\label{12}
\pt_t z_R(t_1)-\pt_t z_R(t_0)\geq \eta_1 (t_1-t_0).
\end{align}
Using \eqref{final4}, Cauchy-Schwarz and Lemma \ref{cnls killip visan curve} we have
\begin{align}\label{13}
|\pt_t z_{R}(t)|\leq C_2 \mD^*R= C_2 \mD^*(R_0+\eta_2 t_1)
\end{align}
for some $C_2=C_2(\mD^*)>0$. \eqref{12} and \eqref{13} give us
\begin{align*}
2C_2 \mD^*(R_0+\eta_2 t_1)\geq\eta_1 (t_1-t_0).
\end{align*}
Setting $\eta_2=\frac{\eta_1}{4C_2\mD^*}$, dividing both sides by $t_1$ and then sending $t_1$ to infinity we obtain $\frac{1}{2}\eta_1\geq\eta_1$, which implies $\eta_1\leq 0$, a contradiction. This completes the proof.
\end{proof}

\subsection{Finite time blow-up below ground states}
In the final subsection we give the proof of Theorem \ref{thm blow up}.

\begin{proof}[Proof of Theorem \ref{thm blow up}]
The proof makes use of the classical Glassey's virial arguments \cite{Glassey1977}. In the context of normalized ground states, we shall invoke the same idea from the proof of \cite[Thm. 1.5]{BellazziniJeanjean2016} to show the claim. We firstly prove the following statement: for $\phi\in H_{x,y}^1$ satisfying $\mH(\phi)<m_{\mM(\phi)}$ and $\mK(\phi)<0$ one has
\begin{align}\label{energy trapping}
\mK(\phi)\leq \mH(\phi)-m_{\mM(\phi)}.
\end{align}
Indeed, from the proof of Lemma \ref{monotoneproperty} we know that there exists some $t^*\in(0,1)$ such that $\mK(\phi^{t^*})=0$ and $\frac{d}{ds}(\mH(\phi^{s}))(s)\geq \frac{d}{ds}(\mH(\phi^{s}))(1)=\mK(\phi)$ for $s\in(t^*,1)$. Then
\begin{align*}
\mH(\phi)&=\mH(\phi^1)=\mH(\phi^{t^*})+\int_{t^*}^1\frac{d}{ds}(\mH(\phi^{s}))(s)\,ds\geq\mH(\phi^{t^*})+(1-t^*)\frac{d}{ds}(\mH(\phi^{s}))(1)\nonumber\\
&=\mH(\phi^{t^*})+(1-t^*)\mK(\phi)> m_{\mM(\phi)}+\mK(\phi),
\end{align*}
which implies \eqref{energy trapping}. Next, using the same arguments as in the proof of Lemma \ref{invariance from mA} we infer that $\mK(u(t))<0$ for all $t$ in the maximal lifespan of $u$, thus also $\mK(u(t))\leq m_{\mM(u)}-\mH(u)$. We now define
\begin{align*}
V(t):=\int |x|^2|u(t)|^2\,dxdy.
\end{align*}
By using the same approximation arguments as in the proof of \cite[Prop. 6.5.1]{Cazenave2003} we know that $|x|u(t)\in L_{x,y}^2$ for all $t$ in the maximal lifespan of $u$. Direct calculation (which is similar to the one given in the proof of Theorem \ref{main thm}) yields
\begin{align*}
\pt_{t}^2V(t)=8\mK(u(t))\leq 8(m_{\mM(u)}-\mH(u))<0.
\end{align*}
This particularly implies that $t\mapsto V(t)$ is a positive and concave function simultaneously. Hence the function $t\mapsto V(t)$ can not exist for all $t\in\R$ and the desired claim follows.
\end{proof}

\subsubsection*{Acknowledgements}
The author acknowledges the funding by Deutsche Forschungsgemeinschaft (DFG) through the Priority Programme SPP-1886 (No. NE 21382-1).

\addcontentsline{toc}{section}{References}

\end{document}